\documentclass{article}

\usepackage{arxiv}

\usepackage[utf8]{inputenc} 
\usepackage[T1]{fontenc}    
\usepackage{hyperref}       
\usepackage{url}            
\usepackage{booktabs}       
\usepackage{amsfonts}       
\usepackage{nicefrac}       
\usepackage{microtype}      
\usepackage{lipsum}		
\usepackage{graphicx}
\usepackage{natbib}
\usepackage{doi}
\usepackage{amsmath}
\usepackage{amsthm}
\usepackage{bbm}
\usepackage{caption}
\usepackage{subcaption}

\DeclareMathOperator*{\argmin}{arg\,min}
\usepackage{multirow}
\usepackage{longtable}
\usepackage{float}

\title{Controlling the low-temperature Ising model using spatiotemporal Markov decision theory}

\date{January 7, 2025}	

\author{ \href{https://orcid.org/0000-0000-0000-0000}{\includegraphics[scale=0.06]{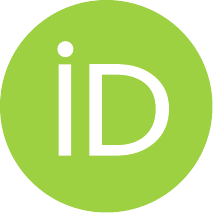}\hspace{1mm}M.C. de Jongh} \\
	Department of Applied Mathematics\\
	University of Twente\\
	P.O. Box 217, NL-7500 AE Enschede \\
	\texttt{m.c.dejongh@utwente.nl} \\
	\And
	\href{https://orcid.org/0000-0000-0000-0000}{\includegraphics[scale=0.06]{orcid.pdf}\hspace{1mm}Richard J. Boucherie} \\
	Department of Applied Mathematics\\
	University of Twente\\
	   P.O. Box 217, NL-7500 AE Enschede \\
	\texttt{r.j.boucherie@utwente.nl} \\
	\AND
    \href{https://orcid.org/0000-0000-0000-0000}{\includegraphics[scale=0.06]{orcid.pdf}\hspace{1mm}M.N.M. van Lieshout}\thanks{M.N.M. van Lieshout is also affiliated with the University of Twente} \\
	Stochastics Department\\
	Centrum Wiskunde \& Informatica\\
	P.O. Box 94079, NL-1090 GB Amsterdam    \\
	\texttt{Marie-Colette.van.Lieshout@cwi.nl} \\
}


\renewcommand{\shorttitle}{Controlling the low-temperature Ising model using STMDPs}

\hypersetup{
pdftitle={A template for the arxiv style},
pdfsubject={q-bio.NC, q-bio.QM},
pdfauthor={David S.~Hippocampus, Elias D.~Striatum},
pdfkeywords={First keyword, Second keyword, More},
}

\allowdisplaybreaks

\begin{document}

\theoremstyle{plain}
\newtheorem{axiom}{Axiom}
\newtheorem{claim}[axiom]{Claim}
\newtheorem{theorem}{Theorem}[section]
\newtheorem{lemma}[theorem]{Lemma}
\newtheorem{corollary}[theorem]{Corollary}

\theoremstyle{remark}
\newtheorem{definition}[theorem]{Definition}
\newtheorem*{example}{Example}
\newtheorem*{fact}{Fact}
\newtheorem{assumption}{Assumption}
\newtheorem{remark}{Remark}

\maketitle

\begin{abstract}
	We introduce the spatiotemporal Markov decision process (STMDP), a special type of Markov decision process that models sequential decision-making problems which are not only characterized by temporal, but also by spatial interaction structures.
To illustrate the framework, we construct an STMDP inspired by the low-temperature two-dimensional Ising model on a finite, square lattice, evolving according to the Metropolis dynamics. We consider the situation in which an external decision maker aims to drive the system towards the all-plus configuration by flipping spins at specified moments in time. In order to analyze this problem, we construct an auxiliary MDP by means of a reduction of the configuration space to the local minima of the Hamiltonian. Leveraging the convenient form of this auxiliary MDP, we uncover the structure of the optimal policy by solving the Bellman equations in a recursive manner. Finally, we conduct a numerical study on the performance of the optimal policy obtained from the auxiliary MDP in the original Ising STMDP. 
\end{abstract}

\keywords{Bellman equations \and Ising model \and Metastability \and Sequential decision making \and Spatiotemporal Markov decision process}

\section{Introduction}
\label{Introduction}
The \textit{Markov decision process} (MDP) is a well-established framework for modeling and solving problems that involve sequential decision-making under uncertainty \citep{Puterman}. Various domains, such as epidemic management \citep{Diaz, Palopoli}, wild fire prevention \citep{Altamimi, Roozbeh} and agricultural economics \citep{Chi, Swinton}, face decision-making problems that feature not only temporal but also spatial structures. If the state space, and possibly the action space, can be factored, such problems can be covered by the \textit{Factored MDP} (FMDP) \citep{Boutilier} or \textit{graph-based MDP} (GMDP) \citep{Sabbadin1} frameworks. However, a framework for problems that do not obey such a factorisation assumption is lacking in the literature. In this article, we introduce the \textit{spatiotemporal Markov decision process} (STMDP) as a special type of MDP in which the state variables adhere to local interaction structures that cannot necessarily be written in factored form. Unlike the FMDP and GMDP models, the STMDP framework includes processes with asynchronous dynamics, providing a higher level of flexibility. 

To illustrate the framework, we formulate an STMDP based on the two-dimensional Ising model on a finite, square lattice that evolves according to asynchronous Metropolis dynamics \citep[Section 3]{Newman}. We assume that an external decision maker has the power to flip a spin at specified points in time, with the aim to drive the process towards a predetermined target configuration.

In our analysis, we focus on the low-temperature regime, where the behaviour of the Ising model is well studied \citep{Cerf, Neves, Cirillo, Cerf2, Kotecky, Nardi, Nardi2, Cirillo2}. Specifically, if the system is initialized from the all-minus configuration and subjected to a small positive magnetic field, it will take an exponentially long time to reach the stable all-plus configuration. This phenomenon is known as \textit{metastability}. The transition to the stable state typically occurs through the formation of a critical droplet of $+$-spins, which eventually nucleates the lattice. In this article, we use the Ising STMDP to optimize the trajectory for reaching the stable state in the low-temperature regime. To analyze the STMDP, we construct an auxiliary MDP by reducing the configuration space to the local minima of the Hamiltonian and show that this MDP is an accurate approximation of the original process if the time between two actions of the decision maker is sufficiently long. Exploiting the convenient form of this auxiliary MDP, we recursively solve the Bellman equations to unravel the structure of the optimal policy to optimally speed up the nucleation process. 

The structure of this paper is as follows. In Section \ref{The STMDP}, we introduce the STMDP framework and discuss its relation to the FMDP and GMDP models. In addition, we provide some insights in the connection between the value function and first hitting times for MDPs with a reachability objective. In Section \ref{Ising STMDP}, we formulate the two-dimensional low-temperature Ising STMDP and outline the construction of the auxiliary MDP. In addition, we state our main result, which concerns the structure of the optimal policy in the auxiliary MDP. Section~\ref{Numerical results} provides several numerical experiments that give insight in the performance of the optimal policy derived for the auxiliary MDP in the original Ising STMDP and draws a comparison with two alternative policies. In Section \ref{Analysis of the low-temperature Ising STMDP sec}, we formalize our analysis of the low-temperature Ising STMDP and give an outline of the proof of the result stated in Section~\ref{Definition of the Ising STMDP}. Details of the proof are deferred to the Supplementary Material.

\section{The spatiotemporal Markov decision process}
\label{The STMDP}
\subsection{Introduction to Markov decision processes}
\label{Introduction Markov decision processes}
A Markov decision process is defined as a tuple $(S, A, P, r)$, in which the elements are specified as follows \citep{Puterman}.
\begin{itemize}
    \item The \textit{state space} $S$ contains the possible states that the system can occupy. We assume $S$ to be finite. 
    \item Let $A_s$ denote the set of possible actions that can be taken from a state $s \in S$. The \textit{action space} is defined as $A = \cup_{s \in S} A_s$. We assume the action space to be finite.
    \item The \textit{transition probability kernel} $P: S \times A \times S$ specifies the dynamics of the MDP. Here, $P(s'|s, a)$ denotes the probability that the system will make a transition to state $s' \in S$ given that it is currently in state $s \in S$ and the decision maker selected action $a \in A_s$. 
    \item The \textit{reward function} $r: S \times A \rightarrow \mathbb{R}$ specifies the immediate reward $r(s, a)$ which the decision maker receives if he selects action $a \in A_s$ from state $s \in S$. We assume that the reward function is bounded, i.e., that there is a constant $M > 0$ such that $|r(s,a)| \leq M$ for each $s \in S$, $a \in A$. 
\end{itemize}
Let $T$ denote the set of \textit{decision epochs}, i.e., the moments in time at which a decision can be made. We assume that $T = \{0, 1, 2, \ldots\}$.
The behaviour of the decision maker is governed by a \textit{policy}. We restrict our attention to policies that are \textit{stationary} and \textit{deterministic}. Such a policy applies at each decision epoch a \textit{deterministic decision rule} $d: S \rightarrow A$ that specifies the action $d(s)$ that ought to be selected in state $s \in S$. The resulting policy is written as $\pi = d^{\infty}$. Let $\Pi$ denote the set of stationary deterministic policies.  The expected total discounted reward, or the value function $v_{\lambda}^{\pi}(s)$ of a policy $\pi = d^{\infty}$ in a state $s \in S$ is defined as    
\begin{equation*}
    v^{\pi}_{\lambda}(s) = \mathbb{E}^{\pi}_s\left[\sum\limits_{t=0}^{\infty}\lambda^{t}r_t(X_t, Y_t)\right],
\end{equation*}
where $X_t$ denotes the state of the system at decision epoch $t$, $Y_t$ the selected action at decision epoch $t$ and $\lambda \in (0,1)$ is a discount factor. By conditioning on the state reached at the first decision epoch, we obtain the following set of equations for the value function $v^{\pi}_{\lambda}$:
\begin{equation*}
    v^{\pi}_{\lambda}(s) = r(s, d(s)) + \lambda \sum\limits_{s' \in S} P(s'|s, d(s))v^{\pi}_{\lambda}(s'), \quad s \in S.
\end{equation*}
In vector notation, this equation reads
\begin{equation*}
    \mathbf{v}^{\pi}_{\lambda} = \mathbf{r}^{\pi} + \lambda \mathbf{P}^{\pi}\mathbf{v}^{\pi}_{\lambda},
\end{equation*}
where $\mathbf{v}^{\pi}_{\lambda} \in \mathbb{R}^{|S|}$, $\mathbf{r}^{\pi} \in \mathbb{R}^{|S|}$ and $\mathbf{P}^{\pi} \in \mathbb{R}^{|S| \times |S|}$ denote the vector of values, the vector of rewards and the transition probability matrix specified by policy $\pi$. Now, given a stationary deterministic policy $\pi = d^{\infty}$, let the operator $\mathcal{F}^{\pi}_{\lambda}: \mathbb{R}^{|S|} \rightarrow \mathbb{R}^{|S|}$ be defined as
\begin{equation}\label{operator}
    \mathcal{F}^{\pi}_{\lambda}(\mathbf{x}) := \mathbf{r}^{\pi} + \lambda \mathbf{P}^{\pi}\mathbf{x}, \quad \mathbf{x} \in \mathbb{R}^{|S|}.
\end{equation}
Standard results from Markov decision theory assert that the vector of expected total discounted rewards $\mathbf{v}^{\pi}_{\lambda}$ is the unique fixed point of the operator $\mathcal{F}^{\pi}_{\lambda}$ \citep[p.\ 151, Thm.\ 6.2.5]{Puterman}.
A policy $\pi^*$ is \textit{optimal} if it satisfies
\begin{equation*}
    v^{\pi^*}_{\lambda}(s) \geq v^{\pi}_{\lambda}(s), \quad s \in S, 
\end{equation*}
for each $\pi \in \Pi$. The optimal value function of the MDP is now defined as
\begin{equation*}
    v^*_\lambda(s) = \sup_{\pi \in \Pi}v^{\pi}_{\lambda}(s), \quad s \in S.
\end{equation*}
Under the listed assumptions on the state space, action space and reward function, there exists a stationary, deterministic policy which is optimal under the expected total discounted reward optimality criterion \citep[p.\ 154, Thm.\ 6.2.10]{Puterman}. The optimal values and policies for infinite horizon models can be characterized by the so-called \textit{optimality equations} or \textit{Bellman equations}:
\begin{equation}\label{Bellman_equations}
    v_{\lambda}(s) = \sup_{a \in A_s}\{r(s, a) + \lambda \sum\limits_{j \in S} p(j|s, a)v_{\lambda}(j)\}.
\end{equation}
Note that the assumptions made on the state and action spaces and the reward function guarantee the attainment of the supremum. Hence, in the remainder of the paper, we will replace it by a maximum.   
The following theorem establishes the usefulness of these equations in identifying optimal policies.
\begin{theorem}{\rm\citep[p. 152]{Puterman}}\label{Bellman_equations_thm}
A policy $\pi^* \in \Pi$ is optimal if and only if $v^{\pi^*}_{\lambda}$ is a solution to the optimality equations.
\end{theorem}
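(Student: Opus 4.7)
The plan is to establish the equivalence by proving each direction separately, relying on the fixed-point characterisation of $v^{\pi}_{\lambda}$ recalled above and the monotonicity of the resolvent $(\mathbf{I} - \lambda \mathbf{P}^{\pi})^{-1}$. Throughout, I write $\pi^{*} = (d^{*})^{\infty}$ and for each $\pi = d^{\infty} \in \Pi$ denote by $\mathbf{r}^{\pi}$ and $\mathbf{P}^{\pi}$ the reward vector and transition matrix induced by $d$.

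For the forward implication, I would assume $\pi^{*}$ is optimal, so that $v^{\pi^{*}}_{\lambda} = v^{*}_{\lambda}$. Fix $s \in S$. For any $a \in A_{s}$ the ``one-step deviation'' policy that plays $a$ at epoch $0$ and then follows $\pi^{*}$ has value $r(s,a) + \lambda \sum_{j} P(j|s,a)\, v^{*}_{\lambda}(j)$, which by optimality of $\pi^{*}$ is bounded above by $v^{*}_{\lambda}(s)$. Taking the maximum over $a \in A_{s}$ yields one of the two inequalities in the Bellman equation. The reverse inequality is immediate: applying the policy evaluation equation to $\pi^{*}$ gives $v^{*}_{\lambda}(s) = r(s, d^{*}(s)) + \lambda \sum_{j} P(j|s, d^{*}(s)) v^{*}_{\lambda}(j)$, and the right-hand side is dominated by the maximum over $a \in A_{s}$ of the same expression. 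Combining both bounds shows that $v^{\pi^{*}}_{\lambda}$ satisfies \eqref{Bellman_equations}.

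For the converse implication, suppose $v^{\pi^{*}}_{\lambda}$ solves \eqref{Bellman_equations}. Pick an arbitrary $\pi = d^{\infty} \in \Pi$; then for every $s \in S$ the Bellman equation gives
\begin{equation*}
    v^{\pi^{*}}_{\lambda}(s) \;\geq\; r(s, d(s)) + \lambda \sum_{j \in S} P(j|s, d(s))\, v^{\pi^{*}}_{\lambda}(j),
\end{equation*}
which in vector form reads $(\mathbf{I} - \lambda \mathbf{P}^{\pi})\mathbf{v}^{\pi^{*}}_{\lambda} \geq \mathbf{r}^{\pi}$. Since $\lambda \in (0,1)$ and $\mathbf{P}^{\pi}$ is a stochastic matrix, the Neumann expansion $(\mathbf{I} - \lambda \mathbf{P}^{\pi})^{-1} = \sum_{k \geq 0} \lambda^{k}(\mathbf{P}^{\pi})^{k}$ converges and has entrywise nonnegative entries. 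Left-multiplication by this matrix preserves the componentwise inequality and yields $\mathbf{v}^{\pi^{*}}_{\lambda} \geq (\mathbf{I} - \lambda \mathbf{P}^{\pi})^{-1}\mathbf{r}^{\pi} = \mathbf{v}^{\pi}_{\lambda}$, where the last equality uses that $\mathbf{v}^{\pi}_{\lambda}$ is the unique fixed point of the affine operator $\mathcal{F}^{\pi}_{\lambda}$ defined in \eqref{operator}. Since $\pi \in \Pi$ was arbitrary, $\pi^{*}$ is optimal.

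The only nontrivial ingredient is the monotonicity step in the converse direction, namely that $(\mathbf{I} - \lambda \mathbf{P}^{\pi})^{-1}$ preserves componentwise inequalities; this is precisely where discounting $\lambda < 1$ is used, and it is the place where care has to be taken to cite the existence of the Neumann series rather than arguing by induction on decision epochs. The rest of the argument is bookkeeping with the policy-evaluation fixed point equation.
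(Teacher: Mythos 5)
The paper gives no proof of this theorem at all — it is quoted directly from Puterman — so your argument can only be measured against the standard textbook proof. Your converse direction is correct and complete: the Bellman equations give $(\mathbf{I}-\lambda\mathbf{P}^{\pi})\mathbf{v}^{\pi^*}_{\lambda}\geq\mathbf{r}^{\pi}$ for every $\pi=d^{\infty}\in\Pi$, and left-multiplying by the entrywise nonnegative Neumann series $(\mathbf{I}-\lambda\mathbf{P}^{\pi})^{-1}=\sum_{k\geq0}\lambda^{k}(\mathbf{P}^{\pi})^{k}$ yields $\mathbf{v}^{\pi^*}_{\lambda}\geq\mathbf{v}^{\pi}_{\lambda}$; that is exactly where the discounting is needed and you identify it correctly.

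The forward direction, however, has a gap. The one-step deviation policy (play $a$ at epoch $0$, then follow $\pi^{*}$) is not stationary, so it does not belong to $\Pi$ as the paper defines it, and optimality of $\pi^{*}$ — which the paper defines only relative to $\Pi$ — does not by itself bound that policy's value by $v^{*}_{\lambda}(s)$. You must either invoke the separate fact that the supremum over stationary deterministic policies coincides with the supremum over all history-dependent policies (which is essentially the content of the cited Theorem 6.2.10 and arguably circular here), or repair the step inside $\Pi$ with a policy-improvement argument: if $r(s,a)+\lambda\sum_{j}P(j|s,a)v^{\pi^{*}}_{\lambda}(j)>v^{\pi^{*}}_{\lambda}(s)$ for some $s$ and $a\in A_{s}$, define $\pi'=(d')^{\infty}$ with $d'(s)=a$ and $d'=d^{*}$ elsewhere; then $\mathcal{F}^{\pi'}_{\lambda}\mathbf{v}^{\pi^{*}}_{\lambda}\geq\mathbf{v}^{\pi^{*}}_{\lambda}$ with strict inequality in component $s$, and the same monotonicity you already use in the converse (iterate $\mathcal{F}^{\pi'}_{\lambda}$ and pass to its fixed point) gives $v^{\pi'}_{\lambda}(s)>v^{\pi^{*}}_{\lambda}(s)$, contradicting optimality of $\pi^{*}$ within $\Pi$. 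With that repair your proof is complete and self-contained.
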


\subsection{A reachability objective}
\label{A reachability objective}
We focus on decision processes with a reachability objective, i.e., with the aim to reach some target state. This section provides some relations between the value function of such an MDP and the first hitting time to the target state. 

Consider an MDP $(S, A, P, r)$. Let $\tau^{s, \pi}_{B}$ denote the \textit{first hitting time} to a set $B \subseteq S$ of the state process $X^{\pi}_t$ induced by policy $\pi \in \Pi$ starting from state $s \in S$. That is,

\begin{equation*}
\tau^{s, \pi}_{B} = \inf_{t \in \mathbb{N}}\{X^{\pi}_t \in B|X^{\pi}_0 = s\}.
\end{equation*}
Simplifying notation, we write $\tau^{s, \pi}_{\{s'\}}$ as $\tau^{s, \pi}_{s'}$ for $s, s' \in S$. The following theorem expresses the expected total discounted reward of a stationary, deterministic policy $\pi$ in terms of expected first hitting times.

\begin{theorem}
For each stationary, deterministic policy $\pi = d^{\infty}$, we have

\begin{equation}\label{value_function_hitting_time}
v^{\pi}_{\lambda}(s) = r(s, d(s)) + \sum\limits_{s' \in S} \dfrac{r(s', d(s'))\mathbb{E}[\lambda^{\tau^{s, \pi}_{s'}}]}{1- \mathbb{E}[\lambda^{\tau^{s', \pi}_{s'}}]}.
\end{equation}
\end{theorem}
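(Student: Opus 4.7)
The plan is to decompose the value function into a weighted sum of expected discounted occupation times, one for each state $s'\in S$, and then to evaluate each occupation time in closed form via the strong Markov property applied to the successive visit times of the Markov chain induced by $\pi$.

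First, I would use Fubini's theorem (justified by the boundedness of $r$ and the geometric factor $\lambda^t$ with $\lambda<1$) to swap expectation and summation and then regroup the terms according to the state being visited:
\begin{equation*}
v^\pi_\lambda(s) \;=\; \sum_{s'\in S} r(s',d(s'))\,\eta^\pi_\lambda(s,s'), \qquad \eta^\pi_\lambda(s,s') \;:=\; \mathbb{E}^\pi_s\!\left[\sum_{t=0}^\infty \lambda^t\,\mathbbm{1}\{X^\pi_t=s'\}\right].
\end{equation*}
The remaining task is to obtain a closed-form expression for the expected discounted occupation time $\eta^\pi_\lambda(s,s')$.

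For fixed $s'$, let $T_0<T_1<T_2<\ldots$ be the successive time indices at which $X^\pi$ visits $s'$ (set $T_k=\infty$ and $\lambda^\infty=0$ if fewer than $k+1$ visits occur), so that $\eta^\pi_\lambda(s,s')=\sum_{k\ge 0}\mathbb{E}^\pi_s[\lambda^{T_k}]$. Applying the strong Markov property at each $T_k$, the inter-visit increments $T_{k+1}-T_k$ are i.i.d.\ copies of $\tau^{s',\pi}_{s'}$, which I would interpret throughout as a first return time (infimum over $t\ge 1$), so that $\mathbb{E}[\lambda^{\tau^{s',\pi}_{s'}}]\le \lambda<1$ and every geometric sum that arises converges. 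For $s\ne s'$ the first visit occurs at $T_0=\tau^{s,\pi}_{s'}$, and iterating the factorisation $\mathbb{E}[\lambda^{T_k}]=\mathbb{E}[\lambda^{\tau^{s,\pi}_{s'}}]\bigl(\mathbb{E}[\lambda^{\tau^{s',\pi}_{s'}}]\bigr)^{k}$ gives
\begin{equation*}
\eta^\pi_\lambda(s,s') \;=\; \frac{\mathbb{E}[\lambda^{\tau^{s,\pi}_{s'}}]}{1-\mathbb{E}[\lambda^{\tau^{s',\pi}_{s'}}]}.
\end{equation*}
For $s=s'$ there is an additional contribution of $1$ from the visit at $T_0=0$, so that $\eta^\pi_\lambda(s,s) = 1 + \mathbb{E}[\lambda^{\tau^{s,\pi}_s}]/(1-\mathbb{E}[\lambda^{\tau^{s,\pi}_s}])$.

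Finally, I would plug these two expressions back into the decomposition and split off the diagonal summand: the extra constant $1$ inside $\eta^\pi_\lambda(s,s)$ contributes precisely the stand-alone $r(s,d(s))$ term outside the sum, while the remaining parts combine into the uniform expression over all $s'\in S$ claimed in the theorem. The main subtlety I expect is the convention for $\tau^{s,\pi}_{s'}$ in the diagonal case $s=s'$, which has to be read as a first return time (strictly positive) rather than $0$, both so that the denominator $1-\mathbb{E}[\lambda^{\tau^{s',\pi}_{s'}}]$ is non-zero and so that the strong Markov factorisation applies cleanly; once this convention is made explicit, the remaining work is a routine combination of the strong Markov property and the geometric series formula.
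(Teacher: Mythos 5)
Your proposal is correct and follows essentially the same route as the paper: both decompose the value function into reward-weighted discounted occupation times and evaluate these via hitting-time transforms of the chain induced by $\pi$, with the diagonal term handled through the first return time so that the denominator $1-\mathbb{E}[\lambda^{\tau^{s',\pi}_{s'}}]$ is nonzero. The only cosmetic difference is that you sum the geometric series over successive visits explicitly, whereas the paper obtains the same identity by a one-step renewal recursion solved as a fixed-point equation.
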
  
\begin{proof}
Let $\{(X^{\pi}_t, r(X^{\pi}_t, Y^{\pi}_t)); t = 0, 1, \ldots\}$ denote the stochastic process induced by policy $\pi$, where $X^{\pi}_t$ represents the state at time $t$, $Y^{\pi}_t$ the action taken at time $t$ and $r(X^{\pi}_t, Y^{\pi}_t)$ the reward obtained after taking action $Y^{\pi}_t$ from state $X^{\pi}_t$. First of all, note that for any pair of states $s, s' \in S$, conditioning on the first hitting time from $s$ to $s'$ yields
\begin{align}
    \nonumber \mathbb{E}_s \left[\sum\limits_{t=1}^{\infty} \lambda^tr(s', d(s'))\mathbbm{1}\{X^{\pi}_t = s'\}\right] &= \sum\limits_{t'=1}^{\infty}    \mathbb{E}_{s} \left[\sum\limits_{t=1}^{\infty} \lambda^tr(s', d(s'))\mathbbm{1}\{X^{\pi}_t = s'\}\Big|\tau^{s, \pi}_{s'} = t'\right] \mathbb{P}(\tau^{s, \pi}_{s'} = t') \\
    \label{hit_help} &= \mathbb{E}\left[\lambda^{\tau^{s, \pi}_{s'}}\right]\mathbb{E}_{s'}\left[\sum\limits_{t=0}^{\infty} \lambda^t r(s', d(s'))\mathbbm{1}\{X^{\pi}_t = s'\} \right].
\end{align}
For $s = s'$, it follows from expression (\ref{hit_help}) that
\begin{align*}
\mathbb{E}_{s}\left[\sum\limits_{t=0}^{\infty} \lambda^t r(s, d(s))\mathbbm{1}\{X^{\pi}_t = s\} \right] &= r(s, d(s)) + \mathbb{E}\left[\lambda^{\tau^{s, \pi}_{s}}\right]\mathbb{E}_{s}\left[\sum\limits_{t=0}^{\infty} \lambda^t r(s, d(s))\mathbbm{1}\{X^{\pi}_t = s\} \right].
\end{align*}
This yields, for each $s \in S$,
\begin{equation}\label{hit_help_2}
\mathbb{E}_{s}\left[\sum\limits_{t=0}^{\infty} \lambda^t r(s, d(s))\mathbbm{1}\{X^{\pi}_t = s\} \right] = \dfrac{r(s, d(s))}{1- \mathbb{E}[\lambda^{\tau^{s, \pi}_{s}}]}.
\end{equation}
We now obtain, for $s \in S$, 
\begin{align*}
v^{\pi}_{\lambda}(s) =  \mathbb{E}_s\left[\sum\limits_{t=0}^{\infty} \lambda^{t}r(X^{\pi}_t, Y^{\pi}_t)\right] &= r(s, d(s)) + \sum\limits_{s' \in S}   \mathbb{E}_s\left[\sum\limits_{t=1}^{\infty} \lambda^t r(s', d(s'))\mathbbm{1}\{X^{\pi}_t = s'\}\right]. 
\end{align*}
Invoking expression (\ref{hit_help}) yields
\begin{equation*}
    v^{\pi}_{\lambda}(s) = r(s, d(s)) + \sum\limits_{s' \in S} \mathbb{E}\left[\lambda^{\tau^{s, \pi}_{s'}}\right] \mathbb{E}_{s'}\left[\sum\limits_{t=0}^{\infty} \lambda^t r(s', d(s'))\mathbbm{1}\{X^{\pi}_t = s'\} \right].
\end{equation*}
Inserting (\ref{hit_help_2}) now completes the proof.
\end{proof}

\begin{corollary}\label{Value_hitting_time_indicator_reward}
Consider the reward function $r: S \rightarrow \mathbb{R}$ given by
\begin{equation}\label{indicator_reward}
r(s) = \begin{cases}
1, &\text{ if } s = s^*, \\
0, &\text{ otherwise}, 
\end{cases}
\quad s \in S,
\end{equation}
for some target state $s^* \in S$. The value function of a stationary deterministic policy $\pi = d^{\infty}$ is given by

\begin{equation*}
v^{\pi}_{\lambda}(s) = \begin{cases}
1 + \dfrac{\mathbb{E}[\lambda^{\tau^{s^*, \pi}_{s^*}}]}{1- \mathbb{E}[\lambda^{\tau^{s^*, \pi}_{s^*}}]}, &\text{ if } s = s^*, \\
\dfrac{\mathbb{E}[\lambda^{\tau^{s, \pi}_{s^*}}]}{1- \mathbb{E}[\lambda^{\tau^{s^*, \pi}_{s^*}}]}, &\text{otherwise}.
\end{cases}
\end{equation*}
\end{corollary}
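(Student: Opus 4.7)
The plan is to apply the theorem just proved with the particular indicator reward function of the corollary and observe that the resulting sum collapses to a single surviving term. First I would note that the reward function here depends only on the state, so $r(s, d(s)) = r(s)$ under every stationary deterministic policy $\pi = d^\infty$. The formula
$$v^{\pi}_{\lambda}(s) = r(s, d(s)) + \sum_{s' \in S} \dfrac{r(s', d(s'))\,\mathbb{E}[\lambda^{\tau^{s,\pi}_{s'}}]}{1 - \mathbb{E}[\lambda^{\tau^{s',\pi}_{s'}}]}$$
from the theorem then becomes an expression in which $r(s')$ appears in each numerator. Since $r(s') = \mathbbm{1}\{s' = s^*\}$, every summand with $s' \neq s^*$ vanishes and only the term $s' = s^*$ survives, yielding
$$v^{\pi}_{\lambda}(s) = r(s) + \dfrac{\mathbb{E}[\lambda^{\tau^{s,\pi}_{s^*}}]}{1 - \mathbb{E}[\lambda^{\tau^{s^*,\pi}_{s^*}}]}.$$

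Next I would split into the two cases of the corollary. For $s \neq s^*$ we have $r(s) = 0$, which immediately gives the second branch of the stated formula. For $s = s^*$ we have $r(s) = 1$, and the quantity $\tau^{s^*,\pi}_{s^*}$ appearing in both numerator and denominator is interpreted as the first return time to $s^*$, consistently with the theorem's proof, where the $t=0$ contribution in expression~\eqref{hit_help_2} was separated from the sum over $t \geq 1$. This yields the first branch.

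The argument is entirely routine once the theorem is in hand, so there is no real obstacle; the only point worth emphasising is the convention that $\tau^{s^*,\pi}_{s^*}$ stands for the strictly positive first return time, since otherwise the denominator $1 - \mathbb{E}[\lambda^{\tau^{s^*,\pi}_{s^*}}]$ would vanish and the formula would be vacuous.
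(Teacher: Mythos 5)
Your proposal is correct and follows the paper's own route exactly: substitute the indicator reward into the theorem's formula, observe that only the $s' = s^*$ summand survives, and split into the cases $s = s^*$ and $s \neq s^*$. Your remark that $\tau^{s^*,\pi}_{s^*}$ must be read as the strictly positive first return time is a sensible clarification consistent with how the theorem's proof conditions on $t' \geq 1$, but it does not change the argument.
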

\begin{proof}
    The result follows immediately from substituting the indicator reward function into expression (\ref{value_function_hitting_time}).
\end{proof}

\subsection{The spatiotemporal Markov decision process}
We introduce the \textit{spatiotemporal Markov decision process} (STMDP) as an MDP in which the state space is multidimensional and local dependencies between state variables are represented by an undirected finite graph $G(V, E)$ with vertex set $V$ and edge set $E$. Given a set $W \subseteq V$, let $N(W)$ denote the neighborhood of $W$, i.e.,
\begin{equation*}
    N(W) = \{v \in V\setminus W \big| (v,w) \in E \text{ for some } w \in W\}.
\end{equation*}
Each vertex $v \in V$ has a local state space $S_v$. A \textit{configuration} $\sigma$ is defined as a function that maps each vertex $v \in V$ to a state $\sigma(\{v\})$ in its local state space $S_v$. To simplify notation, we write $\sigma(\{v\}) = \sigma(v)$ for singletons $\{v\} \subset V$. Given a certain ordering of the vertices, we often regard a configuration as a vector of which the $i$th element specifies the state of the $i$th vertex. The global state space, or \textit{configuration space} is defined as the Cartesian product $S = \times_{v \in V} S_v$. We denote by $\sigma(W)$, where $W = \{w_1, w_2, \ldots, w_k\} \subseteq V$, the configuration on the vertices in $W$, i.e., $\sigma(W) = (\sigma(w_1), \sigma(w_2), \ldots, \sigma(w_k))$. 
The action space, the reward function and the set of decision epochs are defined in the same way as in the classic MDP setting. 

The main characteristic of the STMDP framework is the following assumption on the transition probability kernel.

\begin{assumption}
For each $W \subseteq V$ and configuration $\eta' \in \times_{v \in W} S_v$, we have
\begin{equation}\label{STprop}
    \sum\limits_{\substack{\eta \in S,\\\eta(W) = \eta'}} P(\eta | \sigma, a) = \sum\limits_{\substack{\eta \in S,\\\eta(W) = \eta'}} P(\eta | \sigma', a),
\end{equation}
for all $\sigma, \sigma' \in S$ with $\sigma(W \cup N(W)) = \sigma'(W \cup N(W))$ and $a \in A_{\sigma} \cap A_{\sigma'}$. 
\end{assumption}
\noindent This property, which can be considered a Markov property in space, ensures that for any action $a \in A$, the restriction of the configuration at time $t+1$ to any set $W \subseteq V$ depends on the configuration at time $t$ only through its restriction to the set $W$ and its neighborhood $N(W)$. 

\subsection{Related frameworks}
The STMDP is related to the \textit{factored} MDP (FMDP) \citep{Boutilier} and \textit{graph-based} MDP (GMDP) \citep{Sabbadin1} models. As opposed to the latter two models, however, the STMDP framework does not require the assumption that local interaction structures can be written in factored form and thus offers greater flexibility. In this section, we further discuss the similarities and differences between the three frameworks. 

In an FMDP, the dependencies between the state variables are expressed by means of a directed graph $G'(V', E')$ with vertex set $V'$ and edge set $E'$. The transition probability kernel can be written in the following factored form:
\begin{equation*}
    P(\eta|\sigma, a) = \prod\limits_{v \in V}P_v(\eta(v)|\sigma(N_{\text{in}}(v)), a), \quad \sigma, \eta \in S, a \in A,
\end{equation*}
where $P_v: \times_{w \in N_{\text{in}}(v)}S_w \times A \times S_v \rightarrow [0,1]$, $v \in V'$, are local transition probabilities and 
\begin{equation*}
    N_{\text{in}}(v) = \{w \in V'|(w, v) \in E'\}, \quad v \in V'.
\end{equation*}
Observe that the class of FMDPs defined on symmetric directed graphs (i.e., directed graphs with the property that for each directed edge, the edge in the opposite direction is present as well) in which each vertex has a self-loop is a subclass of the STMDPs. After all, it coincides with the class of STDMPs for which the transition probability kernel satisfies, in addition to the spatial Markov property given by expression (\ref{STprop}), the assumption of synchronous and independent transitions, i.e., 
\begin{equation*}
    P(\eta|\sigma, a) = \prod\limits_{v \in V} P_v(\eta(v)|\sigma(v \cup N(v)), a), \text{ for all } \sigma, \eta \in S, a \in A,
\end{equation*}
for some local transition probabilities $P_v: \times_{w \in \{v\} \cup N(v)}S_w \times A \times S_v \rightarrow [0,1]$.
A GMDP is a special type of FMDP that imposes additional structure on the action space and the reward function. Let $G'(V', E')$ again denote a directed graph. In the GMDP framework, a local action space $A_v$ is defined for each vertex $v \in V'$ and the global action space is the Cartesian product $A = \times_{v \in V'}A_v$. Similar to the FMDP model, the transition probability kernel can be written as follows:
\begin{equation*}
    P(\eta|\sigma, a) = \prod\limits_{v \in V}P_v(\eta(v)|\sigma(N_{\text{in}}(v)), a(v)), \quad \sigma, \eta \in S, a \in A,
\end{equation*}
where $P_v: \times_{w \in N_{\text{in}}(v)}S_w \times A_v \times S_v \rightarrow [0,1]$, $v \in V'$ are again local transition probabilities. In addition, the reward function can be decomposed into a sum of local reward functions $r_v: \times_{w \in N_{\text{in}}(v)} S_w \times A_v \rightarrow \mathbb{R}$, i.e.,
\begin{equation*}
    r(\sigma, a) = \sum\limits_{v \in V'} r_v(\sigma(N_{\text{in}}(v)), a(v)), \quad \sigma \in S, a \in A.
\end{equation*}
Note that the class of GMDPs defined on symmetric directed graphs in which each vertex has a self-loop is again a subclass of the class of STMDPs.  

What sets the STMDP framework apart from the FMDP and GMDP models is the fact that it includes dynamics defined by asynchronous update rules. This flexibility comes at the cost of rendering solution methods designed for FMDPs and GMDPs in general inapplicable to STMDPs, since such methods typically rely on the factored structure of the state space  \citep{Boutilier, Sabbadin1, Guestrin, Sabbadin2, Sabbadin3, Sabbadin4}.

\section{The two-dimensional low-temperature Ising STMDP}
\label{Ising STMDP}
In this section, we formulate and analyze an STMDP based on the two-dimensional Ising model under Metropolis dynamics in the low-temperature regime. Section \ref{Definition of the Ising STMDP} provides a formal definition of the model and a rough description of an auxiliary MDP, which serves as an approximation of the original STMDP. In addition, we state our main result, which concerns the structure of the optimal policy in this auxiliary MDP. A complete analysis can be found in Section \ref{Analysis of the low-temperature Ising STMDP sec}.

\subsection{Definition of the Ising STMDP}
\label{Definition of the Ising STMDP}

Let $V = \{0, \ldots, N-1\}^2$ denote the vertex set of a finite, square, two-dimensional lattice. We formulate an STMDP based on the Ising model defined on $V$ with periodic boundary conditions, evolving according to the discrete-time Metropolis dynamics.

In line with the literature on the Ising model, we will refer to the vertices as spins \citep{Liggett}. The local state space of each spin $i \in V$ is $S_i = \{-1, +1\}$. We denote a configuration on $V$ by $\sigma = (\sigma(i))_{i\in V}$.

The Hamiltonian of a configuration $\sigma \in S$ is given by
\begin{equation}\label{Hamiltonian}
    H(\sigma) = -\dfrac{1}{2}\sum\limits_{\substack{i,j \in V \\ j \in N(i)}} \sigma(i)\sigma(j) - h \sum\limits_{i \in V} \sigma(i),
\end{equation}
where the first summation ranges over all pairs of neighboring spins $i, j \in V$ and $h \in (0,1)$ represents an external magnetic field. Given a configuration $\sigma \in S$ and a spin $i \in V$, let $\sigma^i$ denote the configuration that is obtained after flipping spin $i$ from $\sigma$, and given a set $W \subseteq V$, let $\sigma^W$ denote the configuration obtained after flipping all spins in $W$.

The Metropolis dynamics is defined as a discrete-time Markov chain $\{X_t\}_{t\geq0}$ on $S$ that, given inverse temperature $\beta > 0$, evolves according to the following transition probabilities:
\begin{equation}\label{Metropolis}
    p_{\beta}(\sigma, \sigma^i) = \begin{cases}
        1/N^2, & \text{if } H(\sigma^i) \leq H(\sigma),\\
        (1/N^2)\exp(-\beta(H(\sigma^i) - H(\sigma))), & \text{otherwise,}
    \end{cases}
\end{equation}
and $p_{\beta}(\sigma, \sigma) = 1-\sum\limits_{i \in V} p_{\beta}(\sigma, \sigma^i)$. The Metropolis dynamics is reversible with respect to the Gibbs measure associated with $H$, which is given by
\begin{equation}\label{Gibbs}
    \mu_{\beta}(\sigma) = Z_{\beta}^{-1} \exp(-\beta H(\sigma)),
\end{equation}
where $Z_{\beta}$ is the partition function, i.e.,
\begin{equation}\label{Partition_function}
    Z_{\beta} = \sum\limits_{\sigma \in S} \exp(-\beta H(\sigma)).
\end{equation}

Suppose now that an external decision maker can control the dynamics of the Ising model by flipping a spin every $\kappa$ time steps. Each decision epoch, the decision maker can either decide to flip one of the spins in $V$ or leave the configuration intact. After the execution of the action selected by the decision maker, the process evolves according to the Metropolis dynamics for a period of $\kappa$ time steps. We refer to $\kappa$ as the \textit{adjustment time}. To keep track of the time that elapsed since the previous decision moment, we introduce an additional vertex $v_c$, referred to as the clock vertex, with state space $S_{v_c} = \{0, \ldots, \kappa -1\}$. The state of the clock vertex increases by 1 each time step until it reaches state $\kappa -1$. Then, it is set back to 0, at which point the decision maker can decide to flip a spin. We denote by $\bar{\sigma} = (\sigma(i))_{i \in V \cup \{v_c\}}$ a configuration defined on the lattice augmented with the clock vertex and by $\bar{S}$ the configuration space on this set of vertices. The action space is specified by $A = \cup_{\bar{\sigma} \in \bar{S}}A_{\bar{\sigma}}$, where
\begin{equation*}
    A_{\bar{\sigma}} = \begin{cases}
        V \cup \{0\},  &\text{ if } \bar{\sigma}_{v_c} = 0, \\
        \{0\}, &\text{otherwise,}
    \end{cases}
\end{equation*}
where the action $a = 0$ corresponds to not flipping any spins. Let $\sigma^a$ denote the configuration obtained immediately after taking action $a \in A$ from $\sigma \in S$. We refer to $\sigma^a$ as the \textit{post-decision configuration}. The transition probability kernel $\bar{P}_{\beta}: \bar{S}\times A \times \bar{S} \rightarrow [0,1]$ for inverse temperature $\beta > 0$ is defined as
\begin{equation*}
    \bar{P}_{\beta}(\bar{\sigma}'|\bar{\sigma}, a) = \begin{cases}
        p_{\beta}(\sigma^a, \sigma), &\text{if } \bar{\sigma}'(v_c) = (\bar{\sigma}(v_c) + 1)\text{mod }\kappa,\\
        0, &\text{otherwise,}
    \end{cases}
\end{equation*}
where $\sigma, \sigma' \in S$ are the restrictions of configurations $\bar{\sigma}$ and $\bar{\sigma}'$ respectively to $V$ and $\sigma^0 = \sigma$.

The objective of the Ising STMDP is to enforce a certain desirable behaviour on the process, for example to steer the process towards a certain target configuration $\sigma^* \in S$. This goal is reflected by means of a reward function of the form:
\begin{equation}\label{barreward}
    \bar{r}(\bar{\sigma}, a) = \begin{cases}
        r(\sigma, a), &\text{ if } \bar{\sigma}(v_c) = 0,\\
        0, &\text{ otherwise,}
    \end{cases} \quad \bar{\sigma} \in \bar{S}, \quad a \in A,
\end{equation}
where $r(\sigma, a) \in \mathbb{R}$, $\sigma \in S$, $a \in A$, is a suitably chosen function satisfying $|r(\sigma, a)| \leq M$ for all $\sigma \in S$, $a \in A$, and some $M > 0$. 
We consider the situation in which the decision maker aims to drive the Ising model as quickly as possible towards the all-plus configuration, denoted by $\sigma^+$. Accordingly, we choose the function $r:S \times A \rightarrow \mathbb{R}$ to be
\begin{equation}\label{reward}
    r(\sigma, a) = \begin{cases}
        1, &\text{if } \sigma = \sigma^+,\\
        0, &\text{otherwise.}
    \end{cases}
\end{equation}
In addition, we assume that the process starts in a configuration in which the spins in state $+1$ form a single cluster, which means that each pair of spins in state $+1$ is connected by a path that consists only of spins in state $+1$.

Since the decision maker can only flip a spin if the clock vertex occupies state $0$, we define a decision rule $d: S \rightarrow A$ as a deterministic function on the configuration space without the clock vertex. Also, we define $P_{\beta}: S \times A \times S \rightarrow [0, 1]$ as the counterpart of $\bar{P}_{\beta}$ that disregards the clock vertex, i.e.,
\begin{equation*}
    P_{\beta}(\sigma'|\sigma, a) = p_{\beta}(\sigma^a, \sigma), \quad \sigma, \sigma' \in S, \quad a \in A.
\end{equation*}
Using expression (\ref{barreward}), the expected total discounted reward of a configuration $\sigma \in S$ under policy $\pi = d^{\infty}$, with discount factor $\lambda \in (0,1)$, inverse temperature $\beta > 0$ and adjustment time $\kappa \in \mathbb{N}$, can now be written as 
\begin{equation*}
    v^{\pi}_{\lambda, \beta, \kappa}(\sigma) = \mathbb{E}_{\sigma, \beta}^{\pi}\left[\sum\limits_{t = 0}^{\infty}\lambda^{\kappa t}r(X^{\pi}_{\kappa t}, Y^{\pi}_{\kappa t})\right].
\end{equation*}
By conditioning on the configuration at time $\kappa$, it follows that the value function satisfies the relation 
\begin{equation}\label{value_function_equation}
    v^{\pi}_{\lambda, \beta, \kappa}(\sigma) = r(\sigma, d(\sigma)) + \lambda^{\kappa} \sum\limits_{\sigma' \in S}P^{(\kappa)}_{\beta}(\sigma'|\sigma, d(\sigma))v^{\pi}_{\lambda, \beta, \kappa}(\sigma'), \quad \sigma \in S,
\end{equation}
where $P^{(\kappa)}_{\beta}(\sigma'|\sigma, d(\sigma))$ is the $\kappa$-step transition probability that the process will occupy configuration $\sigma'$ after $\kappa$ time steps, given that action $d(\sigma)$ was selected in configuration $\sigma$. 
Note that $v^{\pi}_{\lambda, \beta, \kappa}(\sigma) < \infty$ for each $\sigma \in S$ due to the fact that the reward function is bounded and $\lambda < 1$. 

\subsection{The low-temperature Ising STMDP}
In analyzing the Ising STMDP, we will restrict ourselves to the low-temperature regime. Hence, we study the low-temperature limit of the value function, i.e.,
\begin{equation*}
    \tilde{v}^{\pi}_{\lambda, \kappa}(\sigma) := \lim_{\beta \rightarrow \infty}v^{\pi}_{\lambda, \beta, \kappa}(\sigma), \quad \sigma \in S.
\end{equation*}
Similarly, we define the low-temperature $\kappa$-step transition probability kernel as 
\begin{equation*}
    \tilde{P}_{\kappa}(\sigma'|\sigma, a) := \lim_{\beta \rightarrow \infty} P^{(\kappa)}_{\beta}(\sigma'|\sigma, a), \quad \sigma, \sigma' \in S, \quad a \in A.
\end{equation*}
Taking the low-temperature limit in both the left- and right-hand side of expression (\ref{value_function_equation}) now yields
\begin{equation}\label{low_temp_value_function_equation}
    \tilde{v}^{\pi}_{\lambda, \kappa}(\sigma) = r(\sigma, d(\sigma)) + \lambda^{\kappa} \sum\limits_{\sigma' \in S}\tilde{P}_{\kappa}(\sigma'|\sigma, d(\sigma))\tilde{v}^{\pi}_{\lambda, \kappa}(\sigma'), \quad \sigma \in S. 
\end{equation}
In vector notation, this expression reads 
\begin{equation*}
    \tilde{\mathbf{v}}^{\pi}_{\lambda, \kappa} = \mathbf{r^{\pi}} + \lambda^{\kappa} \mathbf{\tilde{P}^{\pi}}_{\kappa}\tilde{\mathbf{v}}^{\pi}_{\lambda, \kappa},
\end{equation*}
where $\tilde{\mathbf{v}}_{\lambda, \kappa}^{\pi} \in \mathbb{R}^{|S|}$ and $\mathbf{r^{\pi}} \in \mathbb{R}^{|S|}$ and $\mathbf{\tilde{P}^{\pi}}_{\kappa}\in \mathbb{R}^{|S|\times|S|}$ denote the vector of values, the vector of rewards and the low-temperature $\kappa$-step transition probability matrix specified by policy $\pi$. As established in Section \ref{Introduction Markov decision processes}, the vector of expected total discounted rewards $\tilde{\mathbf{v}}^{\pi}_{\lambda, \kappa}$ of a policy $\pi \in \Pi$ is the unique fixed point of the operator $\mathcal{F}^{\pi}_{\lambda, \kappa}: \mathbb{R}^{|S|} \rightarrow \mathbb{R}^{|S|}$ defined as 
\begin{equation}\label{operator_F}
    \mathcal{F}^{\pi}_{\lambda, \kappa}\mathbf{x} := \mathbf{r}^{\pi} + \lambda^{\kappa} \mathbf{\tilde{P}}^{\pi}_{\kappa}\mathbf{x}, \quad \mathbf{x} \in \mathbf{R}^{|S|}.
\end{equation}
One of the notorious obstacles in solving these equations for the value function is the fact that the state space can be very large. In the Ising STMDP, the state space is of size $2^{N^2}$, which qualifies traditional methods to find the value function as intractable for all but small values of $N$. Observe, however, that this issue is less severe in the low-temperature regime, as the definition of the Metropolis dynamics implies that $p_{\beta}(\sigma, \sigma') \rightarrow 0$ as $\beta \rightarrow \infty$ for all $\sigma'\in S$ that satisfy $H(\sigma') > H(\sigma)$, $\sigma \in S$. Hence, within a finite number of time steps, the probability of the process making a transition that leads to a higher energy configuration tends to 0 in the low-temperature limit, which greatly simplifies the Bellman equations. Also, observe that for large values of $\kappa$, the process is likely to be found in a local minimum of the energy function at the end of the adjustment period. Hence, the dominating terms in the Bellman equations are those that correspond to local minima of the Hamiltonian. Based on these observations, we construct an auxiliary MDP, which ranges only over the local minima of the Hamiltonian in which the spins in state $+1$ form a single cluster. In Section \ref{The auxiliary MDP}, we show that these configurations correspond to the set of configurations in which the spins in state $+1$ form a rectangle.

\subsection{The auxiliary MDP and main result}
This section gives a brief description of the auxiliary MDP and reports the structure of the optimal policy in this MDP. Details of the construction and the analysis of the auxiliary MDP are provided in Sections~\ref{The auxiliary MDP}~and~\ref{Proof of main thm}.

Let the auxiliary MDP be denoted by $(\hat{S}, \hat{A}, \hat{P}, \hat{r})$. Motivated by the geometrical characterization of the local minima of the Hamiltonian as configurations in which the spins in state $+1$ form a rectangle, we define the state space $\hat{S}$ as 
\begin{equation*}
    \hat{S} = \{(i,j)|i,j = 2, 3, \ldots, N-3, N-2, N\} \cup \{(0,0)\},
\end{equation*}
where each vector represents the size of a rectangle in the Ising STMDP and the vector $(0,0)$ corresponds to the all-minus configuration. The action space $\hat{A} = \cup_{(i,j) \in \hat{S}} \hat{A}(i,j)$ is defined as 
\begin{equation*}
    \hat{A} = \{a_{11}, a_{12}, a_{21}, a_{22}, a_{11}', a_{12}', a_{21}', a_{22}', a_0, \tilde{a}, 0\},
\end{equation*}
where $a_{1\ell}$ and $a_{2\ell}$, $\ell = 1,2$, represent the actions of flipping a spin at distance $\ell$ from the horizontal and the vertical side of the rectangle, for which the closest spin belonging to the rectangle is not a corner, actions $a_{1\ell}'$, $a_{2\ell}'$, $\ell = 1,2$, represent the actions of flipping a spin at distance $\ell$ from the horizontal and the vertical side of the rectangle, where the closest spin belonging to the rectangle is a corner, action $a_0$ corresponds to the action of flipping a spin that is diagonally adjacent to the rectangle, action $\tilde{a}$ represents the action of flipping a corner spin of the rectangle and action $0$ corresponds to any other spin or doing nothing. Figure \ref{auxiliary_action_space} provides a visualization of the action space. 

\begin{figure}
\centering
\includegraphics[width=0.3\linewidth]{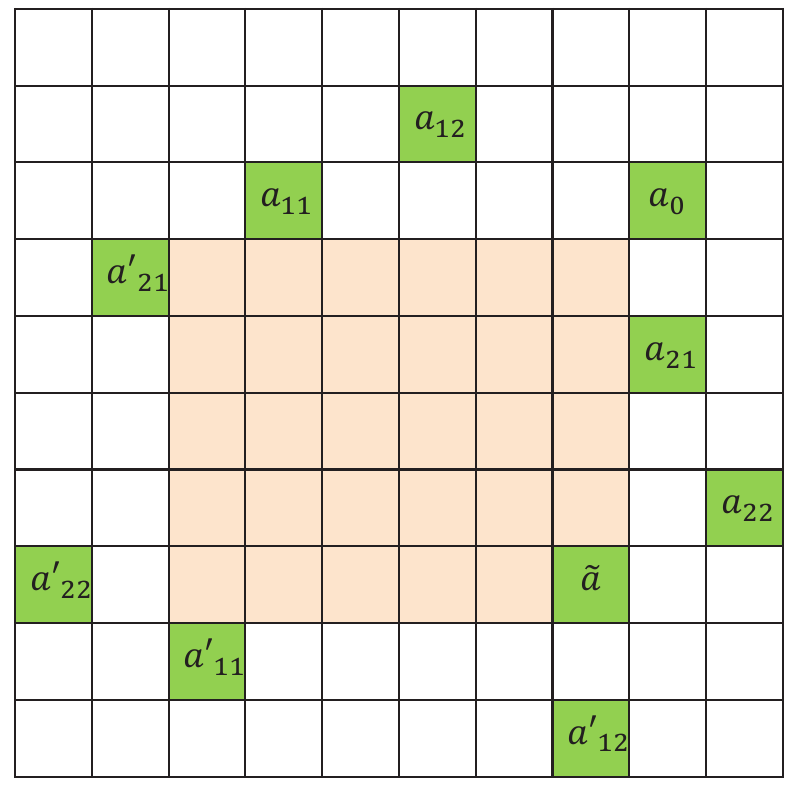}
\caption{Visualization of the action space of the auxiliary MDP.}
\label{auxiliary_action_space}
\end{figure} 
The transition probability kernel $\hat{P}: \hat{S} \times \hat{A} \times \hat{S} \rightarrow [0,1]$, derived from that of the original Ising STMDP, is provided in explicit form in Lemma \ref{explicit_Phats}. The reward function of the auxiliary process is given by 
\begin{equation*}
    \hat{r}(s, a) = \begin{cases}
        1, &\text{ if } s = (N, N), \\
        0, &\text{ otherwise},
    \end{cases}
\end{equation*}
for all $s \in \hat{S}$, $a \in \hat{A}$, reflecting the reward function (\ref{reward}) of the original STMDP, as a rectangle of size $N \times N$ is equivalent to the all-plus configuration. Our main result, which specifies the structure of the optimal policy in the auxiliary MDP, is stated in Theorem \ref{Optimal_policy}. A visualization of this result is provided in Figure \ref{fig_optimal_policy}.

\newpage
\begin{theorem}\label{Optimal_policy}
    Let $A^*_k: \hat{S} \rightarrow P(\hat{A})$, $k = 1, 2$, where $P(\hat{A})$ denotes the power set of the action space $\hat{A}$, be defined as
    \begin{itemize}
        \item $A^*_k(N, N) = \{0\}$, for $k = 1, 2$,
        \item $A^*_k(N, j) = \{a_{11}\}$, if $j = N-2, N-4$, for $k = 1, 2$,
        \item $A^*_k(i, N) = \{a_{21}\}$, if $i = N-2, N-4$, for $k = 1, 2$, 
        \item $A^*_k(N, N-3) = \{a_{12}\}$, for $k = 1, 2$, 
        \item $A^*_k(N-3, N) = \{a_{22}\}$, for $k = 1, 2$,
        \item $A^*_k(N, j) = \begin{cases}
                \{a_{11}\}, &\text{if } k = 1, \\
                \{a_{12}\}, &\text{if } k = 2,
            \end{cases} \quad \text{if } j = 2, \ldots, N-5,$
        \item $A^*_k(i, N) = \begin{cases}
               \{a_{21}\}, &\text{if } k = 1, \\
               \{a_{22}\}, &\text{if } k = 2,
                \end{cases}, \quad \text{if } i = 2, \ldots, N-5,$
        \item $A^*_k(N-2, N-3) = \{a_{12}\}$, for $k = 1, 2$,
        \item $A^*_k(N-3, N-2) = \{a_{22}\}$, for $k = 1, 2$,
        \item $A^*_k(N-2, j) = A^*_k(j, N-2) = \{a_0\}$, if $j = 2, \ldots, N-4$ or $j = N-2$, for $k = 1, 2$,
        \item $A^*_k(N-3, N-3) = \{a_{12}, a_{22}\}$, for $k = 1, 2$,
        \item $A^*_k(N-3, j) = A^*_k(j, N-3) = \{a_0\}$, if $j = 2, \ldots, N-4$ for $k = 1, 2$,
        \item $A^*_k(i, j) = \{a_0\}$, if $i, j = 2, \ldots, N-4$ for $k = 1, 2$.
\end{itemize}
A stationary, deterministic policy $\pi^* = (d^*)^{\infty}$ is optimal in the auxiliary MDP if and only if 
\begin{equation*}
d^*(i,j) \in \begin{cases} 
A^*_{1}(i,j), &\text{if } \lambda \in (\lambda_c, 1), \\
A^*_1(i,j) \cup A^*_2(i,j), &\text{if } \lambda = \lambda_c, \\
A^*_2(i,j), &\text{if } \lambda \in (0, \lambda_c),
\end{cases}
\end{equation*}
for all $(i,j) \in \hat{S}$, where $\lambda_c = 15/17$.
\end{theorem}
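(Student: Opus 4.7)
The plan is to invoke Theorem \ref{Bellman_equations_thm} and verify directly that any policy with $d^*(i,j) \in A^*_k(i,j)$ (with $k$ determined by $\lambda$ relative to $\lambda_c$) has a value function that solves the Bellman equations of the auxiliary MDP, while simultaneously showing that competing actions yield strictly smaller values to establish uniqueness up to the indicated indifference at $\lambda = \lambda_c$. Because the reward is the indicator of the target state $(N,N)$, Corollary \ref{Value_hitting_time_indicator_reward} reduces the task to comparing quantities of the form $\mathbb{E}[\lambda^{\tau}]$, where $\tau$ is the first hitting time of $(N,N)$ under each candidate policy.

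The computation is organized as a backward recursion on $\hat{S}$. First, the value at the target is $\hat{v}(N,N) = 1/(1-\lambda)$. One then works outward through the rectangles, using the explicit transition probabilities supplied by Lemma \ref{explicit_Phats}. For each state $(i,j)$ the Bellman equation couples its value only to values at strictly "larger" successor rectangles that have already been computed, yielding a closed-form rational expression in $\lambda$ for the value of each candidate action. By the coordinate-swap symmetry of the auxiliary MDP, the analysis of column states $(i,N)$ mirrors that of row states $(N,j)$, so these cases can be treated in parallel.

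The dichotomy between $A_1^*$ and $A_2^*$ only arises at the "long-side" states $(N,j)$ with $j \in \{2,\ldots,N-5\}$ (and their column-symmetric counterparts), where the genuine competition is between flipping a spin adjacent to the long side ($a_{11}$) and flipping one at distance two ($a_{12}$). I would compute both candidate values in closed form using the already-determined values of the successor rectangles; their difference is a single rational function of $\lambda$ whose unique relevant root yields $\lambda_c = 15/17$. Intuitively, the distance-two flip sets up a potentially rapid two-row fill but is wasted if the isolated spin is erased before it merges with the rectangle, so it is preferred in the impatient regime $\lambda < \lambda_c$ and disfavored in the patient regime $\lambda > \lambda_c$. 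For all remaining states, the prescribed action is uniquely optimal for every $\lambda \in (0,1)$; this is verified by showing that the alternatives ($\tilde{a}$, $a_{1\ell}'$, $a_{2\ell}'$, $a_0$ in interior-vs-boundary comparisons, or $0$) either do not grow the rectangle or generate isolated spins that are removed with high probability, and hence lead to strictly smaller discounted values.

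The principal obstacle is bookkeeping rather than any single deep estimate: the Bellman equation at each state couples to several neighbors, the recursion must be executed in an order compatible with the partial order on rectangle sizes, and the comparison that produces $\lambda_c = 15/17$ requires a careful algebraic reduction of competing rational functions in $\lambda$. The explicit form of $\hat{P}$ from Lemma~\ref{explicit_Phats} is what makes these computations tractable, and most of the work in the formal proof will consist of performing them cleanly case by case while exploiting the row-column symmetry to halve the workload.
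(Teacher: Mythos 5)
Your overall strategy -- verify the Bellman optimality equations via Theorem \ref{Bellman_equations_thm}, recurse backward from $(N,N)$ using the fact that rectangles of size at least $3\times3$ cannot shrink, exploit row--column symmetry, and locate the phase transition in the $a_{11}$-versus-$a_{12}$ comparison at long-side states $(N,j)$, $j = 2,\ldots,N-5$ -- is exactly the paper's. The gap is in how you propose to execute it. You plan to produce ``a closed-form rational expression in $\lambda$ for the value of each candidate action'' at every state and compare these directly. This is feasible for the handful of states near $(N,N)$, but the theorem is stated for arbitrary $N$: there are order $N^2$ states, the closed forms grow without bound in complexity as one recurses away from the target (the paper's supplementary Table of explicit values already fills a page for states only four or five steps from $(N,N)$), and no finite case-by-case computation covers all $j$ and all $(i,j)$ uniformly. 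The paper's resolution, which your proposal is missing, is to rewrite the optimality conditions as a fixed finite family of \emph{linear inequalities} in the value function (e.g.\ $-6v(N,j)+11v(N,j+1)-5v(N,j+2)>0$) and prove that each inequality \emph{propagates} under the value recursion -- if it holds at $(i,n+1)$ it holds at $(i,n)$ -- so that only the base cases near $(N,N)$ need explicit computation. Several of these propagation arguments require nested inductions (an outer induction on one side length with an embedded induction on the other, plus auxiliary inequalities that must themselves be proved inductively). Without this inductive structure your plan does not close for general $N$; even the claim that the $a_{11}$/$a_{12}$ difference has ``a single rational function of $\lambda$ whose unique relevant root yields $\lambda_c$'' only holds at the base state $j=N-5$, and the fact that the same $\lambda_c=15/17$ governs every $j$ down to $2$ is precisely what the induction establishes.

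Two smaller omissions: first, eliminating the corner-adjacent actions $a_{11}',a_{12}',a_{21}',a_{22}'$ is not routine when the relevant side has length $2$, since the unprimed counterparts are then unavailable; the paper needs an artificially extended MDP $(\hat{S},\hat{A}',\hat{P}',\hat{r}')$ and a separate lemma to transfer the comparison back. Second, discarding $0$ and $\tilde{a}$ requires an argument (a policy-improvement contradiction using reachability of $(N,N)$), not just the observation that they ``do not grow the rectangle'' -- $\tilde{a}$ can shrink a side of length $2$ and send the process to $(0,0)$, and one must rule out optimal policies that loop forever among small rectangles.
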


\vspace{5mm}

\begin{remark}
    An interesting aspect of the result of Theorem \ref{Optimal_policy} is the phase transition with respect to the discount factor for rectangles of size $(N, j)$ or $(i, N)$, $i, j = 2, \ldots, N-5$. This phenomenon has a very intuitive explanation. Note that actions $a_{12}$ and $a_{22}$ involve more risk but potentially higher gains compared to actions $a_{11}$ and $a_{21}$. If the discount factor is high, a reward obtained in the future still retains a fair portion of its value in the present. In this scenario, it is most favorable to choose the safe options $a_{11}$ or $a_{21}$. On the other hand, if the discount factor is low, a reward obtained in the future holds less value in the present. In this case, it is more advantageous to opt for a riskier strategy that has the potential to reach the target sooner.\qed 
\end{remark}

\begin{figure}
    \centering
    \captionsetup[subfloat]{justification=centering, labelformat=empty, singlelinecheck=false}

    \parbox{\linewidth}{
        \centering
        \subfloat[{\small Optimal action in state $(N,N)$.}]{
            \includegraphics[width=0.2\linewidth]{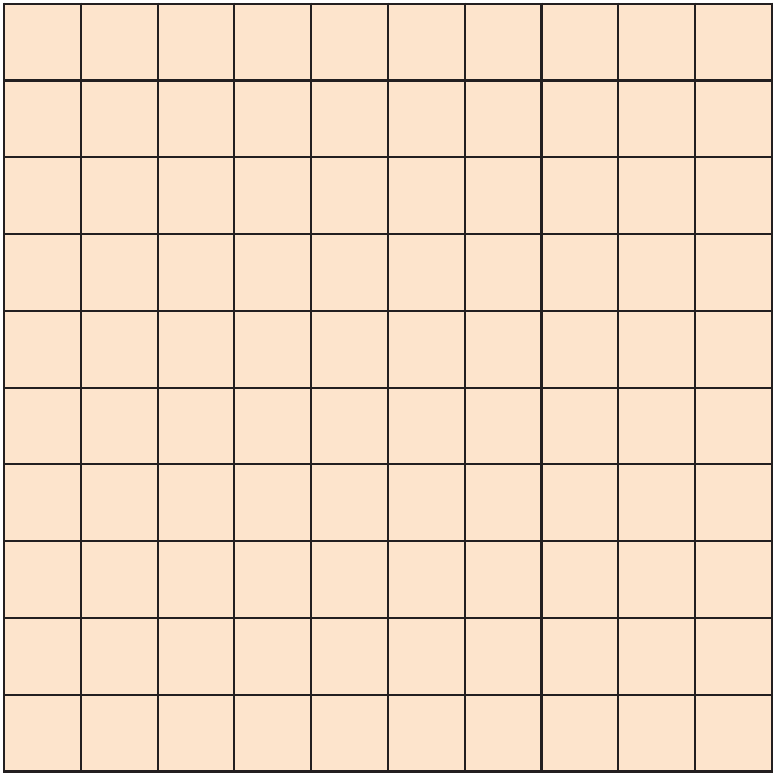}
        } \hspace{0.02\linewidth}
        \subfloat[{\small Optimal action in state $(N,N-2)$.}]{
            \includegraphics[width=0.2\linewidth]{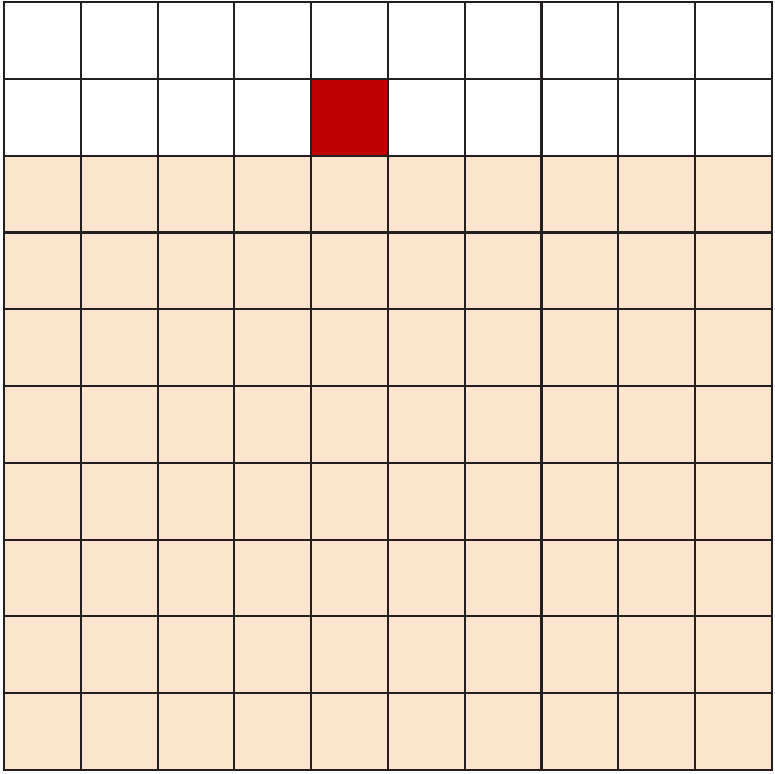}
        } \hspace{0.02\linewidth}
        \subfloat[{\small Optimal action in state $(N,N-3)$.}]{
            \includegraphics[width=0.2\linewidth]{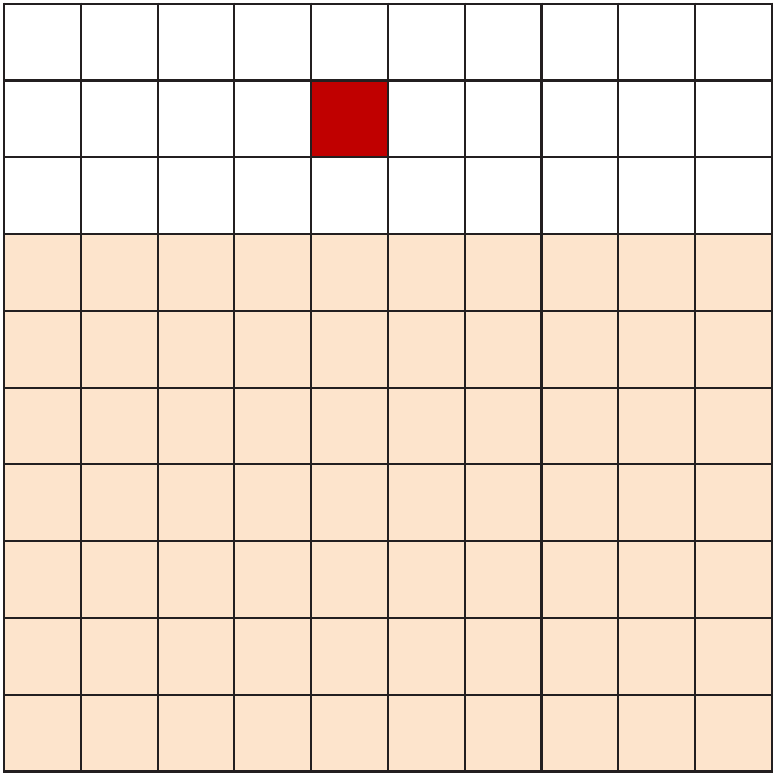}
        } \hspace{0.02\linewidth}
        \subfloat[{\small Optimal action in state $(N,N-4)$.}]{
            \includegraphics[width=0.2\linewidth]{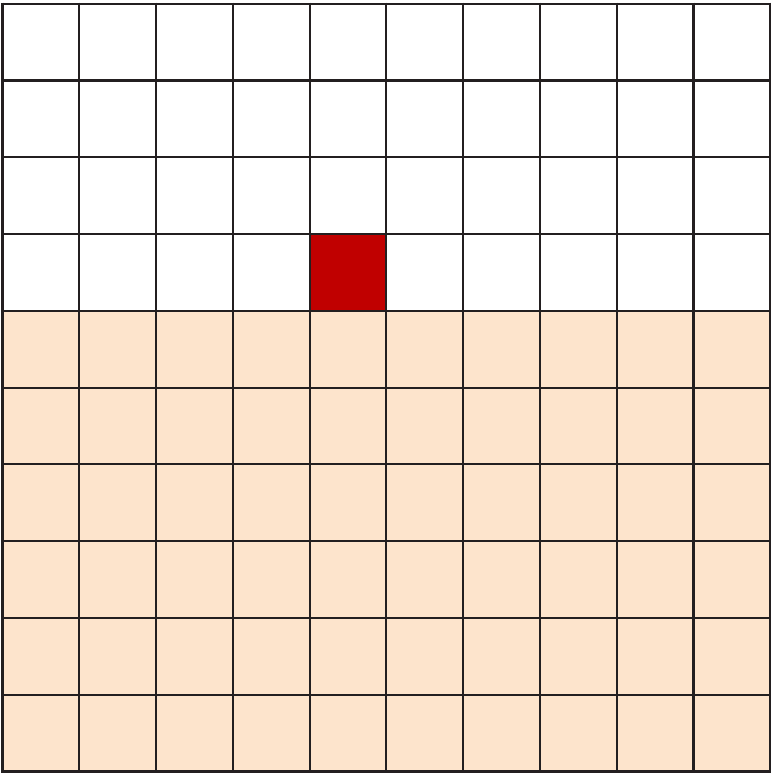}
        }
    }
    
    \vspace{0.5cm} 

    \parbox{\linewidth}{
        \centering
        \subfloat[{\small Optimal actions in states $(N,j)$, $j = 2, 3, \ldots, N-5$, depending on value of $\lambda$.}]{
            \includegraphics[width=0.2\linewidth]{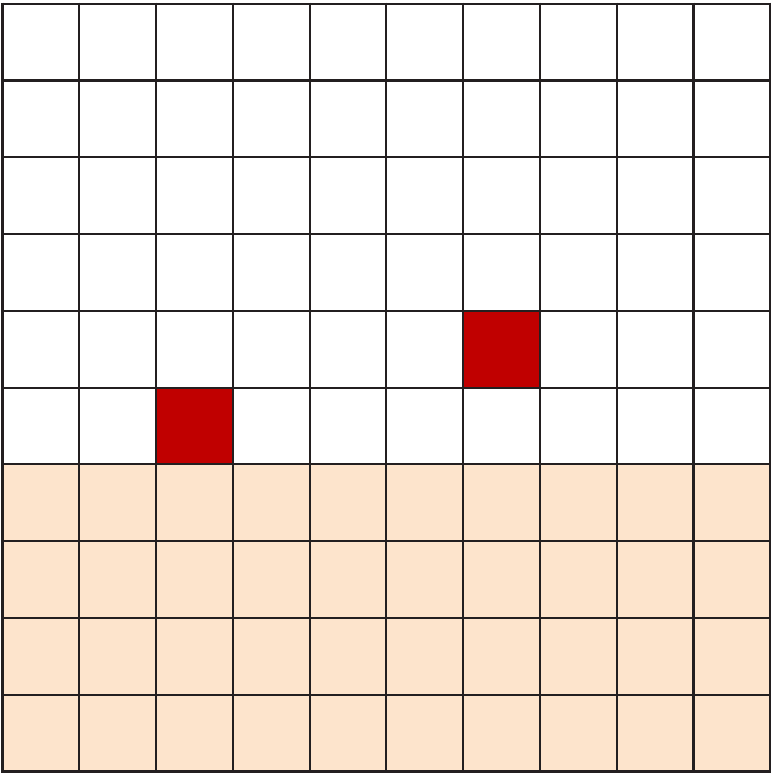}
        } \hspace{0.02\linewidth}
        \subfloat[{\small Optimal action in state $(N-2,N-2)$.}]{
            \includegraphics[width=0.2\linewidth]{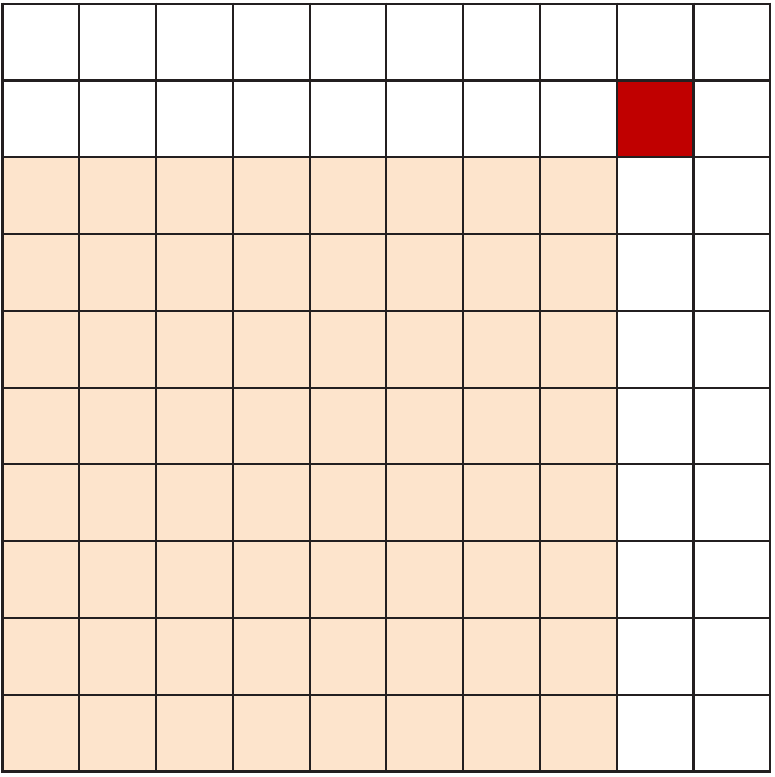}
        } \hspace{0.02\linewidth}
        \subfloat[{\small Optimal action in state $(N-2, N-3)$.}]{
            \includegraphics[width=0.2\linewidth]{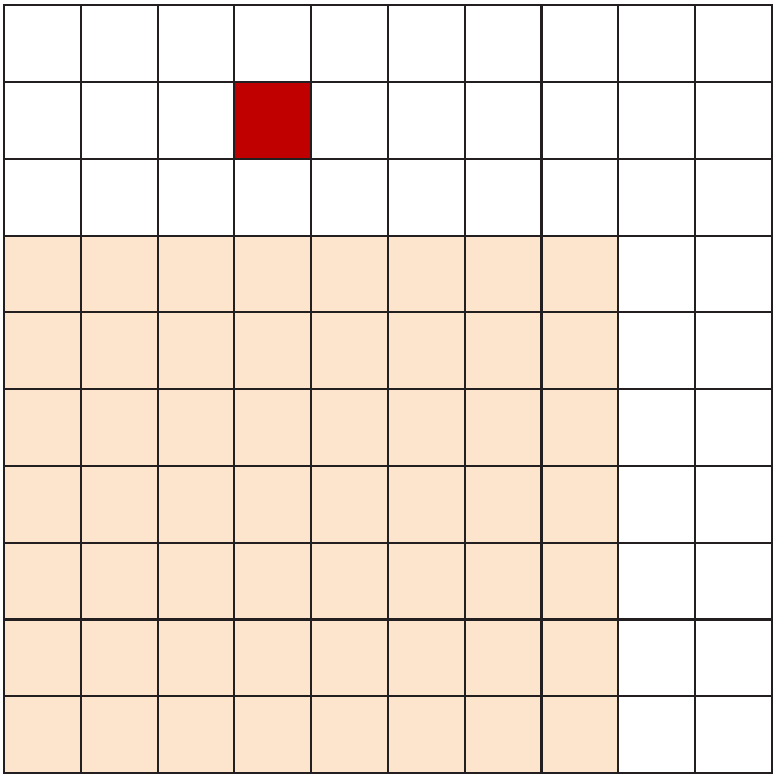}
        } \hspace{0.02\linewidth}
        \subfloat[{\small Optimal action in states $(N-2,j)$, $j = 2, 3, \ldots, N-4$.}]{
            \includegraphics[width=0.2\linewidth]{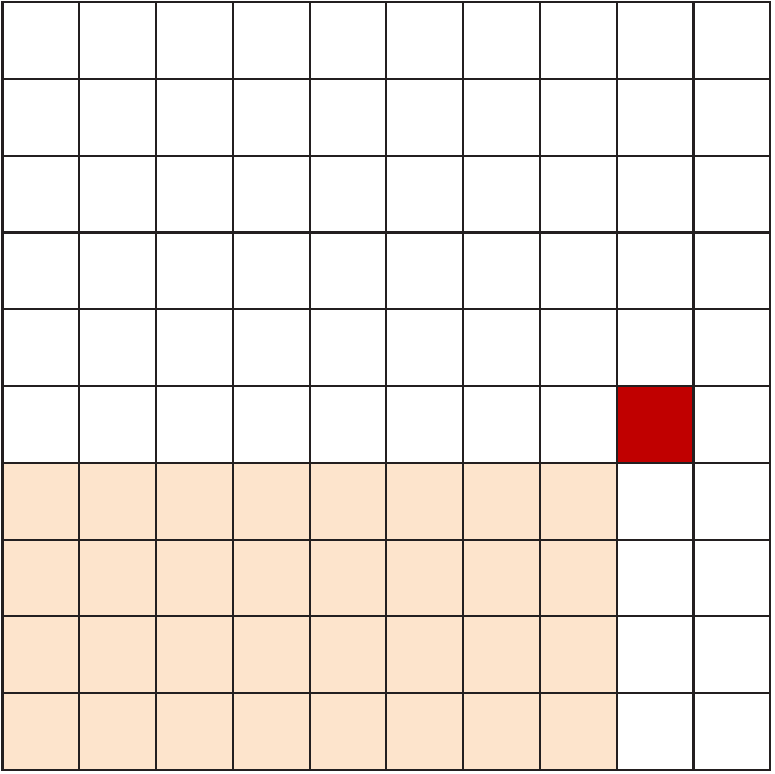}
        }
    }

    \vspace{0.5cm}

    \parbox{\linewidth}{
        \centering
        \subfloat[{\small Optimal action in state $(N-3,N-3)$.}]{
            \includegraphics[width=0.2\linewidth]{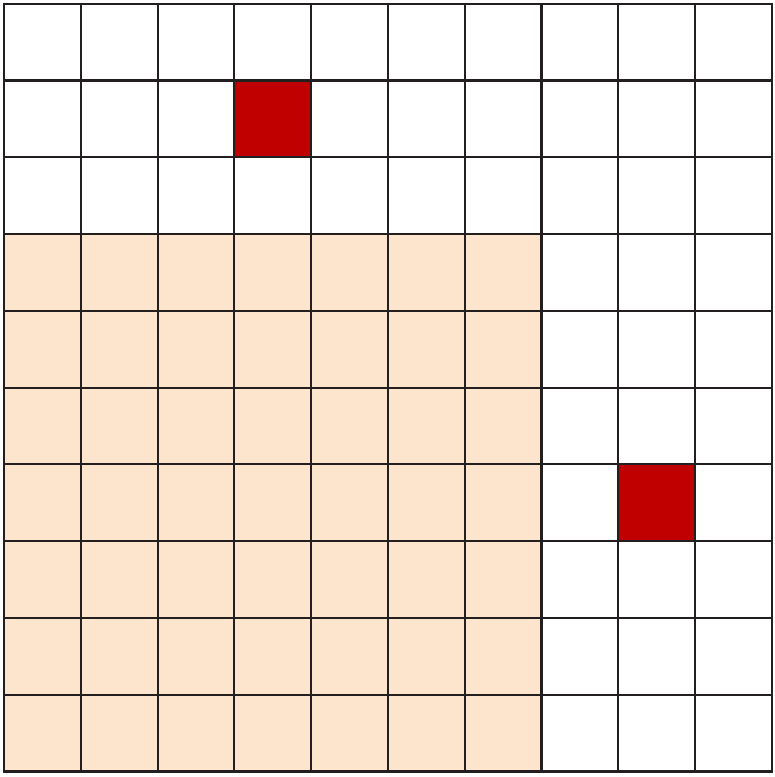}
        } \hspace{0.02\linewidth}
        \subfloat[{\small Optimal action in state $(N-3,j)$, $j = 2, 3, \ldots, N-4$.}]{
            \includegraphics[width=0.2\linewidth]{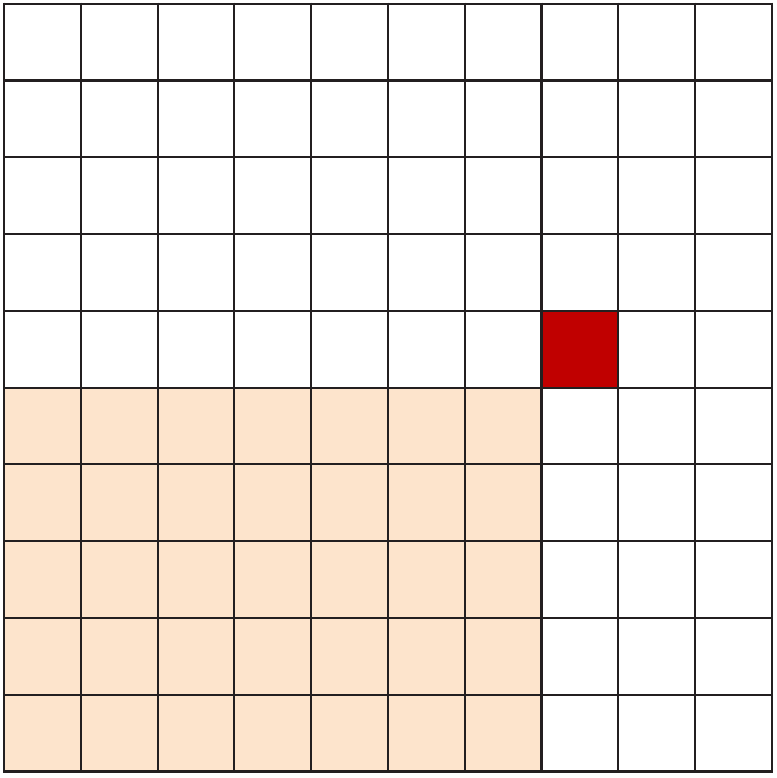}
        } \hspace{0.02\linewidth}
        \subfloat[{\small Optimal action in states $(i,j)$, $i,j = 2, 3, \ldots, N-4$.}]{
            \includegraphics[width=0.2\linewidth]{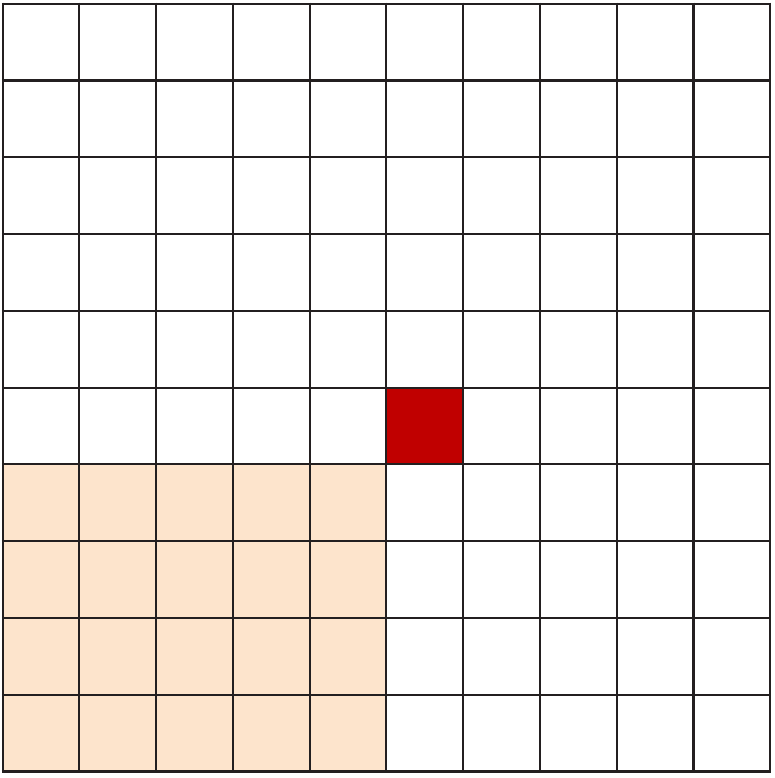}
        } \hspace{0.02\linewidth}
        \subfloat[]{\phantom{\includegraphics[width=0.2\linewidth]{Opt_pol_i,j.pdf}}} 
    }

    \caption[The average and standard deviation of critical parameters]
    {\small Visualization of the optimal policy of the auxiliary MDP (Theorem \ref{Optimal_policy}).} 
    \label{fig_optimal_policy}
\end{figure}

\section{Numerical results}
\label{Numerical results}
In this section, we numerically investigate the performance of an analogue of the optimal policy for the auxiliary MDP in the original low-temperature Ising STMDP. In addition, we compare the results to those of two other policies. \\
Let $\hat{\pi}^* = (\hat{d}^*)^{\infty}$ denote an optimal policy in the auxiliary MDP. Furthermore, define policies $\hat{\pi}_1 = (\hat{d}^1_t)_{t \in \mathbb{N}}$ and $\hat{\pi}_2 = (\hat{d}^2_t)_{t \in \mathbb{N}}$ in the auxiliary MDP as
\begin{align*}
    \hat{d}^1_t(i, j) &= \begin{cases}
        a_{11}, &\text{if } t \text{ even}, \\
        a_{21}, &\text{if } t \text{ odd},
    \end{cases} \quad \text{for } i, j = 2, \ldots, N-2, \\
    \hat{d}^1_t(i,j) &= \begin{cases}
        a_{11}, &\text{if } i = N, \quad j = 2, \ldots, N-2, \\
        a_{21}, &\text{if } i = 2, \ldots, N-2, \quad j = N, \\
        0, &\text{if } i = j = N
    \end{cases}
\end{align*}
and
\begin{align*}
    \hat{d}^2_t(i,j) &= \begin{cases}
        a_{12}, &\text{if } t \text{ even and } j = 2, \ldots, N-3, \\
        a_{11}, &\text{if } t \text{ even and } j = N-2, \\
        a_{22}, &\text{if } t \text{ odd and } i = 2, \ldots, N-3, \\
        a_{21}, &\text{if } t \text{ odd and } i = N-2,
    \end{cases} \quad \text{for } i, j = 2, \ldots, N-2, \\
    \hat{d}^2_t(i,j) &= \begin{cases}
        a_{12}, &\text{if } i = N, \quad j = 2, \ldots, N-3, \\
        a_{11}, &\text{if } i = N, \quad j = N-2, \\
        a_{22}, &\text{if } i = 2, \ldots, N-3, \quad j = N, \\
        a_{21}, &\text{if } i = N, \quad j = N-2, \\
        0, &\text{if } i = j = N.
    \end{cases}
\end{align*}
Policies $\hat{\pi}_1$ and $\hat{\pi}_2$ flip a spin at distance 1 and 2, respectively, from the rectangle each time step, alternating sides. 
Figure \ref{policy_comparison_auxiliary_MDP} depicts the values of these policies in state $(3, 3)$ for $N = 20$ as a function of the discount factor $\lambda$. It can be seen that $\hat{\pi}^*$ indeed achieves higher expected total discounted reward than the policies $\hat{\pi}_1$ and $\hat{\pi}_2$ for any value of the discount factor. 

\begin{figure}
\centering
        \includegraphics[width=1.1\linewidth]{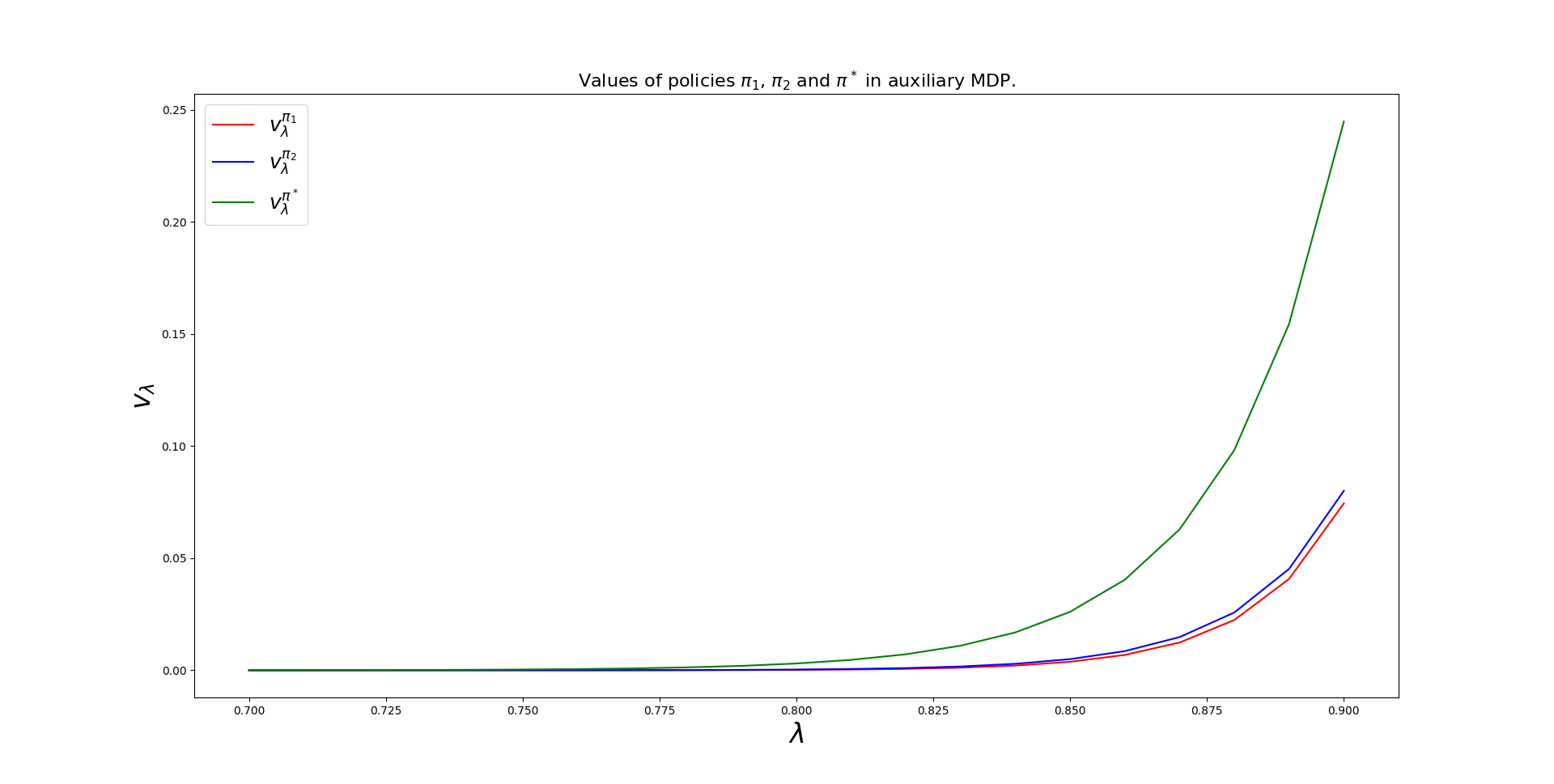}
        \caption{Values of policies $\hat{\pi}_1$, $\hat{\pi}_2$ and $\hat{\pi}^*$ in state $(3, 3)$ in the auxiliary MDP as a function of the discount factor.}
        \label{policy_comparison_auxiliary_MDP}
    \end{figure} 

We now construct the analogues of policies $\hat{\pi}^*$, $\hat{\pi}_1$ and $\hat{\pi}_2$ in the original low-temperature Ising STMDP by identifying a cluster of spins in state $+1$ with its circumscribed rectangle and selecting a spin that corresponds to the action specified for this rectangle by the policy for the auxiliary MDP. Specifically, we define policies $\tilde{\pi}^* = (\tilde{d}^*)^{\infty}$, $\pi_1 = (d_t^1)_{t \in \mathbb{N}}$ and $\pi_2 = (d_t^2)_{t \in \mathbb{N}}$, where $\tilde{d}^*(\sigma)$, $d^1_t(\sigma)$, $d^2_t(\sigma)$ for a configuration $\sigma \in S$ in which the circumscribed rectangle has size $i \times j$, are spins corresponding to the actions $\hat{d}^*(i,j)$, $\hat{d}^1_t(i,j)$ and $\hat{d}^2_t(i,j)$. We choose such a spin as the one that is closest to the spin in the middle of the side of the circumscribed rectangle (or to one of the two spins in the middle if the side length is even). Furthermore, whenever we select a spin on a horizontal side of the circumscribed rectangle, we always choose the same horizontal side. The same principle applies to spins on the vertical side.

Figure \ref{policy_comparison_numerical_k5000} shows simulation results for the three different policies for $\beta = 10$, $N = 100$ and an adjustment time of $\kappa = 5000$. The policy $\tilde{\pi}^*$ indeed seems to reach the target configuration faster than the other two policies.

\begin{figure}
    \centering
    \captionsetup[subfloat]{justification=centering, labelformat=empty, singlelinecheck=false, font=small}
    
    \subfloat[{\small Simulation of policy $\tilde{\pi}^*$ for $\beta = 10$, $N = 100$, and $\kappa = 5000$.} \label{subfig:piopt_k5000}]{
        \includegraphics[width=0.3\linewidth]{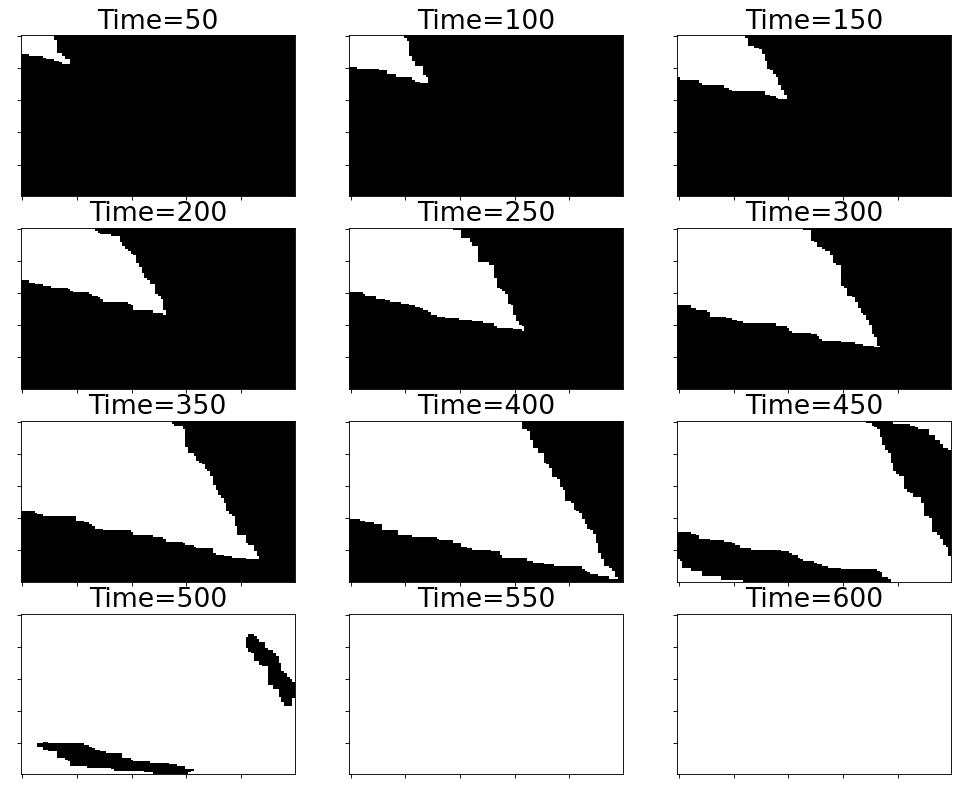}
    } \hspace{0.02\linewidth}
    \subfloat[{\small Simulation of policy $\pi_1$ for $\beta = 10$, $N = 100$, and $\kappa = 5000$.} \label{subfig:pi1_k5000}]{
        \includegraphics[width=0.3\linewidth]{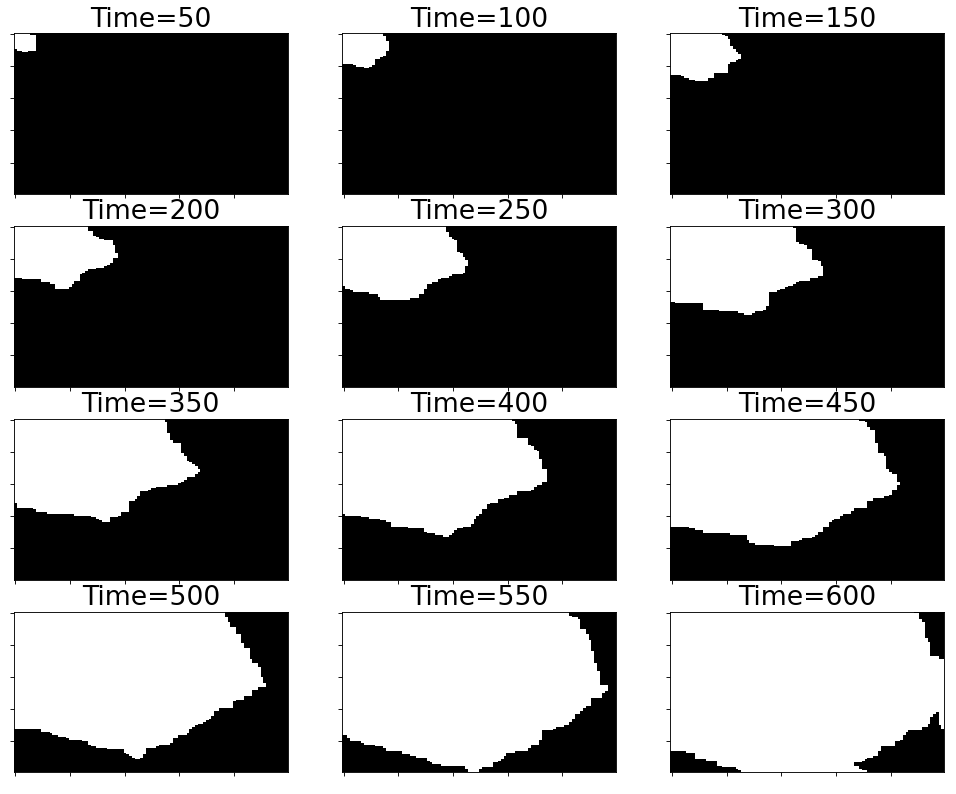}
    } \hspace{0.02\linewidth}
    \subfloat[{\small Simulation of policy $\pi_2$ for $\beta = 10$, $N = 100$, and $\kappa = 5000$.} \label{subfig:pi2_k5000}]{
        \includegraphics[width=0.3\linewidth]{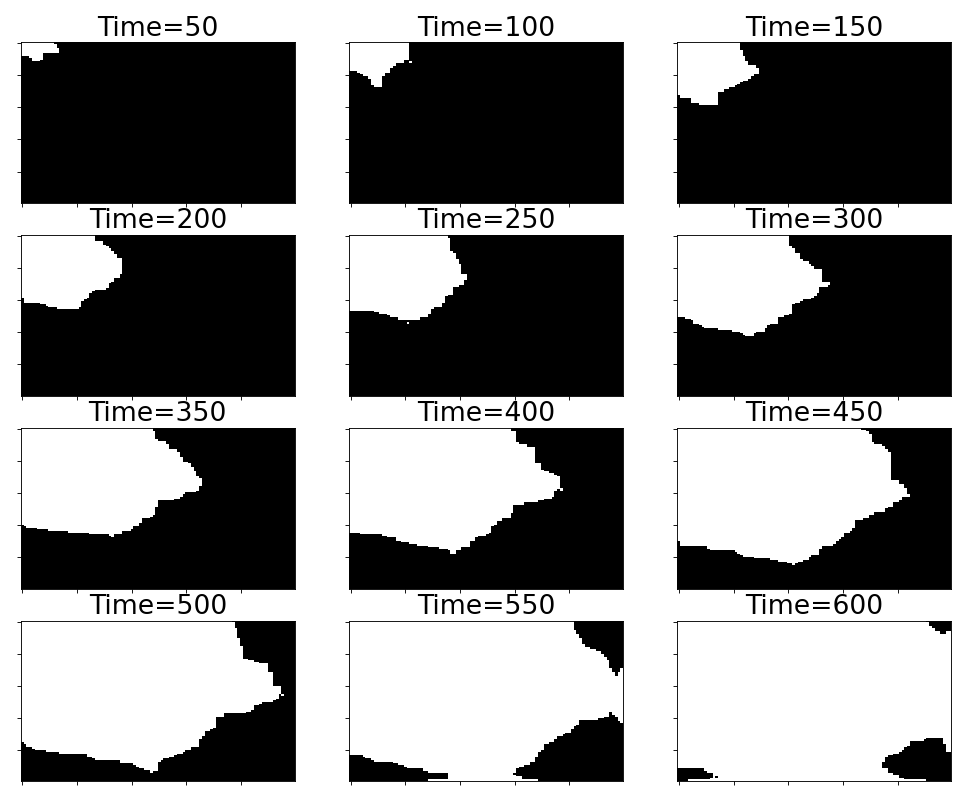}
    }

    \caption[Simulations of policies $\tilde{\pi}^*$, $\pi_1$, and $\pi_2$ for $\kappa = 5000$]
    {\small Simulations of policies $\tilde{\pi}^*$, $\pi_1$, and $\pi_2$ for $\beta = 10$, $N = 100$, and $\kappa = 5000$.} 
    \label{policy_comparison_numerical_k5000}
\end{figure}

Note that the adjustment time in Figure \ref{policy_comparison_numerical_k5000} is clearly not long enough for a cluster of $+$-spins to fully grow into a rectangle before the decision maker again decides to flip a spin. Figure \ref{policy_comparison_numerical_adjustment_times} illustrates the behaviour of the policy $\tilde{\pi}^*$ for various different adjustment times. For larger adjustment times, the shapes of the clusters more closely resemble rectangles.

\begin{figure}
    \centering
    \captionsetup[subfloat]{justification=centering, labelformat=empty, singlelinecheck=false, font=small}
    
    \subfloat[{\small Simulation of policy $\tilde{\pi}^*$ for $\beta = 10$, $N = 100$, and $\kappa = 5000$.} \label{subfig:piopt_k50000}]{
        \includegraphics[width=0.3\linewidth]{Policy_comparison_numerical_piopt_k5000_new.png}
    } \hspace{0.02\linewidth}
    \subfloat[{\small Simulation of policy $\tilde{\pi}^*$ for $\beta = 10$, $N = 100$, and $\kappa = 10000$.} \label{subfig:pi1_k50000}]{
        \includegraphics[width=0.3\linewidth]{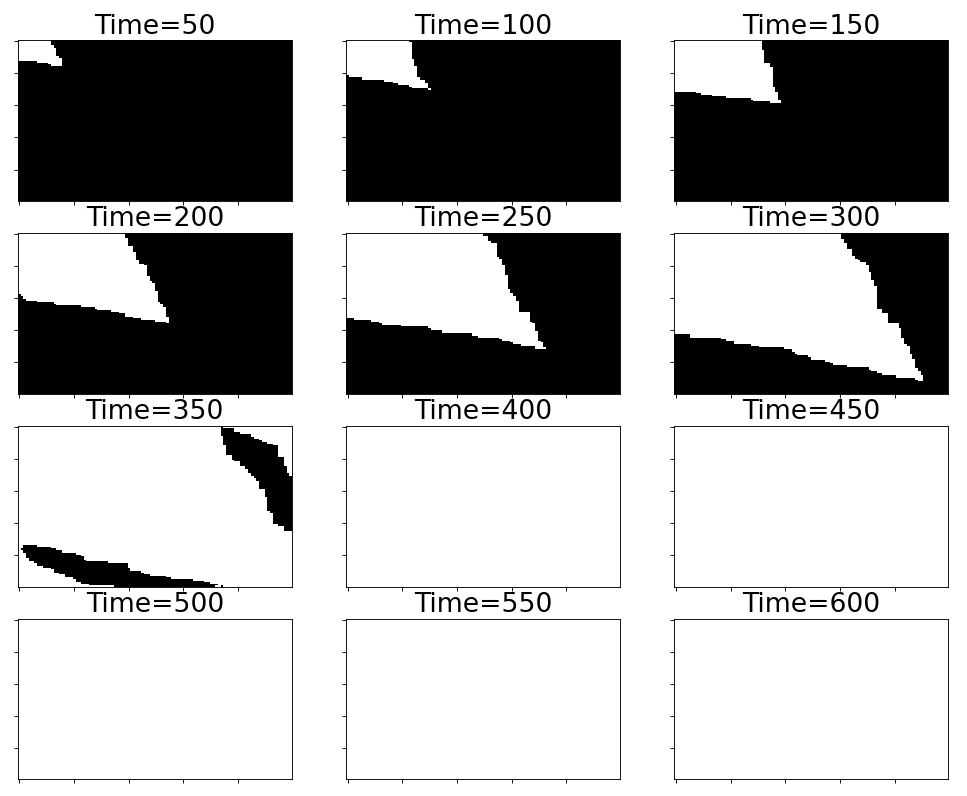}
    } \hspace{0.02\linewidth}
    \subfloat[{\small Simulation of policy $\tilde{\pi}^*$ for $\beta = 10$, $N = 100$, and $\kappa = 50000$.} \label{subfig:pi2_k50000}]{
        \includegraphics[width=0.3\linewidth]{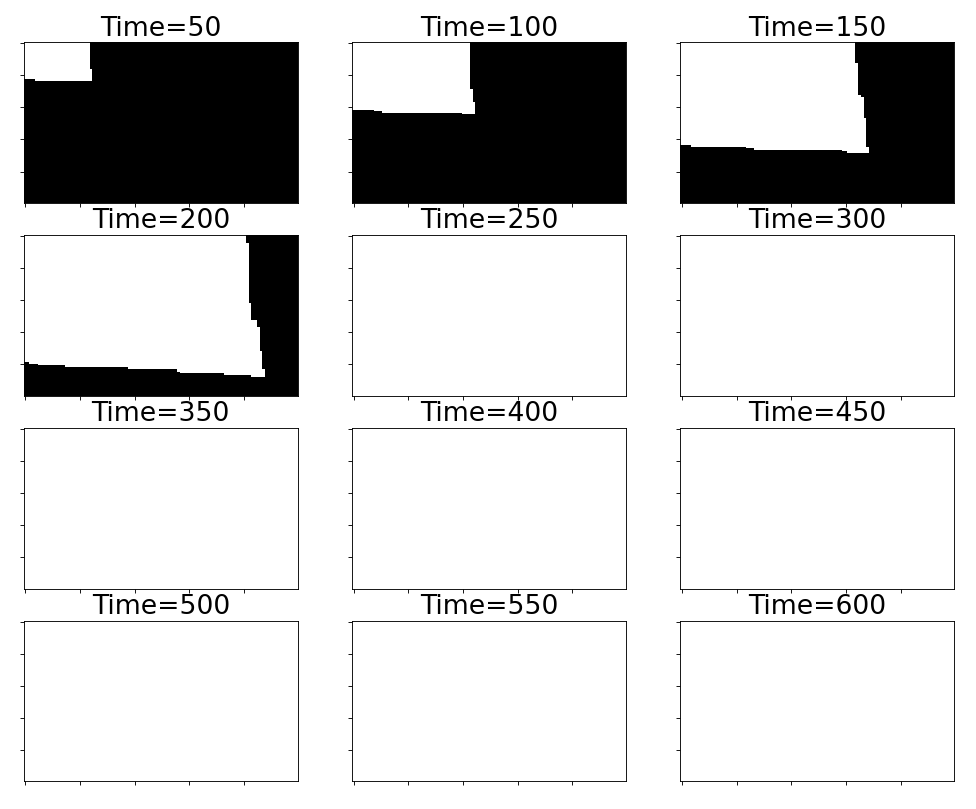}
    }

    \caption[Simulations of policy $\tilde{\pi}^*$ for various adjustment times]
    {\small Simulations of policy $\tilde{\pi}^*$ for various adjustment times: $\kappa = 5000$, $\kappa = 10000$, and $\kappa = 50000$, with $\beta = 10$ and $N = 100$.} 
    \label{policy_comparison_numerical_adjustment_times}
\end{figure}

Figure \ref{policy_comparison_numerical_temperatures} explores the behaviour of the policy $\tilde{\pi}^*$ for various temperatures. It is evident that the cluster of $+$-spins evolves more chaotically as the temperature increases.

\begin{figure}
    \centering
    \captionsetup[subfloat]{justification=centering, labelformat=empty, singlelinecheck=false, font=small}
    
    \subfloat[{\small Simulation of policy $\tilde{\pi}^*$ for $\beta = 2.5$, $N = 100$, and $\kappa = 5000$.} \label{subfig:piopt_k50000}]{
        \includegraphics[width=0.3\linewidth]{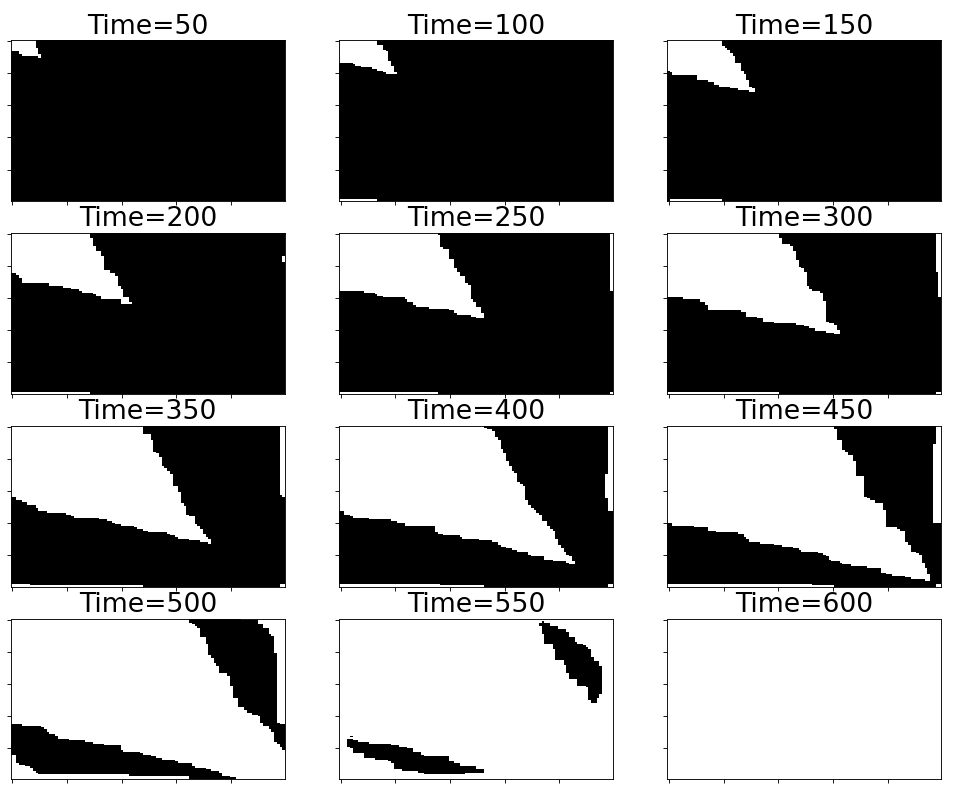}
    } \hspace{0.02\linewidth}
    \subfloat[{\small Simulation of policy $\tilde{\pi}^*$ for $\beta = 1.67$, $N = 100$, and $\kappa = 5000$.} \label{subfig:pi1_k50000}]{
        \includegraphics[width=0.3\linewidth]{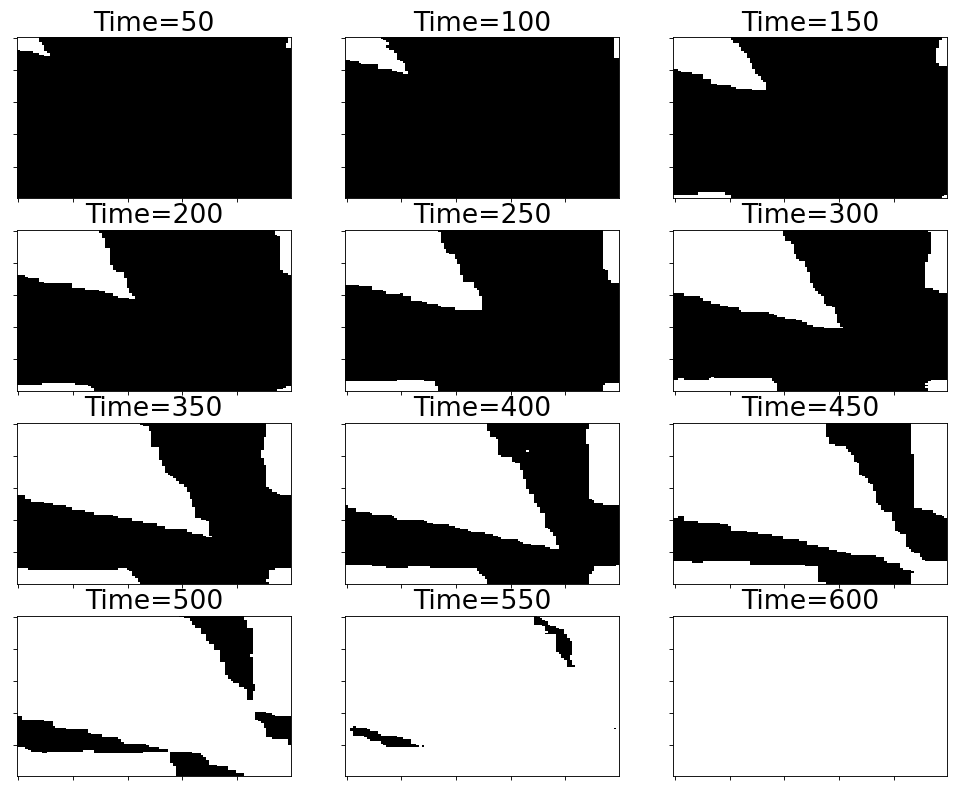}
    } \hspace{0.02\linewidth}
    \subfloat[{\small Simulation of policy $\tilde{\pi}^*$ for $\beta = 1.25$, $N = 100$, and $\kappa = 5000$.} \label{subfig:pi2_k50000}]{
        \includegraphics[width=0.3\linewidth]{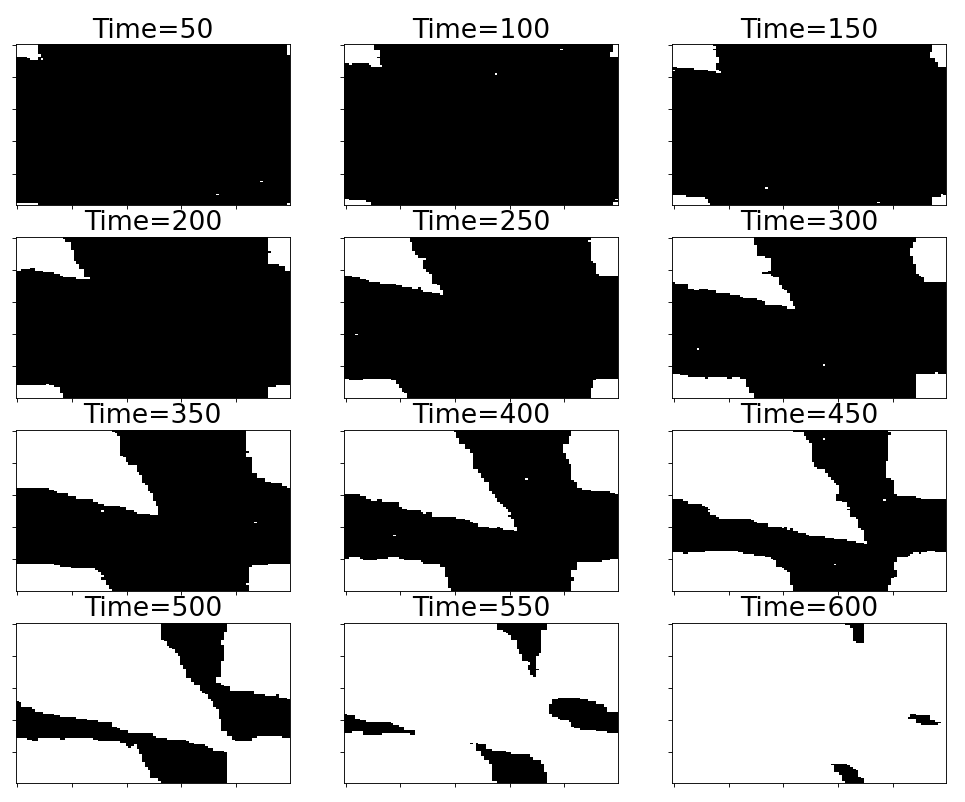}
    }

    \caption[Simulations of policies $\tilde{\pi}^*$ for various temperatures]
    {\small Simulations of policy $\tilde{\pi}^*$ for various temperatures: $\beta = 2.5$, $\beta = 1.67$, and $\beta = 1.25$, with $N = 100$ and $\kappa = 5000$.} 
    \label{policy_comparison_numerical_temperatures}
\end{figure}

To study the performance of the three policies more closely, we measure the first hitting times $\tau^{\sigma, \pi}_{\sigma^*}$ to the target configuration across $n$ experiments for different values of the adjustment time. Figure \ref{policy_comparison_adjustment_times} shows the resulting average hitting times for a starting configuration with a single 3x3 rectangle of $+$-spins for several values of the adjustment time and for $n = 20$, $\beta = 10$ and $N = 100$. Recall that the relation between these hitting times and the value function, given a discount factor $\lambda$, has been established in Section \ref{A reachability objective}.

\begin{figure}
\centering
        \includegraphics[width=0.5\linewidth]{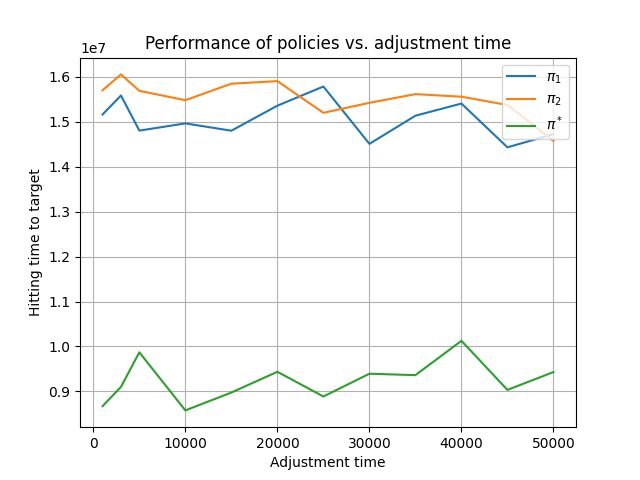}
        \caption{Hitting times to the target of policies $\pi_1$, $\pi_2$ and $\pi^*$ from state $(3, 3)$ for $n = 20$, $\beta = 10$ and $N = 100$.}
        \label{policy_comparison_adjustment_times}
    \end{figure} 

The results indicate that the policy $\tilde{\pi}^*$ derived from the optimal policy in the auxiliary MDP indeed achieves the goal faster than the policies $\pi_1$ and $\pi_2$ over a wide range of adjustment times.

\section{Analysis of the low-temperature Ising STMDP}
\label{Analysis of the low-temperature Ising STMDP sec}

In this section we provide a more formal analysis of the low-temperature Ising STMDP and prove our main result, i.e., Theorem \ref{Optimal_policy}. 
Section \ref{Additional notation and terminology} introduces some additional notation and terminology required for the complete analysis of the low-temperature Ising STMDP. In Section \ref{The auxiliary MDP}, we formalize the construction of the auxiliary MDP and show that it is indeed an accurate approximation of the original STMDP. Finally, Section \ref{Proof of main thm} reports an outline of the proof of Theorem \ref{Optimal_policy}. Some details of the proofs are deferred to the Supplementary Material.

\newpage
\subsection{Additional notation and terminology}
\label{Additional notation and terminology}

In order to formalize the analysis outlined in Section \ref{Definition of the Ising STMDP}, we introduce some additional notation and terminology. 

Given a spin $i \in V$, let $N_h(i) \subseteq V$ and $N_v(i) \subseteq V$ denote the sets of its horizontal neighbors and its vertical neighbors. We define the following distance measure $\delta: V \times V \rightarrow \mathbb{N}_0$ on the lattice:
    \begin{equation*}
        \delta((x,y), (x',y')) = \min\{|x'-x|, N - |x'-x|\} + \min\{|y'-y|, N-|y'-y|\}, 
    \end{equation*}
    where $(x,y), (x',y') \in V \times V$, taking into account a torus edge correction. 
    Furthermore, letting $P(V)$ denote the power set of $V$, we define a distance measure $\tilde{\delta}: V \times P(V) \rightarrow \mathbb{N}_0$ as 
    \begin{equation*}
        \tilde{\delta}((x,y), W) = \min_{(x',y') \in V}\delta((x,y), (x',y')), \quad (x,y) \in V,\quad W \subseteq V. 
    \end{equation*}
    Let $\mu((x,y), W)$ denote the set of spins in $W$ that are closest to $(x,y)$ with respect to this distance measure, i.e.,
    \begin{equation*}
        \mu((x,y), W) = \argmin_{(x',y') \in W}\delta((x,y), (x',y')), \quad (x,y) \in V,\quad W \subseteq V. 
    \end{equation*}
    In addition, we define a horizontal distance measure $\delta_h: V \times V \rightarrow \mathbb{N}^{\infty}_0$ as 
    \begin{equation*}
        \delta_h((x,y), (x',y')) = \begin{cases}
            \min\{|x'-x|, N-|x'-x|\}, &\text{if } y = y,' \\
            \infty, &\text{otherwise.}
        \end{cases}
    \end{equation*}
    Accordingly, let $\tilde{\delta}_h: V \times P(V) \rightarrow \mathbb{N}^{\infty}_0$ be defined as 
    \begin{equation*}
        \tilde{\delta}_h((x,y), W) = \min_{(x',y') \in V}\delta_h((x,y), (x',y')), \quad (x,y) \in V, \quad W \subseteq V,
    \end{equation*}
    and $\mu_h((x,y), W)$, $(x,y) \in V$, $W \subseteq V$ as
    \begin{equation*}
        \mu_h((x,y), W) = \begin{cases} 
        \emptyset, &\text{if } \delta_h((x,y), (x',y')) = \infty \\
        &\text{for all } (x',y') \in W, \\
        \argmin_{(x',y') \in W}\delta_h((x,y), (x',y')), &\text{otherwise}.
        \end{cases}
    \end{equation*}
    The vertical counterparts $\delta_v$, $\tilde{\delta}_v$ and $\mu_v$ are defined in an analogous way.

Given a configuration $\sigma \in S$, let $V_+(\sigma) \subseteq V$ denote the set of spins in state $+1$ in $\sigma$, i.e., $V_+(\sigma) = \{i \in V|\sigma(i) = +1\}$. Furthermore, let $R(\sigma)$ denote the spins in the smallest rectangle that circumscribes the set of $+$-spins.  We define the set of \textit{corner spins}, the sets of \textit{horizontal and vertical boundary spins} and the set of \textit{interior spins} of $\sigma$ as
\newpage
    \begin{align*}
        C(\sigma) &= \left\{i \in V| \sigma(i) = +1, \sum_{j \in N_h(i)} \sigma(j) = 0 \text{ and } \sum_{j \in N_v(i)}\sigma(j) = 0\right\},\\
        B_h(\sigma) &= \left\{i \in V| \sigma(i) = +1, \sum_{j \in N_h(i)} \sigma(j) = 2 \text{ and } \sum_{j \in N_v(i)}\sigma(j) = 0\right\}, \\
        B_v(\sigma) &= \left\{i \in V| \sigma(i) = +1, \sum_{j \in N_v(i)} \sigma(j) = 2 \text{ and } \sum_{j \in N_h(j)} \sigma(j) = 0\right\},\\
        I(\sigma) &= \left\{i \in V| \sigma(i) = \sigma(j) = +1 \text{ for all } j \in N(i)\right\}.
    \end{align*}
 Let $\hat{\sigma} \in S$ denote the configuration obtained from $\sigma$ by setting all spins in $R(\sigma)$ to $+1$, i.e.,
\begin{equation*}
    \hat{\sigma}(i) = \begin{cases}
        +1, &\text{if } i \in R(\sigma), \\
        \sigma(i), &\text{otherwise},
    \end{cases}
    \quad i \in V.
\end{equation*}
We proceed to introduce some notation and terminology specific to the low-temperature behaviour of the Ising STMDP. 
Let the low-temperature Metropolis dynamics be defined as
\begin{equation*}
    \tilde{p}(\sigma, \sigma') = \lim_{\beta \rightarrow \infty} p_{\beta}(\sigma, \sigma'), \quad \sigma, \sigma' \in S.
\end{equation*}
We call a configuration $\sigma' \in S$ a \textit{downhill configuration} of a configuration $\sigma \in S$ if there exists a sequence $\omega = (\sigma_0, \sigma_1, \ldots, \sigma_{\ell})$ for some $\ell \in \mathbb{N}$ that satisfies $\sigma_0 = \sigma$, $\sigma_{\ell} = \sigma'$, $H(\sigma_{t+1}) \leq H(\sigma_t)$ and $\sum\limits_{i \in V}|\sigma_{t+1}(i) - \sigma_t(i)| \leq 1$ for all $t = 1, \ldots, \ell -1$. Let the probability of following this sequence under the low-temperature Metropolis dynamics be denoted by $\tilde{p}(\omega)$, i.e.,
\begin{equation*}
    \tilde{p}(\omega) = \prod\limits_{t = 1}^{\ell-1} \tilde{p}(\sigma_t, \sigma_{t+1}).
\end{equation*}
Note that, using this formulation, a configuration $\sigma' \in S$ is a downhill configuration of $\sigma \in S$ if and only if there exists a sequence $\omega = (\sigma_0, \sigma_1, \ldots, \sigma_{\ell})$ for some $\ell \in \mathbb{N}$ such that $\sigma_0 = \sigma$, $\sigma_{\ell} = \sigma'$ and $\tilde{p}(\omega) > 0$. Similarly, we define $\sigma' \in S$ to be an \textit{$\ell$-step downhill configuration} of $\sigma \in S$ if there exists a sequence $\omega = (\sigma_0, \sigma_1, \ldots, \sigma_{\ell})$ of specified length $\ell \in \mathbb{N}$ that satisfies $\sigma_0 = \sigma$, $\sigma_{\ell} = \sigma'$ and $H(\sigma_{t+1}) \leq H(\sigma_t)$ and $\sum\limits_{i \in V}|\sigma_{t+1}(i) - \sigma_t(i)| \leq 1$ for all $t = 0, \ldots, \ell-1$. Let $\Gamma(\sigma) \subseteq S$ denote the set of all downhill configurations of $\sigma \in S$. Similarly, let $\Gamma_{\ell}(\sigma) \subseteq S$ denote the set of all $\ell$-step downhill configurations of $\sigma \in S$, where $\ell \in \mathbb{N}$.

We call a sequence of configurations $\omega = (\sigma_0, \sigma_1, \ldots, \sigma_{\ell})$, $\sigma_0, \ldots, \sigma_{\ell} \in S$, a \textit{downhill path} if $H(\sigma_{t+1}) \leq H(\sigma_t)$ and $\sum\limits_{i\in V}|\sigma_{t+1}(i) - \sigma_t(i)| = 1$ for all $t = 0, \ldots, \ell-1$.  For any $\sigma, \eta \in S$, let $\Omega(\sigma, \eta)$ denote the collection of all downhill paths leading from configuration $\sigma$ to $\eta$. Furthermore, for any $\sigma \in S$, let $\Omega(\sigma)$ denote the set of all downhill paths starting at configuration $\sigma$. Finally, let $\Omega$ denote the set of all downhill paths. Hence, $\Omega = \cup_{\sigma \in S} \Omega(\sigma)$ and $\Omega(\sigma) = \cup_{\eta \in S} \Omega(\sigma, \eta)$, $\sigma \in S$. For a set $W \subseteq V$, let $W^*$ denote the set of sequences of any length with elements in $W$. Given a downhill path $\omega = (\sigma_0, \sigma_1, \ldots, \sigma_{\ell})$, let $\zeta(\omega) = (x_1, x_2, \ldots, x_{\ell}) \in V^*$ denote the corresponding sequence of spins that are flipped along this path, i.e., $x_k$ is the spin that was flipped to obtain configuration $\sigma_{k}$ from configuration $\sigma_{k-1}$, $k = 1, \ldots, \ell$. We now define $\Theta(\sigma, \eta)$, $\Theta(\sigma)$, $\Theta \subseteq V^*$, $\sigma, \eta \in S$, as the sets of \textit{downhill sequences} corresponding to the paths in $\Omega(\sigma, \eta)$, $\Omega(\sigma)$ and $\Omega$. Note that $\zeta: \Omega \rightarrow \Theta$ is a one-to-one mapping from the set of downhill paths to the set of sequences.

We call a spin $i \in V$ \textit{susceptible} in a configuration $\sigma \in S$ if $H(\sigma^i) \leq H(\sigma)$. Note that a spin $i$ is susceptible in $\sigma$ if and only if $\tilde{p}(\sigma, \sigma^i) > 0$. Recall that in our setting, we imposed a small external magnetic field $h \in (0,1)$. This implies that a spin in state $+1$ is susceptible if and only if it has at least three neighboring spins in state -1. A spin in state -1, on the other hand, is susceptible if and only if it has at least two neighboring spins in state +1. Figure \ref{susceptible_spin} shows an example of a susceptible spin in state $+1$. The set of susceptible spins in a configuration $\sigma \in S$ is denoted by $\Delta(\sigma)$.

\begin{figure}
\centering
\includegraphics[width=0.2\linewidth]{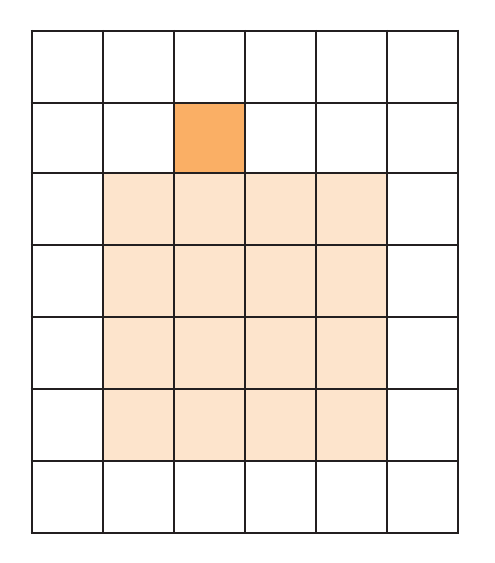}
\caption{Configuration drawn on the dual lattice. Each orange square corresponds to a spin in state $+1$ and each white square represents a spin in state $-1$. The dark orange spin, which occupies state $+1$, is susceptible. After all, it has three neighbors in state $-1$, so flipping this spin would lead to a lower energy configuration.}
\label{susceptible_spin}
\end{figure} 
 We call a configuration $\sigma \in S$ \textit{fragile} if at least one spin is susceptible in $\sigma$, that is, if $\Delta(\sigma) \neq \emptyset$. A configuration that is not fragile, we call \textit{robust}. Some examples of fragile and robust configurations are depicted in Figure \ref{fragile_vs_robust}. Let $U \subseteq S$ denote the set of robust configurations. We define $U^1$ to be the set of robust configurations that are not the all-minus configuration, denoted by $\sigma^-$, and in which the spins in state $+1$ form a single cluster. Theorem \ref{geom} characterizes these configurations geometrically as configurations in which the spins in state $+1$ form a rectangle. 

\begin{figure}
\centering
\includegraphics[width=0.7\linewidth]{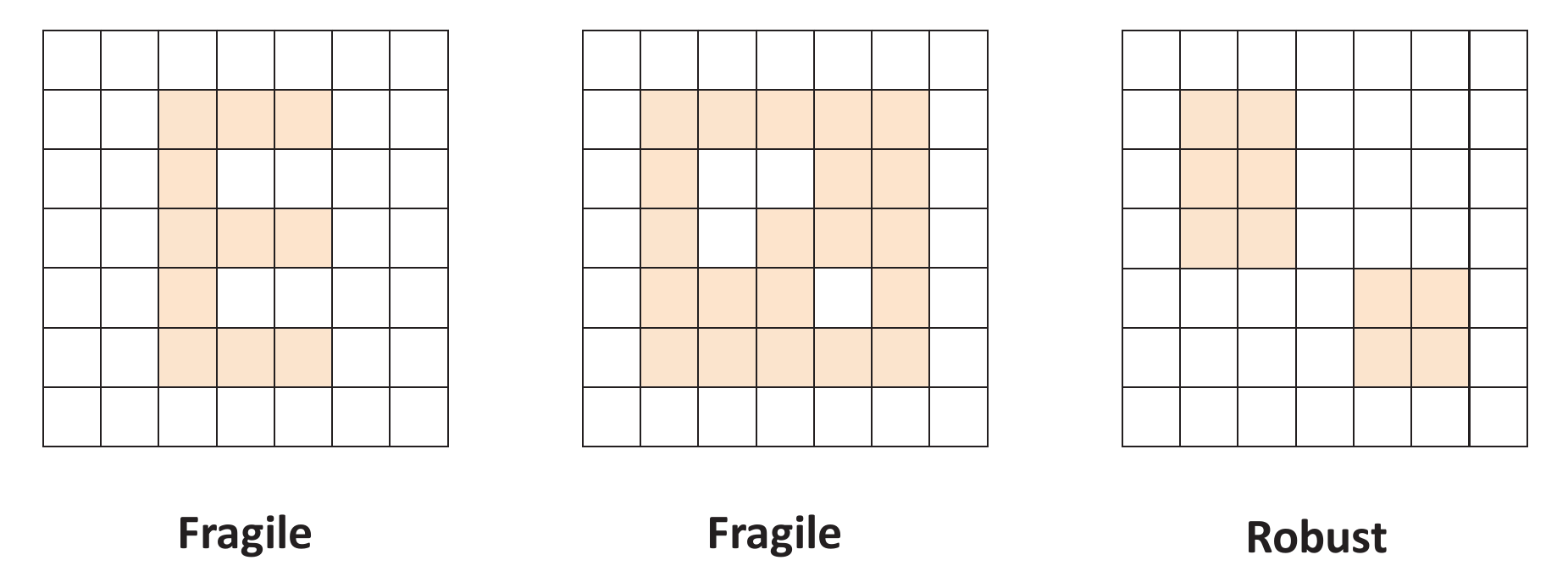}
\caption{Examples of fragile and robust configurations.}
\label{fragile_vs_robust}
\end{figure} 

Let $U(\sigma) \subseteq U$ denote the set of robust configurations that are downhill configurations of $\sigma \in S$, i.e., $U(\sigma) = U \cap \Gamma(\sigma)$. Accordingly, let $U_{\ell}(\sigma) = U \cap \Gamma_{\ell}(\sigma)$ denote the set of robust configurations that are $\ell$-step downhill configurations of $\sigma$ for some $\ell \in \mathbb{N}$. The sets $U^1(\sigma)$ and $U^1_{\ell}(\sigma)$ for $\sigma \in S$ and $\ell \in \mathbb{N}$ are defined in an analogous way, replacing $U$ with $U^1$. 

We call a downhill path $\omega = (\sigma_0, \sigma_1, \ldots, \sigma_{\ell})$, $\sigma_0, \ldots, \sigma_{\ell} \in S$, a \textit{closed downhill path} if it ends in a robust configuration, i.e., if $\sigma_{\ell} \in U$. We refer to the corresponding sequence of spins $\zeta(\omega)$ as a \textit{closed downhill sequence}. Let $\tilde{\Omega}(\sigma, \eta)$, $\tilde{\Omega}(\sigma)$ and $\tilde{\Omega}$ denote the sets of closed downhill paths, defined analogously to $\Omega(\sigma, \eta)$, $\Omega(\sigma)$ and $\Omega$, and let $\tilde{\Theta}(\sigma, \eta)$, $\tilde{\Theta}(\sigma)$ and $\tilde{\Theta}$ denote the corresponding sets of closed downhill sequences.   

Given a sequence $\mathbf{x} \in V^*$ and a configuration $\sigma \in S$, let $\sigma^{\mathbf{x}}$ denote the configuration obtained after consecutively flipping the spins in $\mathbf{x}$ from $\sigma$. For a set $W \subseteq V$ and a sequence $\mathbf{x} \in V^*$, let $\mathbf{x}^W$ denote the subsequence of $\mathbf{x}$ that consists only of those elements of $\mathbf{x}$ that are part of $W$. We refer to $\mathbf{x}^W$ as the \textit{restriction of} $\mathbf{x}$ to $W$. We now call a set $W \subseteq V$ \textit{essential} for a configuration $\sigma \in S$ if it satisfies the following two requirements:

\begin{itemize}
    \item All closed downhill sequences $\mathbf{x} \in \tilde{\Theta}(\sigma)$ that have the same restriction to $W$ lead to the same robust configuration, i.e., if $\mathbf{x}^W = \mathbf{y}^W$, for $\mathbf{x}, \mathbf{y} \in \tilde{\Theta}(\sigma)$, then $\sigma^{\mathbf{x}} = \sigma^{\mathbf{y}}$.
    \item The susceptibility of the spins in $W$ only depends on the evolution of the configuration on $W$, that is, for any downhill sequence $\mathbf{x} \in \Theta(\sigma)$, we have $\Delta(\sigma^{\mathbf{x}}) \cap W = \Delta(\sigma^{\mathbf{x}^W}) \cap W$. 
\end{itemize}
The above properties imply that the robust configuration at the end of a closed downhill path is fully determined by the evolution of the spins in the essential set. Furthermore, the dynamics governing the spins in the essential set, starting from the configuration $\sigma$, is also fully specified by the evolution of the spins in the set. As a consequence, when computing transition probabilities to the next robust configuration from a configuration $\sigma$, it suffices to focus on the essential set for $\sigma$, which greatly simplifies computations.

\subsection{Analysis of the low-temperature Ising STMDP}
\label{Analysis of the low-temperature Ising STMDP}
Using the concepts defined in Section \ref{Additional notation and terminology}, we now proceed to formalize the analysis of the low-temperature Ising STMDP. First, observe that for any $\ell \in \mathbb{N}$, we have $\tilde{P}_{\ell}(\sigma'|\sigma, d(\sigma)) = 0$ for all $\sigma' \notin \Gamma_{\ell}(\sigma^{d(\sigma)})$. Recalling in addition that the value function is finite, due to the fact that the reward function is bounded, the equations in expression (\ref{low_temp_value_function_equation}) reduce to 
\begin{equation}\label{low_temp_value_function_equation_simp}
    \tilde{v}^{\pi}_{\lambda, \kappa}(\sigma) = r(\sigma, d(\sigma)) + \lambda^{\kappa} \sum\limits_{\sigma' \in \Gamma_{\kappa}(\sigma^{d(\sigma)})}\tilde{P}_{\kappa}(\sigma'|\sigma, d(\sigma))\tilde{v}^{\pi}_{\lambda, \kappa}(\sigma'), \quad \sigma \in S.
\end{equation}
 We show that dropping the contributions of fragile configurations in the summation in expression (\ref{low_temp_value_function_equation_simp}) in fact leads to a valid approximation of the low-temperature value function. To this end, we define a transition probability kernel $Q_{\kappa}: S \times A \times S \rightarrow [0,1]$ by conditioning on the event that $\sigma_{\kappa} \in U$, given a starting configuration $\sigma \in S$ and action $a \in A$, i.e.,
\begin{equation}\label{Q}
    Q_{\kappa}(\sigma'|\sigma, a) := \begin{cases}
        \dfrac{\tilde{P}_{\kappa}(\sigma'|\sigma, a)}{\sum\limits_{\sigma'' \in U}\tilde{P}_{\kappa}(\sigma''|\sigma, a)}, &\text{if } \sigma' \in U,\\
        0, &\text{otherwise,}
    \end{cases} 
\end{equation}
for $\sigma, \sigma' \in S$, $a \in A$. 
In analogy to expression (\ref{operator_F}), we define the operator $\mathcal{G}_{\lambda, \kappa}^{\pi}: \mathbb{R}^{|S|} \rightarrow \mathbb{R}^{|S|}$ as
\begin{equation}\label{operator_G}
\mathcal{G}^{\pi}_{\lambda, \kappa}\mathbf{x} := \mathbf{r}^{\pi} + \lambda^{\kappa} \mathbf{Q}^{\pi}_{\kappa} \mathbf{x}, \quad \textbf{x} \in \mathbb{R}^{|S|}.
\end{equation}
Theorem \ref{Thm_P_to_Q} asserts that the fixed point of the operator $\mathcal{G}^{\pi}_{\lambda, \kappa}$ can approach the low-temperature value function $\tilde{v}^{\pi}_{\lambda, \kappa}$ arbitrarily closely and that the accuracy of this approximation can be controlled by selecting a sufficiently long adjustment time $\kappa$. 
\begin{theorem}\label{Thm_P_to_Q}
Consider the operators $\mathcal{F}^{\pi}_{\lambda, \kappa}$ and $\mathcal{G}^{\pi}_{\lambda, \kappa}$ defined in expressions (\ref{operator_F}) and (\ref{operator_G}) respectively. For all $\varepsilon > 0$, there exists a positive integer $K$ such that for all $\kappa \geq K$, we have
\begin{equation*}
    ||\mathcal{F}_{\lambda, \kappa}^{\pi}\mathbf{x} - \mathcal{G}_{\lambda, \kappa}^{\pi}\mathbf{x}||_{\infty} < \varepsilon ||\mathbf{x}||_{\infty} \quad \text{for all } \mathbf{x} \in \mathbb{R}^{|S|} \text{ and } \pi \in \Pi.
\end{equation*}
\end{theorem}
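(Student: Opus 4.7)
\textbf{Proof proposal for Theorem \ref{Thm_P_to_Q}.}
The plan is to pass from the operator-level statement to a componentwise estimate, then reduce everything to showing that the $\kappa$-step low-temperature chain lands in $U$ with probability tending to $1$ uniformly in the starting configuration and action. Since the two operators share the reward term, their difference is $\lambda^{\kappa}(\tilde{\mathbf{P}}^{\pi}_{\kappa}-\mathbf{Q}^{\pi}_{\kappa})\mathbf{x}$, so it suffices (using $\lambda^{\kappa}\leq 1$) to control $\|(\tilde{\mathbf{P}}^{\pi}_{\kappa}-\mathbf{Q}^{\pi}_{\kappa})\mathbf{x}\|_{\infty}$.

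Fix $\sigma\in S$, write $a=d(\sigma)$, and set $p_{U}(\sigma,a):=\sum_{\sigma''\in U}\tilde{P}_{\kappa}(\sigma''|\sigma,a)$. Using the definition of $Q_{\kappa}$ in \eqref{Q}, the $\sigma$-th entry of $(\tilde{\mathbf{P}}^{\pi}_{\kappa}-\mathbf{Q}^{\pi}_{\kappa})\mathbf{x}$ splits into a sum over $U$ and a sum over $S\setminus U$:
\begin{equation*}
\sum_{\sigma'\in U}\tilde{P}_{\kappa}(\sigma'|\sigma,a)\Bigl(1-\tfrac{1}{p_{U}(\sigma,a)}\Bigr)x(\sigma')\;+\;\sum_{\sigma'\notin U}\tilde{P}_{\kappa}(\sigma'|\sigma,a)\,x(\sigma').
\end{equation*}
Taking absolute values and pulling $\|\mathbf{x}\|_{\infty}$ out, the first sum contributes at most $\|\mathbf{x}\|_{\infty}\,|p_{U}-1|$ and the second at most $\|\mathbf{x}\|_{\infty}(1-p_{U})$. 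The main inequality becomes $\|(\tilde{\mathbf{P}}^{\pi}_{\kappa}-\mathbf{Q}^{\pi}_{\kappa})\mathbf{x}\|_{\infty}\leq 2\|\mathbf{x}\|_{\infty}\sup_{\sigma,a}(1-p_{U}(\sigma,a))$, so it remains to prove that $1-p_{U}(\sigma,a)$ can be made smaller than $\varepsilon/2$ by choosing $\kappa$ large, uniformly in $\sigma$ and $a$.

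Because robust configurations are absorbing under $\tilde{p}$, we have $1-p_{U}(\sigma,a)=\mathbb{P}_{\sigma^{a}}(\tau_{U}>\kappa)$, where $\tau_{U}$ is the hitting time of $U$ for the low-temperature chain. I would bound $\mathbb{E}_{\eta}[\tau_{U}]$ uniformly in $\eta\in S$ and then apply Markov's inequality. The key observation is that under the Metropolis low-temperature dynamics, a flip at a susceptible spin $i\in\Delta(\sigma)$ \emph{strictly} decreases the Hamiltonian: writing $H(\sigma^{i})-H(\sigma)=2\sigma(i)\sum_{j\in N(i)}\sigma(j)+2h\sigma(i)$, and noting that the first summand is an even integer while $2h\in(0,2)$ is not an integer, this difference is never zero. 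Hence every actual transition of $\tilde{p}$ lowers $H$. Since $H$ takes only finitely many values on the finite space $S$, the minimum positive gap $\delta>0$ between distinct values of $H$ is well defined, and the number of flips along any trajectory is at most $L:=(H_{\max}-H_{\min})/\delta$. In any fragile configuration there is at least one susceptible spin, so the probability of a flip at the next step is at least $1/N^{2}$, hence the geometric waiting time between consecutive flips has mean at most $N^{2}$. Combining, $\mathbb{E}_{\eta}[\tau_{U}]\leq L\cdot N^{2}$ uniformly in $\eta$.

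By Markov's inequality, $\mathbb{P}_{\sigma^{a}}(\tau_{U}>\kappa)\leq LN^{2}/\kappa$ for every $\sigma,a$, so choosing $K:=\lceil 4LN^{2}/\varepsilon\rceil$ yields $2(1-p_{U}(\sigma,a))<\varepsilon$ for all $\kappa\geq K$, which closes the argument. The main obstacle is the uniform hitting-time bound: it hinges on the non-integrality of $h$, which rules out the worst-case scenario in which a susceptible flip preserves energy and the chain could in principle wander indefinitely through an isoenergetic orbit of fragile configurations. Once strict energy decrease is established, the remaining pieces (finitely many $H$-values, geometric stay-times) are routine.
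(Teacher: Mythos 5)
Your proposal is correct and follows essentially the same route as the paper's proof: the same splitting of $(\tilde{\mathbf{P}}^{\pi}_{\kappa}-\mathbf{Q}^{\pi}_{\kappa})\mathbf{x}$ into a sum over $U$ and a sum over its complement, each bounded by $\|\mathbf{x}\|_{\infty}(1-p_U)$, and the same key facts — strict energy decrease at susceptible spins (from $h\in(0,1)$), a uniform bound $L$ on the number of flips needed to reach a robust configuration, and a per-step flip probability of at least $1/N^{2}$. The only difference is cosmetic: you close with Markov's inequality on the expected hitting time of $U$ (yielding an explicit $K$), whereas the paper bounds the same event via the tail of a $\mathrm{Bin}(\kappa,1/N^{2})$ variable.
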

\begin{proof}
    Let $\pi = d^{\infty} \in \Pi$, $\mathbf{x} \in \mathbb{R}^{|S|}$ and $\varepsilon > 0$. We start by showing that there exists a positive integer $K$ such that for all $\kappa \geq K$, we have 
    \begin{equation}\label{ineq_P_E_kappa_sigma_a}
        \sum\limits_{\sigma' \in U}\tilde{P}_{\kappa}(\sigma'|\sigma, a) \geq 1 - \dfrac{\varepsilon}{2} \quad \text{for all } \sigma \in S, a \in A. 
    \end{equation}
    First, observe that $0<h<1$ implies that flipping a susceptible spin strictly decreases the energy of a configuration. Since the energy cannot drop below its global minimum $H_{\text{min}} = -N^2(1+h)$, each closed downhill path has finite length. Let $L$ denote the maximum length of such a path. Note that the probability of transitioning from a fragile configuration $\sigma \in S$ to a different configuration in the low-temperature limit is given by
    \begin{equation*}
        \sum\limits_{\substack{\sigma' \in S \\ \sigma' \neq \sigma}} \tilde{p}(\sigma, \sigma') = \dfrac{|\Delta(\sigma)|}{N^2}. 
    \end{equation*}
    Since each fragile configuration has at least 1 susceptible spin, the probability of taking a step along a downhill path is at least $1/N^2$. The minimum number of steps necessary to reach a robust configuration is at most $L$. Hence, we can lower bound the expression $\sum\limits_{\sigma' \in U}\tilde{P}_{\kappa}(\sigma'|\sigma, a)$ by the probability of obtaining at least $L$ successes in $\kappa$ Bernoulli trials with success probability $1/N^2$. That is,
    \begin{equation*}
        \sum\limits_{\sigma' \in U}\tilde{P}_{\kappa}(\sigma'|\sigma, a) \geq \mathbb{P}(X \geq L), \quad \text{for all } \sigma \in S, \quad a \in A,
    \end{equation*}
    where $X \sim \text{Bin}\left(\kappa, 1/N^2 \right)$ and $\kappa \geq L$. Hence,
    \begin{align*}
        \sum\limits_{\sigma' \in U}\tilde{P}_{\kappa}(\sigma'|\sigma, a) &\geq 1 - \mathbb{P}(X<L) = 1 - \sum\limits_{i=0}^{L-1} \binom{\kappa}{i}\left(\dfrac{1}{N^2}\right)^i\left(1-\dfrac{1}{N^2}\right)^{\kappa - i}.
    \end{align*}
    For a given $i = 0, \ldots, L-1$, we have
    \begin{equation*}
        0 \leq \lim_{\kappa \rightarrow \infty} \binom{\kappa}{i}\left(\dfrac{1}{N^2}\right)^i\left(1-\dfrac{1}{N^2}\right)^{\kappa - i} \leq \lim_{\kappa \rightarrow \infty} \dfrac{\kappa^i}{i!}\left(\dfrac{1}{N^2}\right)^ie^{(\kappa - i)\log{(1-1/N^2)}} = 0.
    \end{equation*}
    Hence,
    \begin{equation*}
        \lim_{\kappa \rightarrow \infty} \left[1 - \sum\limits_{i=0}^{L-1} \binom{\kappa}{i}\left(\dfrac{1}{N^2}\right)^i\left(1-\dfrac{1}{N^2}\right)^{\kappa - i}\right] = 1.
    \end{equation*}
    This implies that there exists a positive integer $K \geq L$ such that expression (\ref{ineq_P_E_kappa_sigma_a}) is satisfied for all $\kappa \geq K$.  
    
    We have
    \begin{equation*}
        ||\tilde{\mathbf{P}}^{\pi}_{\kappa} - \mathbf{Q}^{\pi}_{\kappa}||_{\infty} = \max_{\sigma \in S} \sum\limits_{\sigma' \in S} \left| \tilde{P}^{\pi}_{\kappa}(\sigma'|\sigma, d(\sigma)) - Q^{\pi}_{\kappa}(\sigma'|\sigma, d(\sigma)) \right|.
    \end{equation*}
    Since, $\tilde{P}^{\pi}_{\kappa}(\sigma'|\sigma, d(\sigma)) = Q^{\pi}_{\kappa}(\sigma'|\sigma, d(\sigma)) = 0$ for all $\sigma \in S$, $\sigma' \in S \setminus \Gamma_{\kappa}(\sigma^{d(\sigma)})$ and $Q^{\pi}_{\kappa}(\sigma'|\sigma, d(\sigma)) = 0$ for all $\sigma \in S$ and $\sigma' \in \Gamma_{\kappa}(\sigma^{d(\sigma)}) \setminus U_{\kappa}(\sigma^{d(\sigma)})$, we obtain
    \begin{align}\label{inf_norm}
         ||\tilde{\mathbf{P}}^{\pi}_{\kappa} - \mathbf{Q}^{\pi}_{\kappa}||_{\infty} &= \max_{\sigma \in S} \sum\limits_{\sigma' \in \Gamma_{\kappa}(\sigma^{d(\sigma)})} \left| \tilde{P}^{\pi}_{\kappa}(\sigma'|\sigma, d(\sigma)) - Q^{\pi}_{\kappa}(\sigma'|\sigma, d(\sigma)) \right| \\
        \nonumber&= \max_{\sigma \in S} \Big\{ \sum\limits_{\sigma' \in U_{\kappa}(\sigma^{d(\sigma)})} \left| \tilde{P}^{\pi}_{\kappa}(\sigma'|\sigma, d(\sigma)) - Q^{\pi}_{\kappa}(\sigma'|\sigma, d(\sigma)) \right| + \sum\limits_{\sigma' \in \Gamma_{\kappa}(\sigma^{d(\sigma)}) \setminus U_{\kappa}(\sigma^{d(\sigma)})} \tilde{P}^{\pi}_{\kappa}(\sigma'|\sigma, d(\sigma)) \Big\}.
    \end{align}
    For the first summation, using expression (\ref{Q}) yields
    \begin{align*}
        &\sum\limits_{\sigma' \in U_{\kappa}(\sigma^{d(\sigma)})} \left| \tilde{P}^{\pi}_{\kappa}(\sigma'|\sigma, d(\sigma)) - Q^{\pi}_{\kappa}(\sigma'|\sigma, d(\sigma)) \right| \\
        &=  \sum\limits_{\sigma' \in U_{\kappa}(\sigma^{d(\sigma)})} \left| \sum\limits_{\sigma'' \in U}\tilde{P}_{\kappa}(\sigma''|\sigma, d(\sigma))Q^{\pi}_{\kappa}(\sigma'|\sigma, d(\sigma)) - Q^{\pi}_{\kappa}(\sigma'|\sigma, d(\sigma))\right| \\
        \nonumber&= 1-\sum\limits_{\sigma' \in U}\tilde{P}_{\kappa}(\sigma'|\sigma, d(\sigma)).
    \end{align*}
    Choosing $\kappa \geq K$, we now obtain
    \begin{equation*}
         \sum\limits_{\sigma' \in U_{\kappa}(\sigma^{d(\sigma)})} \left| \tilde{P}^{\pi}_{\kappa}(\sigma'|\sigma, d(\sigma)) - Q^{\pi}_{\kappa}(\sigma'|\sigma, d(\sigma)) \right| \leq \dfrac{\varepsilon}{2}.
    \end{equation*}
    As for the second summation in expression (\ref{inf_norm}), we have, again for $\kappa \geq K$, 
    \begin{equation*}
        \sum\limits_{\sigma' \in \Gamma_{\kappa}(\sigma^{d(\sigma)}) \setminus U_{\kappa}(\sigma^{d(\sigma)})} \tilde{P}^{\pi}_{\kappa}(\sigma'|\sigma, d(\sigma)) = 1-\sum\limits_{\sigma' \in U}\tilde{P}_{\kappa}(\sigma'|\sigma, d(\sigma)) \leq \dfrac{\varepsilon}{2}. 
    \end{equation*}
    Hence, we obtain
    \begin{equation*}
        ||\tilde{\mathbf{P}}^{\pi}_{\kappa} - \mathbf{Q}^{\pi}_{\kappa}||_{\infty} \leq \varepsilon.
    \end{equation*}
    This now implies
    \begin{equation*}
        ||\mathcal{F}^{\pi}_{\lambda, \kappa}\mathbf{x} - \mathcal{G}^{\pi}_{\lambda, \kappa}\mathbf{x}||_{\infty} = ||\lambda^{\kappa}(\tilde{\mathbf{P}}^{\pi}_{\kappa} - \mathbf{Q}^{\pi}_{\kappa})\mathbf{x}||_{\infty} \leq \lambda^{\kappa}||\tilde{\mathbf{P}}^{\pi}_{\kappa} - \mathbf{Q}^{\pi}_{\kappa}||_{\infty} \cdot ||\mathbf{x}||_{\infty} < \varepsilon ||\mathbf{x}||_{\infty},
    \end{equation*}
    for all $\kappa \geq K$. 
\end{proof}

\subsubsection{The auxiliary MDP}
\label{The auxiliary MDP}
 Theorem \ref{Thm_P_to_Q} suggests that the fixed point of the operator $\mathcal{G}^{\pi}_{\lambda, \kappa}$ for some $\pi \in \Pi$ provides an accurate approximation of the low-temperature value function $\tilde{v}^{\pi}_{\lambda, \kappa}$ for a sufficiently long adjustment time~$\kappa$. Armed with this result, we construct an auxiliary MDP that serves as a caricature version of the original Ising STMDP and ranges only over the all-minus configuration, denoted by $\sigma^-$ together with the set $U^1$, i.e., the set of robust configurations in which the spins in state $+1$ form a single cluster. The following result provides a geometrical characterization of such robust configurations. 

\begin{theorem}\label{geom}
A configuration $\sigma \in S$ belongs to $U^1$ if and only if $\sigma \neq \sigma^-$ and the spins in state $+1$ form a rectangle of size $i \times j$, for some $i,j \in \{2, 3, \ldots, N-3, N-2, N\}$.     
\end{theorem}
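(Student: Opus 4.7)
The plan is to prove both directions of the equivalence separately. The key observation to use throughout is the explicit reformulation of robustness stated earlier in the section: a configuration $\sigma$ is robust if and only if every $+$-spin has at least two $+$-neighbors (equivalently, at most one $-$-neighbor of the susceptibility threshold) and every $-$-spin has at most one $+$-neighbor.

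For the $(\Leftarrow)$ direction, I would verify robustness by direct case analysis on the structure of the rectangle. Given that $V_+(\sigma)$ is an $i \times j$ rectangle $R$ with $i, j$ in the allowed set, the $+$-spins in $I(\sigma)$ have $4$ $+$-neighbors, those in $B_h(\sigma) \cup B_v(\sigma)$ have $3$, and those in $C(\sigma)$ have $2$, all above the susceptibility threshold. For the $-$-spins, those adjacent to a flat side of $R$ have exactly one $+$-neighbor, those diagonally adjacent to a corner have none, and all others have none. The wrapping cases $i = N$ or $j = N$ require some adjustment to the counts because of the torus identification, but the analysis is analogous. The exclusion $i, j \neq N - 1$ is exactly what prevents the complementary region from containing a strip of width one, in which case its $-$-spins would acquire two $+$-neighbors across the wraparound and become susceptible. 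Connectedness of $R$ then yields the single-cluster property.

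For the $(\Rightarrow)$ direction, I would fix $\sigma \in U^1$ and establish rigidity of $V_+(\sigma)$ in three geometric steps. Let $R_y := \{x : (x, y) \in V_+(\sigma)\}$ for each row $y$, with an analogous notation for columns. Step 1: show that each nonempty $R_y$ is a contiguous arc on the cycle of length $N$. A gap of length one in $R_y$ would place a $-$-spin with two $+$-horizontal-neighbors, immediately contradicting robustness; a gap of length $\geq 2$ would split $R_y$ into disjoint arcs, and I would derive a contradiction by propagating the boundary constraints outward from each endpoint and invoking the single-cluster assumption. Step 2: show that adjacent rows with nonempty $+$-content satisfy $R_y = R_{y+1}$. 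A mismatch of a single position at either endpoint creates a $-$-spin with both a horizontal and a vertical $+$-neighbor (hence susceptible), while larger mismatches are ruled out by combining Step 1 with connectedness. Step 3: use single-cluster connectivity to force $Y := \{y : R_y \neq \emptyset\}$ to be a contiguous arc on the column cycle. Together these three statements identify $V_+(\sigma)$ as a rectangle with well-defined dimensions $i \times j$. Finally, I would exclude the forbidden dimensions by re-examining the susceptibility counts, exactly as in the forward direction.

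The step I expect to be the main obstacle is Step 1 of the forward direction---ruling out gaps of length $\geq 2$ within a single row. The length-one case is immediate, but longer gaps require a careful bootstrap: one has to propagate the boundary constraints outward from the endpoints of each isolated arc and show that the forced local geometry, combined with global connectivity, eventually produces either a second disconnected $+$-component or a susceptible spin somewhere in the configuration. Handling the torus wraparound carefully in this combinatorial argument, and distinguishing the bulk rectangle case from the wrapping strip cases $i = N$ or $j = N$, adds bookkeeping but no conceptual difficulty once Step 1 is in place.
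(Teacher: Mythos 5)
Your plan follows the same route as the paper's proof, which consists of exactly two sentences: restate the local robustness criterion (each $+$-spin has at least two $+$-neighbours, each $-$-spin at most one $+$-neighbour) and then assert, with no further argument, that a single-cluster configuration meets it precisely when the $+$-spins form a rectangle with side lengths in $\{2,\ldots,N-2,N\}$. So there is no divergence of method, only of detail: your row/column decomposition is the combinatorial content the paper leaves implicit. One simplification is worth pointing out, because it removes the step you single out as the main obstacle. The $-$-spin condition forbids not only two horizontal or two vertical $+$-neighbours but also \emph{one of each}; that is, robustness rules out every concave corner outright. With this in hand, your Steps 1 and 2 need no bootstrap: if $x \in R_y$ but $x \notin R_{y+1}$, then $(x,y+1)$ already has a vertical $+$-neighbour, so $(x\pm 1,y+1)$ must be $-$; propagating along the arc of $R_y$ forces $R_{y+1}$ either to be empty or to be horizontally separated from that arc by at least two, after which connectivity must route the cluster around a turn that necessarily creates a concave corner. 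The same observation handles split rows and the torus wrap-around uniformly, so the ``careful bookkeeping'' you anticipate largely evaporates.

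There is, however, one configuration that slips through both your final exclusion step and the paper's assertion: a $1\times N$ ring of $+$-spins wrapping the torus. Every $+$-spin in it has exactly two $+$-neighbours (along the ring) and every adjacent $-$-spin exactly one, so the configuration is robust, connected, and distinct from $\sigma^-$, hence lies in $U^1$ by definition; yet $1$ is not in the admissible set of side lengths. Your concluding step, ``exclude the forbidden dimensions by re-examining the susceptibility counts,'' works for $1\times j$ with $j<N$ (the endpoints of the strip have three $-$-neighbours and are susceptible) but fails when the strip wraps completely. Executed honestly, your plan would surface this as a counterexample to the statement rather than complete the proof, so either the theorem needs this degenerate case excluded by hypothesis or the admissible set must be enlarged; the paper's one-line proof glosses over it in exactly the same way.
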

\begin{proof}
    A configuration $\sigma \neq \sigma^-$ is robust if and only if each spin in state $-1$ has at most one neighbor in state $+1$ and each spin in state $+1$ has at least two neighbors in state $+1$. A configuration in which the spins in state $+1$ form one cluster satisfies these requirements if and only if these spins form a rectangle of size $i \times j$, where $i,j \in \{2, 3, \ldots, N-3, N-2, N\}$. 
\end{proof}
We now formalize the construction of the auxiliary MDP $(\hat{S}, \hat{A}, \hat{P}, \hat{r}$) described in Section~\ref{Definition of the Ising STMDP}. Recall that the state space  $\hat{S}$ consist of 2-dimensional vectors representing the size of a rectangle in the Ising STMDP, i.e.,
\begin{equation*}
    \hat{S} = \{(i,j)|i,j = 2, 3, \ldots, N-3, N-2, N\} \cup \{(0,0)\}.
\end{equation*}
Here, the vector $(0,0)$ corresponds to the all-minus configuration. To formally relate the original low-temperature Ising STMDP to the auxiliary MDP, we define a mapping $\mathcal{I}: U^1 \cup \{\sigma^-\} \rightarrow \hat{S}$ as $\mathcal{I}(\sigma) = (i,j)$ if $\sigma \in U^1$ and the spins in state $+1$ in $\sigma$ form a rectangle of size $i \times j$ and $\mathcal{I}(\sigma^-) = (0, 0)$. Given $(i,j) \in \hat{S}$, we now let $\sigma_{(i,j)} \in \mathcal{I}^{-1}((i,j))$, without loss of generality, denote the configuration in $\mathcal{I}^{-1}((i,j))$ in which the lower left corner of the rectangle is located at the origin of the lattice. 
The action space $\hat{A}_{(i,j)}$ corresponding to a state $(i,j) \in \hat{S}$ is defined as
\begin{equation*}
    \hat{A}(i,j) = \begin{cases}
        \{0\}, &\text{if } i = j = 0 \text{ or } i = j = N,\\
        \{a_{11}', a_{12}', a_{21}', a_{22}', a_0, \tilde{a}, 0\}, &\text{if } i = j = 2, \\
        \{a_{11}', a_{12}', a_{21}, a_{22}, a_{21}', a_{22}', a_0, \tilde{a}, 0\}, &\text{if } i = 2,\text{ } 3 \leq j \leq N-3, \\
        \{a_{11}', a_{21}, a_{22}, a_{21}', a_{22}', a_0, \tilde{a}, 0\}, &\text{if } i = 2,\text{ } j = N-2,\\
        \{a_{11}, a_{12}, a_{11}', a_{12}', a_{21}', a_{22}', a_0, \tilde{a}, 0\}, &\text{if } 3 \leq i \leq N-3, \text{ }j = 2, \\
        \{a_{11}, a_{12}, a_{11}', a_{12}', a_{21}', a_0, \tilde{a}, 0\}, &\text{if } i = N-2, \text{ }j = 2,\\
        \{a_{11}, a_{12}, a_{11}', a_{12}', a_{21}, a_{22}, a_{21}', a_{22}', a_0, \tilde{a}, 0\}, &\text{if } 3 \leq i, j \leq N-3, \\
        \{a_{11}, a_{12}, a_{11}', a_{12}', a_{21}, a_{21}', a_0, \tilde{a}, 0\},  &\text{if } i = N-2, \text{ }3 \leq j \leq N-3, \\
        \{a_{11}, a_{11}', a_{21}, a_{22}, a_{21}', a_{22}', a_0, \tilde{a}, 0\}, &\text{if }3 \leq i \leq N-3, \text{ }j = N-2, \\
        \{a_{11}, a_{11}', a_{21}, a_{21}', a_0, \tilde{a}, 0\},  &\text{if } i = j = N-2, \\
        \{a_{21}, 0\}, &\text{if } i = N-2, \text{ }j = N, \\
        \{a_{11}, a_{12}, 0\}, &\text{if } i = N,\text{ } 2 \leq j \leq N-3, \\
        \{a_{11}, 0\}, &\text{if } i = N, \text{ }j = N-2, \\
        \{a_{21}, a_{22}, 0\}, &\text{if } 2 \leq i \leq N-3,\text{ } j = N.\\
    \end{cases}
\end{equation*}
To clarify the connection to the original STMDP, we define a mapping $\mathcal{J}_{\sigma}: A \rightarrow \hat{A}_{\mathcal{I}(\sigma)}$ for each $\sigma \in U^1 \cup \{\sigma^-\}$ as 
\begin{equation*}
    \mathcal{J}_{\sigma}(a) = \begin{cases}
        a_{1\ell}, &\text{if } \tilde{\delta}_v(a, V_+(\sigma)) = \ell \text{ and } \mu_v(a, V_+(\sigma)) \notin C(\sigma),\\
        a_{1\ell}', &\text{if } \tilde{\delta}_v(a, V_+(\sigma)) = \ell \text{ and } \mu_v(a, V_+(\sigma)) \in C(\sigma),\\
        a_{2\ell}, &\text{if } \tilde{\delta}_h(a, V_+(\sigma)) = \ell \text{ and } \mu_h(a, V_+(\sigma)) \notin C(\sigma),\\
        a_{2\ell}', &\text{if } \tilde{\delta}_h(a, V_+(\sigma)) = \ell \text{ and } \mu_h(a, V_+) \in C(\sigma), \\
        a_0,  &\text{if } \tilde{\delta}(a, V_+(\sigma)) = 2 \text{ and } \tilde{\delta}_h(a, V_+(\sigma)) = \tilde{\delta}_v(a, V_+(\sigma)) = \infty,\\
        \tilde{a}, &\text{if } a \in C(\sigma), \\
        0, &\text{ otherwise,}
    \end{cases} 
\end{equation*}
for $\ell = 1, 2$, $a \in A$. Hence, $a_{1\ell}$ and $a_{2\ell}$, $\ell = 1, 2$, correspond to the actions of flipping a spin at distance $\ell$ from the horizontal and the vertical side of the rectangle, for which the closest spin belonging to the rectangle is not a corner spin of the rectangle. Actions $a_{1\ell}'$, $a_{2\ell}'$, $\ell = 1, 2$, correspond to the actions of flipping a spin at distance $\ell$ from the horizontal and the vertical side of the rectangle, where the closest spin belonging to the rectangle is a corner spin. The action $a_0$ represents the action of flipping a spin that is diagonally adjacent to the rectangle. The action $\tilde{a}$ corresponds to the action of flipping a corner spin of the rectangle. The action $0$ represents flipping any other spin or doing nothing. 
We refer to Figure \ref{auxiliary_action_space} for a visualization of the action space.

Recall that the reward function of the auxiliary process is given by 
\begin{equation*}
    \hat{r}(s, a) = \begin{cases}
        1, &\text{ if } s = (N, N), \\
        0, &\text{ otherwise},
    \end{cases}
\end{equation*}
for all $s \in \hat{S}$, $a \in \hat{A}$

Finally, we define the transition probability kernel $\hat{P}: \hat{S} \times \hat{A} \times \hat{S} \rightarrow [0,1]$, which is based on the the transition probability kernel $Q_{\kappa}: S \times A \times S \rightarrow [0,1]$ given by expression (\ref{Q}). To this end, we first provide some preliminary results. The following statement asserts that for any configuration $\sigma \in U^1$, action $a \in A$ and state $(i',j') \in \hat{S}$, there is at most one robust downhill configuration of $\sigma^a$ that corresponds to state $(i',j')$. 
\newpage
\begin{lemma}
    For any configuration $\sigma \in U^1$, action $a \in A$ and state $(i',j') \in \hat{S}$, the set $U^1(\sigma^a) \cap \mathcal{I}^{-1}((i',j'))$ is either empty or a singleton.
\end{lemma}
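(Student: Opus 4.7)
The plan is to use the rectangular structure of $\sigma \in U^1$: by Theorem \ref{geom}, $V_+(\sigma)$ is a rectangle $R$ of size $i \times j$, and $\sigma^a$ differs from $\sigma$ in at most one spin. Hence $V_+(\sigma^a)$ is either $R$, $R \cup \{x\}$ for some $-$-spin $x$ outside $R$, or $R \setminus \{x\}$ for some $+$-spin $x \in R$. I would argue that for any $\tau \in U^1(\sigma^a)$, the position (not just the size) of the rectangle $V_+(\tau)$ is determined by $R$ and the single spin flipped by $a$; since $(i', j') \in \hat{S}$ already prescribes the size, this implies that the intersection contains at most one element.

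The core step is to show that any side of $R$ lying far from the perturbed spin remains a side of every rectangle reachable from $\sigma^a$ by a downhill path. Interior $+$-spins of $R$ and $-$-spins just outside a side of $R$ that is away from the perturbation have their full rectangular local pattern already in $\sigma^a$: the interior $+$-spins have four $+$-neighbors, and the nearby outside $-$-spins have at most one $+$-neighbor. Since downhill flips can occur only at susceptible spins, these local patterns persist throughout the downhill evolution in the unperturbed region. Consequently, the side geometry of $R$ away from the perturbation is frozen, and must coincide with a side of any rectangle in $U^1(\sigma^a)$.

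Given this invariance, a short case analysis on $a$ will pin down the remaining sides once the size $(i', j')$ is specified. For a growth action (flipping a $-$-spin adjacent to a side of $R$), the only way the downhill dynamics can enlarge $R$ is to extend along the side of $R$ closest to the flipped spin, so the new side is located exactly one row or column beyond it. For a shrinkage action (flipping a $+$-spin in $R$), the rectangle can only peel off rows or columns along the sides of $R$ containing the flipped spin, and the number of rows/columns lost on each affected side is forced by the requirement that the final size equal $(i', j')$.

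The main obstacle is the analysis of corner-type actions, namely $\tilde{a}$ (flipping a corner of $R$) and $a_{1\ell}'$, $a_{2\ell}'$ (flipping a $-$-spin near a corner), where both sides meeting at that corner may participate in the downhill dynamics and shrinkage can propagate in two directions. A careful tracking of the susceptibility patterns along each admissible downhill sequence is required, but the invariance argument above guarantees that the two opposite sides of $R$ remain frozen; these pin the far corner of any shrunk rectangle, and combined with the prescribed size $(i', j')$ they uniquely determine the location of the two remaining sides. Thus, even in the corner cases, at most one configuration in $U^1(\sigma^a)$ is compatible with each $(i', j')$, proving the lemma.
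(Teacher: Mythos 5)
Your proposal is correct and follows essentially the same route as the paper's proof: both arguments hinge on the observation that downhill flips from $\sigma^a$ are confined to the immediate vicinity of the perturbed spin (the paper phrases this as "the only spins that can flip on a downhill path are those in $R(\sigma^a)\setminus V_+(\sigma)$", you phrase it as the far sides of the rectangle being frozen), and then a case analysis over the action type shows that the prescribed size pins down the unique compatible robust rectangle. Your side-freezing formulation is a slightly more uniform packaging of what the paper does by explicit enumeration of the reachable robust configurations, but the substance and the level of rigor are the same.
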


\begin{proof}
    Let $\sigma \in U^1$ and suppose that $\mathcal{I}(\sigma) = (i,j)$. Consider the case that $\mathcal{J}_{\sigma}(a) = 0$. This implies that $a \in \{0\} \cup [V_+(\sigma)\setminus C(\sigma)] \cup \{i \in V|\tilde{\delta}(a, V_+(\sigma)) > 2\}$. Suppose first that $a = 0$. In this case, we have $U^1(\sigma^a) \cap \mathcal{I}^{-1}((i',j')) = \{\sigma\}$ if $(i',j') = (i,j)$ and $U^1(\sigma^a) \cap \mathcal{I}^{-1}((i',j')) = \emptyset$ otherwise. Suppose now that either $a \in V_+(\sigma) \setminus C(\sigma)$ or $a$ satisfies $\tilde{\delta}(a, V_+(\sigma)) > 2$. In both cases, the only downhill path starting at configuration $\sigma^a$ is $(\sigma^a, \sigma)$, so we again obtain $U^1(\sigma^a) \cap \mathcal{I}^{-1}((i',j')) = \{\sigma\}$ if $(i',j') = (i,j)$ and $U^1(\sigma^a) \cap \mathcal{I}^{-1}((i',j')) = \emptyset$ otherwise. 

    We proceed to consider the case $\mathcal{J}_{\sigma}(a) = \tilde{a}$. If $i, j \geq 3$, then the only downhill path starting at configuration $\sigma^a$ is $(\sigma^a, \sigma)$, which again yields $U^1(\sigma^a) \cap \mathcal{I}^{-1}((i',j')) = \{\sigma\}$ if $(i',j') = (i,j)$ and $U^1(\sigma^a) \cap \mathcal{I}^{-1}((i',j')) = \emptyset$ otherwise. Suppose now that $a \in C(\sigma)$ and either $i = 2$, $j > 2$ or $i > 2$, $j = 2$. Without loss of generality, we consider the case $i = 2$ and $j > 2$. In this situation, $\sigma^a$ has two susceptible spins, namely $a$ itself and the unique horizontal neighbor $a'$ of $a$ that is in state $+1$. Flipping spin $a$ leads back to configuration $\sigma$, whereas flipping spin $a'$ leads to the robust configuration $\sigma^{\{a, a'\}}$, which satisfies $\mathcal{I}(\sigma^{\{a, a'\}}) = (i, j-1)$. Hence, we obtain 
        \begin{equation*}
            U^1(\sigma^a) \cap \mathcal{I}^{-1}((i',j')) = \begin{cases}
                \{\sigma\}, &\text{if } (i',j') = (i,j), \\
                \{\sigma^{\{a, a'\}}\}, &\text{if } (i',j') = (i, j-1), \\
                \emptyset, &\text{otherwise}.
            \end{cases}
        \end{equation*}
        Suppose that $i = j = 2$. In this case, $\sigma^a$ has three susceptible spins, namely $a$, the unique horizontal neighbor $a'$ of $a$ that is in state $+1$ and the unique vertical neighbor $a''$ of $a$ that is in state $+1$. If either spin $a'$ or spin $a''$ flips first from $\sigma^a$, any closed downhill path will end in the all-minus configuration $\sigma^-$, which corresponds to state $(0,0)$. On the other hand, flipping spin $a$ first leads back to configuration $\sigma$. Thus, we have 
        \begin{equation*}
            U^1(\sigma^a) \cap \mathcal{I}^{-1}((i',j')) = \begin{cases}
                \{\sigma\}, &\text{if } (i',j') = (i,j), \\
                \{\sigma^{-}\}, &\text{if } (i',j') = (0,0), \\
                \emptyset, &\text{otherwise}.
            \end{cases}
        \end{equation*} 
    Finally, consider the situation $\mathcal{J}_{\sigma}(a) \neq 0$. In this case, the only spins that can flip on a downhill path starting at configuration $\sigma^a$ are those in the set $R(\sigma^a) \setminus V_+(\sigma)$, i.e., the spins in the circumscribed rectangle of the set of $+$-spins in $\sigma^a$ which are not part of the rectangle in configuration $\sigma$. We consider $\mathcal{J}_{\sigma}(a) = a_{12}$. The remaining cases can be dealt with in a similar way. Let $b_{11}(\sigma)$ and $b_{12}(\sigma)$ denote the horizontal sets of spins lying at distance 1 and 2 from the rectangle in $\sigma$ respectively, as depicted in Figure \ref{Slices}. 
    
     \begin{figure}
     \centering
        \includegraphics[width=0.31\linewidth]{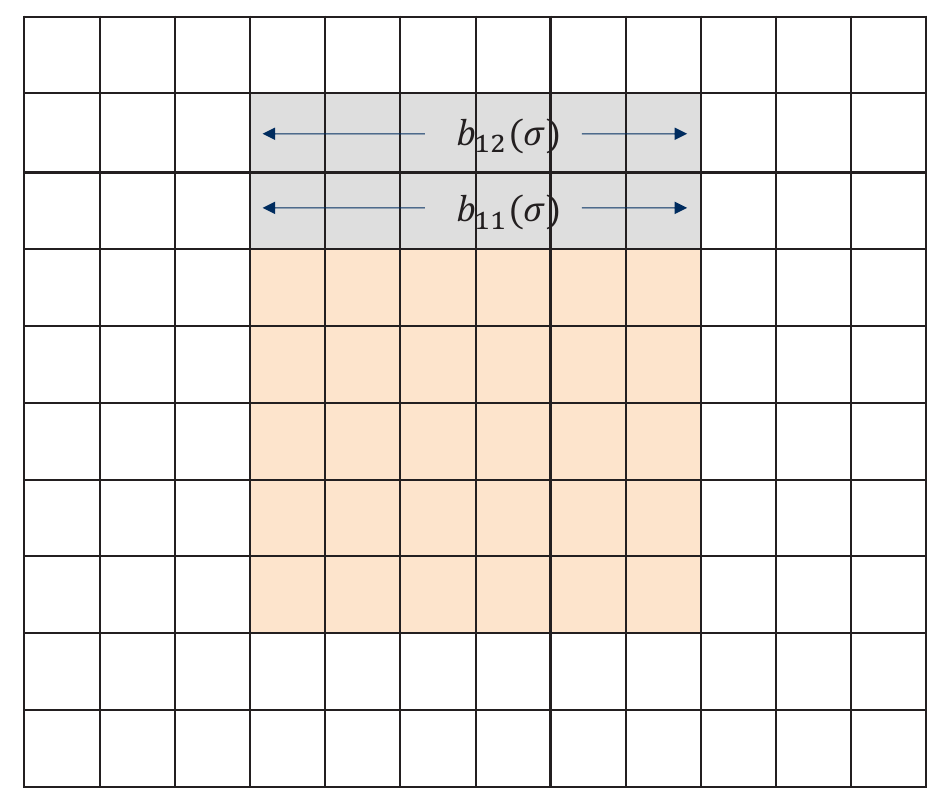}
        \caption{Sets $b_{11}(\sigma)$ and $b_{12}(\sigma)$.}
        \label{Slices}
    \end{figure}

    We have $R(\sigma^a) \setminus V_+(\sigma) = b_{11}(\sigma) \cup b_{12}(\sigma)$, so the only spins that can flip on a downhill path starting at configuration $\sigma^a$ are those in the set $b_{11}(\sigma) \cup b_{12}(\sigma)$. This implies that the only robust endpoints of downhill paths starting at $\sigma^a$ are the configurations $\sigma$ itself, $\sigma^{b_{11}(\sigma)}$ and $\sigma^{b_{11}(\sigma) \cup b_{12}(\sigma)}$. Thus, we obtain $U^1(\sigma^a) \cap \mathcal{I}^{-1}((i',j')) = \{\sigma\}$ if $(i',j') = (i,j)$, $U^1(\sigma^a) \cap \mathcal{I}^{-1}((i',j')) = \{\sigma^{b_{11}(\sigma)}\}$ if $(i',j') = (i, j+1)$, $U^1(\sigma^a) \cap \mathcal{I}^{-1}((i',j')) = \{\sigma^{b_{11}(\sigma) \cup b_{12}(\sigma)}\}$ if $(i',j') = (i, j+2)$ and $U^1(\sigma^a) \cap \mathcal{I}^{-1}((i',j')) = \emptyset$ otherwise. Analogous results for $\mathcal{J}_{\sigma}(a) \in \{a_{11}, a_{21}, a_{22}, a_{11}', a_{12}', a_{21}', a_{22}', a_0\}$ can be obtained in a similar way. It follows that the set $U^1(\sigma^a) \cap \mathcal{I}^{-1}((i',j'))$ is either empty or a singleton for any $\sigma \in U^1$ and $a \in A$.
\end{proof}

If the set $U^1(\sigma_{(i,j)}^a) \cap \mathcal{I}^{-1}((i',j'))$, for $(i,j), (i',j') \in \hat{S}$ and $a \in A$, is nonempty, let the unique configuration in this set be denoted by $\eta_{\{(i,j), a, (i',j')\}}$.

\newpage
\begin{lemma}\label{map_actions}
    Let $(i,j), (i',j') \in \hat{S}$ and $a, a' \in A$. If $\mathcal{J}_{\sigma_{(i,j)}}(a) = \mathcal{J}_{\sigma_{(i,j)}}(a')$, then 
    \begin{equation*}
        \lim_{\kappa \rightarrow \infty} Q_{\kappa}(\eta_{\{(i,j), a, (i',j')\}}|\sigma_{(i,j)}, a) = \lim_{\kappa \rightarrow \infty} Q_{\kappa}(\eta_{\{(i,j), a', (i',j')\}}|\sigma_{(i,j)}, a').
    \end{equation*}
\end{lemma}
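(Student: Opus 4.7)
The plan is to exhibit a lattice automorphism that carries $\sigma_{(i,j)}^a$ onto $\sigma_{(i,j)}^{a'}$ and respects both the Hamiltonian and the rectangle-size label $\mathcal{I}$. Since the low-temperature Metropolis kernel is built solely from $H$ and the uniform choice of spin, such an automorphism will intertwine the dynamics starting from the two post-decision configurations, from which equality of the limiting $Q$-probabilities will follow.

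First I would record the basic symmetries. The torus automorphism group generated by translations $(x,y)\mapsto(x+x_0,y+y_0)\bmod N$ together with the reflections about the horizontal and vertical axes preserves both the edge set of $V$ and the external field term, hence the Hamiltonian: $H(\phi\sigma)=H(\sigma)$ for each such $\phi$ and each $\sigma\in S$. Because the Metropolis acceptance rule depends on $\sigma$ only through $H$, we get $\tilde p(\phi\sigma,\phi\eta)=\tilde p(\sigma,\eta)$ for all $\sigma,\eta$. Consequently $\phi$ sends downhill paths to downhill paths of equal probability, maps robust configurations to robust configurations, and (crucially) sends rectangles of size $i'\times j'$ to rectangles of the same size, so it maps $\mathcal{I}^{-1}((i',j'))$ into itself.

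Next I would isolate the subgroup $G_{(i,j)}$ of automorphisms that fix the set $V_+(\sigma_{(i,j)})$ setwise: the two reflections through the axes of symmetry of the rectangle, together with torus translations along any dimension in which the rectangle has extent $N$ (the cases $i=N$ or $j=N$). Inspecting the definition of $\mathcal{J}_{\sigma_{(i,j)}}$, every action label depends on $a$ only through $\tilde\delta_h(a,V_+(\sigma_{(i,j)}))$, $\tilde\delta_v(a,V_+(\sigma_{(i,j)}))$ and whether the nearest rectangle spin lies in $C(\sigma_{(i,j)})$. All three invariants are preserved by $G_{(i,j)}$, and a short case check on each label $a_{1\ell}, a_{2\ell}, a'_{1\ell}, a'_{2\ell}, a_0, \tilde a, 0$ (for $\ell\in\{1,2\}$) shows that $G_{(i,j)}$ acts transitively on each nonempty fiber $\mathcal{J}_{\sigma_{(i,j)}}^{-1}(\hat a)$. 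Hence whenever $\mathcal{J}_{\sigma_{(i,j)}}(a)=\mathcal{J}_{\sigma_{(i,j)}}(a')$ there exists $\phi\in G_{(i,j)}$ with $\phi(a)=a'$; since $\phi$ fixes $\sigma_{(i,j)}$ we have $\phi(\sigma_{(i,j)}^a)=\sigma_{(i,j)}^{a'}$.

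To conclude, since $\phi$ preserves path probabilities and permutes the fibers of $\mathcal{I}$, it bijects downhill paths from $\sigma_{(i,j)}^a$ ending in $\mathcal{I}^{-1}((i',j'))$ with those from $\sigma_{(i,j)}^{a'}$ ending in the same fiber, and in particular sends $\eta_{\{(i,j),a,(i',j')\}}$ to $\eta_{\{(i,j),a',(i',j')\}}$ when these exist. Summing over path lengths $\le\kappa$ yields
\begin{equation*}
\tilde P_\kappa\bigl(\eta_{\{(i,j),a,(i',j')\}}\mid\sigma_{(i,j)},a\bigr)=\tilde P_\kappa\bigl(\eta_{\{(i,j),a',(i',j')\}}\mid\sigma_{(i,j)},a'\bigr),
\end{equation*}
and likewise the denominator $\sum_{\sigma''\in U}\tilde P_\kappa(\sigma''\mid\sigma_{(i,j)},a)$ in \eqref{Q} is $\phi$-invariant. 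Both identities hold for every $\kappa$, and hence persist in the limit $\kappa\to\infty$, giving the lemma. The main obstacle is the verification of transitivity of $G_{(i,j)}$ on each fiber in the degenerate rectangle sizes ($i$ or $j$ equal to $2$, $N-2$, or $N$), where the available symmetries shift: along a side of length $N$ one gains a full cyclic translation group, while for small $i$ or $j$ some reflections coincide and the corner/non-corner distinction requires careful bookkeeping. This is a finite but tedious case check, driven by the explicit description of $\hat A_{(i,j)}$.
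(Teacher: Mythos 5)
There is a genuine gap: your key claim that the stabilizer $G_{(i,j)}$ of the rectangle acts transitively on every fiber $\mathcal{J}_{\sigma_{(i,j)}}^{-1}(\hat a)$ is false for the unprimed actions $a_{11},a_{12},a_{21},a_{22}$. For a rectangle with $3\le i,j\le N-3$ the only lattice automorphisms fixing $V_+(\sigma_{(i,j)})$ setwise are the reflections through its two center lines (plus the diagonal ones if $i=j$); no translation is available. The fiber of, say, $a_{11}$ consists of all $2(m-2)$ spins at distance $1$ above or below a side of length $m$ whose nearest rectangle spin is not a corner, and the reflection group partitions these into orbits of size at most $4$ (at most $8$ in the square case). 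As soon as $m\ge 5$ (resp.\ $m\ge 7$ for a square) there is more than one orbit, so no automorphism carries $\sigma_{(i,j)}^{a}$ to $\sigma_{(i,j)}^{a'}$ for two positions in different orbits, and your intertwining argument does not apply to exactly the cases that carry the content of the lemma. Your symmetry argument does dispose of the fibers of $a_{1\ell}',a_{2\ell}',\tilde a, a_0$, which are single orbits --- and this is precisely the set of actions the paper also dismisses ``by symmetry.''

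What is missing is a locality argument, which is how the paper closes the gap for the unprimed actions. One shows that the limiting probability of each robust outcome is determined entirely by the evolution of a small \emph{essential set} $W$ of spins around the flipped spin (for $a_{12}$, the flipped spin, its neighbor adjacent to the rectangle, and the horizontal neighbors of both --- six spins): every closed downhill sequence with the same restriction to $W$ ends in the same robust configuration, and the susceptibility of the spins in $W$ is unaffected by what happens elsewhere along the growing row. The local configuration of $W$ together with the adjacent rectangle spins is the same \emph{up to translation} for every $a$ in the fiber; the translation does not fix the rectangle globally, but since only the evolution on $W$ matters, this local identification already forces equal outcome probabilities. Without some such reduction to a translation-invariant local neighbourhood, the equality of the $a_{12}$-probabilities for two non-symmetric positions along a long side is not established by your argument.
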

\begin{proof}
    By symmetry, the statement is obvious for \[
\mathcal{J}_{\sigma_{(i,j)}}(a) = \mathcal{J}_{\sigma_{(i,j)}}(a') 
\in \{a_{11}', a_{12}', a_{21}', a_{22}', \tilde{a}, a_0\}.
\] We consider $\mathcal{J}_{\sigma_{(i,j)}}(a) = \mathcal{J}_{\sigma_{(i,j)}}(a') = a_{12}$. The result for the remaining cases can be obtained in a very similar way. Let the spin $a$ be labeled as 1, let its vertical neighbor that is adjacent to the rectangle be labeled as 2 and let the horizontal neighbors of spins 1 and 2 be labeled as 5, 6 and 3, 4 respectively, as illustrated in Figure \ref{fig_trans_probs_3}. 
    \begin{figure}
    \centering
        \includegraphics[width=0.308\linewidth]{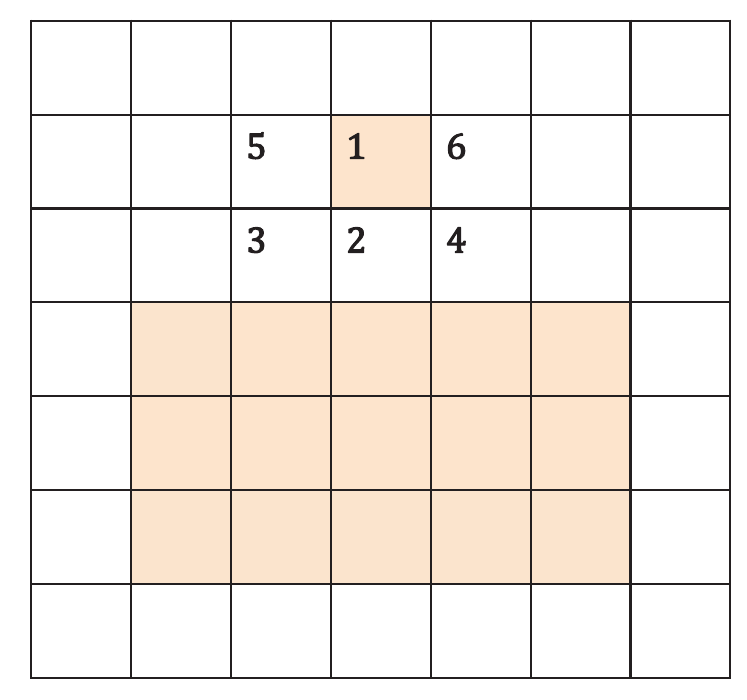}
        \caption{Post-decision configuration $\sigma_{(i,j)}^a$.}
        \label{fig_trans_probs_3}
    \end{figure} 
    Let $b_{11}(\sigma_{(i,j)})$ and $b_{12}(\sigma_{(i,j)})$ again denote the horizontal sets lying at distance 1 and 2 from the rectangle in $\sigma_{(i,j)}$ respectively, as depicted in Figure \ref{Slices}. Observe first of all that once two adjacent spins in one of these sets reach state $+1$, these spins can no longer flip to $-1$ on any downhill path. Furthermore, note that all remaining spins in this set will inevitably occupy state $+1$ in any robust configuration at the end of such a downhill path.   
    The only way to gain two $+$-spins in set $b_{11}(\sigma_{(i,j)})$ on a downhill path from configuration $\sigma_{(i,j)}^a$ is if spin 2 flips first, followed by either spin 3 or spin 4. The only way for set $b_{12}(\sigma_{(i,j)})$ to gain two spins on a downhill path from configuration $\sigma_{(i,j)}^a$ is through flipping either spin 5 or 6, which is only possible if spin 3 or 4 respectively has flipped to $+1$ and spin 1 has not flipped back in the mean time. It follows from these arguments that the robust configuration at the end of a closed downhill path is fully determined by the evolution of the spins in the set $W = \{1, 2, 3, 4, 5, 6\}$. That is, for any two closed downhill sequences $\mathbf{x}, \mathbf{y} \in \tilde{\Theta}(\sigma^a_{(i,j)})$ that satisfy $\mathbf{x}^W = \mathbf{y}^W$, we have $(\sigma^a_{(i,j)})^{\mathbf{x}} = (\sigma^a_{(i,j)})^{\mathbf{y}}$. In addition, the robust configuration we reach after taking action $a$ only depends on the rectangle through the spins that are adjacent to spins 2, 3 and 4. Since the configuration of the set of spins labeled 2, 3 and 4 is the same up to translation for each $a \in A$ that satisfies $\mathcal{J}_{\sigma_{(i,j)}}(a) = a_{12}$, we can conclude that
    \begin{equation*}
        \lim_{\kappa \rightarrow \infty} Q_{\kappa}(\eta_{\{(i,j), a, (i',j')\}}|\sigma_{(i,j)}, a) = \lim_{\kappa \rightarrow \infty} Q_{\kappa}(\eta_{\{(i,j), a', (i',j')\}}|\sigma_{(i,j)}, a')
    \end{equation*}
    if $\mathcal{J}_{\sigma_{(i,j)}}(a) = \mathcal{J}_{\sigma_{(i,j)}}(a') = a_{12}$.
\end{proof}
We now define the transition probability kernel $\hat{P}: \hat{S} \times \hat{A} \times \hat{S} \rightarrow [0,1]$ as
\begin{align}\label{hatP}
    \hat{P}((i',j')|&(i,j), \hat{a}) = \begin{cases}
        \lim\limits_{\kappa \rightarrow \infty} Q_{\kappa}(\eta_{\{(i,j), a, (i',j')\}}|\sigma_{(i,j)}, a), & \text{if } U^1(\sigma_{(i,j)}^a) \cap \mathcal{I}^{-1}((i',j')) \neq \emptyset, \\ 
        0, & \text{otherwise,}
    \end{cases}
\end{align}
for $(i,j), (i',j') \in \hat{S}$, $\hat{a} \in \hat{A}_{(i,j)}$, where $a \in \mathcal{J}^{-1}_{\sigma_{(i,j)}}(\hat{a})$. By Lemma \ref{map_actions}, each choice of $a \in \mathcal{J}^{-1}_{\sigma_{(i,j)}}(\hat{a})$ yields the same result and therefore expression (\ref{hatP}) is well-defined. 

\newpage
\noindent
The following result provides a way to compute the probabilities $\hat{P}((i',j')|(i,j), \hat{a})$ explicitly.

\begin{lemma}\label{Phat_formula}
    Consider $(i,j), (i',j') \in \hat{S}$ and $a \in A$ such that $U^1(\sigma^a_{(i,j)}) \cap \mathcal{I}^{-1}((i',j')) \neq~\emptyset$. Then, 
    \begin{align}\label{limQ}
        \lim_{\kappa \rightarrow \infty} Q_{\kappa}(&\eta_{\{(i,j), a, (i',j')\}}|\sigma_{(i,j)}, a) = \begin{cases}
            1, &  \text{if } \sigma_{(i,j)}^a = \eta_{\{(i,j), a, (i',j')\}}, \\
            \sum\limits_{\omega \in \Omega(\sigma_{(i,j)}^a, \eta_{\{(i,j), a, (i',j')\}})} \prod\limits_{\ell = 0}^{|\omega|-1} \dfrac{1}{|\Delta(\omega_{\ell})|}, & \text{otherwise}. 
        \end{cases}
    \end{align}
\end{lemma}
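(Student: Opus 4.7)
The plan is to interpret the limit $\lim_{\kappa \to \infty} Q_\kappa(\eta \mid \sigma_{(i,j)}, a)$, where $\eta := \eta_{\{(i,j), a, (i', j')\}}$, as an absorption probability for the low-temperature Metropolis chain and then decompose it as a sum over downhill paths. First, I would reduce from $Q_\kappa$ to $\tilde{P}_\kappa$: since the proof of Theorem~\ref{Thm_P_to_Q} showed that $\sum_{\sigma'' \in U} \tilde{P}_\kappa(\sigma'' \mid \sigma, a) \to 1$ as $\kappa \to \infty$, the denominator in definition~\eqref{Q} tends to $1$, so for any $\sigma' \in U$ we have $\lim_\kappa Q_\kappa(\sigma' \mid \sigma_{(i,j)}, a) = \lim_\kappa \tilde{P}_\kappa(\sigma' \mid \sigma_{(i,j)}, a)$.

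Next I would handle the trivial case $\sigma_{(i,j)}^a = \eta$. In this situation $\sigma_{(i,j)}^a$ is robust, so $|\Delta(\sigma_{(i,j)}^a)| = 0$ and $\tilde{p}(\sigma_{(i,j)}^a, \sigma_{(i,j)}^a) = 1$. Hence $\tilde{P}_\kappa(\eta \mid \sigma_{(i,j)}, a) = 1$ for every $\kappa$, yielding the stated value. For the non-trivial case, I view the low-temperature Metropolis chain starting at $\sigma_{(i,j)}^a$ as an absorbing Markov chain: robust configurations are absorbing, while from any fragile $\sigma$ the exit probability per step equals $|\Delta(\sigma)|/N^2 > 0$ and the flipped spin strictly lowers $H$ because $h \in (0,1)$. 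Since $H$ is bounded below and finitely many configurations exist, every trajectory is almost surely absorbed in $U$ after finitely many non-trivial transitions. By the same bound used in Theorem~\ref{Thm_P_to_Q}, the $\kappa$-step probability $\tilde{P}_\kappa(\eta \mid \sigma_{(i,j)}, a)$ therefore converges to the absorption probability $\mathbb{P}_{\sigma_{(i,j)}^a}(\text{absorbed at } \eta)$.

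Finally, I would decompose this absorption probability via the jump (skeleton) chain obtained by ignoring self-loops. From a fragile $\sigma$, conditional on a transition occurring, the chance of flipping a specific susceptible spin $i \in \Delta(\sigma)$ equals $(1/N^2)/(|\Delta(\sigma)|/N^2) = 1/|\Delta(\sigma)|$. Consequently the probability that the jump chain traces out a specific downhill path $\omega = (\omega_0, \omega_1, \ldots, \omega_{|\omega|})$ with $\omega_0 = \sigma_{(i,j)}^a$ and $\omega_{|\omega|} = \eta$ equals $\prod_{\ell=0}^{|\omega|-1} 1/|\Delta(\omega_\ell)|$. Summing over all $\omega \in \Omega(\sigma_{(i,j)}^a, \eta)$ then gives the claimed expression, once I verify that every realisation of the jump chain absorbed at $\eta$ corresponds to a unique element of $\Omega(\sigma_{(i,j)}^a, \eta)$ and conversely.

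The main (minor) technical point is precisely this bijection between absorbed sample paths of the jump chain and elements of $\Omega(\sigma_{(i,j)}^a, \eta)$. This is essentially automatic: strict energy decrease at every non-self-loop transition (guaranteed by $0 < h < 1$) prevents configurations from being revisited, each transition flips exactly one spin, and $\eta \in U$ makes $\eta$ an absorbing endpoint. Thus the sum is a well-defined finite sum over a combinatorially bounded set of paths, and the formula follows.
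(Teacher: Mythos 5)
Your proposal is correct and follows essentially the same route as the paper's proof: both reduce $Q_{\kappa}$ to the $\kappa\to\infty$ limit of the low-temperature Metropolis dynamics (using that the normalising denominator in the definition of $Q_{\kappa}$ tends to $1$), identify that limit as an absorption probability in which, conditional on a jump, each susceptible spin is flipped with probability $1/|\Delta(\sigma)|$, and then sum over downhill paths. The only difference is presentational: the paper establishes the path-sum identity by an explicit induction on the maximal downhill-path length starting from the one-step recursion $\hat{p}(\sigma,\sigma') = \sum_{i \in \Delta(\sigma)}\hat{p}(\sigma^{i},\sigma')/|\Delta(\sigma)|$, whereas you appeal directly to the jump-chain decomposition and the bijection between absorbed sample paths and elements of $\Omega(\sigma^{a}_{(i,j)},\eta)$; both rest on the same conditioning argument.
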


\begin{proof}
Note that a configuration $\sigma \in S$ is an absorbing state in the process that evolves according to this dynamics if and only if it is robust. Now, let $\tilde{p}^{\kappa}(\sigma, \sigma')$ denote the $\kappa$-step low-temperature Metropolis dynamics and define
\begin{equation*}
    \hat{p}(\sigma, \sigma') = \lim_{\kappa \rightarrow \infty} \tilde{p}^{\kappa}(\sigma, \sigma'), \quad \sigma, \sigma' \in S. 
\end{equation*}
It follows that $\hat{p}(\sigma, \sigma') > 0$ if and only if the configuration $\sigma'$ is robust. We first show that for any $\sigma, \sigma' \in S$ that satisfy $\sigma' \in U(\sigma)$ and $\sigma \neq \sigma'$, we have
\begin{equation}\label{hatp}
    \hat{p}(\sigma, \sigma') = \sum\limits_{\omega \in \Omega(\sigma, \sigma')} \prod\limits_{\ell = 0}^{|\omega|-1} \dfrac{1}{|\Delta(\omega_{\ell})|}.
\end{equation}
Recall that $\lim_{\beta \rightarrow \infty} p_{\beta}(\sigma, \sigma^i) = 0$ for all $i \notin \Delta(\sigma)$ and $\lim_{\beta \rightarrow \infty} p_{\beta}(\sigma, \sigma^i) = 1/N^2$ for all $i \in~\Delta(\sigma)$. Conditioning on the first time a susceptible spin, if any, is selected by the Metropolis dynamics during the adjustment period yields
\begin{equation*}
    \tilde{p}^{\kappa}(\sigma, \sigma') = \sum\limits_{t=1}^{\kappa} \left(1-\dfrac{|\Delta(\sigma)|}{N^2}\right)^{t-1} \dfrac{|\Delta(\sigma)|}{N^2} \sum\limits_{i \in \Delta(\sigma)} \dfrac{1}{|\Delta(\sigma)|} \tilde{p}^{\kappa-t}(\sigma^i, \sigma').
\end{equation*}
Taking the limit $\kappa \rightarrow \infty$, we now obtain
\begin{align*}
    \hat{p}(\sigma, \sigma') &= \sum\limits_{t=1}^{\infty} \left(1-\dfrac{|\Delta(\sigma)|}{N^2}\right)^{t-1} \dfrac{|\Delta(\sigma)|}{N^2} \sum\limits_{i \in \Delta(\sigma)} \dfrac{1}{|\Delta(\sigma)|} \hat{p}(\sigma^i, \sigma') = \sum\limits_{i \in \Delta(\sigma)}\dfrac{1}{|\Delta(\sigma)|)} \hat{p}(\sigma^i, \sigma').
\end{align*}
We show that this implies the validity of expression (\ref{hatp}) by induction over the length $L(\sigma, \sigma')$ of the longest downhill path that leads from $\sigma$ to $\sigma'$, i.e., $L(\sigma, \sigma') = \max_{\omega \in \Omega(\sigma, \sigma')}|\omega| -1$. Note that $\Omega(\sigma, \sigma') \neq \emptyset$ and $L(\sigma, \sigma') \geq 1$, since $\sigma' \in U(\sigma)$ and $\sigma \neq \sigma'$. For $L = 1$, we obtain
\begin{equation*}
    \hat{p}(\sigma, \sigma') = \sum\limits_{i \in \Delta(\sigma)}\dfrac{1}{|\Delta(\sigma)|}\hat{p}(\sigma^i, \sigma') = \sum\limits_{\substack{i \in \Delta(\sigma)\\ \sigma^i = \sigma'}} \dfrac{1}{|\Delta(\sigma)|} = \sum\limits_{\omega \in \Omega(\sigma, \sigma')} \prod\limits_{\ell = 0}^{|\omega|-1} \dfrac{1}{|\Delta(\omega_{\ell})|}.
\end{equation*}
Now, assume that expression (\ref{hatp}) holds if $L \leq n$. For $L = n+1$, we then obtain
\begin{align*}
    \hat{p}(\sigma, \sigma') &= \sum\limits_{i \in \Delta(\sigma)} \dfrac{1}{|\Delta (\sigma)|} \hat{p}(\sigma^i, \sigma').
\end{align*}
If $\sigma^i = \sigma'$, for $i \in \Delta(\sigma)$, then $\hat{p}(\sigma^i, \sigma') = 1$. On the other hand, if $\sigma^i \neq \sigma'$ for $i \in \Delta(\sigma)$, then either $\sigma' \notin U(\sigma^i)$ or $\sigma' \in U(\sigma^i)$ and $L(\sigma^i, \sigma') \leq n$. In the first case, we have $\hat{p}(\sigma^i, \sigma') = 0$. In the second case, the induction hypothesis implies that
\begin{equation*}
    \hat{p}(\sigma^i, \sigma') = \sum\limits_{\omega \in \Omega(\sigma^i, \sigma')} \prod\limits_{\ell = 0}^{|\omega|-1} \dfrac{1}{|\Delta(\omega_{\ell})|}.
\end{equation*}
Thus, we obtain
\begin{align*}
    \hat{p}(\sigma, \sigma') &= \sum\limits_{\substack{i \in \Delta(\sigma),\\ \sigma^i = \sigma'}} \dfrac{1}{|\Delta(\sigma)|} + \sum\limits_{\substack{i \in \Delta(\sigma)\\ \sigma^i \neq \sigma'\\ \sigma' \in U(\sigma^i)}} \dfrac{1}{|\Delta(\sigma)|} \sum\limits_{\omega \in \Omega(\sigma^i, \sigma')} \prod\limits_{\ell = 0}^{|\omega|-1} \dfrac{1}{|\Delta(\omega_{\ell})|} = \sum\limits_{\omega \in \Omega(\sigma, \sigma')}\prod\limits_{\ell = 0}^{|\omega|-1} \dfrac{1}{|\Delta(\omega_{\ell})|}.
\end{align*}
Now, consider $(i,j), (i',j') \in \hat{S}$ and $a \in A$ such that $U^1(\sigma^a_{(i,j)}) \cap \mathcal{I}^{-1}((i',j')) \neq \emptyset$. 
By definition of the transition probability kernel $Q_{\kappa}$, we have
\begin{equation}\label{Qk_phat}
    \lim_{\kappa \rightarrow \infty} Q_\kappa(\eta_{\{(i,j), a, (i',j')\}}|\sigma_{(i,j)}, a) = \hat{p}(\sigma_{(i,j)}^a, \eta_{\{(i,j), a, (i',j')\}}).
\end{equation}
First, suppose that $\eta_{\{(i,j), a, (i',j')\}} = \sigma_{(i,j)}^a$. Since $\eta_{\{(i,j), a, (i',j')\}}$ is a robust configuration, it immediately follows that
\begin{equation*}
    \lim_{\kappa \rightarrow \infty} Q_\kappa(\eta_{\{(i,j), a, (i',j')\}}|\sigma_{(i,j)}, a) = 1.
\end{equation*}
Secondly, suppose that $\eta_{\{(i,j), a, (i',j')\}} \neq \sigma_{(i,j)}^a$. It now follows from expression (\ref{hatp}) that
\begin{equation*}
    \lim_{\kappa \rightarrow \infty} Q_\kappa(\eta_{\{(i,j), a, (i',j')\}}|\sigma_{(i,j)}, a) = \sum\limits_{\omega \in \Omega(\sigma_{(i,j)}^a, \eta_{\{(i,j), a, (i',j')\}})} \prod\limits_{\ell = 0}^{|\omega|-1} \dfrac{1}{|\Delta(\omega_{\ell})|},
\end{equation*}
establishing expression (\ref{limQ}). 
\end{proof}

Lemma \ref{explicit_Phats} now provides the explicit expressions for the transition probabilities \\ $\hat{P}((i',j')|(i,j), \hat{a})$ for each $(i,j), (i',j') \in \hat{S}$, $\hat{a} \in \hat{A}_{(i, j)}$. 

\begin{lemma}\label{explicit_Phats}
The transition probability kernel $\hat{P}:\hat{S} \times \hat{A} \times \hat{S} \rightarrow [0,1]$ is given by

\begin{align}
\label{trans_probs_1}
    \hat{P}((i',j')|(i,j), a_{11}) &= 
    \begin{cases} 1/3, &\text{if } i'= i, j'= j,\\
    2/3, &\text{if } i'= i, j' = j+1, \\
    0, &\text{otherwise,}
    \end{cases} \quad \text{for } 
\begin{aligned}
    &i = 3, \ldots, N-2, N, \\
    &j = 2, \ldots, N-3,
\end{aligned}\\
\nonumber \\
\label{trans_probs_2}
    \hat{P}((i',j')|(i,j), a_{21}) &= \begin{cases} 1/3, &\text{if } i' = i, j'= j,\\
    2/3, &\text{if } i' = i+1, j'= j,\\
    0, &\text{ otherwise,}
    \end{cases} \quad \text{for } 
\begin{aligned}
    &i = 2, \ldots, N-3, \\
    &j = 3, \ldots, N-2, N, 
\end{aligned}\\
\nonumber \\
\label{trans_probs_3}
\hat{P}((i', j')|(i,j), a_{12}) &= 
\begin{cases} 5/9, &\text{if } i'=i, j'=j,\\
7/27, &\text{if } i'= i, j'=j+1,\\
5/27, &\text{if } i'= i, j'=j+2,\\
0, &\text{otherwise,}
\end{cases} \quad \text{for } 
\begin{aligned}
    &i = 3, \ldots, N-2, N, \\
    &j = 2, \ldots, N-4,
\end{aligned}\\
\nonumber \\
\label{trans_probs_4}
\hat{P}((i', j')|(i,j), a_{22}) &= 
\begin{cases} 5/9, &\text{if } i'=i, j'=j,\\
7/27, &\text{if } i'= i+1, j'=j,\\
5/27, &\text{if } i'= i+2, j'=j,\\
0, &\text{otherwise,}
\end{cases}\quad \text{for } 
\begin{aligned}
    &i = 2, \ldots, N-4, \\
    &j = 3, \ldots, N-2, N,
\end{aligned}\\
\nonumber \\
\label{trans_probs_5}
    \hat{P}((i', j')|(i,j), a_0) &= \begin{cases}
        4/9, &\text{if } i'=i, j'=j,\\
        1/9, &\text{if } i'=i+1, j'=j \\
        &\text{ or } i'=i, j'= j+1,\\
        1/3, &\text{if } i'=i+1, j'=j+1, \\
        0, &\text{otherwise,}
    \end{cases} \quad \text{for } 
\begin{aligned}
    &i = 2, \ldots, N-3, \\
    &j = 2, \ldots, N-3, 
\end{aligned}\\
\nonumber \\
\label{trans_probs_extra_1}
    \hat{P}((i', j')|(i,j), a_{11}') &= \begin{cases}
        1/2, &\text{if } i'=i, j'=j,\\
        1/2, &\text{if } i'=i, j'=j+1,\\
        0, &\text{otherwise,}
    \end{cases} \quad \text{for } 
\begin{aligned}
    &i = 2, \ldots, N-2, \\
    &j = 2, \ldots, N-3,
\end{aligned} \\
    \nonumber \\
\label{trans_probs_extra_2}
    \hat{P}((i', j')|(i,j), a_{21}') &= \begin{cases}
        1/2, &\text{if } i'=i, j'=j,\\
        1/2, &\text{if } i'=i+1, j'=j,\\
        0, &\text{otherwise,}
    \end{cases} \quad \text{for } 
\begin{aligned} 
    &i = 2, \ldots, N-3, \\
    &j = 2, \ldots, N-2, 
\end{aligned} \\
    \nonumber \\
\label{trans_probs_extra_3}
    \hat{P}((i',j')|(i,j), a_{12}') &= \begin{cases}
        5/8, &\text{if } i'=i, j'=j, \\
        1/4, &\text{if } i'=i, j'=j+1, \\
        1/8, &\text{if } i'=i, j'=j+2, \\
        0, &\text{otherwise,}
    \end{cases} \quad \text{for } 
\begin{aligned}
    &i = 2, \ldots, N-2, \\
    &j = 2, \ldots, N-4,
\end{aligned} \\
    \nonumber \\
\label{trans_probs_extra_4}
    \hat{P}((i',j')|(i,j), a_{22}') &= \begin{cases}
        5/8, &\text{if } i'=i, j'=j, \\
        1/4, &\text{if } i'=i+1, j'=j, \\
        1/8, &\text{if } i'=i+2, j'=j, \\
        0, &\text{otherwise,}
    \end{cases} \quad \text{for } 
\begin{aligned}
    &i = 2, \ldots, N-4, \\ 
    &j = 2, \ldots, N-2,
\end{aligned} \\
    \nonumber \\
\label{trans_probs_6}
    \hat{P}((i', j')|(i, N-2), a_{11}) &= 
    \begin{cases}
        1/4, &\text{if } i'=i, j'=N-2,\\
        3/4, &\text{if } i'=i, j'=N,\\
        0, &\text{otherwise,}
        \end{cases} \quad \text{for } i = 3, \ldots, N-2, N, \\ 
        \nonumber \\
\label{trans_probs_7}
    \hat{P}((i', j')|(N-2, j), a_{21}) &=
    \begin{cases}
        1/4, &\text{if } i'=N-2, j'=j,\\
        3/4, &\text{if } i'=N, j'=j,\\
        0, &\text{otherwise,}
    \end{cases} \quad \text{for } j = 3, \ldots, N-2, N, \\
    \nonumber \\
\label{trans_probs_8}
    \hat{P}((i',j')|(i, N-3), a_{12}) &= 
    \begin{cases}
        7/18, &\text{if } i'=i, j'=N-3,\\
        31/144, &\text{if } i'=i, j'=N-2\\
        19/48, &\text{if } i'=i, j'=N,\\
        0, &\text{otherwise,}
    \end{cases} \quad \text{for } i = 3, \ldots, N-2, N, \\
    \nonumber \\
\label{trans_probs_9}
    \hat{P}((i', j')|(N-3, j), a_{22}) &= 
    \begin{cases}
        7/18, &\text{if } i'=N-3, j'=j,\\
        31/144, &\text{if } i'=N-2, j'=j,\\
        19/48, &\text{if } i'= N, j'=j,\\
        0, &\text{otherwise,}
    \end{cases} \quad \text{for } j = 3, \ldots, N-2, N, \\
    \nonumber \\
\label{trans_probs_12}
    \hat{P}((i',j')|(i, N-2), a_0) &= 
    \begin{cases}
        5/12, &\text{if } i'=i, j'=j, \\
        1/8, &\text{if } i'=i, j'=N,\\
        1/9, &\text{if } i'=i+1, j'=j,\\
        25/72, &\text{if } i'=i+1, j'=N,\\
        0, &\text{otherwise,}
    \end{cases} \quad \text{for } i = 2, \ldots, N-3, \\
    \nonumber \\
\label{trans_probs_10}
    \hat{P}((i',j')|(N-2, j), a_0) &= 
    \begin{cases}
        5/12, &\text{if } i'=i, j'=j, \\
        1/8, &\text{if } i'=N, j'=j,\\
        1/9, &\text{if } i'=i, j'=j+1,\\
        25/72, &\text{if } i'=N, j'=j+1,\\
        0, &\text{otherwise,}
    \end{cases} \quad \text{for } j = 2, \ldots, N-3, \\
    \nonumber \\
\label{trans_probs_11}
    \hat{P}((i',j')|(N-2, N-2), a_0) &= 
    \begin{cases}
        7/18, &\text{if } i'=j'=N-2,\\
        1/8, &\text{if } i'=N, j'=N-2 \\
        &\text{or } i'=N-2, j'= N,\\
        13/36, &\text{if } i'=j'=N,\\
        0, &\text{otherwise,}
    \end{cases}\\
    \nonumber \\
\label{trans_probs_extra_5}
    \hat{P}((i',j')|(i, N-2), a_{11}') &= 
    \begin{cases}
        1/3, &\text{if } i'=i, j'=N-2,\\
        2/3, &\text{if } i'=i, j'=N,\\
        0, &\text{otherwise,}
    \end{cases} \quad \text{for } i = 2, \ldots, N-2, \\ 
    \nonumber \\
\label{trans_probs_extra_6}
    \hat{P}((i', j')|(N-2, j), a_{21}') &= 
    \begin{cases}
        1/3, &\text{if } i'=N-2, j'=j,\\
        2/3, &\text{if } i'=N, j'=j,\\
        0, &\text{otherwise,}
    \end{cases} \quad \text{for } j = 2, \ldots, N-2, \\
    \nonumber \\
\label{trans_probs_extra_7}
    \hat{P}((i',j')|(i, N-3), a_{12}') &= 
    \begin{cases}
        4/9, &\text{if } i'=i, j'=N-3,\\
        5/27, &\text{if } i'=i, j'=N-2,\\
        10/27, &\text{if } i'=i, j'=N,\\
        0, &\text{otherwise,}
    \end{cases} \quad \text{for } i = 2, \ldots, N-2, \\
    \nonumber \\
\label{trans_probs_extra_8}
    \hat{P}((i',j')|(N-3, j), a_{22}') &= 
    \begin{cases}
        4/9, &\text{if } i'=N-3, j'=j,\\
        5/27, &\text{if } i'=N-2, j'=j,\\
        10/27, &\text{if } i'=N, j'=j,\\
        0, &\text{otherwise,}
    \end{cases} \quad \text{for } j = 2, \ldots, N-2, \\
    \nonumber \\
\label{trans_probs_double_extra_1}
    \hat{P}((i',j')|(i,j), \tilde{a}) &= \begin{cases}
        1, &\text{if } (i',j') = (i,j), \\
        0, &\text{otherwise,}
    \end{cases} \quad \text{for } i, j = 3, \ldots, N-2, \\
    \nonumber \\
\label{trans_probs_double_extra_2}
    \hat{P}((i',j')|(2,j), \tilde{a}) &= 
    \begin{cases}
        1/2, &\text{if } (i',j') = (2,j), \\
        1/2, &\text{if } (i',j') = (2,j-1),\\
        0, &\text{otherwise,}
    \end{cases} \quad \text{for } j = 3, \ldots, N-2, \\
    \nonumber \\
\label{trans_probs_double_extra_3}
    \hat{P}((i',j')|(i,2), \tilde{a}) &= 
    \begin{cases}
        1/2, &\text{if } (i',j') = (i,2), \\
        1/2, &\text{if } (i',j') = (i-1,2),\\
        0, &\text{otherwise,}
    \end{cases} \quad \text{for } i = 3, \ldots, N-2, \\
    \nonumber \\
\label{trans_probs_double_extra_4}
    \hat{P}((i',j')|(2, 2), \tilde{a}) &= 
    \begin{cases}
        1/3, &\text{if } (i',j') = (2, 2), \\
        2/3, &\text{if } (i',j') = (0, 0), \\
        0, &\text{otherwise,}
    \end{cases} \quad \text{and } \\
    \hat{P}((i,j)|(i,j), 0) &= 1, \quad \text{for all } (i,j) \in \hat{S}.
\end{align}
 
\end{lemma}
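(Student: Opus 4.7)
The plan is to apply Lemma \ref{Phat_formula} case-by-case for each action $\hat{a} \in \hat{A}_{(i,j)}$ listed in the statement. The key simplification, inherited from the essential-set argument in the proof of Lemma \ref{map_actions}, is that only a small neighborhood $W$ of the flipped spin participates in the downhill dynamics before a robust configuration is reached. So every entry reduces to a finite enumeration of downhill paths on $W$, together with the associated products $\prod_\ell 1/|\Delta(\omega_\ell)|$.

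For the simple actions $a_{11}$, $a_{21}$ at generic $(i,j)$, I would check directly that $\sigma^a_{(i,j)}$ has exactly three susceptible spins: the flipped spin $a$ itself (a $+$ spin with three $-$ neighbors) and its two in-row horizontal (resp.\ vertical) neighbors, each of which is a $-$ spin with two $+$ neighbors, namely $a$ and the adjacent rectangle spin. Flipping $a$ back gives probability $1/3$ of staying; in every other path the cascade fills the entire distance-1 row, because partial strips always leave at least one susceptible spin at the frontier while nothing past a rectangle corner can become susceptible. Summing the weighted path products produces the $2/3$ advertised in (\ref{trans_probs_1})-(\ref{trans_probs_2}). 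The primed versions $a_{11}', a_{21}'$ have only two susceptible spins initially because one side is blocked by a corner; the action $\tilde a$ at a rectangle with $\min(i,j) \geq 3$ has a single susceptible spin and thus returns deterministically.

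The more involved entries $a_{12}, a_{22}, a_{12}', a_{22}', a_0$ and the boundary-state cases follow the same method but with a larger essential set. For $a_{12}$ I would take $W$ to be the six-spin set of Fig.\ \ref{fig_trans_probs_3}, build the directed graph of distinct intermediate configurations on $W$ with edge weights $1/|\Delta|$, and check that the sum of weighted path products over all paths terminating at each of the three robust configurations of sizes $(i,j)$, $(i,j+1)$, $(i,j+2)$ reproduces $5/9$, $7/27$, $5/27$. For $a_0$ the essential set is an analogous block around the diagonally-flipped spin. For the near-saturation states with $i$ or $j \in \{N-2, N-3\}$, the essential set must be enlarged to accommodate the toroidal wrap-around: once the penultimate row is completed, each spin in the remaining row acquires two $+$ neighbors through the periodic boundary and therefore becomes susceptible, forcing a cascade to size $N$ and producing the asymmetric probabilities such as $3/4$, $19/48$, $25/72$ and $13/36$ appearing in (\ref{trans_probs_6})-(\ref{trans_probs_11}). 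Finally, $\tilde a$ on $(2,2)$ creates three susceptible spins, two of which trigger a cascade eliminating the whole cluster, giving the $2/3$ collapse to $(0,0)$ in (\ref{trans_probs_double_extra_4}).

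The principal obstacle is purely combinatorial: for the six-spin cases the enumeration involves on the order of twenty distinct paths, and it is easy to (i) overlook a spin that becomes susceptible only after two previously flipped spins combine to give it two $+$ neighbors, or (ii) miscount $|\Delta(\omega_\ell)|$ in the wrap-around cases where the essential set wraps across the lattice. I would organize the work by exploiting the reflective symmetry across $a$ for the horizontal actions and the diagonal symmetry exchanging $a_{1\bullet} \leftrightarrow a_{2\bullet}$ to cut the enumeration by a factor of two, and would cross-check each computed row of $\hat P$ by verifying $\sum_{(i',j')} \hat P((i',j') \mid (i,j), \hat a) = 1$.
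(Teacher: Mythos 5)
Your proposal follows essentially the same route as the paper: invoke Lemma \ref{Phat_formula}, restrict attention to an essential set around the flipped spin, enumerate the (restricted) closed downhill sequences together with their weights $\prod_\ell 1/|\Delta(\omega_\ell)|$, and sum by terminal robust configuration — this is exactly how the paper proves the representative case (\ref{trans_probs_3}) via Table \ref{tab_trans_probs_3} and defers the remaining cases to analogous tables in the Supplementary Material. Your case analyses (three susceptible spins for $a_{11}$, two for $a_{11}'$, one for $\tilde a$ when $\min(i,j)\geq 3$, the wrap-around cascade for $j=N-2$, and the $2/3$ collapse from $(2,2)$) all match the paper's tables, so the sketch is sound.
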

\begin{proof}
    We give the proof of expression (\ref{trans_probs_3}). Let $(i,j) \in \hat{S}$, $i = 3, \ldots, N$, $j = 2, \ldots, N-4$ and let $a \in \mathcal{J}^{-1}_{\sigma_{(i,j)}}(a_{12})$. Consider again Figure \ref{fig_trans_probs_3}. First, the arguments laid out in the proof of Lemma \ref{map_actions} imply that $U^1(\sigma^a_{(i,j)}) \cap \mathcal{I}^{-1}((i',j')) \neq \emptyset$ if and only if $(i',j') = (i,j)$, $(i',j') = (i, j+1)$ or $(i',j') = (i, j+2)$. Hence, $\hat{P}((i',j')|(i,j), a_{12}) = 0$ if $(i',j') \notin \{(i, j), (i, j+1), (i, j+2)\}$. 
    
    Now, suppose that $(i',j') \in \{(i, j), (i, j+1), (i, j+2)\}$. 
    The proof of Lemma \ref{map_actions} implies that the transition probability $\hat{P}((i',j')|(i,j), a_{12})$ depends only on the evolution of the flipped spin, denoted by $a$, its vertical neighbor that is adjacent to the rectangle, the horizontal neighbors of the latter spin and the horizontal neighbors of $a$ itself, labeled 1, 2, 3, 4, 5 and 6 respectively, as illustrated in Figure \ref{fig_trans_probs_3}. Letting $W = \{1, 2, 3, 4, 5, 6\}$, we showed that for any two closed downhill sequences $\mathbf{x}, \mathbf{y} \in \tilde{\Theta}(\sigma^a_{(i,j)})$ that satisfy $\mathbf{x}^W = \mathbf{y}^W$, we have $(\sigma^a_{(i,j)})^{\mathbf{x}} = (\sigma^a_{(i,j)})^{\mathbf{y}}$. We now proceed to show that the susceptibility of the spins in the set $W = \{1, 2, 3, 4, 5, 6\}$ is not affected by the evolution of the configuration on the remainder of the lattice, i.e., that for any downhill sequence $\mathbf{x} \in \Theta(\sigma)$, we have $\Delta((\sigma^a_{(i,j)})^{\mathbf{x}}) \cap W = \Delta((\sigma^a_{(i,j)})^{\mathbf{x}^W}) \cap W$. Consider the sequence $\mathbf{y} = (2)$ and let $\mathbf{x} \in \Theta(\sigma)$ be any sequence that satisfies $\mathbf{x}^W = \mathbf{y}$. In configuration $\sigma^a_{(i,j)}$, the only susceptible spins are 1 and 2. This implies that the first element of $\mathbf{x}$ is spin 2. After flipping spin 2, the set of susceptible spins is $\{1, 3, 4\}$. Therefore, we have $\mathbf{x} = \mathbf{y} = (2)$ and thus $\Delta((\sigma^a_{(i,j)})^{\mathbf{x}}) \cap W = \Delta((\sigma^a_{(i,j)})^{\mathbf{x}^W}) \cap W$. Now consider the sequence $\mathbf{y} = (2, 4)$ and let $\mathbf{x} \in \Theta(\sigma^a_{(i,j)})$ again be any sequence that satisfies $\mathbf{x}^W = \mathbf{y}$. The same argument as before implies that the first two entries of $\mathbf{x}$ are the spins 2 and 4. At this point, the only spins outside of $W$ that can flip on any downhill path are the spins in the set $b_{11}(\sigma_{(i,j)})\setminus W$, where $b_{11}(\sigma_{(i,j)})$ is the horizontal set lying at distance 1 from the rectangle in $\sigma_{(i,j)}$, as depicted in Figure \ref{Slices}. Observe, however, that flipping any spins in this set leaves the set of susceptible spins that are part of $W$, namely $\{1, 6\}$, unaffected. We thus again obtain $\Delta((\sigma^a_{(i,j)})^{\mathbf{x}}) \cap W = \Delta((\sigma^a_{(i,j)})^{\mathbf{x}^W}) \cap W$. This argument can easily be extended to the sequence $\mathbf{y} = (2, 4, 1)$. Since after flipping the spins in this sequence, spins 2 and 4 can no longer flip back on any downhill path and the only spins that will become susceptible on any downhill path are those in the set $b_{11}(\sigma_{(i,j)})$, it follows that any closed downhill sequence $\mathbf{x} \in \tilde{\Theta}(\sigma^a_{(i,j)})$ that satisfies $\mathbf{x}^W = \mathbf{y}$ leads to the robust configuration $\sigma_{(i,j)}^{b_{11}(\sigma_{(i,j)})}$. 
    In Table \ref{trans_probs_3} we listed the restrictions to $W$ of all sequences in the set $\tilde{\Theta}(\sigma^a_{(i,j)})$. The same argument as above can be applied to any of the sequences in this table. We can thus conclude that for any downhill sequence $\mathbf{x} \in \Theta(\sigma)$, we have $\Delta((\sigma^a_{(i,j)})^{\mathbf{x}}) \cap W = \Delta((\sigma^a_{(i,j)})^{\mathbf{x}^W}) \cap W$. It follows that this set is essential for the configuration $\sigma_{(i,j)}^a$. Thus, to compute $\hat{P}((i',j')|(i,j), a_{12})$ it suffices to focus on the restrictions of the closed downhill sequences to $W$ that lead to the robust configuration $\eta_{\{(i,j), a, (i',j')\}}$. Let the set of these sequences be denoted by $\tilde{\Theta}^W(\sigma^a_{(i,j)}, \eta_{\{(i,j), a, (i',j')\}})$. To keep notation simple in the computation that follows, we abbreviate $\sigma^a_{(i,j)}$ and $\eta_{\{(i,j), a, (i',j')\}}$ as $\sigma^a$ and $\eta$. By the properties of the essential set, it follows that 
    \begin{align}\label{prune_paths}
        \sum\limits_{\omega \in \tilde{\Omega}(\sigma^a, \eta)} \prod\limits_{\ell = 0}^{|\omega|-1} \dfrac{1}{|\Delta(\omega_{\ell})|} &= \sum\limits_{\mathbf{x} \in \tilde{\Theta}(\sigma^a, \eta)} \dfrac{1}{|\Delta(\sigma^a)|} \prod\limits_{\ell = 1}^{|\mathbf{x}|-1} \dfrac{1}{|\Delta((\sigma^a)^{(x_1, \ldots, x_{\ell})})|} \\
        \nonumber &=\sum\limits_{\mathbf{y} \in \tilde{\Theta}^W(\sigma^a, \eta)} \dfrac{1}{|\Delta(\sigma^a) \cap W|} \prod\limits_{\ell = 1}^{|\mathbf{y}|-1} \dfrac{1}{|\Delta((\sigma^a)^{(y_1, \ldots, y_{\ell})}) \cap W|}.
    \end{align}
    For each sequence in $\mathbf{y} \in \tilde{\Theta}^W(\sigma^a, \eta)$, Table \ref{tab_trans_probs_3} gives the probability that this sequence is selected, i.e., the quantity $\dfrac{1}{|\Delta(\sigma^a \cap W)|} \prod\limits_{\ell = 1}^{|\mathbf{y}|-1} \dfrac{1}{|\Delta((\sigma^a)^{(y_1, \ldots, y_{\ell})}) \cap W|}$ and the state that corresponds to the robust configuration that the sequence leads to. Using Table \ref{tab_trans_probs_3} to evaluate expression (\ref{prune_paths}), we obtain expression (\ref{trans_probs_3}).
    
    The remaining transition probabilities can be computed in a similar way. Supporting figures depicting the essential sets and tables listing the relevant sequences of susceptible spins are provided in part A of the Supplementary Material.  \end{proof}
    
    \begin{table}
    \begin{center}
    \caption{Derivation of expression (\ref{trans_probs_3}), corresponding to Figure \ref{fig_trans_probs_3}. Similar derivation leads to expression (\ref{trans_probs_4}).}
    \centering
    \begin{tabular}{ llll } 
    \toprule
    Sequences of susceptible spins & Probability of selecting sequence & Next state & Transition probability \\
    \midrule
    $(1)$ & $1/2$ & \multirow{2}{4em}{$(i, j)$} & \multirow{2}{4em}{$5/9$} \\
    $(2, 1, 2)$ & $1/18$ & & \\
    \midrule
    $(2, 1, 3)$ & $1/18$ & \multirow{6}{4em}{$(i, j+1)$} & \multirow{6}{4em}{$7/27$} \\
    $(2, 1, 4)$ & $1/18$ & & \\
    $(2, 3, 1)$ & $1/18$ & & \\
    $(2, 3, 4, 1)$ & $1/54$ & & \\
    $(2, 4, 1)$ & $1/18$ & & \\
    $(2, 4, 3, 1)$ & $1/54$ & & \\
    \midrule 
    $(2, 3, 4, 5)$ & $1/54$ &\multirow{6}{4em}{$(i, j + 2)$} & \multirow{6}{4em}{$5/27$} \\
    $(2, 3, 4, 6)$ & $1/54$ & & \\
    $(2, 4, 3, 5)$ & $1/54$ & & \\
    $(2, 4, 3, 6)$ & $1/54$ & & \\
    $(2, 3, 5)$ & $1/18$ & & \\
    $(2, 4, 6)$ & $1/18$ & & \\
    \bottomrule

    \end{tabular}
    
    \label{tab_trans_probs_3}
    \end{center}
    \end{table}
     
\subsubsection{Proof of Theorem \ref{Optimal_policy}}
\label{Proof of main thm}

A convenient feature of the auxiliary MDP is the fact that it is not possible for a rectangle of size at least 3x3 to shrink, as by definition of the transition probability kernel, the probability that a state variable that is greater than 3 decreases in the auxiliary MDP is zero. This fact allows us to solve the Bellman equations recursively, starting from state $(N,N)$, which corresponds to the all-plus configuration. Before doing so, we first show the suboptimality of the actions 0 and $\tilde{a}$, such that we can leave these actions out of consideration when solving the Bellman equations.

\begin{lemma}\label{0_sub_opt}
Any stationary deterministic optimal policy $\pi^* = (d^*)^{\infty}$ satisfies $d^*(i,j) \neq 0$ for all $(i,j) \in \hat{S}$, $(i,j) \notin \{(0,0), (N, N)\}$. 
\end{lemma}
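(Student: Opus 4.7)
The plan is a short contradiction argument combined with one explicit comparator policy. Suppose $\pi^* = (d^*)^{\infty}$ is optimal and $d^*(i,j) = 0$ at some $(i,j) \notin \{(0,0),(N,N)\}$. By the last line of Lemma \ref{explicit_Phats} the action $0$ is a self-loop, $\hat{P}((i,j) \mid (i,j), 0) = 1$; combined with $\hat{r}(i,j) = 0$ (since $(i,j) \neq (N,N)$), the policy-evaluation equation
\begin{equation*}
v^{\pi^*}_{\lambda}(i,j) \;=\; \hat{r}(i,j) + \lambda^{\kappa}\,v^{\pi^*}_{\lambda}(i,j)
\end{equation*}
forces $(1-\lambda^{\kappa})\,v^{\pi^*}_{\lambda}(i,j) = 0$ and hence $v^{\pi^*}_{\lambda}(i,j) = 0$. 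It therefore suffices to exhibit any policy $\pi_0$ with $v^{\pi_0}_{\lambda}(i,j) > 0$, because optimality of $\pi^*$ yields the contradiction $0 = v^{\pi^*}_{\lambda}(i,j) \geq v^{\pi_0}_{\lambda}(i,j) > 0$.

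As comparator I would use the ``grow $j$ first, then $i$'' policy $\pi_0$: as long as $j < N$, apply $a_{11}$, or its primed variant $a_{11}'$ when $a_{11} \notin \hat{A}(i,j)$; once $j = N$, apply $a_{21}$ or $a_{21}'$ until $i = N$. Reading off Lemma \ref{explicit_Phats}, each such move strictly enlarges the targeted coordinate with probability at least $1/2$, and the boundary transitions $(i,N-2) \to (i,N)$ and $(N-2,j) \to (N,j)$ occur with probability $3/4$. Concatenating at most $2(N-2)$ such steps drives the chain from $(i,j)$ to the absorbing state $(N,N)$ in a bounded time $T$ with a strictly positive probability $p$. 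Since $\hat{r}(N,N) = 1$ and $\hat{A}(N,N) = \{0\}$, the reward accrued after reaching $(N,N)$ sums to $1/(1-\lambda^{\kappa}) > 0$, so $v^{\pi_0}_{\lambda}(i,j) \geq p\,\lambda^{\kappa T}/(1-\lambda^{\kappa}) > 0$, completing the contradiction.

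There is no substantive obstacle here: the argument is essentially the reachability fact that $(N,N)$ can be approached from any non-terminal state, together with the observation that action $0$ is a sink in the auxiliary MDP. The only bookkeeping is to verify that at least one of $\{a_{11}, a_{11}', a_{21}, a_{21}'\}$ always lies in $\hat{A}(i,j)$ when $(i,j) \notin \{(0,0),(N,N)\}$, which follows by direct inspection of the action-space table in Section \ref{The auxiliary MDP}: the unprimed variant is present whenever the relevant side has length at least $3$, and the primed variant is present whenever that side has length $2$.
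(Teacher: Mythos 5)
Your proposal is correct and follows essentially the same route as the paper's proof: action $0$ is an absorbing self-loop with zero reward, forcing $v^{\pi^*}_{\lambda}(i,j)=0$, while any policy that never plays $0$ outside $\{(0,0),(N,N)\}$ reaches $(N,N)$ with positive probability and hence has strictly positive value, yielding the contradiction. The only cosmetic difference is that you spell out an explicit comparator policy and write the discount as $\lambda^{\kappa}$ where the auxiliary MDP uses $\lambda$ directly; neither affects the argument.
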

\begin{proof}
    Suppose that $\pi^* = (d^*)^{\infty}$ is an optimal policy, with $d^*(i, j) = 0$ for some $(i, j) \in \hat{S}$, $(i,j) \neq (N, N)$. By definition of the transition probabilities and the reward function, this implies $v^*(i, j) = 0$. Now, let $\pi = d^{\infty}$ denote another policy that satisfies $d(i,j) \neq 0$ for all $(i,j) \neq (0,0)$. Note that the definition of the transition probabilities imply that for any state $(i,j) \neq (0,0)$, there exists a sequence of states $(s_1, s_2, \ldots, s_k)$ for some $k \in \mathbb{N}$ that satisfies $s_1 = (i,j)$,  $s_k = (N, N)$ and $\hat{P}(s_{\ell + 1}|s_{\ell}, d(s_{\ell})) > 0)$ for all $\ell = 1, \ldots, k-1$. From the definition of the reward function, it now follows that $v^{\pi}(i,j) > 0$ and thus $v^{\pi}(i,j) > v^*(i,j)$, which contradicts the optimality of policy $\pi^*$. This completes the proof. 
\end{proof}

\begin{lemma}\label{atilde_sub_opt}
Any stationary deterministic optimal policy $\pi^* = (d^*)^{\infty}$ satisfies $d^*(i,j) \neq \tilde{a}$ for all $(i, j) \in \hat{S}$, $i, j \notin \{0, N\}$.
\end{lemma}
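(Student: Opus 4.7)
The plan is to perform a case analysis on $(i,j)$ according to the three qualitative forms of the $\tilde{a}$-transitions, given by expressions (\ref{trans_probs_double_extra_1}), (\ref{trans_probs_double_extra_2})--(\ref{trans_probs_double_extra_3}), and (\ref{trans_probs_double_extra_4}), and in each case derive a contradiction with the optimality of $\pi^* = (d^*)^{\infty}$. Throughout I will use the auxiliary fact -- established by exactly the reachability argument that appears in the proof of Lemma \ref{0_sub_opt} -- that $v^*(s) > 0$ for every $s \in \hat{S}\setminus\{(0,0)\}$: from any such $s$ there is a finite sequence of non-zero actions of positive $\hat{P}$-probability reaching $(N,N)$, where the self-loop at the target contributes $v^*(N,N) = 1/(1-\lambda^{\kappa}) > 0$.

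For the interior states $(i,j)$ with $i,j \in \{3,\ldots,N-2\}$, expression (\ref{trans_probs_double_extra_1}) shows that $\tilde{a}$ is an absorbing self-loop, so the Bellman equation at $\tilde{a}$ reduces to $v^{\pi^*}(i,j) = \lambda^{\kappa}\, v^{\pi^*}(i,j)$, forcing $v^{\pi^*}(i,j)=0$ and contradicting $v^*(i,j)>0$. For $(i,j) = (2,2)$, expression (\ref{trans_probs_double_extra_4}) together with $\hat{A}(0,0)=\{0\}$, which makes $v^{\pi^*}(0,0)=0$, reduces Bellman to $v^{\pi^*}(2,2) = (\lambda^{\kappa}/3)\,v^{\pi^*}(2,2)$, again yielding $v^{\pi^*}(2,2)=0$ and the same contradiction.

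The remaining case -- boundary states $(2,j)$ with $j \in \{3,\ldots,N-2\}$, and symmetrically $(i,2)$ with $i \in \{3,\ldots,N-2\}$ -- is the substantive one. By expression (\ref{trans_probs_double_extra_2}) and $d^*(2,j) = \tilde{a}$, Bellman gives $v^{\pi^*}(2,j) = \tfrac{\lambda^{\kappa}}{2 - \lambda^{\kappa}}\,v^{\pi^*}(2,j-1)$. Since $a_{11}' \in \hat{A}(2,j)$ for every such $j$, optimality of $\pi^*$ forces the Bellman value of the alternative $a_{11}'$ to be no larger; using expression (\ref{trans_probs_extra_1}) (or expression (\ref{trans_probs_extra_5}) in the edge case $j = N-2$) and substituting for $v^{\pi^*}(2,j)$, this rearranges to the requirement $v^{\pi^*}(2,j+1) \leq v^{\pi^*}(2,j-1)$.

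The main obstacle is to contradict this inequality by establishing the strict monotonicity $v^{\pi^*}(2,j+1) > v^{\pi^*}(2,j-1)$. Via Corollary \ref{Value_hitting_time_indicator_reward} applied with $s^*=(N,N)$ and indicator reward, and noting that $\tau^{(N,N),\pi^*}_{(N,N)} = 1$ because $(N,N)$ is absorbing, this translates into the strict stochastic ordering $\mathbb{E}[(\lambda^{\kappa})^{\tau^{(2,j+1),\pi^*}_{(N,N)}}] > \mathbb{E}[(\lambda^{\kappa})^{\tau^{(2,j-1),\pi^*}_{(N,N)}}]$. I would prove it by coupling: starting from $(2,j+1)$, play $\pi^*$'s actions along the chain from $(2,j-1)$ but with column indices shifted by $+2$, verifying via the case structure of $\hat{A}$ that every shifted action still lies in the local action set and that the transition probabilities of the auxiliary MDP are invariant under such a translation. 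The shifted chain then dominates $\pi^*$'s chain from $(2,j-1)$ state-wise at every decision epoch and hits $(N,N)$ strictly earlier with positive probability; since $\pi^*$ is optimal from $(2,j+1)$, its value at $(2,j+1)$ is at least that of this coupled alternative policy, which gives $v^{\pi^*}(2,j+1) > v^{\pi^*}(2,j-1)$, the desired contradiction.
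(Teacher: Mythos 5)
Your treatment of the interior states $(i,j)$ with $i,j\in\{3,\ldots,N-2\}$ and of $(2,2)$ matches the paper's argument and is fine. The substantive case $(2,j)$, $j=3,\ldots,N-2$, is where you diverge, and there your argument has a genuine gap. The reduction of the Bellman comparison between $\tilde{a}$ and $a_{11}'$ to $v^*(2,j-1)\geq v^*(2,j+1)$ is correct (for $j\leq N-3$; the case $j=N-2$ gives a different inequality via expression (\ref{trans_probs_extra_5}) and is not a mere rearrangement), but the monotonicity $v^*(2,j+1)>v^*(2,j-1)$ that you need to contradict it is exactly the hard point, and the proposed coupling does not deliver it. Two concrete obstructions: (i) the transition kernel is \emph{not} invariant under a column shift of $+2$ near the boundary --- compare (\ref{trans_probs_1}) with (\ref{trans_probs_6}), or (\ref{trans_probs_extra_3}) with (\ref{trans_probs_extra_7}); worse, the $a_0$ kernel at $(N-2,j)$ in (\ref{trans_probs_10}) couples the row and column increments, so even the row marginals of the two chains fail to agree once either chain nears the boundary; (ii) the shifted action need not lie in the local action set: if $\pi^*$ plays $a_{12}'$ at $(2,N-4)$, the shifted state is $(2,N-2)$ and $a_{12}'\notin\hat{A}(2,N-2)$. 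Repairing either point requires a stochastic-dominance argument, i.e.\ some monotonicity of $v^*$ along the $i=2$ column --- which is what you are trying to prove, and which is genuinely in doubt here precisely because states $(2,j)$ \emph{can} shrink under $\tilde{a}$ (indeed, if $\pi^*$ played $\tilde{a}$ at every $(2,j)$, one would have $v^{\pi^*}(2,j+1)<v^{\pi^*}(2,j-1)$).

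The paper circumvents this by only ever invoking monotonicity on the region where rectangles provably cannot shrink (both sides $\geq 3$, or $i=2$ with $j$ above the largest $j^*$ at which $\tilde{a}$ is played). It then locates the largest ``escape point'' $j'<j^*$ at which the policy leaves the $i=2$ column, bounds $v^*(2,j^*)\leq\lambda^{j^*-j'}v^*(2,j')$ by the forced downward drift, and beats this with the policy that escapes immediately at $(2,j^*)$, using $v^*(3,j')<v^*(3,j^*)$ --- an inequality that lives entirely in the non-shrinking region. If you want to salvage your route, you would need to prove the column-2 monotonicity by some such indirect device rather than by translation coupling; as written, the key step does not go through.
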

\begin{proof}
First consider $s = (i,j) \in \hat{S}$, where either $i,j = 3, \ldots, N-2$ or $(i,j) = (2, 2)$. Suppose that $\pi^* = (d^*)^{\infty}$ is an optimal policy that satisfies $d^*(s) = \tilde{a}$. By expressions (\ref{trans_probs_double_extra_1}), (\ref{trans_probs_double_extra_4}) and (\ref{reward}), this implies $v^*(s) = 0$. As shown in the proof of Lemma \ref{0_sub_opt}, there exists a policy $\pi = d^{\infty}$ that satisfies $v^{\pi}(i,j) > 0 = v^*(i,j)$, which contradicts the optimality of $\pi^*$. Hence $d^*(s) \neq \tilde{a}$.

We proceed to prove the statement for states $(i,j) \in \hat{S}$, where either $i = 2$ and $j = 3, \ldots, N-2$ or $i = 3, \ldots, N-2$ and $j = 2$. Suppose that $\pi^* = (d^*)^{\infty}$ is an optimal policy that satisfies $d^*(2,j) = \tilde{a}$ for some $j = 3, \ldots, N-2$. Let $j^*$ denote the largest such $j$ for which this holds. We assume that $3< j^* < N-2$. The argument can easily be extended to $j^* = 3$ and $j^* = N-2$. Note that rectangles of size $(i,j)$, where $i, j = 3, \ldots, N-2, N$ or $i = 2$, $j = j^* + 1, \ldots, N-2, N$, cannot shrink under policy $\pi^*$. In addition, by the definition of the reward function, the optimality of $\pi^*$ and the result of Lemma \ref{0_sub_opt}, we obtain $v^*(i,j) < v^*(i+1, j)$ and $v^*(i, j) < v^*(i, j+1)$ for $i, j = 3, \ldots, N-2, N$ or $i = 2$, $j = j^* + 1, \ldots, N-2, N$. We proceed to show that at least one of the following must hold:
\begin{enumerate}
    \item $d^*(2, j^*-1) \in \{a_{12}', a_{21}, a_{21}', a_{22}, a_{22}', a_0\}$,
    \item $d^*(2,j) \in \{a_{21}, a_{21}', a_{22}, a_{22}', a_0\}$, for some $j = 3, \ldots, j^*-2$,
    \item $d^*(2,2) = a_0$.
\end{enumerate}
If none of these statements holds, we have $d^*(2, j^*-1) \in \{a_{11}', \tilde{a}\}$, $d^*(2, j) \in \{a_{11}', a_{12}', \tilde{a}\}$ and $d^*(2, 2) \in \{a_{11}', a_{12}', a_{21}', a_{22}'\}$. It now follows from the definition of the transition probabilities and from symmetry that the only rectangles that can be reached from state $(2, j^*)$ are those in the set $\{(2, j), (j, 2)| j = 2, \ldots, j^*\}$. By the definition of the reward function, this implies $v^*(2, j^*) = 0$. Since there exists a policy $\pi = d^{\infty}$ that satisfies $v^{\pi}(s) > 0$ for all $s \in S\setminus\{(0,0\}$, this contradicts the optimality of $\pi^*$. Hence, one of the statements $1-3$ must hold. Let $j'$ denote the largest $j$ for which one of these statements is attained. We assume that $j' \in \{3, \dots, j^*-2\}$ and $d^*(2, j') = a_{21}$. All the remaining scenarios can be handled in a similar way. We construct a policy $\pi = d^{\infty}$, where $d: \hat{S} \rightarrow \hat{A}$ is given by
\begin{equation*}
    d(i,j) = \begin{cases}
        a_{21}, &\text{if } i = 2, \quad j = j^*, \\
        d^*(i,j), &\text{otherwise}.
    \end{cases}
\end{equation*}
That is, we replace the action in state $(2, j^*)$ by the action prescribed in state $(2, j')$. Now, let $\tau$ denote the first hitting time to the state $(2,j')$ in the process induced by policy $\pi^*$, started from state $(2, j^*)$. Note that $s_t \in \{(2, j)|j = j'+1, \ldots, j^*\}$ for all $t < \tau$ and that $\tau \geq j^*-j'$ by definition of $\pi^*$ and the transition probabilities. This implies that $v^*(2, j^*) \leq \lambda^{j^*-j'}v^*(2, j')$. Also, by the fact that $d(i,j) = d^*(i,j)$ for $i,j = 3, \ldots, N-2, N$, and for $i = 2$, $j = j^*+1, \ldots, N-2, N$ and the fact that these rectangles cannot shrink, we have $v^{\pi}(i,j) = v^*(i,j)$ for all $i,j = 3, \ldots, N-2, N$ and $i = 2$, $j = j^*+1, \ldots, N-2, N$. We now obtain, using expression (\ref{trans_probs_2}),
\begin{equation*}
    v^*(2, j^*) \leq \lambda^{j^* -j'}v^*(2, j') = \lambda^{j^*-j'}\dfrac{2}{3-\lambda}v^*(3, j'),
\end{equation*}
and
\begin{equation*}
    v^{\pi}(2, j^*) = \dfrac{2}{3-\lambda}v^{\pi}(3, j^*) = \dfrac{2}{3-\lambda}v^*(3, j^*).
\end{equation*}
Since $v^*(3, j') < v^*(3, j^*)$, this yields $v^*(2, j^*) < v^{\pi}(2, j^*)$, contradicting the optimality of $\pi^*$. Thus, $d^*(i,j) \neq \tilde{a}$ for all $(i,j) \in \hat{S}$, $i,j \notin \{0,N\}$. 
\end{proof}
It follows from Lemmas \ref{0_sub_opt} and \ref{atilde_sub_opt} that we can leave actions 0 and $\tilde{a}$ out of consideration. The following result implies the suboptimality of actions $a_{11}'$, $a_{12}'$, $a_{21}'$ and $a_{22}'$. For rectangles with side lengths greater than 2, it shows that these actions are always inferior to their counterparts $a_{11}$, $a_{12}$, $a_{21}$ and $a_{22}$ respectively. Hence, in finding the optimal policy, when considering a rectangle that has a side of length greater than 2, we leave the corresponding actions in the set $\{a_{11}'$, $a_{12}'$, $a_{21}'$, $a_{22}'\}$ out of consideration. 
\begin{lemma}\label{corner_vs_middle}
Let $v^*: \hat{S} \rightarrow \mathbb{R}$ denote the optimal value function of the auxiliary MDP. For any state $s = (x,y) \in \hat{S}$ with $3 \leq x \leq N-2$ and $y \leq N-2$, we have
\begin{equation}\label{a11_vs_a11'}
    \hat{r}(s, a_{11}') + \lambda \sum\limits_{s'\in \hat{S}}\hat{P}(s'|s, a_{11}')v^*(s') < \hat{r}(s, a_{11}) + \lambda \sum\limits_{s' \in \hat{S}}\hat{P}(s'|s, a_{11})v^*(s'),
\end{equation}
and for any state $s = (x,y) \in \hat{S}$ with $3 \leq x \leq N-2$ and $y \leq N-3$, we have
\begin{equation}\label{a12_vs_a12'}
    \hat{r}(s, a_{12}') + \lambda \sum\limits_{s'\in \hat{S}}\hat{P}(s'|s, a_{12}')v^*(s') < \hat{r}(s, a_{12}) + \lambda \sum\limits_{s' \in \hat{S}}\hat{P}(s'|s, a_{12})v^*(s').
\end{equation}
Similarly, for any state $s = (x,y) \in \hat{S}$ with $3 \leq y \leq N-2$ and $x \leq N-2$, we have
\begin{equation}\label{a21_vs_a21'}
    \hat{r}(s, a_{21}') + \lambda \sum\limits_{s'\in \hat{S}}\hat{P}(s'|s, a_{21}')v^*(s') < \hat{r}(s, a_{21}) + \lambda \sum\limits_{s' \in \hat{S}}\hat{P}(s'|s, a_{21})v^*(s'),
\end{equation}
and for any state $s = (x,y) \in \hat{S}$ with $3 \leq y \leq N-2$ and $x \leq N-3$, we have
\begin{equation}\label{a22_vs_a22'}
    \hat{r}(s, a_{22}') + \lambda \sum\limits_{s'\in \hat{S}}\hat{P}(s'|s, a_{22}')v^*(s') < \hat{r}(s, a_{22}) + \lambda \sum\limits_{s' \in \hat{S}}\hat{P}(s'|s, a_{22})v^*(s').
\end{equation}
\end{lemma}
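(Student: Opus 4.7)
The plan is to reduce each of the four inequalities to a strict monotonicity property of $v^*$ via direct computation, and then to establish that property. For (\ref{a11_vs_a11'}), expressions (\ref{trans_probs_1}), (\ref{trans_probs_6}), (\ref{trans_probs_extra_1}) and (\ref{trans_probs_extra_5}) of Lemma \ref{explicit_Phats} show that both $a_{11}$ and $a_{11}'$ yield transitions from $(i, j)$ supported on $\{(i, j), (i, j+1)\}$, so a short calculation collapses the difference of the two Bellman right-hand sides to $\tfrac{\lambda}{6}\bigl(v^*(i, j+1) - v^*(i, j)\bigr)$ when $j \leq N-3$ and to $\tfrac{\lambda}{12}\bigl(v^*(i, N) - v^*(i, N-2)\bigr)$ when $j = N-2$. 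For (\ref{a12_vs_a12'}), using (\ref{trans_probs_3}), (\ref{trans_probs_8}), (\ref{trans_probs_extra_3}) and (\ref{trans_probs_extra_7}) I obtain the nonnegative combination $\tfrac{\lambda}{216}\bigl[13(v^*(i, j+2) - v^*(i, j)) + 2(v^*(i, j+1) - v^*(i, j))\bigr]$ for $j \leq N-4$, and an analogous positive combination of one-step increments of $v^*$ for $j = N-3$. The inequalities (\ref{a21_vs_a21'}) and (\ref{a22_vs_a22'}) follow by interchanging the roles of the coordinates and using the symmetric transition probabilities in Lemma \ref{explicit_Phats}.

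It then remains to show strict monotonicity of $v^*$: concretely, $v^*(i, j) < v^*(i, j')$ whenever $(i, j), (i, j') \in \hat{S}\setminus\{(0, 0)\}$ with $j < j'$, and symmetrically in the first coordinate. Combining Corollary \ref{Value_hitting_time_indicator_reward} with the reward function of the auxiliary MDP, I would write
$$v^*(s) \;=\; \frac{\mathbb{E}\bigl[\lambda^{\tau^{s, \pi^*}_{(N, N)}}\bigr]}{1 - \lambda} \qquad (s \neq (N, N))$$
for an optimal stationary deterministic policy $\pi^* = (d^*)^\infty$. By Lemmas \ref{0_sub_opt} and \ref{atilde_sub_opt}, $d^*(s) \notin \{0, \tilde{a}\}$ for $s \notin \{(0, 0), (N, N)\}$, and direct inspection of Lemma \ref{explicit_Phats} shows that every other action has transitions whose two coordinates are componentwise non-decreasing with probability one. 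Consequently, every trajectory under $\pi^*$ is coordinatewise non-decreasing.

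The strict monotonicity will follow from a coupling of the processes started at $(i, j)$ and $(i, j')$ that keeps the latter componentwise weakly ahead of the former under $\pi^*$: this yields $\tau^{(i, j), \pi^*}_{(N, N)} \geq \tau^{(i, j'), \pi^*}_{(N, N)}$ almost surely, with strict inequality on a positive-probability event, since from $(i, j)$ the second coordinate needs at least one more transition to reach $j'$ and the one-step probability of moving up is strictly less than one for every action listed in Lemma \ref{explicit_Phats}. Taking expectations of $\lambda^{\cdot}$ then yields $v^*(i, j) < v^*(i, j')$. I expect the main obstacle to be the coupling construction itself, because $\pi^*$ may prescribe different actions at $(i, j)$ and $(i, j')$, so the two chains cannot simply be driven by a single randomness stream. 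I plan to build the coupling by backward induction on the partial order of $\hat{S}$, starting from the absorbing state $(N, N)$ and gluing one-step couplings via the strong Markov property, ensuring at each step that the joint distribution has the correct marginals from Lemma \ref{explicit_Phats} and preserves componentwise domination.
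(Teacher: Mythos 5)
Your algebraic reduction is exactly the paper's: it likewise substitutes the transition probabilities of Lemma \ref{explicit_Phats} into both Bellman right-hand sides and rewrites each difference as a positive combination of increments of $v^*$ along the relevant coordinate (your coefficients $\tfrac{\lambda}{6}$, $\tfrac{\lambda}{12}$ and $\tfrac{\lambda}{216}\left[13(v^*(i,j+2)-v^*(i,j))+2(v^*(i,j+1)-v^*(i,j))\right]$ all check out), so everything rests on the strict monotonicity $v^*(x,y)<v^*(x,y+1)$. The gap is in how you propose to prove that monotonicity. A coupling of the two $\pi^*$-chains that preserves componentwise domination does not exist in general, because the one-step kernels under an optimal policy are not stochastically monotone for the componentwise order. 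Concretely, consider the comparable states $(N-2,N-4)\preceq(N-2,N-3)$; you must allow for the optimal action being $a_0$ at the smaller state and $a_{12}$ at the larger one (this is in fact what Theorem \ref{Optimal_policy} prescribes). Under $a_0$ the first coordinate jumps from $N-2$ to $N$ with probability $1/8+25/72=17/36$ by (\ref{trans_probs_10}), whereas under $a_{12}$ it stays at $N-2$ with probability one by (\ref{trans_probs_8}). Taking the upper set $\{(a,b)\in\hat{S}:a=N\}$, Strassen's theorem rules out any one-step coupling that keeps the chain started higher componentwise above the other, so the backward-induction gluing you describe cannot be carried out; the same obstruction blocks the pathwise domination you want for the hitting times.

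The conclusion you need is nevertheless true, and the repair is to avoid coupling two copies of $\pi^*$ altogether. The paper derives $v^*(x,y)<v^*(x,y+1)$ directly from three structural facts: the reward is the indicator of $(N,N)$, rectangles cannot shrink under $\hat{P}$, and from every state other than $(N,N)$ every admissible nontrivial action reaches a strictly larger state with positive probability. Alternatively, a one-sided argument works: lower-bound $v^*(i,j')$ by the value at $(i,j')$ of a policy that mimics (a translate of) the optimal actions used from $(i,j)$, which only requires the mimicking kernel to stochastically dominate the mimicked one and can be checked action type by action type in Lemma \ref{explicit_Phats}; equivalently, show by induction on value iteration that the Bellman operator preserves componentwise monotone functions, and then upgrade to strictness using the discount factor. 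Your use of Corollary \ref{Value_hitting_time_indicator_reward} (with denominator $1-\lambda$ since $(N,N)$ is absorbing) and of Lemmas \ref{0_sub_opt} and \ref{atilde_sub_opt} is fine, but as written the coupling step fails and the proof is incomplete.
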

\begin{proof}
    We prove the validity of expressions (\ref{a11_vs_a11'}) and (\ref{a12_vs_a12'}). Expressions (\ref{a21_vs_a21'}) and (\ref{a22_vs_a22'}) follow immediately by symmetry.
    We first consider expression (\ref{a11_vs_a11'}) for $y = N-2$. The definition of the reward function and expressions (\ref{trans_probs_6}) and (\ref{trans_probs_extra_5}) yield
    \begin{equation*}
        \hat{r}(s, a_{11}') + \lambda \sum\limits_{s'\in \hat{S}}\hat{P}(s'|s, a_{11}')v^*(s') = \dfrac{\lambda}{3}v^*(x,N-2) + \dfrac{2\lambda}{3}v^*(x, N)
    \end{equation*}
    and
    \begin{equation*}
        \hat{r}(s, a_{11}) + \lambda \sum\limits_{s'\in \hat{S}}\hat{P}(s'|s, a_{11})v^*(s') = \dfrac{\lambda}{4}v^*(x,N-2) + \dfrac{3\lambda}{4}v^*(x, N).
    \end{equation*}
    The facts that $\hat{r}(x, y) = 1$ if $(x, y) = (N, N)$ and $\hat{r}(x,y) = 0$ otherwise, together with the fact that for each $s \in \hat{S}$, $s \neq (N, N)$, there exists $s'\in \hat{S}$, $s' \neq s$ such that $\hat{P}(s'|s, d(s)) > 0$ if $d(s) \neq 0$ and the fact that rectangles cannot shrink, imply that $v^*(x, y) < v^*(x, y+1)$ for all $3 \leq y \leq N-1$. Therefore,
    \begin{align*}
        \dfrac{\lambda}{3}v^*(x,N-2) + \dfrac{2\lambda}{3}v^*(x, N) &= \left(\dfrac{\lambda}{3} - \dfrac{\lambda}{12}\right)v^*(x, N-2) + \dfrac{\lambda}{12}v^*(x, N-2) + \dfrac{2\lambda}{3}v^*(x, N) \\
        \nonumber&< \dfrac{\lambda}{4}v^*(x, N-2) + \dfrac{3\lambda}{4}v^*(x, N).
    \end{align*}
    Now suppose that $y < N-2$. Using the definition of the reward function and expressions~(\ref{trans_probs_1}) and (\ref{trans_probs_extra_1}), we obtain
    \begin{equation*}
        \hat{r}(s, a_{11}') + \lambda \sum\limits_{s'\in \hat{S}}\hat{P}(s'|s, a_{11}')v^*(s') = \dfrac{\lambda}{2}v^*(x,y) + \dfrac{\lambda}{2}v^*(x, y+1)
    \end{equation*}
    and
    \begin{equation*}
        \hat{r}(s, a_{11}) + \lambda \sum\limits_{s'\in \hat{S}}\hat{P}(s'|s, a_{11})v^*(s') = \dfrac{\lambda}{3}v^*(x,y) + \dfrac{2\lambda}{3}v^*(x, y+1).
    \end{equation*}
    Again invoking the fact that $v^*(x, y) < v^*(x, y+1)$ for all $3 \leq y \leq N-1$, we have
    \begin{align*}
        \dfrac{\lambda}{2}v^*(x,y) + \dfrac{\lambda}{2}v^*(x, y+1) &= \left(\dfrac{\lambda}{2} - \dfrac{\lambda}{6}\right)v^*(x,y) + \dfrac{\lambda}{6}v^*(x,y) + \dfrac{\lambda}{2}v^*(x,y+1) \\
        \nonumber&< \dfrac{\lambda}{3}v^*(x,y) + \dfrac{2\lambda}{3}v^*(x,y+1),
    \end{align*}
    which proves expression (\ref{a11_vs_a11'}).
    
    The validity of expression (\ref{a12_vs_a12'}) can be established in a similar way. We first consider the case $y = N-3$. From the definition of the reward function and expressions (\ref{trans_probs_8}) and (\ref{trans_probs_extra_7}), it follows that
    \begin{equation*}
        \hat{r}(s, a_{12}') + \lambda \sum\limits_{s'\in \hat{S}}\hat{P}(s'|s, a_{12}')v^*(s') = \dfrac{4\lambda}{9}v^*(x, N-3) + \dfrac{5\lambda}{27}v^*(x, N-2) + \dfrac{10\lambda}{27}v^*(x, N)
    \end{equation*}
    and
    \begin{equation*}
        \hat{r}(s, a_{12}) + \lambda \sum\limits_{s'\in \hat{S}}\hat{P}(s'|s, a_{12})v^*(s') = \dfrac{7\lambda}{18}v^*(x, N-3) + \dfrac{31\lambda}{144}v^*(x, N-2) + \dfrac{19\lambda}{48}v^*(x, N).
    \end{equation*}
    Again using the fact that $v^*(x, y) < v^*(x, y+1)$ for all $3 \leq y \leq N-1$, we obtain
    \begin{align*}
        &\dfrac{4\lambda}{9}v^*(x, N-3) + \dfrac{5\lambda}{27}v^*(x, N-2) + \dfrac{10\lambda}{27}v^*(x, N) \\
        \nonumber &= \dfrac{4\lambda}{9}v^*(x, N-3) + \left(\dfrac{5\lambda}{27} - \dfrac{11\lambda}{432}\right)v^*(x, N-2) + \dfrac{11\lambda}{432}v^*(x, N-2) + \dfrac{10\lambda}{27}v^*(x, N) \\
        \nonumber &< \dfrac{4\lambda}{9}v^*(x, N-3) + \dfrac{23\lambda}{144}v^*(x, N-2) + \dfrac{19\lambda}{48}v^*(x, N) \\
        \nonumber &= \left(\dfrac{4\lambda}{9} - \dfrac{\lambda}{18}\right)v^*(x, N-3) + \dfrac{\lambda}{18}v^*(x, N-3) + \dfrac{23\lambda}{144}v^*(x, N-2) + \dfrac{19\lambda}{48}v^*(x, N) \\
        \nonumber &< \dfrac{7\lambda}{18}v^*(x, N-3) + \dfrac{31\lambda}{144}v^*(x, N-2) + \dfrac{19\lambda}{48}v^*(x, N).
    \end{align*}
    Finally, for $y < N-3$, the definition of the reward function and expressions (\ref{trans_probs_3}) and (\ref{trans_probs_extra_3}) yield
    \begin{equation*}
        \hat{r}(s, a_{12}') + \lambda \sum\limits_{s'\in \hat{S}}\hat{P}(s'|s, a_{12}')v^*(s') = \dfrac{5\lambda}{8}v^*(x, y) + \dfrac{\lambda}{4}v^*(x, y+1) + \dfrac{\lambda}{8}v^*(x, y+2)
    \end{equation*}
    and
    \begin{equation*}
        \hat{r}(s, a_{12}) + \lambda \sum\limits_{s'\in \hat{S}}\hat{P}(s'|s, a_{12})v^*(s') = \dfrac{5\lambda}{9}v^*(x, y) + \dfrac{7\lambda}{27}v^*(x, y+1) + \dfrac{5\lambda}{27}v^*(x, y+2).
    \end{equation*}
    Invoking once more the fact that $v^*(x,y) < v^*(x, y+1)$ for all $3 \leq y \leq N-1$, we conclude
    \begin{align*}
        &\dfrac{5\lambda}{8}v^*(x, y) + \dfrac{\lambda}{4}v^*(x, y+1) + \dfrac{\lambda}{8}v^*(x, y+2)\\
        \nonumber &= \dfrac{5\lambda}{8}v^*(x,y) + \left(\dfrac{\lambda}{4} - \dfrac{13\lambda}{216}\right)v^*(x, y+1) + \dfrac{13\lambda}{216}v^*(x, y+1) + \dfrac{\lambda}{8}v^*(x, y+2) \\
        \nonumber &< \dfrac{5\lambda}{8}v^*(x,y) + \dfrac{41\lambda}{216}v^*(x, y+1) + \dfrac{5\lambda}{27}v^*(x, y+2) \\
        \nonumber &= \left(\dfrac{5\lambda}{8} - \dfrac{5\lambda}{72}\right)v^*(x,y) + \dfrac{5\lambda}{72}v^*(x,y) + \dfrac{41\lambda}{216}v^*(x, y+1) + \dfrac{5\lambda}{27}v^*(x, y+2) \\
        \nonumber &< \dfrac{5\lambda}{9}v^*(x, y) + \dfrac{7\lambda}{27}v^*(x, y+1) + \dfrac{5\lambda}{27}v^*(x, y+2).
    \end{align*}
\end{proof}
Lemma \ref{corner_vs_middle} implies that we can leave actions $a_{11}', a_{12}', a_{21}'$ and $a_{22}'$ out of consideration whenever the corresponding side of the rectangle has length greater than 2. In Lemma \ref{corner_vs_middle_small}, we provide a condition under which these actions are suboptimal when the side length equals 2 as well. First, we let $(\hat{S}, \hat{A}', \hat{P}', \hat{r}')$ denote an MDP that is identical to the auxiliary MDP, apart from the fact that the action spaces of rectangles that have a side of length 2 are extended with the corresponding actions in the set $\{a_{11}, a_{12}, a_{21}$, $a_{22}\}$. That is,
    \begin{equation*}
        \hat{A}'(i, j) = \begin{cases}
            \hat{A}(i,j), &\text{if } i, j \neq 2,\\
            \hat{A}(i,j) \cup \{a_{11}, a_{12}\}, &\text{if } i = 2, \quad j = 3, ..., N-3, \\
            \hat{A}(i,j) \cup \{a_{11}\}, &\text{if } i = 2, \quad j = N-2, \\
            \hat{A}(i,j) \cup \{a_{21}, a_{22}\}, &\text{if } i = 3, ..., N-3, \quad j = 2, \\
            \hat{A}(i,j) \cup \{a_{21}\}, &\text{if } i = N-2, \quad j = 2, \\
            \hat{A}(i,j) \cup \{a_{11}, a_{12}, a_{21}, a_{22}\}, &\text{if } i = j = 2. 
        \end{cases}
    \end{equation*}
    Let corresponding transition probabilities be defined analogously to expressions (\ref{trans_probs_1})--(\ref{trans_probs_4}), (\ref{trans_probs_6})--(\ref{trans_probs_9}), i.e., 
    \begin{equation*}
        \hat{P}'((i',j')|(2, j), a_{11}) = \begin{cases}
            1/3, &\text{if } i' = 2, j'= j, \\
            2/3, &\text{if } i'=i, j'=j+1, \\
            0, &\text{otherwise,}
        \end{cases}
    \end{equation*}
    for $j = 2, \ldots, N-3$ in accordance with expression (\ref{trans_probs_1}) and
    \begin{equation*}
        \hat{P}'((i',j')|(2, N-2), a_{11}) = \begin{cases}
            1/4, &\text{if } i'=2, j'= N-2, \\
            3/4, &\text{if } i'=2, j'=N, \\
            0, &\text{otherwise,}
        \end{cases}
    \end{equation*}
    in accordance with (\ref{trans_probs_6}). The remaining cases follow from symmetry. Let the reward function be defined analogously to expression (\ref{reward}). Lemma \ref{corner_vs_middle_small} shows that this artificially extended MDP can provide an easy way to discard actions $a_{11}', a_{12}', a_{21}'$ and $a_{22}'$ for rectangles with side length 2.   

\begin{lemma}\label{corner_vs_middle_small}
    Let $\pi^* = (d^*)^{\infty}$ denote an optimal policy in the MDP $(\hat{S}, \hat{A}, \hat{P}, \hat{r})$. Suppose that, for each optimal policy $\hat{\pi}^* = (\hat{d}^*)^{\infty}$ in the MDP $(\hat{S}, \hat{A}', \hat{P}', \hat{r}')$, all of the following conditions hold:
    \begin{enumerate}
        \item $\hat{d}^*(2,j) \notin \{a_{11}, a_{12}\}$, $j = 3, \ldots, N-3$, 
        \item $\hat{d}^*(2, N-2) \neq a_{11}$,
        \item $\hat{d}^*(i, 2) \notin \{a_{21}, a_{22}\}$, $i = 3, \ldots, N-3$,
        \item $\hat{d}^*(N-2, 2) \neq a_{21}$,
        \item $\hat{d}^*(2, 2) \notin \{a_{11}, a_{12}, a_{21}, a_{22}\}$.
    \end{enumerate}
    This implies
    \begin{enumerate}
        \item $d^*(2, j) \notin \{a_{11}', a_{12}'\}$, $j = 3, \ldots, N-3$,
        \item $d^*(2, N-2) \neq a_{11}'$, 
        \item $d^*(i,2) \notin \{a_{21}', a_{22}'\}$, $i = 3, \ldots, N-3$,
        \item $d^*(N-2, 2) \neq a_{21}'$,
        \item $d^*(2, 2) \notin \{a_{11}', a_{12}', a_{21}', a_{22}'\}$.
    \end{enumerate} 
\end{lemma}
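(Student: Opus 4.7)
The plan is to reason inside the extended MDP $(\hat{S}, \hat{A}', \hat{P}', \hat{r}')$, establish strict dominance of each non-primed action over its primed counterpart even at the side-length-2 boundary, and then transfer the resulting structural information back to the original MDP through an equality of optimal value functions. Let $\hat{v}^*$ and $v^*$ denote the optimal value functions of the extended and original MDPs respectively; as a preliminary observation, Lemmas \ref{0_sub_opt} and \ref{atilde_sub_opt} transfer verbatim to the extended MDP (adding actions can only help, and the original suboptimality arguments for $0$ and $\tilde{a}$ still go through unchanged), so no optimal policy in the extended MDP selects an action that shrinks a rectangle.

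The core computation mirrors Lemma \ref{corner_vs_middle}. Using the transition kernel obtained by extending (\ref{trans_probs_1})--(\ref{trans_probs_4}) and (\ref{trans_probs_6})--(\ref{trans_probs_9}) to side-length-2 states, together with the primed kernels (\ref{trans_probs_extra_1})--(\ref{trans_probs_extra_8}), one verifies for instance that
\[
Q(2, j, a_{11}) - Q(2, j, a_{11}')
   = \frac{\lambda}{6}\bigl[\hat{v}^*(2, j+1) - \hat{v}^*(2, j)\bigr],
\]
for $j = 3, \ldots, N-3$, with strict monotonicity of $\hat{v}^*$ along rows and columns following from the same non-shrinking plus positive-hitting-probability argument already used inside Lemma \ref{corner_vs_middle}, now justified at $i = 2$ and $j = 2$ by the preliminary observation above. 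Analogous strictly positive expressions arise for $a_{12}$ versus $a_{12}'$, for the boundary states $(2, N-2)$ and $(2, N-3)$, and by symmetry for the column $j=2$ and for the corner $(2, 2)$. This yields uniform strict dominance of each non-primed side-length-2 action over its primed sibling inside the extended MDP.

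The structural consequence in the extended MDP is immediate: if some optimal $\hat{\pi}^* = (\hat{d}^*)^\infty$ had $\hat{d}^*(2, j) = a_{11}'$, then switching to $a_{11}$ would produce a Q-value strictly greater than $\hat{v}^*(2, j)$, contradicting optimality. Hence no optimal extended-MDP policy uses a primed action at a side-length-2 state, and by the hypothesis it also avoids the non-primed additions there. Every such policy therefore selects its side-length-2 actions from $\hat{A}(i, j) \setminus \{a_{11}', a_{12}', a_{21}', a_{22}'\} \subset \hat{A}(i,j)$, so it is also a valid policy in the original MDP with identical value; combined with the trivial bound $\hat{v}^* \geq v^*$ this gives $v^* = \hat{v}^*$.

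For the final transfer, assume toward a contradiction that an optimal $\pi^* = (d^*)^\infty$ in the original MDP satisfies $d^*(2, j) = a_{11}'$. Since $a_{11}' \in \hat{A}(2, j)$ the kernels $\hat{P}$ and $\hat{P}'$ agree on this action, and using $v^* = \hat{v}^*$ we compute
\[
Q_{\mathrm{ext}}(2, j, a_{11}') = Q_{\mathrm{orig}}(2, j, a_{11}') = v^*(2, j) = \hat{v}^*(2, j),
\]
so $a_{11}'$ attains the optimum in the extended MDP at $(2, j)$, contradicting the previous paragraph. The remaining conclusions $d^*(2, N-2) \neq a_{11}'$, $d^*(i, 2) \notin \{a_{21}', a_{22}'\}$, $d^*(N-2, 2) \neq a_{21}'$ and $d^*(2, 2) \notin \{a_{11}', a_{12}', a_{21}', a_{22}'\}$ follow by identical reasoning at the appropriate boundary and corner states. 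The main obstacle is the strict monotonicity of $\hat{v}^*$ along the boundary row $i=2$ and column $j=2$, since Lemma \ref{corner_vs_middle} only guarantees it for side length at least $3$; once the preliminary non-shrinking claim for the extended MDP is in place, the dominance computation and the transfer step are essentially bookkeeping.
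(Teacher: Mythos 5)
Your proposal is correct and takes essentially the same route as the paper's proof: both work in the artificially extended MDP, identify its optimal value function with that of the original auxiliary MDP, extend the dominance argument of Lemma \ref{corner_vs_middle} to the side-length-2 states, and transfer the resulting strict suboptimality of the primed actions back to the original MDP. The remaining differences are cosmetic --- you obtain the value-function identity from your dominance step and conclude by contradiction, whereas the paper reads the identity off the hypothesis directly and chains the two strict inequalities.
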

\begin{proof}
    From the fact that none of the actions that were added to $\hat{A}$ to obtain the action space $\hat{A}'$ are optimal in the MDP $(\hat{S}, \hat{A}', \hat{P}', \hat{r})$, it follows that the optimal value function of the latter MDP is equivalent to that of the MDP $(\hat{S}, \hat{A}, \hat{P}, \hat{r})$. We show that this implies $d^*(s) \neq a_{11}'$ for $s = (2,j)$, $j = 3, \ldots, N-3$. The remaining cases follow from a similar argument. Since $a_{11}$ is not optimal in state $s$ in the MDP $(\hat{S}, \hat{A}', \hat{P}', \hat{r})$, there is an action $a^* \in \hat{A}'(s)\setminus \{a_{11}\}$ such that 
    \begin{equation}\label{a11_small_subopt}
        \hat{r}'(s, a_{11}) + \lambda \sum\limits_{s'\in \hat{S}}\hat{P}'(s'|s, a_{11})v^*(s') < \hat{r}'(s, a^*) + \lambda \sum\limits_{s' \in \hat{S}}\hat{P}'(s'|s, a^*)v^*(s'),
    \end{equation}
    where $v^*: S \rightarrow \mathbb{R}$ denotes the optimal value function of both the MDPs. The same arguments that led to Lemma \ref{corner_vs_middle} can be applied to show that 
    \begin{equation}\label{a11_vs_a11'_small}
        \hat{r}'(s, a_{11}') + \lambda \sum\limits_{s'\in \hat{S}}\hat{P}'(s'|s, a_{11}')v^*(s') < \hat{r}'(s, a_{11}) + \lambda \sum\limits_{s' \in \hat{S}}\hat{P}'(s'|s, a_{11})v^*(s'),
    \end{equation}
    It now follows from expressions (\ref{a11_small_subopt}) and (\ref{a11_vs_a11'_small}) that 
    \begin{equation*}
        \hat{r}'(s, a_{11}') + \lambda \sum\limits_{s'\in \hat{S}}\hat{P}'(s'|s, a_{11}')v^*(s') < \hat{r}'(s, a^*) + \lambda \sum\limits_{s' \in \hat{S}}\hat{P}'(s'|s, a^*)v^*(s').
    \end{equation*}
    Since this statement does not depend on the presence of action $a_{11}$ in the action space $\hat{A}'(s)$, we can conclude that for any stationary deterministic optimal policy $\pi^* = (d^*)^{\infty}$ in the MDP $(\hat{S}, \hat{A}, \hat{P}, \hat{r})$, we have $d^*(2,j) \neq a_{11}'$, $j = 3, \ldots, N-3$. 
\end{proof}

Now that actions $0$, $\tilde{a}$, $a_{11}'$, $a_{21}'$, $a_{12}'$ and $a_{22}'$ are dealt with, we unravel the structure of the optimal policy in the auxiliary MDP, which is specified in Theorem \ref{Optimal_policy}. Figure \ref{fig_optimal_policy} provides a visualization of this result.

\paragraph{Proof of Theorem \ref{Optimal_policy}}\quad \\
We show that the value function $v^{\pi^*}_{\lambda}: \hat{S} \rightarrow \mathbb{R}$ of a stationary, deterministic policy $\pi^* = (d^*)^{\infty}$ satisfies the Bellman optimality equations (\ref{Bellman_equations}) if and only if it has the specified form. The statement then follows immediately from Theorem \ref{Bellman_equations_thm}, since the state and action spaces are both finite. Note that it suffices to consider states of the form $(i,j)$, $i \geq j$. The analogous results for states of the form $(i,j)$, $i < j$ follow directly from symmetry. To obtain the desired result for states $(i, 2)$, $i = 2, \ldots, N-2$ (resp. $(2, j)$, $j = 2, \ldots, N-2$), we prove the statement for the MDP $(\hat{S}, \hat{A}', \hat{P}', \hat{r}')$. The result for the original auxiliary MDP $(\hat{S}, \hat{A}, \hat{P}, \hat{r})$ then follows immediately from Lemma \ref{corner_vs_middle_small}. 

Let $\Pi_1$ denote the space of stationary, deterministic policies $\pi_1 = d_1^{\infty}$ that satisfy $d_1(i,j) \in A^*_1(i,j)$ for all $(i,j) \in \hat{S}$. Similarly, let $\Pi_2$ denote the space of stationary, deterministic policies $\pi_2 = d_2^{\infty}$ that satisfy $d_2(i,j) \in A^*_2(i,j)$ for all $(i,j) \in \hat{S}$. We proceed to show that $v^{\pi^*}_{\lambda}: \hat{S} \rightarrow \mathbb{R}$ for $\lambda \in (\lambda_c, 1)$ satisfies the Bellman optimality equations if and only if $\pi^* \in \Pi_1$, that $v^{\pi^*}_{\lambda}: \hat{S} \rightarrow \mathbb{R}$ for $\lambda \in (0, \lambda_c)$ satisfies the Bellman optimality equations if and only if $\pi^* \in \Pi_2$ and that $v^{\pi^*}_{\lambda_c}: \hat{S} \rightarrow \mathbb{R}$ satisfies the Bellman optimality equations if and only if $\pi^* \in \Pi_1 \cup \Pi_2$. 

To this end, let $\pi_1 = d_1^{\infty}$ and $\pi_2 = d_2^{\infty}$ be two policies in $\Pi_1$ and $\Pi_2$ respectively. Note that $A^*_k(i,j)$, $k = 1, 2$ is a singleton for all $(i,j) \in \hat{S}$, $(i,j) \neq (N-3, N-3)$. Since actions $a_{12}$ and $a_{22}$ have the same effect in state $(N-3, N-3)$, we let $d_1(N-3, N-3) = d_2(N-3, N-3) = a_{22}$ without loss of generality. Using the transition probabilities (\ref{trans_probs_1}--\ref{trans_probs_extra_8}), we obtain the following recursive expressions for $v_{\lambda}^{\pi_1}$ and $v_{\lambda}^{\pi_2}$:
\begin{align}
    &\label{rec_1_main} v_{\lambda}^{\pi_k}(N,N) = \dfrac{1}{1-\lambda}, \quad k = 1, 2, \\
    &\label{rec_2}v_{\lambda}^{\pi_k}(N, N-2) = \dfrac{3\lambda}{4-\lambda}v_{\lambda}^{\pi_k}(N, N), \quad k = 1, 2,\\
    &\label{rec_3} v_{\lambda}^{\pi_k}(N, N-3) = \dfrac{31\lambda}{8(18-7\lambda)}v_{\lambda}^{\pi_k}(N, N-2) + \dfrac{57\lambda}{8(18-7\lambda)}v_{\lambda}^{\pi_k}(N, N), \quad k = 1, 2, \\
    &\label{rec_4}v_{\lambda}^{\pi_1}(N,j) = \dfrac{2\lambda}{3-\lambda}v_{\lambda}^{\pi_1}(N,j+1), \quad j = 2, \ldots, N-4, \\
    &\label{rec_5}v_{\lambda}^{\pi_2}(N,j) = \begin{cases}
        \dfrac{2\lambda}{3-\lambda}v_{\lambda}^{\pi_2}(N,j+1), &\text{if } j = N-4,\\
        \dfrac{7\lambda}{3(9-5\lambda)}v_{\lambda}^{\pi_2}(N, j+1) + \dfrac{5\lambda}{3(9-5\lambda)}v_{\lambda}^{\pi_2}(N, j+2),  &\text{if } j = 2, \ldots, N-5,
    \end{cases} \\
    &\label{rec_6} v_{\lambda}^{\pi_k}(N-2, N-2) = \dfrac{9\lambda}{2(18-7\lambda)}v_{\lambda}^{\pi_k}(N, N-2) + \dfrac{13\lambda}{2(18-7\lambda)}v_{\lambda}^{\pi_k}(N, N), \quad k = 1, 2, \\
    &\label{rec_7}v_{\lambda}^{\pi_k}(N-2, N-3) = \dfrac{31\lambda}{8(18-7\lambda)}v_{\lambda}^{\pi_k}(N-2, N-2) + \dfrac{57\lambda}{8(18-7\lambda)}v_{\lambda}^{\pi_k}(N-2, N), \quad k = 1, 2, \\
    &\label{rec_8} v_{\lambda}^{\pi_k}(N-2, j) = \dfrac{12}{12-5\lambda}\left(\dfrac{\lambda}{8}v_{\lambda}^{\pi_k}(N, j) + \dfrac{25\lambda}{72}v_{\lambda}^{\pi_k}(N, j+1) + \dfrac{\lambda}{9}v_{\lambda}^{\pi_k}(N-2, j+1)\right),\\
    \nonumber &\quad j = 2, \ldots, N-4,\quad  k = 1, 2, \\
    &\label{rec_9} v_{\lambda}^{\pi_k}(N-3, N-3) = \dfrac{31\lambda}{8(18-7\lambda)}v_{\lambda}^{\pi_k}(N-2, N-3) + \dfrac{57\lambda}{8(18-7\lambda)}v_{\lambda}^{\pi_k}(N, N-3), \quad k = 1, 2, \\
    &\label{rec_10_main} v_{\lambda}^{\pi_k}(i,j) = \dfrac{\lambda}{9-4\lambda}\Big(v_{\lambda}^{\pi_k}(i, j+1) + v_{\lambda}^{\pi_k}(i+1, j) + 3v_{\lambda}^{\pi_k}(i+1, j+1)\Big),\\
    \nonumber &\quad i = 2, \ldots, N-3, \quad j = 2, \ldots, N-4, \quad k = 1, 2.
\end{align}
Analogous expressions for states of the form $(i,j)$, $i<j$ follow directly from symmetry. Note that expressions (\ref{rec_1_main}--\ref{rec_10_main}) together with Corollary \ref{Value_hitting_time_indicator_reward} allow us to compute the quantity $\mathbb{E}[\lambda^{\tau^{(i,j), \pi_k}_{(N,N)}}]$ for $k = 1, 2$, $(i,j) \in \hat{S}$.
It suffices to show that for any state $s = (i,j) \in \hat{S}$, we have
\begin{equation}\label{Bellman_high}
    \hat{r}(s, a) + \lambda \sum\limits_{s' \in \hat{S}} \hat{P}(s'|s, a)v_{\lambda}^{\pi_1}(s') < \hat{r}(s, d_1(s)) +  \lambda \sum\limits_{s' \in \hat{S}} \hat{P}(s'|s, d_1(s))v_{\lambda}^{\pi_1}(s'),
\end{equation}
for all $a \in \hat{A}(i,j),   a \notin A^*_1(s),  \lambda > \lambda_c$,
\begin{equation}\label{Bellman_low}
    \hat{r}(s, a) + \lambda \sum\limits_{s' \in \hat{S}} \hat{P}(s'|s, a)v_{\lambda}^{\pi_2}(s') < \hat{r}(s, d_2(s)) +  \lambda \sum\limits_{s' \in \hat{S}} \hat{P}(s'|s, d_2(s))v_{\lambda}^{\pi_2}(s'),
\end{equation}
for all $a \in \hat{A}(i,j),   a \notin A^*_2(s),  \lambda < \lambda_c$ and
\begin{align}\label{Bellman_crit}
    v_{\lambda_c}^{\pi_1}(s) &= v_{\lambda_c}^{\pi_2}(s) := v_{\lambda_c}(s), \\
    \hat{r}(s, d_1(s)) + \lambda_c \sum\limits_{s' \in \hat{S}} \hat{P}(s'|s, d_1(s))v_{\lambda_c}(s') \nonumber &= \hat{r}(s, d_2(s)) +  \lambda \sum\limits_{s' \in \hat{S}} \hat{P}(s'|s, d_2(s))v_{\lambda_c}(s') \\
    \nonumber &> \hat{r}(s, a) + \lambda_c \sum\limits_{s' \in \hat{S}} \hat{P}(s'|s, a)v_{\lambda_c}(s'),
\end{align}
for all $a \in \hat{A}(i,j), a \notin A^*_1(s) \cup A^*_2(s)$. Note that the validity of these inequalities is trivial for states $(N, N)$ and $(N, N-2)$. \\
\\
Assuming that $v_{\lambda^c}^{\pi_1}(N,j) = v_{\lambda^c}^{\pi_2}(N,j) := v_{\lambda^c}(N,j)$, which will be proved at a later point, the inequalities for states $(N, j)$, $j = 2, \ldots, N-3$, reduce to
\begin{align*}
    &\lambda \sum\limits_{(i',j') \in \hat{S}} \hat{P}((i',j')|(N, N-3), a_{12})v_{\lambda}^{\pi_k}(i',j') > \lambda \sum\limits_{(i',j') \in \hat{S}} \hat{P}((i',j')|(N, N-3), a_{11})v_{\lambda}^{\pi_k}(i',j'), \\
    \nonumber &\quad \text{for }k = 1, \lambda \in [\lambda_c, 1) \text{ and } k =2, \lambda \in (0, \lambda_c],\\
    &\lambda \sum\limits_{(i',j') \in \hat{S}} \hat{P}((i',j')|(N, N-4), a_{11})v_{\lambda}^{\pi_k}(i',j') > \lambda \sum\limits_{(i',j') \in \hat{S}} \hat{P}((i',j')|(N, N-4), a_{12})v_{\lambda}^{\pi_k}(i',j'), \\
    \nonumber &\quad \text{for } k = 1, \lambda \in [\lambda_c, 1) \text{ and } k =2, \lambda \in (0, \lambda_c],\\
    &\lambda \sum\limits_{(i',j') \in \hat{S}}\hat{P}((i',j')|(N,j), a_{11})v_{\lambda}^{\pi_1}(i',j') > \lambda \sum\limits_{(i',j') \in \hat{S}}\hat{P}((i',j')|(N,j), a_{12})v_{\lambda}^{\pi_1}(i',j'),\\
    &\quad \text{for }\lambda \in (\lambda^c, 1), \quad j = 2, \ldots, N-5, \\
     &\lambda \sum\limits_{(i',j') \in \hat{S}}\hat{P}((i',j')|(i,j), a_{12})v_{\lambda}^{\pi_2}(i',j') > \lambda \sum\limits_{(i',j') \in \hat{S}}\hat{P}((i',j')|(i,j), a_{11})v_{\lambda}^{\pi_2}(i',j'), \\
     \nonumber &\quad \text{for } \lambda \in (0, \lambda^c) \text{ and } j = 2, \ldots, N-5, \text{ and }\\
     &\lambda_c \sum\limits_{(i',j') \in \hat{S}}\hat{P}((i',j')|(N,j), a_{11})v_{\lambda_c}(i',j') = \lambda_c \sum\limits_{(i',j') \in \hat{S}}\hat{P}((i',j')|(N,j), a_{12})v_{\lambda_c}(i',j'),\\ 
     &\quad \text{for } j = 2, \ldots, N-5.
\end{align*}
Inserting expressions (\ref{trans_probs_1}, \ref{trans_probs_3}, \ref{trans_probs_8}), we obtain the following set of inequalities:
\begin{align}
    &\label{cond_1}8v^{\pi_k}_{\lambda}(N, N-3) - 65v^{\pi_k}_{\lambda}(N, N-2) + 57v^{\pi_k}_{\lambda}(N, N)  > 0, \quad k = 1, \lambda \in [\lambda_c, 1) \text{ and } k =2, \lambda \in (0, \lambda_c],\\
     &\label{cond_2} -6v^{\pi_k}_{\lambda}(N, N-4) + 11v^{\pi_k}_{\lambda}(N, N-3) -5v^{\pi_k}_{\lambda}(N, N-2) > 0, \quad k = 1, \lambda \in [\lambda_c, 1) \text{ and } k =2, \lambda \in (0, \lambda_c],\\
    &\label{cond_3} -6v^{\pi_1}_{\lambda}(N, j) + 11v^{\pi_1}_{\lambda}(N, j+1) -5v^{\pi_1}_{\lambda}(N, j+2) > 0, \quad \lambda \in (\lambda_c, 1), \quad j = 2, \ldots, N-5, \\
     &\label{cond_4} 6v^{\pi_2}_{\lambda}(N, j) - 11v^{\pi_2}_{\lambda}(N, j+1) +5v^{\pi_2}_{\lambda}(N, j+2) > 0, \quad \lambda \in (0, \lambda_c), \quad j = 2, \ldots, N-5, \text{ and } \\
    &\label{cond_5}-6v_{\lambda_c}(N, j) + 11v_{\lambda_c}(N, j+1) -5v_{\lambda_c}(N, j+2) = 0, \quad j = 2, \ldots, N-5.
\end{align}
In a similar way, expressions (\ref{Bellman_high}--\ref{Bellman_crit}) for states $(N-2, j)$, $j = 2, \ldots, N-3$, using expressions (\ref{trans_probs_1}, \ref{trans_probs_3}, \ref{trans_probs_7}, \ref{trans_probs_8}, \ref{trans_probs_10}, \ref{trans_probs_11}), the fact that actions $a_{11}$ and $a_{21}$ have the same effect in state $(N-2, N-2)$ by symmetry and the fact that $v^{\pi_k}_{\lambda}(N-2, N) = v^{\pi_k}_{\lambda}(N, N-2)$ for $k = 1, 2$, $\lambda \in (0, 1)$, boil down to
\newpage
\begin{align}
    &\label{cond_6} 5v^{\pi_k}_{\lambda}(N-2, N-2) - 18v^{\pi_k}_{\lambda}(N, N-2) + 13v^{\pi_k}_{\lambda}(N, N) > 0, \\
    &\label{cond_7} 8v^{\pi_k}_{\lambda}(N-2, N-3) - 65v^{\pi_k}_{\lambda}(N-2, N-2)+57v^{\pi_k}_{\lambda}(N-2, N) > 0, \\
    &\label{cond_8} 20v^{\pi_k}_{\lambda}(N-2, N-3) + 31v^{\pi_k}_{\lambda}(N-2, N-2) + 57v^{\pi_k}_{\lambda}(N-2, N) - 108v^{\pi_k}_{\lambda}(N, N-3) >0, \\
    &\label{cond_9} -4v^{\pi_k}_{\lambda}(N-2, N-3) + 15v^{\pi_k}_{\lambda}(N-2, N-2) - 18v^{\pi_k}_{\lambda}(N, N-3) + 7v^{\pi_k}_{\lambda}(N, N-2) >0, \\
    &\label{cond_10} 6v^{\pi_k}_{\lambda}(N-2, j) - 40 v^{\pi_k}_{\lambda}(N-2, j+1) + 9v^{\pi_k}_{\lambda}(N, j) + 25v^{\pi_k}_{\lambda}(N, j+1) > 0, \quad j = 2, \ldots, N-4, \\
    &\label{cond_11}-30v^{\pi_k}_{\lambda}(N-2, j) -32v^{\pi_k}_{\lambda}(N-2, j+1) - 40v^{\pi_k}_{\lambda}(N-2, j+2) + 27v^{\pi_k}_{\lambda}(N, j)+ 75v^{\pi_k}_{\lambda}(N, j+1) > 0, \\
    \nonumber &\quad j = 2, \ldots, N-4, \\
    &\label{cond_12} 12v^{\pi_k}_{\lambda}(N-2, j) + 8v^{\pi_k}_{\lambda}(N-2, j+1) -45v^{\pi_k}_{\lambda}(N, j) + 25v^{\pi_k}_{\lambda}(N, j+1) > 0, \quad j = 2, \ldots, N-4,  
\end{align}
for $k = 1, \lambda \in [\lambda_c, 1) \text{ and } k =2, \lambda \in (0, \lambda_c]$.

For states $(N-3, j)$, $j = 2, \ldots, N-3$, by inserting expressions (\ref{trans_probs_1}, \ref{trans_probs_2}, \ref{trans_probs_3}, \ref{trans_probs_5}, \ref{trans_probs_9}) and using the fact that actions $a_{21}$ and $a_{11}$ have an equivalent effect in state $(N-3, N-3)$, inequalities (\ref{Bellman_high}--\ref{Bellman_crit}) reduce to
\begin{align}
    &\label{cond_13}8v^{\pi_k}_{\lambda}(N-3, N-3) - 65v^{\pi_k}_{\lambda}(N-2, N-3) + 57v^{\pi_k}_{\lambda}(N, N-3)  > 0, \\
    &\label{cond_14}-8v^{\pi_k}_{\lambda}(N-3, N-3) - v^{\pi_k}_{\lambda}(N-2, N-3) - 48v^{\pi_k}_{\lambda}(N-2, N-2) + 57v^{\pi_k}_{\lambda}(N, N-3)  > 0, \\
    &\label{cond_15} v^{\pi_k}_{\lambda}(N-3, j) - 5v^{\pi_k}_{\lambda}(N-3, j+1) + v^{\pi_k}_{\lambda}(N-2, j) + 3v^{\pi_k}_{\lambda}(N-2, j+1) > 0, \quad j = 2, \ldots, N-4, \\
    &\label{cond_16} -3v^{\pi_k}_{\lambda}(N-3, j) -4v^{\pi_k}_{\lambda}(N-3, j+1) - 5v^{\pi_k}_{\lambda}(N-3, j+2) + 3v^{\pi_k}_{\lambda}(N-2, j)\\
    \nonumber &+ 9v^{\pi_k}_{\lambda}(N-2, j+1) > 0, \quad j = 2, \ldots, N-4, \\
    &\label{cond_17} v^{\pi_k}_{\lambda}(N-3, j) + v^{\pi_k}_{\lambda}(N-3, j+1) - 5v^{\pi_k}_{\lambda}(N-2, j) + 3v^{\pi_k}_{\lambda}(N-2, j+1)  >0, \quad j = 2, \ldots, N-4, \\
    &\label{cond_18} 8v^{\pi_k}_{\lambda}(N-3, j) + 16v^{\pi_k}_{\lambda}(N-3, j+1) - 15v^{\pi_k}_{\lambda}(N-2, j) + 48v^{\pi_k}_{\lambda}(N-2, j+1) - 57v^{\pi_k}_{\lambda}(N, j)  > 0, \\
    \nonumber &\quad j = 2, \ldots, N-4,
\end{align}
for $k = 1, \lambda \in [\lambda_c, 1) \text{ and } k =2, \lambda \in (0, \lambda_c]$. \\
\\
Finally, inequalities (\ref{Bellman_high}--\ref{Bellman_crit}) for states $(i,j)$, $i, j = 2, \ldots, N-4$, using expressions (\ref{trans_probs_1}--\ref{trans_probs_5}), reduce to 
\begin{align}
    &\label{cond_19} v^{\pi_k}_{\lambda}(i,j) + v^{\pi_k}_{\lambda}(i+1, j) -5v^{\pi_k}_{\lambda}(i, j+1) + 3v^{\pi_k}_{\lambda}(i+1, j+1) > 0, \\
     &\label{cond_20} v^{\pi_k}_{\lambda}(i,j) - 5v^{\pi_k}_{\lambda}(i+1, j) + v^{\pi_k}_{\lambda}(i, j+1) + 3v^{\pi_k}_{\lambda}(i+1, j+1)> 0, \\
    &\label{cond_21} -3v^{\pi_k}_{\lambda}(i,j) + 3v^{\pi_k}_{\lambda}(i+1, j) -4v^{\pi_k}_{\lambda}(i, j+1) + 9v^{\pi_k}_{\lambda}(i+1, j+1) -5v^{\pi_k}_{\lambda}(i, j+2) > 0, \\
    &\label{cond_22} -3v^{\pi_k}_{\lambda}(i,j) - 4v^{\pi_k}_{\lambda}(i+1, j) + 3v^{\pi_k}_{\lambda}(i, j+1) + 9v^{\pi_k}_{\lambda}(i+1, j+1) -5v^{\pi_k}_{\lambda}(i+2, j) > 0,
\end{align}
for $k = 1, \lambda \in [\lambda_c, 1) \text{ and } k =2, \lambda \in (0, \lambda_c]$. \\
\\
Thus, in order to obtain the desired result, it suffices to prove that the value functions of policies $\pi_1$ and $\pi_2$ satisfy expressions (\ref{cond_1}--\ref{cond_22}), in addition to the equations $v^{\pi_1}_{\lambda^c}(N, j) = v^{\pi_2}_{\lambda^c}(N, j)$ for all $j = 2, \ldots,N-2, N$. To prove the set of (in)equalities, we make use of the recursive expressions (\ref{rec_1_main}--\ref{rec_10_main}) for $v^{\pi_1}_{\lambda}$ and $v^{\pi_2}_{\lambda}$. The proof crucially relies on the fact that rectangles of size at least $3 \times 3$ cannot shrink under the dynamics of the auxiliary MDP, which allows us to show the validity of the expressions by means of backward induction over the size of the rectangle. For states of the form $(N, j)$, $(N-2, j)$ and $(N-3, j)$, $j = 2, \ldots, N$, we use backward induction over the length of the vertical side of the rectangle. For states of the form $(i,j)$, $i, j = 2, \ldots, N-4$, i.e., expressions (\ref{cond_19}--\ref{cond_22}), we invoke a more involved induction argument, which can be outlined as follows:
\begin{itemize}
    \item \textit{Induction base:} First, we show that the expressions are satisfied for states of the form $(i, N-4)$ and $(N-4, j)$, $i, j = 2, \ldots, N-4$, using an embedded induction argument over the length of the shortest side of the rectangle. 
    \item \textit{Induction hypothesis}: We assume that the expressions are valid for states of the form $(i, n+1)$ and $(n+1, j)$, $i,j = 2, \ldots, n+1$ for some $n < N-4$. 
    \item \textit{Induction step:} First, we show that the induction hypothesis implies the correctness of the expressions for state $(n,n)$. Using this result, together with the induction hypothesis, we invoke another embedded induction argument over the length of the shortest side of the rectangle to show that the expressions hold for states of the form $(i,n)$ and $(n, j)$, $i,j = 2, \ldots, n-1$. 
\end{itemize}
The details of the induction proofs are provided in part B of the Supplementary Material. \qed

\section{Discussion}
\label{Discussion}

In this paper, we have introduced the spatiotemporal Markov decision process (STMDP), a new framework for sequential decision making problems that exhibit not only temporal, but also spatial interaction structures. Unlike the related FMDP and GMDP frameworks, the class of STMDPs includes processes that do not permit a factorisation of the state space. We have formulated and analyzed an STMDP inspired by the low-temperature Ising model on a finite, two-dimensional, square lattice, evolving according to the asynchronous Metropolis dynamics. Our analysis heavily relied on a reduction of the state space to local minima of the Hamiltonian, resulting in an auxiliary MDP. We argued that these local minima indeed make the largest contribution to the Bellman optimality equations if the adjustment time is sufficiently long. For the auxiliary MDP, we uncovered the structure of the exact optimal policy by solving the Bellman optimality equations in a recursive manner. Finally, we conducted numerical experiments on the performance of the analogue of this optimal policy in the original STMDP and compared it to the performance of alternative policies. The results of these experiments suggest that this policy obtained from the auxiliary MDP achieves the best performance over a range of different adjustment times. 

This work opens several interesting avenues for future research. First of all, a more rigorous analysis of the Ising STMDP for small values of the adjustment time could be performed, where the policy obtained from the auxiliary MDP may no longer be optimal. In addition, the Ising STMDP for higher temperatures would be a worthwile object of study, although much more difficult to handle. Furthermore, a potential direction would be to analyze the Ising STMDP for different starting and/or target configurations, for example consisting of multiple clusters of different shapes. Additionally, it would be of interest to study the Ising STMDP on the infinite lattice, in different dimensions or on different types of graphs. Finally, the idea of reducing the state space to local minima of the Hamiltonian may yield insights for a more general class of STMDPs, based on dynamics that are reversible with respect to a Gibbs measure.

\bibliographystyle{unsrtnat}
\bibliography{library_arxiv}  

\begin{thebibliography}{23}
\providecommand{\natexlab}[1]{#1}
\providecommand{\url}[1]{\texttt{#1}}
\expandafter\ifx\csname urlstyle\endcsname\relax
  \providecommand{\doi}[1]{doi: #1}\else
  \providecommand{\doi}{doi: \begingroup \urlstyle{rm}\Url}\fi

\bibitem[Puterman(2014)]{Puterman}
M.L. Puterman.
\newblock \emph{Markov Decision Processes: Discrete Stochastic Dynamic Programming}.
\newblock John Wiley \& Sons, 2014.

\bibitem[Diaz-Infante et~al.(2023)Diaz-Infante, Gonzalez-Sanchez, and Salcedo-Varela]{Diaz}
S.~Diaz-Infante, D.~Gonzalez-Sanchez, and G.~Salcedo-Varela.
\newblock \emph{Markov decision processes and best responses of epidemic models}, pages 1--19.
\newblock Springer International Publishing, Cham, 2023.

\bibitem[Palopoli et~al.(2023)Palopoli, Fontanelli, Frego, and Roveri]{Palopoli}
L.~Palopoli, D.~Fontanelli, M.~Frego, and M.~Roveri.
\newblock A markovian model for the spread of the sars-cov-2 virus.
\newblock \emph{Automatica}, 151:\penalty0 110921, 2023.

\bibitem[Altamimi et~al.(2022)Altamimi, Lagoa, Borges, McDill, Andriotis, and Papakonstantinou]{Altamimi}
A.~Altamimi, C.~Lagoa, J.G. Borges, M.E. McDill, C.P. Andriotis, and K.G. Papakonstantinou.
\newblock Large-scale wildfire mitigation through deep reinforcement learning.
\newblock \emph{Frontiers in Forests and Global Change}, 5, 2022.

\bibitem[Roozbeh et~al.(2021)Roozbeh, Hearne, Abbasi, and Ozlen]{Roozbeh}
I.~Roozbeh, J.~Hearne, B.~Abbasi, and M.~Ozlen.
\newblock Decision support for wildfire asset protection: A two-stage stochastic programming approach.
\newblock \emph{Transportation Research Part E: Logistics and Transportation Review}, 155:\penalty0 102520, 2021.

\bibitem[Chi et~al.(2022)Chi, Guo, Mi, Wang, and Song]{Chi}
M.~Chi, Q.~Guo, L.~Mi, G.~Wang, and W.~Song.
\newblock Spatial distribution of agricultural eco-efficiency and agriculture high-quality development in china.
\newblock \emph{Land}, 11\penalty0 (5), 2022.

\bibitem[Swinton(2002)]{Swinton}
S.M. Swinton.
\newblock Capturing household-level spatial influence in agricultural management using random effects regression.
\newblock \emph{Agricultural Economics}, 27\penalty0 (3):\penalty0 371--381, 2002.

\bibitem[Boutilier et~al.(2000)Boutilier, Dearden, and Goldszmidt]{Boutilier}
C.~Boutilier, R.~Dearden, and M.~Goldszmidt.
\newblock Stochastic dynamic programming with factored representations.
\newblock \emph{Artificial Intelligence.}, 121\penalty0 (1):\penalty0 49--107, 2000.

\bibitem[Sabbadin et~al.(2012)Sabbadin, Peyrard, and Forsell]{Sabbadin1}
R.~Sabbadin, N.~Peyrard, and N.~Forsell.
\newblock A framework and a mean-field algorithm for the local control of spatial processes.
\newblock \emph{International Journal of Approximate Reasoning}, 53\penalty0 (1):\penalty0 66--86, 2012.

\bibitem[Newman and Barkema(1999)]{Newman}
M.E.J. Newman and G.T. Barkema.
\newblock \emph{Monte Carlo Methods in Statistical Physics}.
\newblock Clarendon Press, Oxford, 1999.

\bibitem[Cerf and Manzo(2013)]{Cerf}
R.~Cerf and F.~Manzo.
\newblock {Nucleation and growth for the Ising model in $d$ dimensions at very low temperatures}.
\newblock \emph{{Annals of Probability}}, 41\penalty0 (6), 2013.

\bibitem[Neves and Schonmann(1991)]{Neves}
E.J. Neves and R.H. Schonmann.
\newblock {Critical droplets and metastability for a Glauber dynamics at very low temperature}.
\newblock \emph{{Communications in Mathematical Physics}}, 137:\penalty0 209--230, 1991.

\bibitem[Cirillo et~al.(2022)Cirillo, Jacquier, and Spitoni]{Cirillo}
E.N.M. Cirillo, V.~Jacquier, and C.~Spitoni.
\newblock Metastability of synchronous and asynchronous dynamics.
\newblock \emph{Entropy}, 24\penalty0 (4), 2022.

\bibitem[Arous and Cerf(1996)]{Cerf2}
G.B. Arous and R.~Cerf.
\newblock {Metastability of the three dimensional Ising model on a torus at very low temperatures}.
\newblock \emph{Electronic Journal of Probability}, 1:\penalty0 1 -- 55, 1996.

\bibitem[Koteck{\'y} and Olivieri(1993)]{Kotecky}
R.~Koteck{\'y} and E.~Olivieri.
\newblock {Droplet dynamics for asymmetric Ising model}.
\newblock \emph{Journal of Statistical Physics}, 70\penalty0 (5-6):\penalty0 1121--1148, 1993.

\bibitem[Nardi and Olivieri(1996)]{Nardi}
F.~Nardi and E.~Olivieri.
\newblock Low temperature stochastic dynamics for an ising model with alternating field.
\newblock \emph{Markov Processes and Related Fields}, 2\penalty0 (1):\penalty0 117--166, 1996.

\bibitem[Nardi and Zocca(2019)]{Nardi2}
F.~Nardi and A.~Zocca.
\newblock Tunneling behavior of ising and potts models in the low-temperature regime.
\newblock \emph{Stochastic Processes and their Applications}, 129\penalty0 (11):\penalty0 4556--4575, 2019.

\bibitem[Cirillo and Lebowitz(1998)]{Cirillo2}
E.N.M. Cirillo and J.L. Lebowitz.
\newblock {Metastability in the two-dimensional Ising model with free boundary conditions}.
\newblock \emph{Journal of Statistical Physics}, 90\penalty0 (1-2):\penalty0 211--226, 1998.

\bibitem[Guestrin et~al.(2003)Guestrin, Koller, Parr, and Venkataraman]{Guestrin}
C.~Guestrin, D.~Koller, R.~Parr, and S.~Venkataraman.
\newblock Efficient solution algorithms for factored mdps.
\newblock \emph{Journal of Artifical Intelligence Research}, 19\penalty0 (1):\penalty0 399–468, 2003.

\bibitem[Peyrard and Sabbadin(2006)]{Sabbadin2}
N.~Peyrard and R.~Sabbadin.
\newblock Mean field approximation of the policy iteration algorithm for graph-based markov decision processes.
\newblock In \emph{Proceedings of the 2006 Conference on ECAI 2006: 17th European Conference on Artificial Intelligence August 29 -- September 1, 2006, Riva Del Garda, Italy}, page 595–599, 2006.

\bibitem[Forsell et~al.(2009)Forsell, Garcia, and Sabbadin]{Sabbadin3}
N.~Forsell, F.~Garcia, and R.~Sabbadin.
\newblock {Reinforcement learning for spatial processes}.
\newblock In \emph{{18. World IMACS Congress and MODSIM09. International Congress on Modelling and Simulation}}, pages 755--761, 2009.

\bibitem[Forsell and Sabbadin(2006)]{Sabbadin4}
N.~Forsell and R.~Sabbadin.
\newblock Approximate linear-programming algorithms for graph-based markov decision processes.
\newblock In \emph{Proceedings of the 2006 Conference on ECAI 2006: 17th European Conference on Artificial Intelligence August 29 -- September 1, 2006, Riva Del Garda, Italy}, page 590–594, 2006.

\bibitem[Liggett(1985)]{Liggett}
T.M. Liggett.
\newblock \emph{Interacting Particle Systems}.
\newblock Springer Berlin Heidelberg, 1985.

\end{thebibliography}






\newpage

\section*{Supplementary Material A: supporting tables and figures for computation of transition probabilities}

The derivations of expressions (\ref{trans_probs_1}, \ref{trans_probs_2}) and (\ref{trans_probs_5})--(\ref{trans_probs_11}) are based on similar arguments as in the proof of Lemma \ref{explicit_Phats} and are clarified in Figures \ref{fig_trans_probs_1}--\ref{fig_trans_probs_11} and the corresponding Tables \ref{tab_trans_probs_1}--\ref{tab_trans_probs_11}.

\begin{figure}[H]
\centering
\includegraphics[width=0.35\linewidth]{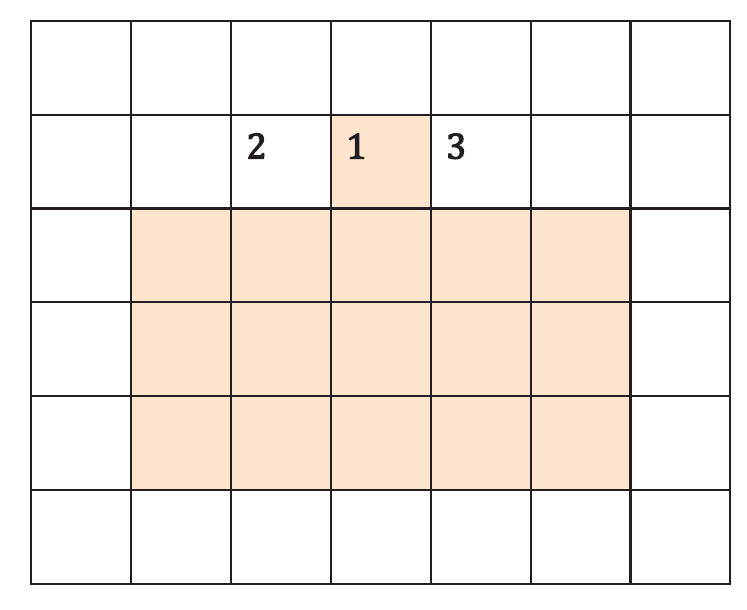}
\caption{Post-decision configuration after flipping a spin at distance 1 from the horizontal side of the rectangle.} 
\label{fig_trans_probs_1}
\end{figure}

\begin{table}[H]
\begin{center}
\caption{Derivation of expression (\ref{trans_probs_1}), corresponding to Figure \ref{fig_trans_probs_1}. Similar derivation leads to expression (\ref{trans_probs_2}).}
\begin{tabular}{ llll } 
\toprule
Sequences of susceptible spins & Probability of selecting sequence & Next state & Transition probability \\
\midrule
$(1)$ & $1/3$ & \multirow{1}{4em}{$(i,j)$}& \multirow{1}{4em}{$1/3$} \\
\midrule
$(2)$ & $1/3$ & \multirow{2}{4em}{$(i, j+1)$} & \multirow{2}{4em}{$2/3$} \\
$(3)$ & $1/3$ & & \\
\bottomrule
\end{tabular}
\label{tab_trans_probs_1}
\end{center}
\end{table}

\begin{figure}[H]
\centering
\includegraphics[width=0.35\linewidth]{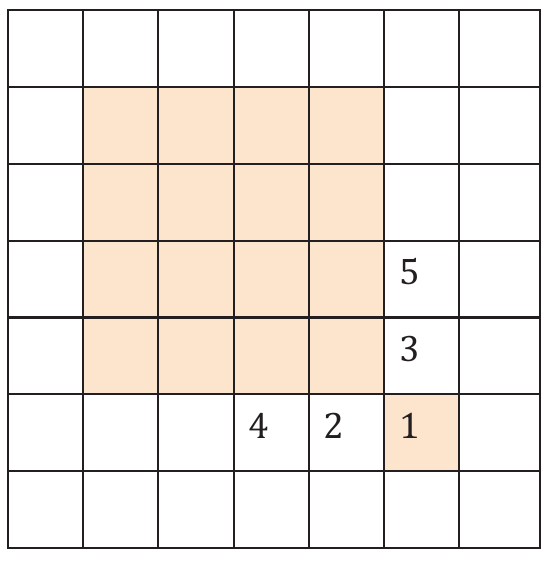}
\caption{Post-decision configuration corresponding to expression (\ref{trans_probs_5}).}
\label{fig_trans_probs_5}
\end{figure}

\begin{table}[H]
\begin{center}
\caption{Derivation of expression (\ref{trans_probs_5}), corresponding to Figure \ref{fig_trans_probs_5}.}
\begin{tabular}{ llll } 
\toprule
Sequences of susceptible spins & Probability of selecting sequence & Next state & Transition probability \\
\midrule
$(1)$ & $1/3$ & \multirow{3}{4em}{$(i,j)$}& \multirow{3}{4em}{$4/9$} \\
$(2, 1, 2)$ & $1/18$ & & \\
$(3, 1, 3)$ & $1/18$ & & \\
\midrule
$(2, 1, 4)$ & $1/18$ & \multirow{2}{4em}{$(i + 1, j)$} & \multirow{2}{4em}{$1/9$} \\
$(2, 4, 1)$ & $1/18$ & & \\
\midrule
$(3, 1, 5)$ & $1/18$ & \multirow{2}{4em}{$(i, j+1)$} & \multirow{2}{4em}{$1/9$} \\
$(3, 5, 1)$ & $1/18$ & & \\
\midrule
$(2, 3)$ & $1/9$ & \multirow{4}{6em}{$(i+1, j+1)$} & \multirow{4}{4em}{$1/3$} \\
$(3, 2)$ & $1/9$ & & \\
$(2, 4, 3)$ & $1/18$ & & \\
$(3, 5, 2)$ & $1/18$ & & \\
\bottomrule
\end{tabular}
\label{tab_trans_probs_5}
\end{center}
\end{table}

\begin{figure}[H]
\centering
\includegraphics[width=0.35\linewidth]{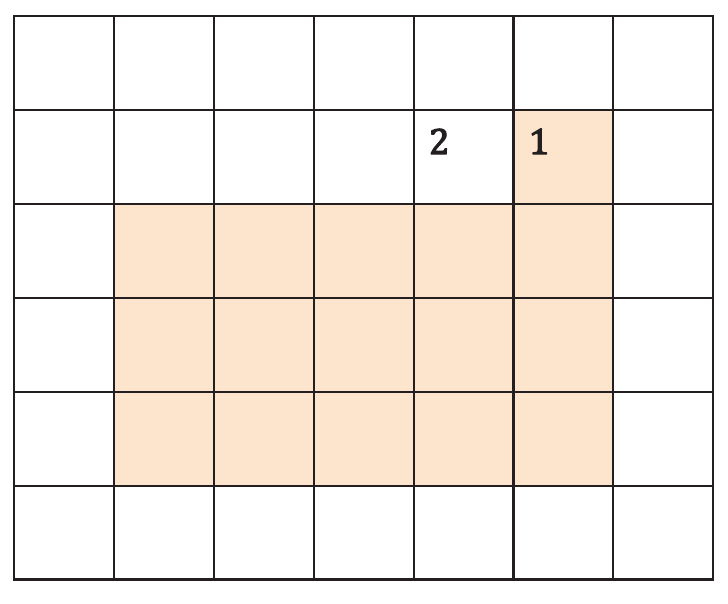}
\caption{Post-decision configuration corresponding to expression (\ref{trans_probs_extra_1}).}
\label{fig_trans_probs_extra_1}
\end{figure}

\begin{table}[H]
\begin{center}
\caption{Derivation of expression (\ref{trans_probs_extra_1}), corresponding to Figure \ref{fig_trans_probs_extra_1}. Similar derivation leads to expression (\ref{trans_probs_extra_2}).}
\begin{tabular}{ llll } 
\toprule
Sequences of susceptible spins & Probability of selecting sequence & Next state & Transition probability \\
\midrule
$(1)$ & $1/2$ & \multirow{1}{4em}{$(i,j)$}& \multirow{1}{4em}{$1/2$} \\
\midrule
$(2)$ & $1/2$ & \multirow{1}{4em}{$(i, j+1)$} & \multirow{1}{4em}{$1/2$} \\
\bottomrule

\end{tabular}
\label{tab_trans_probs_extra_1}
\end{center}
\end{table}

\begin{figure}[H]
\centering
\includegraphics[width=0.35\linewidth]{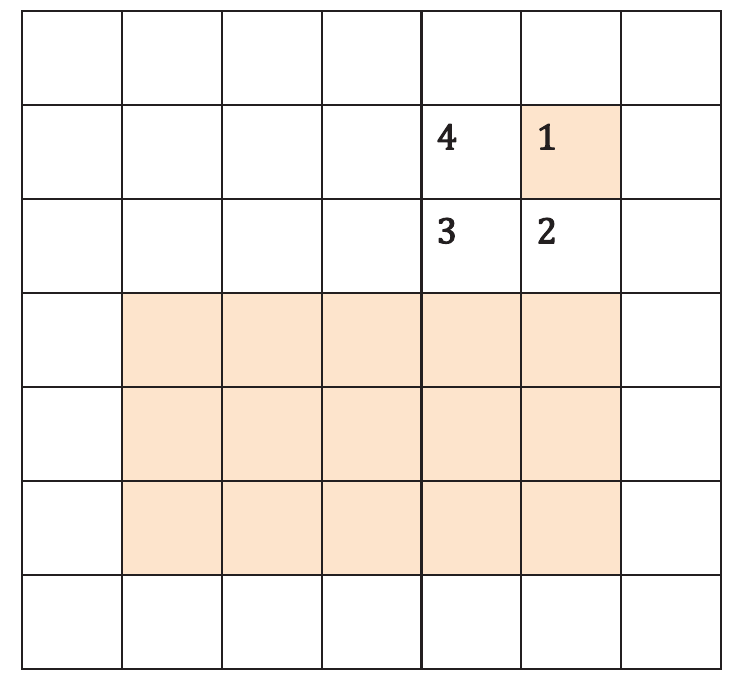}
\caption{Post-decision configuration corresponding to expression (\ref{trans_probs_extra_3}).}
\label{fig_trans_probs_extra_3}
\end{figure}

\begin{table}[H]
\begin{center}
\caption{Derivation of expression (\ref{trans_probs_extra_3}), corresponding to Figure \ref{fig_trans_probs_extra_3}. Similar derivation leads to expression (\ref{trans_probs_extra_4}).}
\begin{tabular}{ llll } 
\toprule
Sequences of susceptible spins & Probability of selecting sequence & Next state & Transition probability \\
\midrule
$(1)$ & $1/2$ & \multirow{2}{4em}{$(i,j)$}& \multirow{2}{4em}{$5/8$} \\
$(2, 1, 2)$ & $1/8$ & & \\
\midrule
$(2, 1, 3)$ & $1/8$ & \multirow{2}{4em}{$(i, j+1)$} & \multirow{2}{4em}{$1/4$} \\
$(2, 3, 1)$ & $1/8$ & & \\
\midrule
$(2, 3, 4)$ & $1/8$ & \multirow{1}{4em}{$(i, j+2)$} & \multirow{1}{4em}{$1/8$} \\
\bottomrule
\end{tabular}
\label{tab_trans_probs_extra_3}
\end{center}
\end{table}

\begin{figure}[H]
\centering
\includegraphics[width=0.35\linewidth]{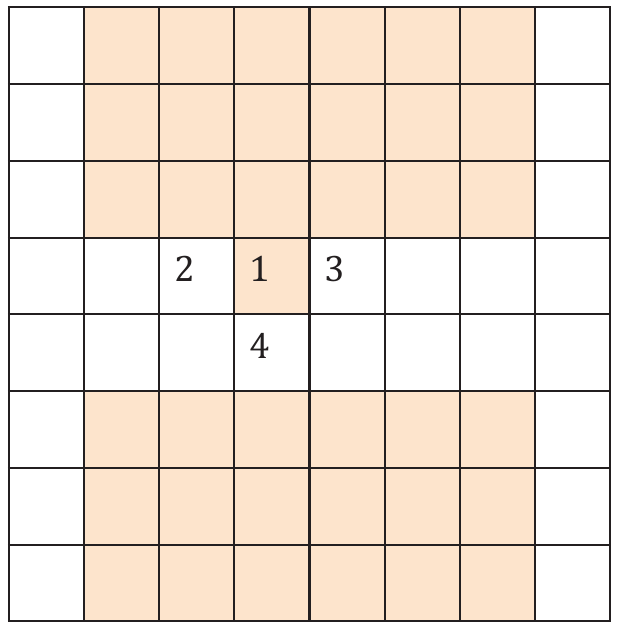}
\caption{Post-decision configuration corresponding to expression (\ref{trans_probs_6}).}
\label{fig_trans_probs_6}
\end{figure}

\begin{table}[H]
\begin{center}
\caption{Derivation of expression (\ref{trans_probs_6}), corresponding to Figure \ref{fig_trans_probs_6}. Similar derivation leads to expression (\ref{trans_probs_7}).}
\begin{tabular}{ llll } 
\toprule
Sequences of susceptible spins & Probability of selecting sequence & Next state & Transition probability \\
\midrule
$(1)$ & $1/4$ & \multirow{1}{6em}{$(i, N-2)$} & \multirow{1}{6em}{$1/4$} \\
\midrule
$(2)$ & $1/4$ & \multirow{3}{6em}{$(i, N)$} & \multirow{3}{6em}{$3/4$} \\
$(3)$ & $1/4$ & & \\
$(4)$ & $1/4$ & & \\
\bottomrule
\end{tabular}
\label{tab_trans_probs_6}
\end{center}
\end{table}

\begin{figure}[H]
\centering
\includegraphics[width=0.35\linewidth]{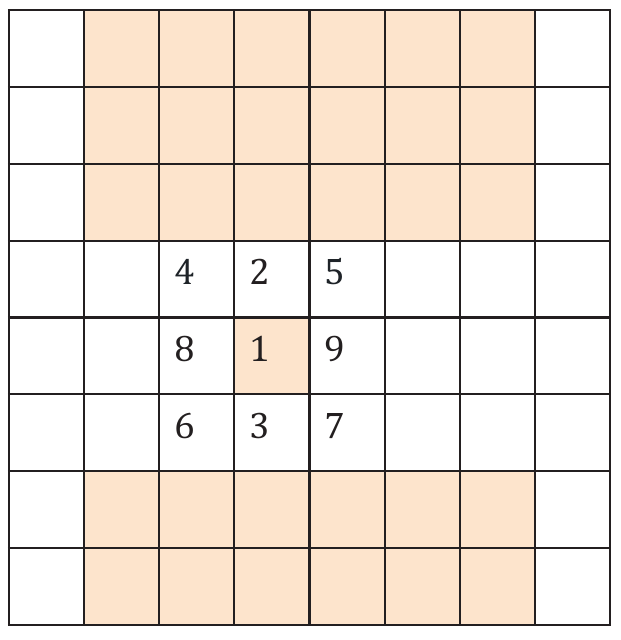}
\caption{Post-decision configuration corresponding to expression (\ref{trans_probs_8}).}
\label{fig_trans_probs_8}
\end{figure}

\begin{table}[H]
\begin{center}
\caption{Derivation of expression (\ref{trans_probs_8}), corresponding to Figure \ref{fig_trans_probs_8}. Similar derivation leads to expression (\ref{trans_probs_9}).}
\begin{tabular}{ llll } 
\toprule
Sequences of susceptible spins & Probability of selecting sequence & Next state & Transition probability \\
\midrule
$(1)$ & $1/3$ & \multirow{3}{6em}{$(i, N-3)$} & \multirow{3}{6em}{$7/18$} \\
$(2, 1, 2)$ & $1/36$ & & \\
$(3, 1, 3)$ & $1/36$ & & \\
\midrule
$(2, 1, 4)$ & $1/36$ & \multirow{12}{6em}{$(i, N-2)$} & \multirow{12}{6em}{$31/144$} \\
$(2, 1, 5)$ & $1/36$ & & \\
$(3, 1, 6)$ & $1/36$ & & \\
$(3, 1, 7)$ & $1/36$ & & \\
$(2, 4, 1)$ & $1/48$ & & \\
$(2, 5, 1)$ & $1/48$ & & \\
$(3, 6, 1)$ & $1/48$ & & \\
$(3, 7, 1)$ & $1/48$ & & \\
$(2, 4, 5, 1)$ & $1/192$ & & \\
$(2, 5, 4, 1)$ & $1/192$ & & \\
$(3, 6, 7, 1)$ & $1/192$ & & \\
$(3, 7, 6, 1)$ & $1/192$ & & \\
\midrule
$(2, 3)$ & $1/12$ & \multirow{22}{6em}{$(i, N)$} & \multirow{22}{6em}{$19/48$} \\
$(3, 2)$ & $1/12$ & & \\
$(2, 4, 3)$ & $1/48$ & & \\
$(2, 5, 3)$ & $1/48$ & & \\
$(3, 6, 2)$ & $1/48$ & & \\
$(3, 7, 2)$ & $1/48$ & & \\
$(2, 4, 5, 3)$ & $1/192$ & & \\
$(2, 5, 4, 3)$ & $1/192$ & & \\
$(3, 6, 7, 2)$ & $1/192$ & & \\
$(3, 7, 6, 2)$ & $1/192$ & & \\
$(2, 4, 5, 8)$ & $1/192$ & & \\
$(3, 6, 7, 8)$ & $1/192$ & & \\
$(3, 7, 6, 8)$ & $1/192$ & & \\
$(2, 5, 4, 8)$ & $1/192$ & & \\
$(2, 4, 5, 9)$ & $1/192$ & & \\
$(2, 5, 4, 9)$ & $1/192$ & & \\
$(3, 6, 7, 9)$ & $1/192$ & & \\
$(3, 7, 6, 9)$ & $1/192$ & & \\
$(2, 4, 8)$ & $1/48$ & & \\
$(2, 5, 9)$ & $1/48$ & & \\
$(3, 6, 8)$ & $1/48$ & & \\
$(3, 7, 9)$ & $1/48$ & & \\
\bottomrule
\end{tabular}
\label{tab_trans_probs_8}
\end{center}
\end{table}

\begin{figure}[H]
\centering
\includegraphics[width=0.35\linewidth]{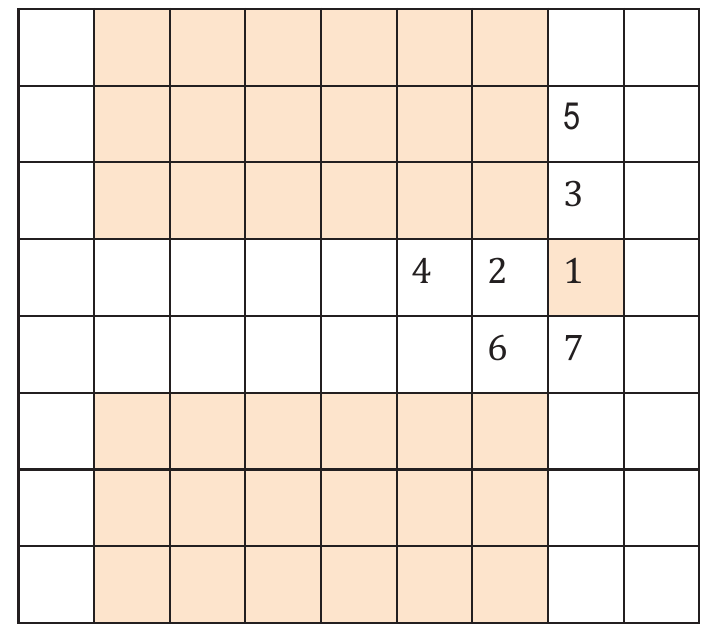}
\caption{Post-decision configuration corresponding to expression (\ref{trans_probs_12}).}
\label{fig_trans_probs_12}
\end{figure}

\begin{table}[H]
\begin{center}
\caption{Derivation of expression (\ref{trans_probs_12}), corresponding to Figure \ref{fig_trans_probs_12}. Similar derivation leads to expression (\ref{trans_probs_10}).} 
\begin{tabular}{ llll } 
\toprule
Sequences of susceptible spins & Probability of selecting sequence & Next state & Transition probability \\
\midrule
$(1)$ & $1/3$ & \multirow{3}{6em}{$(i, N-2)$} & \multirow{3}{6em}{$5/12$} \\
$(2, 1, 2)$ & $1/36$ & & \\
$(3, 1, 3)$ & $1/18$ & & \\
\midrule
$(2, 1, 4)$ & $1/36$ & \multirow{4}{6em}{$(i, N)$} & \multirow{4}{6em}{$1/8$} \\
$(2, 1, 6)$ & $1/36$ & & \\
$(2, 4, 1)$ & $1/24$ & & \\
$(2, 6, 1)$ & $1/36$ & & \\
\midrule
$(3, 1, 5)$ & $1/18$ & \multirow{2}{6em}{$(i+1, N-2)$} & \multirow{2}{6em}{$1/9$} \\
$(3, 5, 1)$ & $1/18$ & & \\
\midrule
$(2, 3)$ & $1/12$ & \multirow{6}{6em}{$(i+1, N)$} & \multirow{6}{6em}{$25/72$} \\
$(2, 4, 3)$ & $1/24$ & & \\
$(2, 6, 3)$ & $1/36$ & & \\
$(2, 6, 7)$ & $1/36$ & & \\
$(3, 2)$ & $1/9$ & & \\
$(3, 5, 2)$ & $1/18$ & & \\
\bottomrule
\end{tabular}
\label{tab_trans_probs_10}
\end{center}
\end{table}

\begin{figure}[H]
\centering
\includegraphics[width=0.35\linewidth]{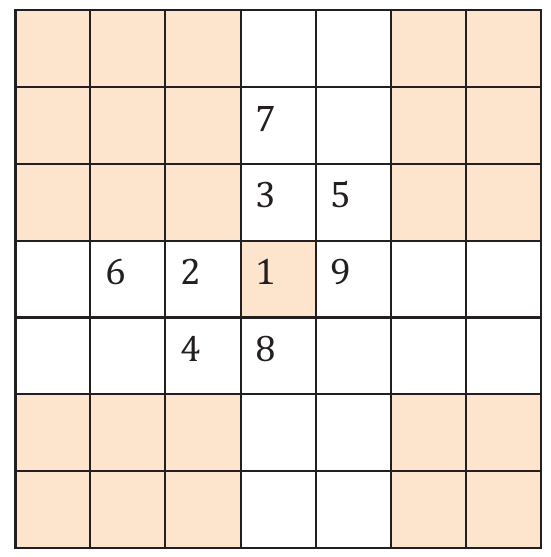}
\caption{Post-decision configuration corresponding to expression (\ref{trans_probs_11}).}
\label{fig_trans_probs_11}
\end{figure}

\begin{table}[H]
\begin{center}
\caption{Derivation of expression (\ref{trans_probs_11}), corresponding to Figure \ref{fig_trans_probs_11}. }
\begin{tabular}{ llll } 
\toprule
Sequences of susceptible spins & Probability of selecting sequence & Next state & Transition probability \\
\midrule
$(1)$ & $1/3$ & \multirow{3}{6em}{$(N-2, N-2)$} & \multirow{3}{6em}{$7/18$} \\
$(2, 1, 2)$ & $1/36$ & & \\
$(3, 1, 3)$ & $1/36$ & & \\
\midrule
$(2, 1, 4)$ & $1/36$ & \multirow{4}{6em}{$(N-2, N)$} & \multirow{4}{6em}{$1/8$ } \\
$(2, 1, 6)$ & $1/36$ & & \\
$(2, 4, 1)$ & $1/36$ & & \\
$(2, 6, 1)$ & $1/24$ & & \\
\midrule
$(3, 1, 5)$ & $1/36$ & \multirow{4}{6em}{$(N, N-2)$} & \multirow{4}{6em}{$1/8$} \\
$(3, 1, 7)$ & $1/36$ & & \\
$(3, 5, 1)$ & $1/36$ & & \\
$(3, 7, 1)$ & $1/24$ & & \\
\midrule
$(2, 3)$ & $1/12$ & \multirow{8}{6em}{$(N, N)$} & \multirow{8}{6em}{$13/36$} \\
$(3, 2)$ & $1/12$ & & \\
$(2, 4, 3)$ & $1/36$ & & \\
$(3, 5, 2)$ & $1/36$ & & \\
$(2, 4, 8)$ & $1/36$ & & \\
$(3, 5, 9)$ & $1/36$ & & \\
$(2, 6, 3)$ & $1/24$ & & \\
$(3, 7, 2)$ & $1/24$ & & \\
\bottomrule
\end{tabular}
\label{tab_trans_probs_11}
\end{center}
\end{table}

\begin{figure}[H]
\centering
\includegraphics[width=0.35\linewidth]{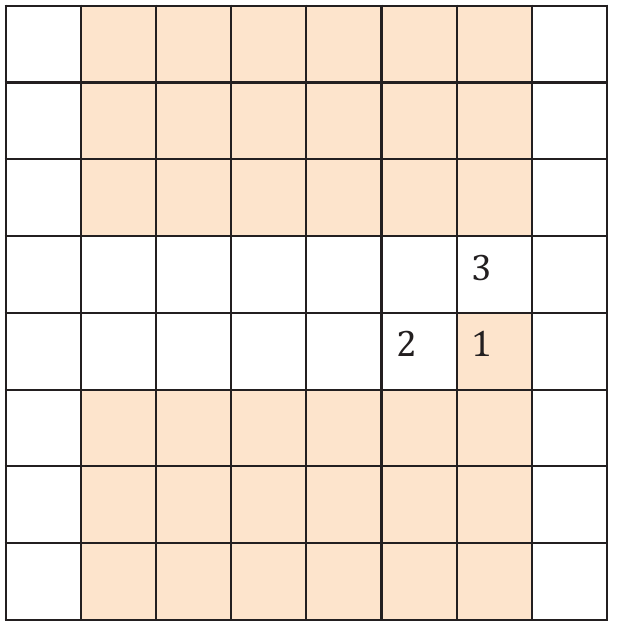}
\caption{Post-decision configuration corresponding to expression (\ref{trans_probs_extra_5}).}
\label{fig_trans_probs_extra_5}
\end{figure}

\begin{table}[H]
\begin{center}
\caption{Derivation of expression (\ref{trans_probs_extra_5}), corresponding to Figure \ref{fig_trans_probs_extra_5}. Similar derivation leads to expression (\ref{trans_probs_extra_6})}
\begin{tabular}{ llll } 
\toprule
Sequences of susceptible spins & Probability of selecting sequence & Next state & Transition probability \\
\midrule
$(1)$ & $1/3$ & \multirow{1}{6em}{$(i, N-2)$} & \multirow{1}{6em}{$1/3$} \\
\midrule
$(2)$ & $1/3$ & \multirow{2}{6em}{$(i, N)$} & \multirow{2}{6em}{$1/3$} \\
$(3)$ & $1/3$ & & \\
\bottomrule
\end{tabular}
\label{tab_trans_probs_extra_5}
\end{center}
\end{table}

\begin{figure}[H]
\centering
\includegraphics[width=0.35\linewidth]{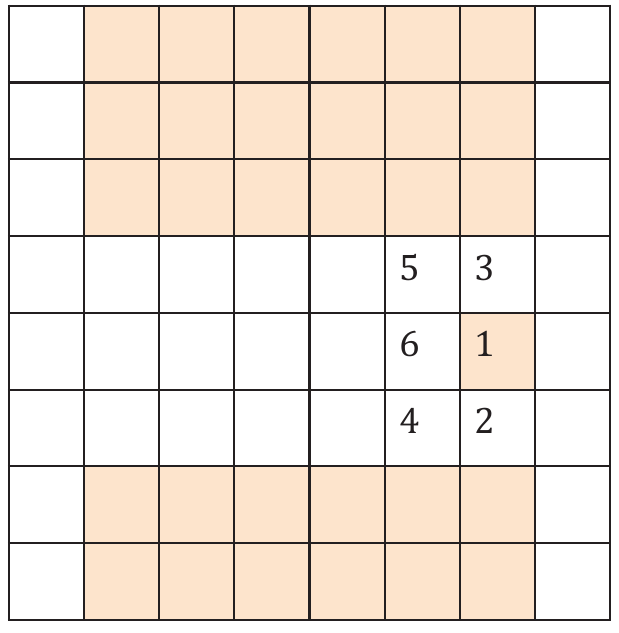}
\caption{Post-decision configuration corresponding to expression (\ref{trans_probs_extra_7}).}
\label{fig_trans_probs_extra_7}
\end{figure}

\begin{table}[H]
\begin{center}
\caption{Derivation of expression (\ref{trans_probs_extra_7}), corresponding to Figure \ref{fig_trans_probs_extra_7}. Similar derivation leads to expression (\ref{trans_probs_extra_8}).}
\begin{tabular}{ llll } 
\toprule
Sequences of susceptible spins & Probability of selecting sequence & Next state & Transition probability \\
\midrule
$(1)$ & $1/3$ & \multirow{3}{6em}{$(i, N-3)$} & \multirow{3}{6em}{$4/9$} \\
$(2, 1, 2)$ & $1/18$ & & \\
$(3, 1, 3)$ & $1/18$ & & \\
\midrule
$(2, 1, 4)$ & $1/18$ & \multirow{4}{6em}{$(i, N-2)$} &  \multirow{4}{6em}{$5/27$} \\
$(2, 4, 1)$ & $1/27$ & & \\
$(3, 1, 5)$ & $1/18$ & & \\
$(3, 5, 1)$ & $1/27$ & & \\
\midrule
$(2, 3)$ & $1/9$ & \multirow{6}{6em}{$(i, N)$} & \multirow{6}{6em}{$10/27$} \\
$(2, 4, 3)$ & $1/27$ & & \\
$(2, 4, 6)$ & $1/27$ & & \\
$(3, 2)$ & $1/9$ & & \\
$(3, 5, 2)$ & $1/27$ & & \\
$(3, 5, 6)$ & $1/27$ & & \\
\bottomrule
\end{tabular}
\label{tab_trans_probs_extra_7}
\end{center}
\end{table}

\begin{figure}[H]
\centering
\includegraphics[width=0.35\linewidth]{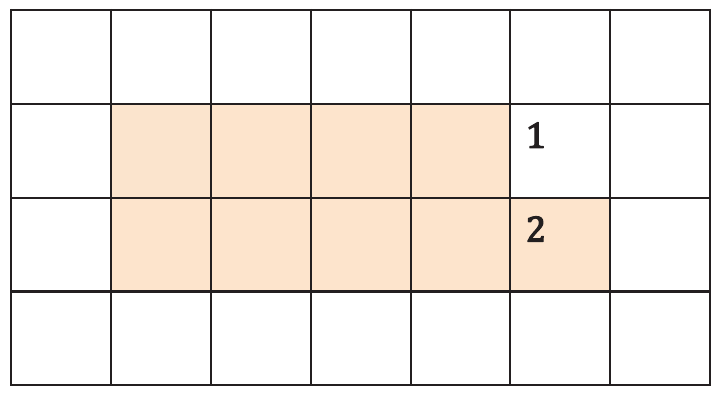}
\caption{Post-decision configuration corresponding to expression (\ref{trans_probs_double_extra_3}).}
\label{fig_trans_probs_double_extra_3}
\end{figure}

\begin{table}[H]
\begin{center}
\caption{Derivation of expression (\ref{trans_probs_double_extra_3}), corresponding to Figure \ref{fig_trans_probs_double_extra_3}. Similar derivation leads to expression (\ref{trans_probs_double_extra_2}).}
\begin{tabular}{ llll } 
\toprule
Sequences of susceptible spins & Probability of selecting sequence & Next state & Transition probability \\
\midrule
$(1)$ & $1/2$ & $(i,2)$ & $1/2$ \\
\midrule
$(2)$ & $1/2$ & $(i-1, 2)$ & $1/2$ \\
\bottomrule
\end{tabular}
\label{tab_trans_probs_double_extra_3}
\end{center}
\end{table}

\begin{figure}[H]
\centering
\includegraphics[width=0.3\linewidth]{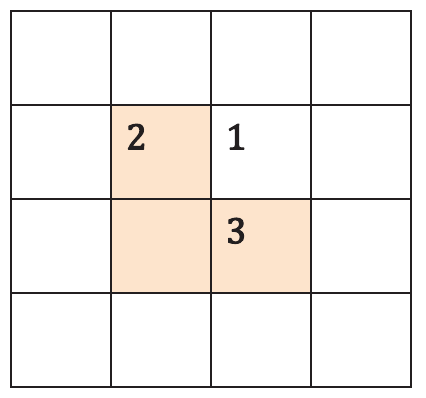}
\caption{Post-decision configuration corresponding to expression (\ref{trans_probs_double_extra_4}).}
\label{fig_trans_probs_double_extra_4}
\end{figure}

\begin{table}[H]
\begin{center}
\caption{Derivation of expression (\ref{trans_probs_double_extra_4}), corresponding to Figure \ref{fig_trans_probs_double_extra_4}.}
\begin{tabular}{ llll } 
\toprule
Sequences of susceptible spins & Probability of selecting sequence & Next state & Transition probability \\
\midrule
$(1)$ & $1/3$ & $(2,2)$ & $1/3$ \\
\midrule
$(2)$ & $1/3$ & \multirow{2}{2em}{$(0, 0)$} & \multirow{2}{2em}{$2/3$} \\
$(3)$ & $1/3$ & & \\
\bottomrule
\end{tabular}
\label{tab_trans_probs_double_extra_4}
\end{center}
\end{table}

\newpage

\section*{Supplementary Material B: Details of the proof of Theorem 3.1}

Let policies $\pi_1 = d_1^{\infty}$ and $\pi_2 = d_2^{\infty}$ denote two arbitrary policies in $\Pi_1$ and $\Pi_2$ respectively. Recall that the value functions $v^{\pi_1}_{\lambda}: \hat{S} \rightarrow \mathbb{R}$ and $v^{\pi_2}_{\lambda}: \hat{S} \rightarrow \mathbb{R}$ satisfy the following recursive expressions: 
\begin{align}
    &\label{rec_1} v_{\lambda}^{\pi_k}(N,N) = \dfrac{1}{1-\lambda}, \quad k = 1, 2, \\
    &\label{rec_2}v_{\lambda}^{\pi_k}(N, N-2) = \dfrac{3\lambda}{4-\lambda}v_{\lambda}^{\pi_k}(N, N), \quad k = 1, 2,\\
    &\label{rec_3} v_{\lambda}^{\pi_k}(N, N-3) = \dfrac{31\lambda}{8(18-7\lambda)}v_{\lambda}^{\pi_k}(N, N-2) + \dfrac{57\lambda}{8(18-7\lambda)}v_{\lambda}^{\pi_k}(N, N), \quad k = 1, 2, \\
    &\label{rec_4}v_{\lambda}^{\pi_1}(N,j) = \dfrac{2\lambda}{3-\lambda}v_{\lambda}^{\pi_1}(N,j+1), \quad j = 2, \ldots, N-4, \\
    &\label{rec_5}v_{\lambda}^{\pi_2}(N,j) = \begin{cases}
        \dfrac{2\lambda}{3-\lambda}v_{\lambda}^{\pi_2}(N,j+1), &\text{if } j = N-4,\\
        \\
        \dfrac{7\lambda}{3(9-5\lambda)}v_{\lambda}^{\pi_2}(N, j+1) + \dfrac{5\lambda}{3(9-5\lambda)}v_{\lambda}^{\pi_2}(N, j+2), &\text{if } j = 2, \ldots, N-5,
    \end{cases} \\
    &\label{rec_6} v_{\lambda}^{\pi_k}(N-2, N-2) = \dfrac{9\lambda}{2(18-7\lambda)}v_{\lambda}^{\pi_k}(N, N-2) + \dfrac{13\lambda}{2(18-7\lambda)}v_{\lambda}^{\pi_k}(N, N), \quad k = 1, 2, \\
    &\label{rec_7}v_{\lambda}^{\pi_k}(N-2, N-3) = \dfrac{31\lambda}{8(18-7\lambda)}v_{\lambda}^{\pi_k}(N-2, N-2) + \dfrac{57\lambda}{8(18-7\lambda)}v_{\lambda}^{\pi_k}(N-2, N), \quad k = 1, 2, \\
    &\label{rec_8} v_{\lambda}^{\pi_k}(N-2, j) = \dfrac{12}{12-5\lambda}\Big(\dfrac{\lambda}{8}v_{\lambda}^{\pi_k}(N, j) + \dfrac{25\lambda}{72}v_{\lambda}^{\pi_k}(N, j+1) + \dfrac{\lambda}{9}v_{\lambda}^{\pi_k}(N-2, j+1)\Big),\\
    \nonumber &\quad j = 2, \ldots, N-4,\quad  k = 1, 2, \\
    &\label{rec_9} v_{\lambda}^{\pi_k}(N-3, N-3) = \dfrac{31\lambda}{8(18-7\lambda)}v_{\lambda}^{\pi_k}(N-2, N-3) + \dfrac{57\lambda}{8(18-7\lambda)}v_{\lambda}^{\pi_k}(N, N-3), \quad k = 1, 2, \\
    &\label{rec_10} v_{\lambda}^{\pi_k}(i,j) = \dfrac{\lambda}{9-4\lambda}\Big(v_{\lambda}^{\pi_k}(i, j+1) + v_{\lambda}^{\pi_k}(i+1, j) + 3v_{\lambda}^{\pi_k}(i+1, j+1)\Big), \\
    \nonumber &\quad i = 2, \ldots, N-3, \quad j = 2, \ldots, N-4, \quad k = 1, 2.
\end{align}

In Table \ref{explicit_v_stars}, explicit expressions for these value functions are recorded for several states, which will be required at a later point in the proof. Using expressions (\ref{rec_1}-\ref{rec_10}), we show that $v^{\pi_1}_{\lambda}$ and $v^{\pi_2}_{\lambda}$ satisfy $v^{\pi_1}_{\lambda^c}(N,j) = v^{\pi_2}_{\lambda^c}(N, j) := v_{\lambda^c}(N,j)$ for all $j = 2, \ldots, N$ and expressions (\ref{cond_1}-\ref{cond_5}, \ref{cond_6}-\ref{cond_12}, \ref{cond_13}-\ref{cond_18}, \ref{cond_19}-\ref{cond_22}), which are recollected below:  
\begin{align}
    \label{1}& 8v^{\pi_k}_{\lambda}(N, N-3) - 65v^{\pi_k}_{\lambda}(N, N-2) + 57v^{\pi_k}_{\lambda}(N, N) > 0, \quad \text{for } k = 1, \lambda \in [\lambda_c, 1) \text{ and } k =2, \lambda \in (0, \lambda_c], \\
    \label{2}& -6v^{\pi_k}_{\lambda}(N, N-4) + 11v^{\pi_k}_{\lambda}(N, N-3) -5v^{\pi_k}_{\lambda}(N, N-2) > 0, \quad \text{for } k = 1, \lambda \in [\lambda_c, 1) \\
    \nonumber & \quad \text{and } k =2, \lambda \in (0, \lambda_c], \\
    \label{3}& -6v^{\pi_1}_{\lambda}(N, j) + 11v^{\pi_1}_{\lambda}(N, j+1) -5v^{\pi_1}_{\lambda}(N, j+2) > 0, \quad \lambda \in (\lambda^c, 1), \quad j = 2, \ldots, N-5, \\
    \label{4}& 6v^{\pi_2}_{\lambda}(N, j) - 11v^{\pi_2}_{\lambda}(N, j+1) +5v^{\pi_2}_{\lambda}(N, j+2) > 0, \quad \lambda \in (0, \lambda^c), \quad j = 2, \ldots, N-5,  \\
    \label{5}& -6v_{\lambda_c}(N, j) + 11v_{\lambda^c}(N, j+1) -5v_{\lambda_c}(N, j+2) = 0, \quad j = 2, \ldots, N-5, \\
    \label{6}& 5v^{\pi_k}_{\lambda}(N-2, N-2) - 18v^{\pi_k}_{\lambda}(N, N-2) + 13v^{\pi_k}_{\lambda}(N, N) > 0, \quad\text{for }k = 1, \lambda \in [\lambda_c, 1)\\
    \nonumber &\quad \text{and } k =2, \lambda \in (0, \lambda_c],\\
    \label{7}&  8v^{\pi_k}_{\lambda}(N-2, N-3) - 65v^{\pi_k}_{\lambda}(N-2, N-2)+57v^{\pi_k}_{\lambda}(N-2, N) > 0, \quad \text{for }k = 1, \lambda \in [\lambda_c, 1) \\
    \nonumber &\quad \text{and } k =2, \lambda \in (0, \lambda_c], \\
    \label{8}& 20v^{\pi_k}_{\lambda}(N-2, N-3) + 31v^{\pi_k}_{\lambda}(N-2, N-2) + 57v^{\pi_k}_{\lambda}(N-2, N) - 108v^{\pi_k}_{\lambda}(N, N-3) >0, \\
    \nonumber &\quad\text{for }k = 1, \lambda \in [\lambda_c, 1) \text{ and } k =2, \lambda \in (0, \lambda_c], \\
    \label{9}&-4v^{\pi_k}_{\lambda}(N-2, N-3) + 15v^{\pi_k}_{\lambda}(N-2, N-2) - 18v^{\pi_k}_{\lambda}(N, N-3) + 7v^{\pi_k}_{\lambda}(N, N-2) >0, \\
    \nonumber &\quad \text{for }k = 1, \lambda \in [\lambda_c, 1) \text{ and } k =2, \lambda \in (0, \lambda_c], \\
    \label{10}& 6v^{\pi_k}_{\lambda}(N-2, j) - 40 v^{\pi_k}_{\lambda}(N-2, j+1) + 9v^{\pi_k}_{\lambda}(N, j) + 25v^{\pi_k}_{\lambda}(N, j+1) > 0, \quad j = 2, \ldots, N-4,\\
    &\nonumber \quad \text{for } k = 1, \lambda \in [\lambda_c, 1) \text{ and } k =2, \lambda \in (0, \lambda_c], \\
    \label{11}&  -30v^{\pi_k}_{\lambda}(N-2, j) -32v^{\pi_k}_{\lambda}(N-2, j+1) - 40v^{\pi_k}_{\lambda}(N-2, j+2) + 27v^{\pi_k}_{\lambda}(N, j) + 75v^{\pi_k}_{\lambda}(N, j+1) > 0, \\
    \nonumber &\quad j = 2, \ldots, N-4, \text{ for } k = 1, \lambda \in [\lambda_c, 1) \quad \text{and } k =2, \lambda \in (0, \lambda_c], \\
    \label{12}& 12v^{\pi_k}_{\lambda}(N-2, j) + 8v^{\pi_k}_{\lambda}(N-2, j+1) -45v^{\pi_k}_{\lambda}(N, j) + 25v^{\pi_k}_{\lambda}(N, j+1) > 0, \quad j = 2, \ldots, N-4, \\
    \nonumber &\quad \text{for } k = 1, \lambda \in [\lambda_c, 1) \text{ and } k =2, \lambda \in (0, \lambda_c]\\
    \label{13}& 8v^{\pi_k}_{\lambda}(N-3, N-3) - 65v^{\pi_k}_{\lambda}(N-2, N-3) + 57v^{\pi_k}_{\lambda}(N, N-3) > 0, \quad \text{for }k = 1, \lambda \in [\lambda_c, 1)\\
    \nonumber &\quad \text{and } k =2, \lambda \in (0, \lambda_c],  \\
    \label{14}& -8v^{\pi_k}_{\lambda}(N-3, N-3) - v^{\pi_k}_{\lambda}(N-2, N-3) - 48v^{\pi_k}_{\lambda}(N-2, N-2) + 57v^{\pi_k}_{\lambda}(N, N-3) > 0, \\
    \nonumber &\quad \text{for }k = 1, \lambda \in [\lambda_c, 1) \text{ and } k =2, \lambda \in (0, \lambda_c], \\
    \label{15} & v^{\pi_k}_{\lambda}(N-3, j) - 5v^{\pi_k}_{\lambda}(N-3, j+1) + v^{\pi_k}_{\lambda}(N-2, j) + 3v^{\pi_k}_{\lambda}(N-2, j+1) > 0,  j = 2, \ldots, N-4, \\
    \nonumber &\quad \text{for } k = 1, \lambda \in [\lambda_c, 1) \text{ and } k =2, \lambda \in (0, \lambda_c],  \\
    \label{16} & -3v^{\pi_k}_{\lambda}(N-3, j) -4v^{\pi_k}_{\lambda}(N-3, j+1) - 5v^{\pi_k}_{\lambda}(N-3, j+2) + 3v^{\pi_k}_{\lambda}(N-2, j) \\
    \nonumber &\quad + 9v^{\pi_k}_{\lambda}(N-2, j+1) >0, \quad j = 2, \ldots, N-4 \\
    \nonumber &\quad \text{for } k = 1, \lambda \in [\lambda_c, 1) \quad \text{and } k =2, \lambda \in (0, \lambda_c],  \\
    \label{17}& v^{\pi_k}_{\lambda}(N-3, j) + v^{\pi_k}_{\lambda}(N-3, j+1) - 5v^{\pi_k}_{\lambda}(N-2, j) + 3v^{\pi_k}_{\lambda}(N-2, j+1) >0, \\
    \nonumber &\quad j = 2, \ldots, N-4, \text{ for } k = 1, \lambda \in [\lambda_c, 1) \text{ and } k =2, \lambda \in (0, \lambda_c] \\
    \label{18}& 8v^{\pi_k}_{\lambda}(N-3, j) + 16v^{\pi_k}_{\lambda}(N-3, j+1) - 15v^{\pi_k}_{\lambda}(N-2, j) + 48v^{\pi_k}_{\lambda}(N-2, j+1) - 57v^{\pi_k}_{\lambda}(N, j) > 0, \\
    \nonumber &\quad j = 2, \ldots, N-4, \text{ for } k = 1, \lambda \in [\lambda_c, 1) \quad \text{and } k =2, \lambda \in (0, \lambda_c], \\
    \label{19}& v^{\pi_k}_{\lambda}(i,j) + v^{\pi_k}_{\lambda}(i+1, j) -5v^{\pi_k}_{\lambda}(i, j+1) + 3v^{\pi_k}_{\lambda}(i+1, j+1) > 0, \quad \lambda \in (0,1), \quad  i,j = 2, \ldots, N-4, \\
    \nonumber &\quad \text{for } k = 1, \lambda \in [\lambda_c, 1) \text{ and } k =2, \lambda \in (0, \lambda_c], \\
    \label{20}& v^{\pi_k}_{\lambda}(i,j) - 5v^{\pi_k}_{\lambda}(i+1, j) + v^{\pi_k}_{\lambda}(i, j+1) + 3v^{\pi_k}_{\lambda}(i+1, j+1) > 0, \quad \lambda \in (0,1), \quad i,j = 2, \ldots, N-4, \\
    \nonumber &\quad \text{for } k = 1, \lambda \in [\lambda_c, 1) \text{ and } k =2, \lambda \in (0, \lambda_c],\\
    \label{21}& -3v^{\pi_k}_{\lambda}(i,j) + 3v^{\pi_k}_{\lambda}(i+1, j) -4v^{\pi_k}_{\lambda}(i, j+1) + 9v^{\pi_k}_{\lambda}(i+1, j+1) -5v^{\pi_k}_{\lambda}(i, j+2) > 0, \\
    \nonumber &\quad i,j = 2, \ldots, N-4 \text{ for } k = 1, \lambda \in [\lambda_c, 1) \quad \text{and } k =2, \lambda \in (0, \lambda_c] \\
    \label{22}&-3v^{\pi_k}_{\lambda}(i,j) - 4v^{\pi_k}_{\lambda}(i+1, j) + 3v^{\pi_k}_{\lambda}(i, j+1) + 9v^{\pi_k}_{\lambda}(i+1, j+1) -5v^{\pi_k}_{\lambda}(i+2, j) > 0, \\
    \nonumber &\quad i,j = 2, \ldots, N-4, \text{ for } k = 1, \lambda \in [\lambda_c, 1) \quad \text{and } k =2, \lambda \in (0, \lambda_c].
\end{align}
We start by showing that $v^{\pi_1}_{\lambda_c}(N,j) = v^{\pi_2}_{\lambda_c}(N,j) := v_{\lambda_c}(N,j)$, for all $j = 2, \ldots, N-2$. This statement is trivial for $j = N-4, N-3, N-2, N$, by the fact that $d^1(N, j) = d^2(N, j)$ for these values of $j$ and the fact that rectangles cannot shrink. Let $v_{\lambda_c}(N,j) := v^{\pi_1}_{\lambda_c}(N,j) = v^{\pi_2}_{\lambda_c}(N, j)$ for $j = N-4, N-3, N-2, N$. Using expression (\ref{rec_4}) and recalling that $\lambda_c = 15/17$, we compute $v_{\lambda_c}(N, N-4) = 19550/3551$ and $v_{\lambda_c}(N N-3) = 23460/3551$. We now use a backward induction argument to show that $v^{\pi_1}_{\lambda_c}(N,j) = v^{\pi_2}_{\lambda_c}(N,j)$ holds for $j = 2, \ldots, N-5$. First of all, we verify its validity for $j = N-5$. By expression (\ref{rec_4}), we have
\begin{equation*}
    v^{\pi_1}_{\lambda_c}(N, N-5) = \dfrac{2\lambda_c}{3-\lambda_c}v^{\pi_1}_{\lambda_c}(N, N-4) = \dfrac{48875}{10653}.
\end{equation*}
By expression (\ref{rec_5}), we obtain
\begin{equation*}
    v^{\pi_2}_{\lambda_c}(N, N-5) = \dfrac{7\lambda_c}{3(9-5\lambda_c)}v^{\pi_2}_{\lambda_c}(N, N-4) + \dfrac{5\lambda_c}{3(9-5\lambda_c)}v^{\pi_2}_{\lambda_c}(N, N-3) = \dfrac{48875}{10653}.
\end{equation*}
Thus, we have
\begin{equation*}
    v^{\pi_1}_{\lambda^c}(N, N-5) = v^{\pi_2}_{\lambda^c}(N, N-5) = \dfrac{48875}{10653} := v_{\lambda^c}(N, N-5). 
\end{equation*}
Similarly, we find
\begin{equation*}
    v^{\pi_1}_{\lambda_c}(N, N-6) = v^{\pi_2}_{\lambda^c}(N, N-6) = \dfrac{244375}{63918} := v_{\lambda_c}(N, N-6).
\end{equation*}
Now, suppose that $v^{\pi_1}_{\lambda_c}(N, j) = v^{\pi_2}_{\lambda_c}(N, j) := v_{\lambda_c}(N,j)$ for all $j \geq n+1$, $n = 2, \ldots, N-7$. We proceed to show that this implies $v^{\pi_1}_{\lambda_c}(N, n) = v^{\pi_2}_{\lambda_c}(N, n) := v_{\lambda_c}(N,n)$.
Using expression (\ref{rec_4}) and the induction hypothesis, we obtain
\begin{align*}
    v^{\pi_1}_{\lambda_c}(N, n) &= \dfrac{2\lambda_c}{3-\lambda_c}v_{\lambda_c}(N, n+1) \\
                                &= \dfrac{7\lambda_c}{3(9-5\lambda_c)}v_{\lambda_c}(N, n+1) + \left(\dfrac{2\lambda_c}{3-\lambda_c} - \dfrac{7\lambda_c}{3(9-5\lambda_c)}\right)\dfrac{2\lambda_c}{3-\lambda_c}v_{\lambda_c}(N, n+2)\\
                                &= \dfrac{7\lambda_c}{3(9-5\lambda^c)}v_{\lambda_c}(N, n+1) + \dfrac{2\lambda_c^2(33-23\lambda_c)}{3(3-\lambda_c)^2(9-5\lambda_c)}v_{\lambda_c}(N, n+2) \\
                                &= \dfrac{7\lambda_c}{3(9-5\lambda_c)}v_{\lambda_c}(N, n+1) + \dfrac{5\lambda_c}{3(9-5\lambda_c)}v_{\lambda_c}(N, n+2),
\end{align*}
where we used the fact that
\begin{equation*}
   \dfrac{2\lambda_c^2(33-23\lambda_c)}{3(3-\lambda_c)^2(9-5\lambda_c)} = \dfrac{25}{78} = \dfrac{5\lambda_c}{3(9-5\lambda_c)}. 
\end{equation*} 
Thus, by expression (\ref{rec_5}) and the induction hypothesis, we have $v^{\pi_1}_{\lambda^c}(N, n) = v^{\pi_2}_{\lambda^c}(N, n) := v_{\lambda_c}(N, n)$. It follows that $v^{\pi_1}_{\lambda^c}(N, j) = v^{\pi_2}_{\lambda^c}(N, j) := v_{\lambda_c}(N, j)$ for all $j = 2, \ldots, N-2$. Note that this immediately implies the validity of equation (\ref{5}) above.\\
\\
We now turn our attention to the remaining inequalities listed above. Inequalities (\ref{1}), (\ref{2}), (\ref{6}), (\ref{7}), (\ref{8}), (\ref{9}), (\ref{13}) and (\ref{14}) can be verified easily using the explicit expressions for the value functions collected in Table \ref{explicit_v_stars}. We now proceed to prove the remaining inequalities by means of induction over the size of the rectangle.

\paragraph*{Inequality (\ref{3})}
First, we verify the validity of inequality (\ref{3}) for $j = N-5$ using the explicit expressions for $v^{\pi_1}_{\lambda}(N, N-5)$, $v^{\pi_1}_{\lambda}(N, N-4)$ and $v^{\pi}_{\lambda}(N, N-3)$, $\lambda \in (\lambda_c, 1)$ recorded in Table \ref{explicit_v_stars}. Now, assume that it holds for $j = n+1$ for some $n = 2, \ldots, N-6$. Using expression (\ref{rec_4}), we obtain for $j = n$:
\begin{align*}
    &-6v^{\pi_1}_{\lambda}(N, n) + 11v^{\pi_1}_{\lambda}(N, n+1) -5v^{\pi_1}_{\lambda}(N, n+2)\\
    &= \dfrac{2\lambda}{3-\lambda}\Big(-6v^{\pi_1}_{\lambda}(N, n+1) + 11v^{\pi_1}_{\lambda}(N, n+2) -5v^{\pi_1}_{\lambda}(N, n+3)\Big).
\end{align*}
From the induction hypothesis, it now follows that
\begin{equation*}
    -6v^{\pi_1}_{\lambda}(N, n) + 11v^{\pi_1}_{\lambda}(N, n+1) -5v^{\pi_1}_{\lambda}(N, n+2) > 0.
\end{equation*}
Thus, inequality (\ref{3}) holds for all $j = 2, \ldots, N-5$. 

\paragraph*{Inequality (\ref{4})}
We first verify the correctness of inequality (\ref{4}) for $j = N-5$ and $j = N-6$ using the explicit expressions collected in Table \ref{explicit_v_stars}. Suppose now that it is valid for all $j \in \{n+1, \ldots, N-5\}$ for some $n = 2, \ldots, N-7$. Using expression (\ref{rec_5}), we obtain
\begin{align*}
    & 6v^{\pi_2}_{\lambda}(N, n) - 11v^{\pi_2}_{\lambda}(N, n+1) +5v^{\pi_2}_{\lambda}(N, n+2) \\
    \nonumber &= 6\left(\dfrac{7\lambda}{3(9-5\lambda)}v^{\pi_2}_{\lambda}(N, n+1) + \dfrac{5\lambda}{3(9-5\lambda)}v^{\pi_2}_{\lambda}(N, n+2)\right) \\
    \nonumber &- 11\left(\dfrac{7\lambda}{3(9-5\lambda)}v^{\pi_2}_{\lambda}(N, n+2) + \dfrac{5\lambda}{3(9-5\lambda)}v^{\pi_2}_{\lambda}(N, n+3)\right) \\
    \nonumber &+5\left(\dfrac{7\lambda}{3(9-5\lambda)}v^{\pi_2}_{\lambda}(N, n+3) + \dfrac{5\lambda}{3(9-5\lambda)}v^{\pi_2}_{\lambda}(N, n+4)\right) \\
    \nonumber &= \dfrac{7\lambda}{3(9-5\lambda)}\Big(6v^{\pi_2}_{\lambda}(N, n+1) - 11v^{\pi_2}_{\lambda}(N, n+2) +5v^{\pi_2}_{\lambda}(N, n+3)\Big) \\
    \nonumber &+ \dfrac{5\lambda}{3(9-5\lambda)}\Big(6v^{\pi_2}_{\lambda}(N, n+2) - 11v^{\pi_2}_{\lambda}(N, n+3) +5v^{\pi_2}_{\lambda}(N, n+4)\Big).
\end{align*}
Invoking the induction hypothesis now yields
\begin{equation*}
    6v^{\pi_2}_{\lambda}(N, n) - 11v^{\pi_2}_{\lambda}(N, n+1) +5v^{\pi_2}_{\lambda}(N, n+2) > 0.
\end{equation*}
It follows that inequality (\ref{4}) holds for all $j = 2, \ldots, N-5$.

\paragraph*{Inequality (\ref{10})}
We start by verifying the validity of inequality (\ref{10}) for $j = N-4$ using the explicit expressions for the value functions in Table \ref{explicit_v_stars}. Then, we assume that it holds for $j = n+1$ for some $n = 2, \ldots, N-5$ and show that its validity carries over to $j = n$. Expression (\ref{rec_8}) implies
\begin{align*}
    & 6v^{\pi_k}_{\lambda}(N-2, n) - 40 v^{\pi_k}_{\lambda}(N-2, n+1) + 9v^{\pi_k}_{\lambda}(N, n) + 25v^{\pi_k}_{\lambda}(N, n+1) \\
    \nonumber &= \dfrac{72}{12-5\lambda}\left(\dfrac{\lambda}{8}v^{\pi_k}_{\lambda}(N, n) + \dfrac{25\lambda}{72}v^{\pi_k}_{\lambda}(N, n+1) + \dfrac{\lambda}{9}v^{\pi_k}_{\lambda}(N-2, n+1)\right) \\
    \nonumber &\quad - \dfrac{480}{12-5\lambda}\left(\dfrac{\lambda}{8}v^{\pi_k}_{\lambda}(N, n+1) + \dfrac{25\lambda}{72}v^{\pi_k}_{\lambda}(N, n+2) + \dfrac{\lambda}{9}v^{\pi_k}_{\lambda}(N-2, n+2)\right) +9v^{\pi_k}_{\lambda}(N, n) + 25v^{\pi_k}_{\lambda}(N, n+1) \\
    \nonumber &= \dfrac{4\lambda}{3(12-5\lambda)}\Big(6v^{\pi_k}_{\lambda}(N-2, n+1) - 40 v^{\pi_k}_{\lambda}(N-2, n+2) + 9v^{\pi_k}_{\lambda}(N, n+1)+ 25v^{\pi_k}_{\lambda}(N, n+2)\Big) \\
    \nonumber &\quad+ \dfrac{36(3-\lambda)}{12-5\lambda}v^{\pi_k}_{\lambda}(N, n)  + \dfrac{4(75-43\lambda)}{12-5\lambda}v^{\pi_k}_{\lambda}(N, n+1) - \dfrac{200\lambda}{12-5\lambda}v^{\pi_k}_{\lambda}(N, n+2),
\end{align*}
for $k = 1, 2$, $\lambda \in (0,1)$. 
From the induction hypothesis, it follows that it suffices to show that
\begin{equation}\label{exp10_cond1}
    \dfrac{36(3-\lambda)}{12-5\lambda}v^{\pi_k}_{\lambda}(N, \ell)  + \dfrac{4(75-43\lambda)}{12-5\lambda}v^{\pi_k}_{\lambda}(N, \ell+1) - \dfrac{200\lambda}{12-5\lambda}v^{\pi_k}_{\lambda}(N, \ell+2) \geq 0,
\end{equation}
for all $\ell = 2, \ldots, N-5$. We prove this statement by means of another induction argument, distinguishing between $k = 1$ and $k = 2$. For $k = 1$, we first verify the validity of expression (\ref{exp10_cond1}) for $\ell = N-5$, using the explicit expressions for $v^{\pi_1}_{\lambda}(N, N-5)$, $v^{\pi_1}_{\lambda}(N, N-4)$ and $v^{\pi_1}_{\lambda}(N, N-3)$ recorded in Table \ref{explicit_v_stars}. Now, assume that it holds for $\ell = \tilde{\ell}+1$ for some $\tilde{\ell} = 2, \ldots, N-6$. By expression (\ref{rec_4}) and the induction hypothesis, we obtain
\begin{align*}
    &\dfrac{36(3-\lambda)}{12-5\lambda}v^{\pi_1}_{\lambda}(N, \tilde{\ell})  + \dfrac{4(75-43\lambda)}{12-5\lambda}v^{\pi_1}_{\lambda}(N, \tilde{\ell}+1) - \dfrac{200\lambda}{12-5\lambda}v^{\pi_1}_{\lambda}(N, \tilde{\ell}+2) \\
    \nonumber &= \dfrac{2\lambda}{3-\lambda}\Big(\dfrac{36(3-\lambda)}{12-5\lambda}v^{\pi_1}_{\lambda}(N, \tilde{\ell}+1)  + \dfrac{4(75-43\lambda)}{12-5\lambda}v^{\pi_1}_{\lambda}(N, \tilde{\ell}+2) - \dfrac{200\lambda}{12-5\lambda}v^{\pi_1}_{\lambda}(N, \tilde{\ell}+3)\Big) > 0.
\end{align*}
This establishes the validity of expression (\ref{exp10_cond1}) for all $\ell = 2, \ldots, N-5$ and $k = 1$. \\
We now turn our attention to $k = 2$. In this case, we verify the validity of expression (\ref{exp10_cond1}) for $\ell = N-5$ and $\ell = N-6$, using Table \ref{explicit_v_stars}, and make the slightly stronger assumption that it holds for all $\ell \geq \tilde{\ell}+1$ for some $\tilde{\ell} = 23 \ldots, N-7$. By expression (\ref{rec_5}), we obtain 
\begin{align*}
    &\dfrac{36(3-\lambda)}{12-5\lambda}v^{\pi_2}_{\lambda}(N, \tilde{\ell})  + \dfrac{4(75-43\lambda)}{12-5\lambda}v^{\pi_2}_{\lambda}(N, \tilde{\ell}+1) - \dfrac{200\lambda}{12-5\lambda}v^{\pi_2}_{\lambda}(N, \tilde{\ell}+2) \\
    \nonumber &= \dfrac{36(3-\lambda)}{12-5\lambda}\left(\dfrac{7\lambda}{3(9-5\lambda)}v^{\pi_2}_{\lambda}(N, \tilde{\ell}+1) + \dfrac{5\lambda}{3(9-5\lambda)}v^{\pi_2}_{\lambda}(N, \tilde{\ell}+2)\right) \\
    \nonumber &+ \dfrac{4(75-43\lambda)}{12-5\lambda}\left(\dfrac{7\lambda}{3(9-5\lambda)}v^{\pi_2}_{\lambda}(N, \tilde{\ell}+2) + \dfrac{5\lambda}{3(9-5\lambda)}v^{\pi_2}_{\lambda}(N, \tilde{\ell}+3)\right) \\
    \nonumber &- \dfrac{200\lambda}{12-5\lambda}\left(\dfrac{7\lambda}{3(9-5\lambda)}v^{\pi_2}_{\lambda}(N, \tilde{\ell}+3) + \dfrac{5\lambda}{3(9-5\lambda)}v^{\pi_2}_{\lambda}(N, \tilde{\ell}+4)\right) \\
    \nonumber &= \dfrac{7\lambda}{3(9-5\lambda)}\Big(\dfrac{36(3-\lambda)}{12-5\lambda}v^{\pi_2}_{\lambda}(N, \tilde{\ell}+1)  + \dfrac{4(75-43\lambda)}{12-5\lambda}v^{\pi_2}_{\lambda}(N, \tilde{\ell}+2) - \dfrac{200\lambda}{12-5\lambda}v^{\pi_2}_{\lambda}(N, \tilde{\ell}+3)\Big) \\
    \nonumber &+ \dfrac{5\lambda}{3(9-5\lambda)}\Big(\dfrac{36(3-\lambda)}{12-5\lambda}v^{\pi_2}_{\lambda}(N, \tilde{\ell}+2) + \dfrac{4(75-43\lambda)}{12-5\lambda}v^{\pi_2}_{\lambda}(N, \tilde{\ell}+3) - \dfrac{200\lambda}{12-5\lambda}v^{\pi_2}_{\lambda}(N, \tilde{\ell}+4)\Big).
\end{align*}
The induction hypothesis now implies
\begin{equation*}
    \dfrac{36(3-\lambda)}{12-5\lambda}v^{\pi_2}_{\lambda}(N, \tilde{\ell})  + \dfrac{4(75-43\lambda)}{12-5\lambda}v^{\pi_2}_{\lambda}(N, \tilde{\ell}+1) - \dfrac{200\lambda}{12-5\lambda}v^{\pi_2}_{\lambda}(N, \tilde{\ell}+2) \geq 0,
\end{equation*}
which proves the correctness of inequality (\ref{exp10_cond1}) for all $\ell = 2, \ldots, N-5$, $k = 2$ and $\lambda \in (0, \lambda^c]$. It now follows that
\begin{equation*}
    6v^{\pi_k}_{\lambda}(N-2, n) - 40 v^{\pi_k}_{\lambda}(N-2, n+1) + 9v^{\pi_k}_{\lambda}(N, n) + 25v^{\pi_k}_{\lambda}(N, n+1) > 0,
\end{equation*}
for $k = 1$, $\lambda \in [\lambda_c, 1)$ and $k = 2$, $\lambda \in (0, \lambda_c]$. Thus, inequality (\ref{10}) is valid for all $j = 2, \ldots, N-4$, for $k =1$, $\lambda \in [\lambda_c, 1)$ and $k =2$, $\lambda \in (0, \lambda_c]$. 

\paragraph*{Inequality (\ref{11})}
First, we verify the correctness of inequality (\ref{11}) for $j = N-4$ and $j = N-5$, for $k = 1$, $\lambda \in [\lambda_c, 1)$ and $k = 2$, $\lambda \in (0, \lambda_c]$, using Table \ref{explicit_v_stars}. Now, assume that it holds for all $j = n+1, \ldots N-4$, for some $n = 2, \ldots, N-6$, for $k = 1$, $\lambda \in [\lambda_c, 1)$ and $k = 2$, $\lambda \in (0, \lambda_c]$. By expression (\ref{rec_8}) we obtain
\begin{align*}
    &-30v^{\pi_k}_{\lambda}(N-2, n) -32v^{\pi_k}_{\lambda}(N-2, n+1) - 40v^{\pi_k}_{\lambda}(N-2, n+2) + 27v^{\pi_k}_{\lambda}(N, n) + 75v^{\pi_k}_{\lambda}(N, n+1) \\
    \nonumber &= -\dfrac{360}{12-5\lambda}\left(\dfrac{\lambda}{8}v^{\pi_k}_{\lambda}(N, n) + \dfrac{25\lambda}{72}v^{\pi_k}_{\lambda}(N, n+1) + \dfrac{\lambda}{9}v^{\pi_k}_{\lambda}(N-2, n+1)\right) \\
    \nonumber & - \dfrac{384}{12-5\lambda}\left(\dfrac{\lambda}{8}v^{\pi_k}_{\lambda}(N, n+1) + \dfrac{25\lambda}{72}v^{\pi_k}_{\lambda}(N, n+2) + \dfrac{\lambda}{9}v^{\pi_k}_{\lambda}(N-2, n+2)\right) \\
    \nonumber &- \dfrac{480}{12-5\lambda}\left(\dfrac{\lambda}{8}v^{\pi_k}_{\lambda}(N, n+2) + \dfrac{25\lambda}{72}v^{\pi_k}_{\lambda}(N, n+3) + \dfrac{\lambda}{9}v^{\pi_k}_{\lambda}(N-2, n+3)\right) + 27v^{\pi_k}_{\lambda}(N, n) + 75v^{\pi_k}_{\lambda}(N, n+1) \\
    \nonumber &= \dfrac{4\lambda}{3(12-5\lambda)}\Big(-30v^{\pi_k}_{\lambda}(N-2, n+1) -32v^{\pi_k}_{\lambda}(N-2, n+2) - 40v^{\pi_k}_{\lambda}(N-2, n+3) + 27v^{\pi_k}_{\lambda}(N, n+1) \\
    \nonumber &+ 75v^{\pi_k}_{\lambda}(N, n+2)\Big) + \dfrac{36(9-5\lambda)}{12-5\lambda}v^{\pi_k}_{\lambda}(N, n) + \dfrac{900-584\lambda}{12-5\lambda}v^{\pi_k}_{\lambda}(N, n+1) - \dfrac{880\lambda}{3(12-5\lambda)}v^{\pi_k}_{\lambda}(N, n+2)\\
    \nonumber &- \dfrac{500\lambda}{3(12-5\lambda)}v^{\pi_k}_{\lambda}(N, n+3),
\end{align*}
for $k = 1$, $\lambda \in [\lambda_c, 1)$ and $k = 2$, $\lambda \in (0, \lambda_c]$. From the induction hypothesis, it follows that it suffices to show that
\begin{align}\label{exp11_cond1}
    &\dfrac{36(9-5\lambda)}{12-5\lambda}v^{\pi_k}_{\lambda}(N, \ell) + \dfrac{900-584\lambda}{12-5\lambda}v^{\pi_k}_{\lambda}(N, \ell+1) - \dfrac{880\lambda}{3(12-5\lambda)}v^{\pi_k}_{\lambda}(N, \ell+2) - \dfrac{500\lambda}{3(12-5\lambda)}v^{\pi_k}_{\lambda}(N, \ell+3) \geq 0,
\end{align}
for all $\ell = 2, \ldots, N-6$, for $k = 1$, $\lambda \in [\lambda_c, 1)$ and $k = 2$, $\lambda \in (0, \lambda_c]$. We first consider the case $k = 1$, $\lambda \in [\lambda_c, 1)$. After verifying the correctness of expression (\ref{exp11_cond1}) for $\ell = N-6$, using Table \ref{explicit_v_stars}, we assume that it holds for $\ell = \tilde{\ell} + 1$, for some $\tilde{\ell} = 2, \ldots, N-7$. Expression (\ref{rec_4}) and this induction hypothesis imply
\begin{align*}
    &\dfrac{36(9-5\lambda)}{12-5\lambda}v^{\pi_1}_{\lambda}(N, \tilde{\ell}) + \dfrac{900-584\lambda}{12-5\lambda}v^{\pi_1}_{\lambda}(N, \tilde{\ell}+1) - \dfrac{880\lambda}{3(12-5\lambda)}v^{\pi_1}_{\lambda}(N, \tilde{\ell}+2) - \dfrac{500\lambda}{3(12-5\lambda)}v^{\pi_1}_{\lambda}(N, \tilde{\ell}+3) \\
    \nonumber &= \dfrac{2\lambda}{3-\lambda}\Bigg(\dfrac{36(9-5\lambda)}{12-5\lambda}v^{\pi_1}_{\lambda}(N, \tilde{\ell}+1) + \dfrac{900-584\lambda}{12-5\lambda}v^{\pi_1}_{\lambda}(N, \tilde{\ell}+2) - \dfrac{880\lambda}{3(12-5\lambda)}v^{\pi_1}_{\lambda}(N, \tilde{\ell}+3) \\
    \nonumber &- \dfrac{500\lambda}{3(12-5\lambda)}v^{\pi_1}_{\lambda}(N, \tilde{\ell}+4)\Bigg) \geq 0.
\end{align*}
Hence, expression (\ref{exp11_cond1}) holds for all $\ell = 2, \ldots, N-6$, for $k = 1$, $\lambda \in [\lambda_c, 1)$. 

We proceed to consider the case $k = 2$, $\lambda \in (0, \lambda_c]$. First of all, we again use Table \ref{explicit_v_stars} to verify the validity of expression (\ref{exp11_cond1}) for $\ell = N-6$ and $\ell = N-7$. Then, we make the slightly stronger assumption that it is satisfied for all $\ell = \tilde{\ell}+1, \ldots N-6$, for some $\tilde{\ell} = 2, \ldots, N-8$. Using expression (\ref{rec_5}) we obtain
\begin{align*}
    &\dfrac{36(9-5\lambda)}{12-5\lambda}v^{\pi_2}_{\lambda}(N, \tilde{\ell}) + \dfrac{900-584\lambda}{12-5\lambda}v^{\pi_2}_{\lambda}(N, \tilde{\ell}+1) - \dfrac{880\lambda}{3(12-5\lambda)}v^{\pi_2}_{\lambda}(N, \tilde{\ell}+2)- \dfrac{500\lambda}{3(12-5\lambda)}v^{\pi_2}_{\lambda}(N, \tilde{\ell}+3) \\
    \nonumber &= \dfrac{36(9-5\lambda)}{12-5\lambda} \left(\dfrac{7\lambda}{3(9-5\lambda)}v^{\pi_2}_{\lambda}(N, \tilde{\ell}+1) + \dfrac{5\lambda}{3(9-5\lambda)}v^{\pi_2}_{\lambda}(N, \tilde{\ell}+2)\right) + \dfrac{900-584\lambda}{12-5\lambda}\Bigg(\dfrac{7\lambda}{3(9-5\lambda)}v^{\pi_2}_{\lambda}(N, \tilde{\ell}+2) \\
    \nonumber &+ \dfrac{5\lambda}{3(9-5\lambda)}v^{\pi_2}_{\lambda}(N, \tilde{\ell}+3)\Bigg) - \dfrac{880\lambda}{3(12-5\lambda)}\left(\dfrac{7\lambda}{3(9-5\lambda)}v^{\pi_2}_{\lambda}(N, \tilde{\ell}+3) + \dfrac{5\lambda}{3(9-5\lambda)}v^{\pi_2}_{\lambda}(N, \tilde{\ell}+4)\right) \\
    \nonumber &- \dfrac{500\lambda}{3(12-5\lambda)}\left(\dfrac{7\lambda}{3(9-5\lambda)}v^{\pi_2}_{\lambda}(N, \tilde{\ell}+4) + \dfrac{5\lambda}{3(9-5\lambda)}v^{\pi_2}_{\lambda}(N, \tilde{\ell}+5)\right) \\ 
    \nonumber &= \dfrac{7\lambda}{3(9-5\lambda)}\Bigg(\dfrac{36(9-5\lambda)}{12-5\lambda}v^{\pi_2}_{\lambda}(N, \tilde{\ell}+1) + \dfrac{900-584\lambda}{12-5\lambda}v^{\pi_2}_{\lambda}(N, \tilde{\ell}+2) - \dfrac{880\lambda}{3(12-5\lambda)}v^{\pi_2}_{\lambda}(N, \tilde{\ell}+3)\\
    \nonumber &- \dfrac{500\lambda}{3(12-5\lambda)}v^{\pi_2}_{\lambda}(N, \tilde{\ell}+4)\Bigg) + \dfrac{5\lambda}{3(9-5\lambda)}\Bigg(\dfrac{36(9-5\lambda)}{12-5\lambda}v^{\pi_2}_{\lambda}(N, \tilde{\ell}+2) + \dfrac{900-584\lambda}{12-5\lambda}v^{\pi_2}_{\lambda}(N, \tilde{\ell}+3) \\
    \nonumber &- \dfrac{880\lambda}{3(12-5\lambda)}v^{\pi_2}_{\lambda}(N, \tilde{\ell}+4)  - \dfrac{500\lambda}{3(12-5\lambda)}v^{\pi_2}_{\lambda}(N, \tilde{\ell}+5)\Bigg).
\end{align*}
The induction hypothesis now implies that
\begin{align*}
    &\dfrac{36(9-5\lambda)}{12-5\lambda}v^{\pi_2}_{\lambda}(N, \tilde{\ell}) + \dfrac{900-584\lambda}{12-5\lambda}v^{\pi_2}_{\lambda}(N, \tilde{\ell}+1) - \dfrac{880\lambda}{3(12-5\lambda)}v^{\pi_2}_{\lambda}(N, \tilde{\ell}+2) - \dfrac{500\lambda}{3(12-5\lambda)}v^{\pi_2}_{\lambda}(N, \tilde{\ell}+3) \geq 0.
\end{align*}
Thus, expression (\ref{exp11_cond1}) holds for all $\ell = 2, \ldots, N-6$ for $k =2$, $\lambda \in (0, \lambda_c]$ as well. From this, we can conclude that 
\begin{align*}
    &-30v^{\pi_k}_{\lambda}(N-2, n) -32v^{\pi_k}_{\lambda}(N-2, n+1) - 40v^{\pi_k}_{\lambda}(N-2, n+2) + 27v^{\pi_k}_{\lambda}(N, n) + 75v^{\pi_k}_{\lambda}(N, n+1) > 0,
\end{align*}
for $k = 1$, $\lambda \in [\lambda_c, 1)$ and $k =2$, $\lambda \in (0, \lambda_c]$. Thus, we have established that inequality (\ref{11}) holds for all $j = 2, \ldots, N-4$ for $k = 1$, $\lambda \in [\lambda_c, 1)$ and $k =2$, $\lambda \in (0, \lambda_c]$.

\paragraph*{Inequality (\ref{12})}
We again start by verifying the validity of inequality (\ref{12}) for $j = N-4$, for $k = 1$, $\lambda \in [\lambda_c, 1)$ and $k = 2$, $\lambda \in (0, \lambda_c]$, using Table \ref{explicit_v_stars}. Assume that it is satisfied for $j = n+1$, for some $n = 2, \ldots, N-5$, for $k =1$, $\lambda \in [\lambda_c, 1)$ and $k =2$, $\lambda \in (0, \lambda_c]$. Expression (\ref{rec_8}) yields
\begin{align*}
    &12v^{\pi_k}_{\lambda}(N-2, n) + 8v^{\pi_k}_{\lambda}(N-2, n+1) -45v^{\pi_k}_{\lambda}(N, n) + 25v^{\pi_k}_{\lambda}(N, n+1) \\
    \nonumber &= \dfrac{144}{12-5\lambda}\left(\dfrac{\lambda}{8}v^{\pi_k}_{\lambda}(N, n) + \dfrac{25\lambda}{72}v^{\pi_k}_{\lambda}(N, n+1) + \dfrac{\lambda}{9}v^{\pi_k}_{\lambda}(N-2, n+1)\right) \\
    \nonumber &+ \dfrac{96}{12-5\lambda} \left(\dfrac{\lambda}{8}v^{\pi_k}_{\lambda}(N, n+1) + \dfrac{25\lambda}{72}v^{\pi_k}_{\lambda}(N, n+2) + \dfrac{\lambda}{9}v^{\pi_k}_{\lambda}(N-2, n+2)\right) -45v^{\pi_k}_{\lambda}(N, n) + 25v^{\pi_k}_{\lambda}(N, n+1) \\
    \nonumber &= \dfrac{4\lambda}{3(12-5\lambda)}\Big(12v^{\pi_k}_{\lambda}(N-2, n+1) + 8v^{\pi_k}_{\lambda}(N-2, n+2) -45v^{\pi_k}_{\lambda}(N, n+1) + 25v^{\pi_k}_{\lambda}(N, n+2)\Big) \\
    \nonumber &- \dfrac{27(20-9\lambda)}{12-5\lambda}v^{\pi_k}_{\lambda}(N, n) + \dfrac{3(100-\lambda)}{12-5\lambda}v^{\pi_k}_{\lambda}(N, n+1),
\end{align*}
for $k = 1, 2$, $\lambda \in (0,1)$. The induction hypothesis implies that a sufficient condition for inequality (\ref{12}) to hold for $j = n$ is given by
\begin{equation}\label{exp12_cond1}
    - \dfrac{27(20-9\lambda)}{12-5\lambda}v^{\pi_k}_{\lambda}(N, \ell) + \dfrac{3(100-\lambda)}{12-5\lambda}v^{\pi_k}_{\lambda}(N, \ell+1) \geq 0,
\end{equation}
for all $\ell = 2, \ldots, N-5$, for $k = 1$, $\lambda \in [\lambda_c, 1)$ and $k =2$, $\lambda \in (0, \lambda^c]$. First, we consider the case $k = 1$, $\lambda \in [\lambda_c, 1)$. After verifying the validity of expression (\ref{exp12_cond1}) for $\ell = N-5$, using Table \ref{explicit_v_stars}, we assume that it holds for $\ell = \tilde{\ell} +1$ for some $\tilde{\ell} = 2, \ldots, N-6$. It now follows from expression (\ref{rec_4}) and this induction hypothesis that
\begin{align*}
    &- \dfrac{27(20-9\lambda)}{12-5\lambda}v^{\pi_1}_{\lambda}(N, \tilde{\ell}) + \dfrac{3(100-\lambda)}{12-5\lambda}v^{\pi_1}_{\lambda}(N, \tilde{\ell}+1) \\
    \nonumber &= \dfrac{2\lambda}{3-\lambda}\left( - \dfrac{27(20-9\lambda)}{12-5\lambda}v^{\pi_1}_{\lambda}(N, \tilde{\ell}+1) + \dfrac{3(100-\lambda)}{12-5\lambda}v^{\pi_1}_{\lambda}(N, \tilde{\ell}+2)\right) \geq 0,
\end{align*}
for $\lambda \in [\lambda^c, 1)$. This implies that expression (\ref{exp12_cond1}) is satisfied for all $\ell = 2, \ldots, N-5$, for $k = 1$, $\lambda \in [\lambda^c, 1)$. \\
We proceed to consider the case $k =2$, $\lambda \in (0, \lambda^c]$. Again, we use Table \ref{explicit_v_stars} to verify the correctness of expression (\ref{exp12_cond1}) for $\ell = N-5$ and $\ell = N-6$. Assume that it is satisfied for $\ell = \tilde{\ell}+1, \ldots, N-5$, where $\tilde{\ell} = 2, \ldots, N-7$. Using expression (\ref{rec_5}), we obtain
\begin{align*}
     &- \dfrac{27(20-9\lambda)}{12-5\lambda}v^{\pi_2}_{\lambda}(N, \tilde{\ell}) + \dfrac{3(100-\lambda)}{12-5\lambda}v^{\pi_2}_{\lambda}(N, \tilde{\ell}+1) \\
     \nonumber &= - \dfrac{27(20-9\lambda)}{12-5\lambda}\left(\dfrac{7\lambda}{3(9-5\lambda)}v^{\pi_2}_{\lambda}(N, \tilde{\ell}+1) + \dfrac{5\lambda}{3(9-5\lambda)}v^{\pi_2}_{\lambda}(N, \tilde{\ell}+2)\right) \\
     \nonumber &+ \dfrac{3(100-\lambda)}{12-5\lambda}\left(\dfrac{7\lambda}{3(9-5\lambda)}v^{\pi_2}_{\lambda}(N, \tilde{\ell}+2) + \dfrac{5\lambda}{3(9-5\lambda)}v^{\pi_2}_{\lambda}(N, \tilde{\ell}+3)\right)\\
     \nonumber &= \dfrac{7\lambda}{3(9-5\lambda)}\left(- \dfrac{27(20-9\lambda)}{12-5\lambda}v^{\pi_2}_{\lambda}(N, \tilde{\ell}+1) + \dfrac{3(100-\lambda)}{12-5\lambda}v^{\pi_2}_{\lambda}(N, \tilde{\ell}+2)\right) \\
     \nonumber &+ \dfrac{5\lambda}{3(9-5\lambda)}\left(- \dfrac{27(20-9\lambda)}{12-5\lambda}v^{\pi_2}_{\lambda}(N, \tilde{\ell}+2) + \dfrac{3(100-\lambda)}{12-5\lambda}v^{\pi_2}_{\lambda}(N, \tilde{\ell}+3)\right).
\end{align*}
By the induction hypothesis, it now follows that
\begin{equation*}
    - \dfrac{27(20-9\lambda)}{12-5\lambda}v^{\pi_2}_{\lambda}(N, \tilde{\ell}) + \dfrac{300-3\lambda}{12-5\lambda}v^{\pi_2}_{\lambda}(N, \tilde{\ell}+1) \geq 0,
\end{equation*}
for all $\lambda \in (0, \lambda_c]$. This implies that expression (\ref{exp12_cond1}) holds for all $\ell = 2, \ldots, N-5$, for $k =2$, $\lambda \in (0, \lambda_c]$ as well. Thus, we can conclude that
\begin{equation*}
    12v^{\pi_k}_{\lambda}(N-2, n) + 8v^{\pi_k}_{\lambda}(N-2, n+1) -45v^{\pi_k}_{\lambda}(N, n) + 25v^{\pi_k}_{\lambda}(N, n+1) > 0,
\end{equation*}
for $k = 1$, $\lambda \in [\lambda_c, 1)$ and $k =2$, $\lambda \in (0, \lambda_c]$. This establishes the validity of inequality (\ref{12}) for all $j = 2, \ldots, N-4$, for $k = 1$, $\lambda \in [\lambda_c, 1)$ and $k =2$, $\lambda \in (0, \lambda_c]$.

\paragraph{Inequality (\ref{15})}
First of all, we verify the correctness of inequality (\ref{15}) for $j = N-4$, for $k = 1$, $\lambda \in [\lambda_c, 1)$ and $k =2$, $\lambda \in (0, \lambda_c]$ using Table \ref{explicit_v_stars}. We proceed to assume that it holds for $j = n+1$ for some $n = 2, \ldots, N-5$, for $k = 1$, $\lambda \in [\lambda_c, 1)$ and $k =2$, $\lambda \in (0, \lambda_c]$. Using expression (\ref{rec_10}), we obtain
\begin{align*}
    &v^{\pi_k}_{\lambda}(N-3, n) - 5v^{\pi_k}_{\lambda}(N-3, n+1) + v^{\pi_k}_{\lambda}(N-2, n) + 3v^{\pi_k}_{\lambda}(N-2, n+1) \\
    \nonumber &= \dfrac{\lambda}{9-4\lambda}\Big(v^{\pi_k}_{\lambda}(N-2, n) + v^{\pi_k}_{\lambda}(N-3, n+1) + 3v^{\pi_k}_{\lambda}(N-2, n+1)\Big) \\
    \nonumber &- \dfrac{5\lambda}{9-4\lambda}\Big(v^{\pi_k}_{\lambda}(N-2, n+1) + v^{\pi_k}_{\lambda}(N-3, n+2) + 3v^{\pi_k}_{\lambda}(N-2, n+2)\Big) + v^{\pi_k}_{\lambda}(N-2, n) + 3v^{\pi_k}_{\lambda}(N-2, n+1) \\
    \nonumber &= \dfrac{\lambda}{9-4\lambda}\Big(v^{\pi_k}_{\lambda}(N-3, n+1) - 5v^{\pi_k}_{\lambda}(N-3, n+2) + v^{\pi_k}_{\lambda}(N-2, n+1) + 3v^{\pi_k}_{\lambda}(N-2, n+2)\Big)\\
    \nonumber &+ \dfrac{3(3-\lambda)}{9-4\lambda}v^{\pi_k}_{\lambda}(N-2, n) + \dfrac{3(9-5\lambda)}{9-4\lambda}v^{\pi_k}_{\lambda}(N-2, n+1) - \dfrac{18\lambda}{9-4\lambda}v^{\pi_k}_{\lambda}(N-2, n+2), 
\end{align*}
for $k = 1, 2$, $\lambda \in (0,1)$. 
From the induction hypothesis, it follows that it suffices to show that
\begin{equation}\label{exp15_cond1}
    \dfrac{3(3-\lambda)}{9-4\lambda}v^{\pi_k}_{\lambda}(N-2, \ell) + \dfrac{3(9-5\lambda)}{9-4\lambda}v^{\pi_k}_{\lambda}(N-2, \ell+1) - \dfrac{18\lambda}{9-4\lambda}v^{\pi_k}_{\lambda}(N-2, \ell+2) \geq 0,
\end{equation}
holds for all $\ell = 2, \ldots, N-5$, for $k = 1$, $\lambda \in [\lambda_c, 1)$ and for $k = 2$, $\lambda \in (0, \lambda_c]$. We prove this statement by means of another induction argument. First of all, we verify its validity for $\ell = N-5$ using Table \ref{explicit_v_stars}. Now suppose that it is true for $\ell = \tilde{\ell} + 1$, for some $\tilde{\ell} = 2, \ldots, N-6$, for $k = 1$, $\lambda \in [\lambda_c, 1)$ and for $k = 2$, $\lambda \in (0, \lambda_c]$. Using expression (\ref{rec_8}), we obtain 
\begin{align*}
    &\dfrac{3(3-\lambda)}{9-4\lambda}v^{\pi_k}_{\lambda}(N-2, \tilde{\ell}) + \dfrac{3(9-5\lambda)}{9-4\lambda}v^{\pi_k}_{\lambda}(N-2, \tilde{\ell}+1) - \dfrac{18\lambda}{9-4\lambda}v^{\pi_k}_{\lambda}(N-2, \tilde{\ell}+2) \\
    \nonumber &=  \dfrac{3(3-\lambda)}{9-4\lambda} \dfrac{12}{12-5\lambda}\left(\dfrac{\lambda}{8}v^{\pi_k}_{\lambda}(N, \tilde{\ell}) + \dfrac{25\lambda}{72}v^{\pi_k}_{\lambda}(N, \tilde{\ell}+1) + \dfrac{\lambda}{9}v^{\pi_k}_{\lambda}(N-2, \tilde{\ell}+1)\right) \\
    \nonumber &+ \dfrac{3(9-5\lambda)}{9-4\lambda}\dfrac{12}{12-5\lambda}\left(\dfrac{\lambda}{8}v^{\pi_k}_{\lambda}(N, \tilde{\ell}+1) + \dfrac{25\lambda}{72}v^{\pi_k}_{\lambda}(N, \tilde{\ell}+2) + \dfrac{\lambda}{9}v^{\pi_k}_{\lambda}(N-2, \tilde{\ell}+2)\right) \\
    \nonumber &-\dfrac{18\lambda}{9-4\lambda}\dfrac{12}{12-5\lambda}\left(\dfrac{\lambda}{8}v^{\pi_k}_{\lambda}(N, \tilde{\ell}+2) + \dfrac{25\lambda}{72}v^{\pi_k}_{\lambda}(N, \tilde{\ell}+3) + \dfrac{\lambda}{9}v^{\pi_k}_{\lambda}(N-2, \tilde{\ell}+3)\right) \\
    \nonumber &= \dfrac{4\lambda}{3(12-5\lambda)}\Big(\dfrac{3(3-\lambda)}{9-4\lambda}v^{\pi_k}_{\lambda}(N-2, \tilde{\ell}+1) + \dfrac{3(9-5\lambda)}{9-4\lambda}v^{\pi_k}_{\lambda}(N-2, \tilde{\ell}+2) - \dfrac{18\lambda}{9-4\lambda}v^{\pi_k}_{\lambda}(N-2, \tilde{\ell}+3)\Big) \\
    \nonumber &+ \dfrac{9\lambda(3-\lambda)}{2(12-5\lambda)(9-4\lambda)}v^{\pi_k}_{\lambda}(N, \tilde{\ell}) + \dfrac{\lambda(78-35\lambda)}{(9-4\lambda)(12-5\lambda)}v^{\pi_k}_{\lambda}(N, \tilde{\ell}+1) + \dfrac{\lambda(225-179\lambda)}{2(9-4\lambda)(12-5\lambda)}v^{\pi_k}_{\lambda}(N, \tilde{\ell}+2) \\
    \nonumber &- \dfrac{75\lambda^2}{(12-5\lambda)(9-4\lambda)}v^{\pi_k}_{\lambda}(N, \tilde{\ell}+3),
\end{align*}
for $k = 1, 2$, $\lambda \in (0,1)$. By the induction hypothesis, it follows that it suffices to show that
\begin{align}\label{exp15_cond2}
     &\dfrac{9\lambda(3-\lambda)}{2(12-5\lambda)(9-4\lambda)}v^{\pi_k}_{\lambda}(N, m) + \dfrac{\lambda(78-35\lambda)}{(9-4\lambda)(12-5\lambda)}v^{\pi_k}_{\lambda}(N, m+1) + \dfrac{\lambda(225-179\lambda)}{2(9-4\lambda)(12-5\lambda)}v^{\pi_k}_{\lambda}(N, m+2) \\
     \nonumber &- \dfrac{75\lambda^2}{(12-5\lambda)(9-4\lambda)}v^{\pi_k}_{\lambda}(N, m+3) \geq 0,
\end{align}
for all $m = 2, \ldots, N-6$, for $k = 1$, $\lambda \in [\lambda_c, 1)$ and $k =2$, $\lambda \in (0, \lambda_c]$. We prove this statement by yet another induction argument. Consider the case $k = 1$, $\lambda \in [\lambda_c, 1)$. Table \ref{explicit_v_stars} yields that expression (\ref{exp15_cond2}) holds with equality for $m = N-6$. Now, suppose that it holds with equality for $m = \tilde{m}+1$, for some $\tilde{m} = 2, \ldots, N-7$. By expression (\ref{rec_4}) and this induction hypothesis, we obtain
\begin{align*}
    &\dfrac{9\lambda(3-\lambda)}{2(12-5\lambda)(9-4\lambda)}v^{\pi_1}_{\lambda}(N, \tilde{m}) + \dfrac{\lambda(78-35\lambda)}{(9-4\lambda)(12-5\lambda)}v^{\pi_1}_{\lambda}(N, \tilde{m}+1) + \dfrac{\lambda(225-179\lambda)}{2(12-5\lambda)(9-4\lambda)}v^{\pi_1}_{\lambda}(N, \tilde{m}+2)\\
    &- \dfrac{75\lambda^2}{(12-5\lambda)(9-4\lambda)}v^{\pi_1}_{\lambda}(N, \tilde{m}+3) \\
    \nonumber &= \dfrac{2\lambda}{3-\lambda}\Bigg[\dfrac{9\lambda(3-\lambda)}{2(12-5\lambda)(9-4\lambda)}v^{\pi_1}_{\lambda}(N, \tilde{m}+1) + \dfrac{\lambda(78-35\lambda)}{(9-4\lambda)(12-5\lambda)}v^{\pi_1}_{\lambda}(N, \tilde{m}+2) \\
    \nonumber &+  \dfrac{\lambda(225-179\lambda)}{2(12-5\lambda)(9-4\lambda)}v^{\pi_1}_{\lambda}(N, \tilde{m}+3) - \dfrac{75\lambda^2}{(12-5\lambda)(9-4\lambda)}v^{\pi_1}_{\lambda}(N, \tilde{m}+4)\Bigg] = 0,
\end{align*}
for all $\lambda \in [\lambda_c, 1)$. Hence, expression (\ref{exp15_cond2}) is satisfied with equality for all $m = 2, \ldots, N-6$, for $k = 1$, $\lambda \in [\lambda_c, 1)$. \\
Now, consider the case $k = 2$, $\lambda \in (0, \lambda_c]$. Again using Table \ref{explicit_v_stars}, we verify the correctness of expression (\ref{exp15_cond2}) for $m = N-6$ and $m = N-7$. Assume that it holds for all $m \geq \tilde{m}+1$, for some $\tilde{m} = 2, \ldots, N-8$. Expression (\ref{rec_5}) yields
\begin{align*}
    &\dfrac{9\lambda(3-\lambda)}{2(12-5\lambda)(9-4\lambda)}v^{\pi_2}_{\lambda}(N, \tilde{m}) + \dfrac{\lambda(78-35\lambda)}{(9-4\lambda)(12-5\lambda)}v^{\pi_2}_{\lambda}(N, \tilde{m}+1) + \dfrac{\lambda(225-179\lambda)}{2(12-5\lambda)(9-4\lambda)}v^{\pi_2}_{\lambda}(N, \tilde{m}+2)\\
    &- \dfrac{75\lambda^2}{(12-5\lambda)(9-4\lambda)}v^{\pi_2}_{\lambda}(N, \tilde{m}+3) \\
    \nonumber &= \dfrac{9\lambda(3-\lambda)}{2(12-5\lambda)(9-4\lambda)}\left(\dfrac{7\lambda}{3(9-5\lambda)}v^{\pi_2}_{\lambda}(N, \tilde{m}+1) + \dfrac{5\lambda}{3(9-5\lambda)}v^{\pi_2}_{\lambda}(N, \tilde{m}+2)\right) \\
    \nonumber &+ \dfrac{\lambda(78-35\lambda)}{(9-4\lambda)(12-5\lambda)}\left(\dfrac{7\lambda}{3(9-5\lambda)}v^{\pi_2}_{\lambda}(N, \tilde{m}+2) + \dfrac{5\lambda}{3(9-5\lambda)}v^{\pi_2}_{\lambda}(N, \tilde{m}+3)\right) \\
    \nonumber &+\dfrac{\lambda(225-179\lambda)}{2(12-5\lambda)(9-4\lambda)}\left(\dfrac{7\lambda}{3(9-5\lambda)}v^{\pi_2}_{\lambda}(N, \tilde{m}+3) + \dfrac{5\lambda}{3(9-5\lambda)}v^{\pi_2}_{\lambda}(N, \tilde{m}+4)\right) \\
    \nonumber &- \dfrac{75\lambda^2}{(12-5\lambda)(9-4\lambda)}\left(\dfrac{7\lambda}{3(9-5\lambda)}v^{\pi_2}_{\lambda}(N, \tilde{m}+4) + \dfrac{5\lambda}{3(9-5\lambda)}v^{\pi_2}_{\lambda}(N, \tilde{m}+5)\right) \\
    \nonumber &= \dfrac{7\lambda}{3(9-5\lambda)}\Bigg[\dfrac{9\lambda(3-\lambda)}{2(12-5\lambda)(9-4\lambda)}v^{\pi_2}_{\lambda}(N, \tilde{m}+1) + \dfrac{\lambda(78-35\lambda)}{(9-4\lambda)(12-5\lambda)}v^{\pi_2}_{\lambda}(N, \tilde{m}+2) \\
    &+ \dfrac{\lambda(225-179\lambda)}{2(12-5\lambda)(9-4\lambda)}v^{\pi_2}_{\lambda}(N, \tilde{m}+3) - \dfrac{75\lambda^2}{(12-5\lambda)(9-4\lambda)}v^{\pi_2}_{\lambda}(N, \tilde{m}+4)\Bigg] \\
    \nonumber &+\dfrac{5\lambda}{3(9-5\lambda)}\Bigg[\dfrac{9\lambda(3-\lambda)}{2(12-5\lambda)(9-4\lambda)}v^{\pi_2}_{\lambda}(N, \tilde{m}+2) + \dfrac{\lambda(78-35\lambda)}{(9-4\lambda)(12-5\lambda)}v^{\pi_2}_{\lambda}(N, \tilde{m}+3) \\
    \nonumber &+ \dfrac{\lambda(225-179\lambda)}{2(12-5\lambda)(9-4\lambda)}v^{\pi_2}_{\lambda}(N, \tilde{m}+4) - \dfrac{75\lambda^2}{(12-5\lambda)(9-4\lambda)}v^{\pi_2}_{\lambda}(N, \tilde{m}+5)\Bigg].
\end{align*}
By the induction hypothesis, we now obtain
\begin{align*}
    &\dfrac{9\lambda(3-\lambda)}{2(12-5\lambda)(9-4\lambda)}v^{\pi_2}_{\lambda}(N, \tilde{m}) + \dfrac{\lambda(78-35\lambda)}{(9-4\lambda)(12-5\lambda)}v^{\pi_2}_{\lambda}(N, \tilde{m}+1)+ \dfrac{\lambda(225-179\lambda)}{2(12-5\lambda)(9-4\lambda)}v^{\pi_2}_{\lambda}(N, \tilde{m}+2)\\
    &- \dfrac{75\lambda^2}{(12-5\lambda)(9-4\lambda)}v^{\pi_2}_{\lambda}(N, \tilde{m}+3) \geq 0,
\end{align*}
for all $\lambda \in (0, \lambda_c]$. This implies that expression (\ref{exp15_cond2}) holds for all $m = 2, \ldots, N-6$ for $k =2$, $\lambda \in (0, \lambda_c]$ as well. It follows that expression (\ref{exp15_cond1}) is valid for all $\ell = 2, \ldots, N-5$, for $k =1$, $\lambda \in [\lambda_c, 1)$, $k =2$, $\lambda \in (0, \lambda_c]$. This in turn implies the validity of inequality (\ref{15}) for all $j = 2, \ldots, N-4$, for $k =1$, $\lambda \in [\lambda_c, 1)$, $k =2$, $\lambda \in (0, \lambda_c]$.

\paragraph*{Inequality (\ref{16})} We again start by verifying inequality (\ref{16}) for $j = N-4$ and $j = N-5$, for $k =1$, $\lambda \in [\lambda_c, 1)$ and $k =2$, $\lambda \in (0, \lambda_c]$, using Table \ref{explicit_v_stars}. Suppose that it is satisfied for $j = n+1$ for some $n = 2, \ldots, N-6$, for $k =1$, $\lambda \in [\lambda_c, 1)$ and $k =2$, $\lambda \in (0, \lambda_c]$. Expressions (\ref{rec_8}) and (\ref{rec_10}) yield
\begin{align*}
    &-3v^{\pi_k}_{\lambda}(N-3, n) -4v^{\pi_k}_{\lambda}(N-3, n+1) - 5v^{\pi_k}_{\lambda}(N-3, n+2) + 3v^{\pi_k}_{\lambda}(N-2, n) + 9v^{\pi_k}_{\lambda}(N-2, n+1)\\
    \nonumber &= -\dfrac{3\lambda}{9-4\lambda}\Big(v^{\pi_k}_{\lambda}(N-2, n) + v^{\pi_k}_{\lambda}(N-3, n+1) + 3v^{\pi_k}_{\lambda}(N-2, n+1)\Big) \\
    \nonumber &- \dfrac{4\lambda}{9-4\lambda}\Big(v^{\pi_k}_{\lambda}(N-2, n+1) + v^{\pi_k}_{\lambda}(N-3, n+2) + 3v^{\pi_k}_{\lambda}(N-2, n+2)\Big) \\
    \nonumber &- \dfrac{5\lambda}{9-4\lambda}\Big(v^{\pi_k}_{\lambda}(N-2, n+2) + v^{\pi_k}_{\lambda}(N-3, n+3) + 3v^{\pi_k}_{\lambda}(N-2, n+3)\Big) \\ 
    \nonumber &+ \dfrac{36}{12-5\lambda}\left(\dfrac{\lambda}{8}v^{\pi_k}_{\lambda}(N, n) + \dfrac{25\lambda}{72}v^{\pi_k}_{\lambda}(N, n+1) + \dfrac{\lambda}{9}v^{\pi_k}_{\lambda}(N-2, n+1)\right) \\
    \nonumber &+ \dfrac{108}{12-5\lambda}\left(\dfrac{\lambda}{8}v^{\pi_k}_{\lambda}(N, n+1) + \dfrac{25\lambda}{72}v^{\pi_k}_{\lambda}(N, n+2) + \dfrac{\lambda}{9}v^{\pi_k}_{\lambda}(N-2, n+2)\right) \\
    \nonumber &= \dfrac{\lambda}{9-4\lambda}\Big(-3v^{\pi_k}_{\lambda}(N-3, n+1) -4v^{\pi_k}_{\lambda}(N-3, n+2) - 5v^{\pi_k}_{\lambda}(N-3, n+3) + 3v^{\pi_k}_{\lambda}(N-2, n+1) \\
    \nonumber &+ 9v^{\pi_k}_{\lambda}(N-2, n+2)\Big) - \dfrac{3\lambda}{9-4\lambda}v^{\pi_k}_{\lambda}(N-2, n) - \left(\dfrac{16\lambda}{9-4\lambda} - \dfrac{4\lambda}{12-5\lambda}\right)v^{\pi_k}_{\lambda}(N-2, n+1) \\
    \nonumber &- \left(\dfrac{26\lambda}{9-4\lambda} - \dfrac{12\lambda}{12-5\lambda}\right)v^{\pi_k}(N-2, n+2) - \dfrac{15\lambda}{9-4\lambda}v^{\pi_k}_{\lambda}(N-2, n+3) + \dfrac{9\lambda}{2(12-5\lambda)}v^{\pi_k}_{\lambda}(N, n) \\
    \nonumber &+ \dfrac{26\lambda}{12-5\lambda}v^{\pi_k}_{\lambda}(N, n+1) + \dfrac{75\lambda}{2(12-5\lambda)}v^{\pi_k}_{\lambda}(N, n+2),  
\end{align*}
for $k = 1$, $\lambda \in [\lambda_c, 1)$ and $k = 2$, $\lambda \in (0, \lambda_c]$. The induction hypothesis now implies that it suffices to show that
\begin{align}\label{exp16_cond1}
    &- \dfrac{3\lambda}{9-4\lambda}v^{\pi_k}_{\lambda}(N-2, \ell) - \left(\dfrac{16\lambda}{9-4\lambda} - \dfrac{4\lambda}{12-5\lambda}\right)v^{\pi_k}_{\lambda}(N-2, \ell+1) \\
    \nonumber &- \left(\dfrac{26\lambda}{9-4\lambda} - \dfrac{12\lambda}{12-5\lambda}\right)v^{\pi_k}(N-2, \ell+2) - \dfrac{15\lambda}{9-4\lambda}v^{\pi_k}_{\lambda}(N-2, \ell+3) + \dfrac{9\lambda}{2(12-5\lambda)}v^{\pi_k}_{\lambda}(N, \ell) \\
    \nonumber &+ \dfrac{26\lambda}{12-5\lambda}v^{\pi_k}_{\lambda}(N, \ell+1) + \dfrac{75\lambda}{2(12-5\lambda)}v^{\pi_k}_{\lambda}(N, \ell+2) \geq 0,
\end{align}
for all $\ell = 2, \ldots, N-6$, for $k =1$, $\lambda \in [\lambda_c, 1)$ and $k =2$, $\lambda \in (0, \lambda_c]$. We prove this statement by means of an embedded induction argument. First of all, we use Table \ref{explicit_v_stars} to verify its validity for $\ell = N-6$. Now, we assume that it holds for $\ell = \tilde{\ell}+1$, for some $\tilde{\ell} = 2, \ldots, N-7$, for $k =1$, $\lambda \in [\lambda_c, 1)$ and $k =2$, $\lambda \in (0, \lambda^c]$. Using expression (\ref{rec_8}), we obtain
\begin{align*}
    &- \dfrac{3\lambda}{9-4\lambda}v^{\pi_k}_{\lambda}(N-2, \tilde{\ell}) - \left(\dfrac{16\lambda}{9-4\lambda} - \dfrac{4\lambda}{12-5\lambda}\right)v^{\pi_k}_{\lambda}(N-2, \tilde{\ell}+1) - \left(\dfrac{26\lambda}{9-4\lambda} - \dfrac{12\lambda}{12-5\lambda}\right)v^{\pi_k}(N-2, \tilde{\ell}+2) \\
    &- \dfrac{15\lambda}{9-4\lambda}v^{\pi_k}_{\lambda}(N-2, \tilde{\ell}+3) + \dfrac{9\lambda}{2(12-5\lambda)}v^{\pi_k}_{\lambda}(N, \tilde{\ell}) + \dfrac{26\lambda}{12-5\lambda}v^{\pi_k}_{\lambda}(N, \tilde{\ell}+1) + \dfrac{75\lambda}{2(12-5\lambda)}v^{\pi_k}_{\lambda}(N, \tilde{\ell}+2) \\
    \nonumber &= - \dfrac{3\lambda}{9-4\lambda} \dfrac{12}{12-5\lambda}\left(\dfrac{\lambda}{8}v^{\pi_k}_{\lambda}(N, \tilde{\ell}) + \dfrac{25\lambda}{72}v^{\pi_k}_{\lambda}(N, \tilde{\ell}+1) + \dfrac{\lambda}{9}v^{\pi_k}_{\lambda}(N-2, \tilde{\ell}+1)\right) \\
    \nonumber &- \left(\dfrac{16\lambda}{9-4\lambda} - \dfrac{4\lambda}{12-5\lambda}\right) \dfrac{12}{12-5\lambda}\Bigg(\dfrac{\lambda}{8}v^{\pi_k}_{\lambda}(N, \tilde{\ell}+1) + \dfrac{25\lambda}{72}v^{\pi_k}_{\lambda}(N, \tilde{\ell}+2) + \dfrac{\lambda}{9}v^{\pi_k}_{\lambda}(N-2, \tilde{\ell}+2)\Bigg) \\
    &- \left(\dfrac{26\lambda}{9-4\lambda} - \dfrac{12\lambda}{12-5\lambda}\right)\dfrac{12}{12-5\lambda}\Bigg(\dfrac{\lambda}{8}v^{\pi_k}_{\lambda}(N, \tilde{\ell}+2) + \dfrac{25\lambda}{72}v^{\pi_k}_{\lambda}(N, \tilde{\ell}+3) + \dfrac{\lambda}{9}v^{\pi_k}_{\lambda}(N-2, \tilde{\ell}+3)\Bigg) \\
    &-\dfrac{15\lambda}{9-4\lambda}\dfrac{12}{12-5\lambda}\Bigg(\dfrac{\lambda}{8}v^{\pi_k}_{\lambda}(N, \tilde{\ell}+3) + \dfrac{25\lambda}{72}v^{\pi_k}_{\lambda}(N, \tilde{\ell}+4) + \dfrac{\lambda}{9}v^{\pi_k}_{\lambda}(N-2, \tilde{\ell}+4)\Bigg) + \dfrac{9\lambda}{2(12-5\lambda)}v^{\pi_k}_{\lambda}(N, \tilde{\ell}) \\
    &+ \dfrac{26\lambda}{12-5\lambda}v^{\pi_k}_{\lambda}(N, \tilde{\ell}+1) + \dfrac{75\lambda}{2(12-5\lambda)}v^{\pi_k}_{\lambda}(N, \tilde{\ell}+2) \\
    \nonumber &= \dfrac{4\lambda}{3(12-5\lambda)}\Bigg[- \dfrac{3\lambda}{9-4\lambda}v^{\pi_k}_{\lambda}(N-2, \tilde{\ell}+1) - \left(\dfrac{16\lambda}{9-4\lambda} - \dfrac{4\lambda}{12-5\lambda}\right)v^{\pi_k}_{\lambda}(N-2, \tilde{\ell}+2)\\
    \nonumber &- \left(\dfrac{26\lambda}{9-4\lambda} - \dfrac{12\lambda}{12-5\lambda}\right)v^{\pi_k}(N-2, \tilde{\ell}+3) - \dfrac{15\lambda}{9-4\lambda}v^{\pi_k}_{\lambda}(N-2, \tilde{\ell}+4) + \dfrac{9\lambda}{2(12-5\lambda)}v^{\pi_k}_{\lambda}(N, \tilde{\ell}+1) \\
    &+ \dfrac{26\lambda}{12-5\lambda}v^{\pi_k}_{\lambda}(N, \tilde{\ell}+2) + \dfrac{75\lambda}{2(12-5\lambda)}v^{\pi_k}_{\lambda}(N, \tilde{\ell}+3)\Bigg] + \dfrac{9\lambda(9-5\lambda)}{2(9-4\lambda)(12-5\lambda)}v^{\pi_k}_{\lambda}(N, \tilde{\ell})\\
    &+ \dfrac{\lambda(468-281\lambda)}{2(9-4\lambda)(12-5\lambda)}v^{\pi_k}_{\lambda}(N, \tilde{\ell}+1) + \dfrac{\lambda(2025-1534\lambda)}{6(9-4\lambda)(12-5\lambda)}v^{\pi_k}_{\lambda}(N, \tilde{\ell}+2) - \dfrac{785\lambda^2}{6(9-4\lambda)(12-5\lambda)}v^{\pi_k}_{\lambda}(N, \tilde{\ell}+3) \\
    \nonumber &- \dfrac{125\lambda^2}{2(9-4\lambda)(12-5\lambda)}v^{\pi_k}_{\lambda}(N, \tilde{\ell}+4),
\end{align*}
for $k = 1$, $\lambda \in [\lambda_c, 1)$ and $k = 2$, $\lambda \in (0, \lambda_c]$. The induction hypothesis implies that in order to prove expression (\ref{exp16_cond1}) for $\ell = \tilde{\ell}$, it suffices to show that
\begin{align}\label{exp16_cond2}
    &\dfrac{9\lambda(9-5\lambda)}{2(9-4\lambda)(12-5\lambda)}v^{\pi_k}_{\lambda}(N, m) + \dfrac{\lambda(468-281\lambda)}{2(9-4\lambda)(12-5\lambda)}v^{\pi_k}_{\lambda}(N, m+1) \\
    \nonumber &+ \dfrac{\lambda(2025-1534\lambda)}{6(9-4\lambda)(12-5\lambda)}v^{\pi_k}_{\lambda}(N, m+2) - \dfrac{785\lambda^2}{6(9-4\lambda)(12-5\lambda))}v^{\pi_k}_{\lambda}(N, m+3) \\
    \nonumber &- \dfrac{125\lambda^2}{2(9-4\lambda)(12-5\lambda)}v^{\pi_k}_{\lambda}(N, m+4) \geq 0,
\end{align}
for all $m = 2, \ldots, N-7$, for $k =1$, $\lambda \in [\lambda_c, 1)$ and $k =2$, $\lambda \in (0, \lambda_c]$. We prove this statement by means of another induction argument. First, we consider the case $k =1$, $\lambda \in [\lambda_c, 1)$. After verifying expression (\ref{exp16_cond2}) for $m = N-7$, using Table \ref{explicit_v_stars}, we assume that it holds for $m = \tilde{m}+1$, for some $\tilde{m} = 2, \ldots, N-8$. This induction hypothesis, together with expression (\ref{rec_4}) now implies 
\begin{align*}
   &\dfrac{9\lambda(9-5\lambda)}{2(9-4\lambda)(12-5\lambda)}v^{\pi_1}_{\lambda}(N, \tilde{m}) + \dfrac{\lambda(468-281\lambda)}{2(9-4\lambda)(12-5\lambda)}v^{\pi_1}_{\lambda}(N, \tilde{m}+1) + \dfrac{\lambda(2025-1534\lambda)}{6(9-4\lambda)(12-5\lambda)}v^{\pi_1}_{\lambda}(N, \tilde{m}+2) \\
   &- \dfrac{785\lambda^2}{6(9-4\lambda)(12-5\lambda)}v^{\pi_1}_{\lambda}(N, \tilde{m}+3) - \dfrac{125\lambda^2}{2(9-4\lambda)(12-5\lambda)}v^{\pi_1}_{\lambda}(N, \tilde{m}+4)\\
   &= \dfrac{2\lambda}{3-\lambda}\Bigg[\dfrac{9\lambda(9-5\lambda)}{2(9-4\lambda)(12-5\lambda)}v^{\pi_1}_{\lambda}(N, \tilde{m}+1) + \dfrac{\lambda(468-281\lambda)}{2(9-4\lambda)(12-5\lambda)}v^{\pi_1}_{\lambda}(N, \tilde{m}+2) \\
   \nonumber &+ \dfrac{\lambda(2025-1534\lambda)}{6(9-4\lambda)(12-5\lambda)}v^{\pi_1}_{\lambda}(N, \tilde{m}+3) - \dfrac{785\lambda^2}{6(9-4\lambda)(12-5\lambda)}v^{\pi_1}_{\lambda}(N, \tilde{m}+4) \\
   \nonumber &- \dfrac{125\lambda^2}{2(9-4\lambda)(12-5\lambda)}v^{\pi_1}_{\lambda}(N, \tilde{m}+5)\Bigg] \geq 0.
\end{align*}
Thus, expression (\ref{exp16_cond2}) holds for all $m = 2, \ldots, N-7$, for $k =1$, $\lambda \in [\lambda_c, 1)$. We now direct our attention to the case $k = 2$, $\lambda \in (0, \lambda^c]$. From Table \ref{explicit_v_stars}, it follows that expression (\ref{exp16_cond2}) holds with equality for $m = N-7$ and $m = N-8$. We proceed to assume that it holds with equality for all $m = \tilde{m}+1, \ldots, N-7$, for some $\tilde{m} = 2, \ldots, N-9$. By expression (\ref{rec_5}), we have
\begin{align*}
   &\dfrac{9\lambda(9-5\lambda)}{2(9-4\lambda)(12-5\lambda)}v^{\pi_2}_{\lambda}(N, \tilde{m}) + \dfrac{\lambda(468-281\lambda)}{2(9-4\lambda)(12-5\lambda)}v^{\pi_2}_{\lambda}(N, \tilde{m}+1) + \dfrac{\lambda(2025-1534\lambda)}{6(9-4\lambda)(12-5\lambda)}v^{\pi_2}_{\lambda}(N, \tilde{m}+2) \\
   &- \dfrac{785\lambda^2}{6(9-4\lambda)(12-5\lambda)}v^{\pi_2}_{\lambda}(N, \tilde{m}+3) - \dfrac{125\lambda^2}{2(9-4\lambda)(12-5\lambda)}v^{\pi_2}_{\lambda}(N, \tilde{m}+4) \\
    \nonumber &= \dfrac{9\lambda(9-5\lambda)}{2(9-4\lambda)(12-5\lambda)}\left[\dfrac{7\lambda}{3(9-5\lambda)}v^{\pi_2}_{\lambda}(N, \tilde{m}+1) + \dfrac{5\lambda}{3(9-5\lambda)}v^{\pi_2}_{\lambda}(N, \tilde{m}+2)\right] \\
    \nonumber &+ \dfrac{\lambda(468-281\lambda)}{2(9-4\lambda)(12-5\lambda)}\left[\dfrac{7\lambda}{3(9-5\lambda)}v^{\pi_2}_{\lambda}(N, \tilde{m}+2) + \dfrac{5\lambda}{3(9-5\lambda)}v^{\pi_2}_{\lambda}(N, \tilde{m}+3)\right] \\
    \nonumber &+ \dfrac{\lambda(2025-1534\lambda)}{6(9-4\lambda)(12-5\lambda)}\left[\dfrac{7\lambda}{3(9-5\lambda)}v^{\pi_2}_{\lambda}(N, \tilde{m}+3) + \dfrac{5\lambda}{3(9-5\lambda)}v^{\pi_2}_{\lambda}(N, \tilde{m}+4)\right] \\
    \nonumber &-\dfrac{785\lambda^2}{6(9-4\lambda)(12-5\lambda)} \left[\dfrac{7\lambda}{3(9-5\lambda)}v^{\pi_2}_{\lambda}(N, \tilde{m}+4) + \dfrac{5\lambda}{3(9-5\lambda)}v^{\pi_2}_{\lambda}(N, \tilde{m}+5)\right] \\ 
    \nonumber &- \dfrac{125\lambda^2}{2(9-4\lambda)(12-5\lambda)}\left[\dfrac{7\lambda}{3(9-5\lambda)}v^{\pi_2}_{\lambda}(N, \tilde{m}+5) + \dfrac{5\lambda}{3(9-5\lambda)}v^{\pi_2}_{\lambda}(N, \tilde{m}+6)\right] \\
    \nonumber &= \dfrac{7\lambda}{3(9-5\lambda)}\Bigg[\dfrac{9\lambda(9-5\lambda)}{2(9-4\lambda)(12-5\lambda)}v^{\pi_k}_{\lambda}(N, \tilde{m}+1) + \dfrac{\lambda(468-281\lambda)}{2(9-4\lambda)(12-5\lambda)}v^{\pi_k}_{\lambda}(N, \tilde{m}+2) \\
    \nonumber &+ \dfrac{\lambda(2025-1534\lambda)}{6(9-4\lambda)(12-5\lambda)}v^{\pi_k}_{\lambda}(N, \tilde{m}+3) - \dfrac{785\lambda^2}{6(9-4\lambda)(12-5\lambda)}v^{\pi_k}_{\lambda}(N, \tilde{m}+4) \\
    \nonumber &- \dfrac{125\lambda^2}{2(12-5\lambda)(9-4\lambda)}v^{\pi_k}_{\lambda}(N, \tilde{m}+5)\Bigg] + \dfrac{5\lambda}{3(9-5\lambda)}\Bigg[\dfrac{9\lambda(9-5\lambda)}{2(9-4\lambda)(12-5\lambda)}v^{\pi_k}_{\lambda}(N, \tilde{m}+2) \\
    \nonumber &+ \dfrac{\lambda(468-281\lambda)}{2(9-4\lambda)(12-5\lambda)}v^{\pi_k}_{\lambda}(N, \tilde{m}+3) + \dfrac{\lambda(2025-1534\lambda)}{6(9-4\lambda)(12-5\lambda)}v^{\pi_k}_{\lambda}(N, \tilde{m}+4) \\
    \nonumber &- \dfrac{785\lambda^2}{6(9-4\lambda)(12-5\lambda)}v^{\pi_k}_{\lambda}(N, \tilde{m}+5) - \dfrac{125\lambda^2}{2(12-5\lambda)(9-4\lambda)}v^{\pi_k}_{\lambda}(N, \tilde{m}+6)\Bigg].
\end{align*}
From the induction hypothesis, it now follows that
\begin{align*}
    &\dfrac{9\lambda(9-5\lambda)}{2(9-4\lambda)(12-5\lambda)}v^{\pi_2}_{\lambda}(N, \tilde{m}) + \dfrac{\lambda(468-281\lambda)}{2(9-4\lambda)(12-5\lambda)}v^{\pi_2}_{\lambda}(N, \tilde{m}+1) + \dfrac{\lambda(2025-1534\lambda)}{6(9-4\lambda)(12-5\lambda)}v^{\pi_2}_{\lambda}(N, \tilde{m}+2) \\
    &- \dfrac{785\lambda^2}{6(9-4\lambda)(12-5\lambda)}v^{\pi_2}_{\lambda}(N, \tilde{m}+3) - \dfrac{125\lambda^2}{2(9-4\lambda)(12-5\lambda)}v^{\pi_2}_{\lambda}(N, \tilde{m}+4) = 0,
\end{align*}
for all $\lambda \in (0, \lambda_c]$. Hence, expression (\ref{exp16_cond2}) is satisfied for all $m = 2, \ldots, N-7$, for $k =2$, $\lambda \in (0, \lambda_c]$ as well. It follows that expression (\ref{exp16_cond1}) holds for all $\ell = 2, \ldots, N-6$, for $k=1$, $\lambda \in [\lambda_c, 1)$ and $k=2$, $\lambda \in (0, \lambda_c]$. This in turn implies the validity of expression (\ref{16}) for all $j = 2, \ldots, N-4$, for $k=1$, $\lambda \in [\lambda_c, 1)$ and $k=2$, $\lambda \in (0, \lambda_c]$.

\paragraph*{Inequality (\ref{17})}
First of all, we use Table \ref{explicit_v_stars} to verify inequality (\ref{17}) for $j = N-4$, for $k =1$, $\lambda \in [\lambda_c, 1)$ and $k =2$, $\lambda \in (0, \lambda_c]$. We now assume that it holds for $j = n+1$, for some $n = 2, \ldots, N-5$, for $k =1$, $\lambda \in [\lambda_c, 1)$ and $k =2$, $\lambda \in (0, \lambda_c]$. By expressions (\ref{rec_8}) and (\ref{rec_10}), we obtain
\begin{align*}
    &v^{\pi_k}_{\lambda}(N-3, n) + v^{\pi_k}_{\lambda}(N-3, n+1) - 5v^{\pi_k}_{\lambda}(N-2, n) + 3v^{\pi_k}_{\lambda}(N-2, n+1) \\
    \nonumber &= \dfrac{\lambda}{9-4\lambda}\Big(v^{\pi_k}_{\lambda}(N-2, n) + v^{\pi_k}_{\lambda}(N-3, n+1) + 3v^{\pi_k}_{\lambda}(N-2, n+1)\Big) \\
    \nonumber &+ \dfrac{\lambda}{9-4\lambda}\Big(v^{\pi_k}_{\lambda}(N-2, n+1) + v^{\pi_k}_{\lambda}(N-3, n+2) + 3v^{\pi_k}_{\lambda}(N-2, n+2)\Big) \\
    \nonumber &- \dfrac{60}{12-5\lambda}\left[\dfrac{\lambda}{8}v^{\pi_k}_{\lambda}(N, n) + \dfrac{25\lambda}{72}v^{\pi_k}_{\lambda}(N, n+1) + \dfrac{\lambda}{9}v^{\pi_k}_{\lambda}(N-2, n+1)\right] \\
    \nonumber &+ \dfrac{36}{12-5\lambda}\left[\dfrac{\lambda}{8}v^{\pi_k}_{\lambda}(N, n+1) + \dfrac{25\lambda}{72}v^{\pi_k}_{\lambda}(N, n+2) + \dfrac{\lambda}{9}v^{\pi_k}_{\lambda}(N-2, n+2)\right] \\
    \nonumber &= \dfrac{\lambda}{9-4\lambda}\Big(v^{\pi_k}_{\lambda}(N-3, n+1) + v^{\pi_k}_{\lambda}(N-3, n+2) - 5v^{\pi_k}_{\lambda}(N-2, n+1) + 3v^{\pi_k}_{\lambda}(N-2, n+2)\Big) \\
    \nonumber &+ \dfrac{\lambda}{9-4\lambda}v^{\pi_k}_{\lambda}(N-2, n) - \left(\dfrac{20\lambda}{3(12-5\lambda)} - \dfrac{9\lambda}{9-4\lambda}\right)v^{\pi_k}_{\lambda}(N-2, n+1) + \dfrac{4\lambda}{12-5\lambda}v^{\pi_k}_{\lambda}(N-2, n+2) \\
    &- \dfrac{15\lambda}{2(12-5\lambda)}v^{\pi_k}_{\lambda}(N, n) - \dfrac{49\lambda}{3(12-5\lambda)}v^{\pi_k}_{\lambda}(N, n+1) + \dfrac{25\lambda}{2(12-5\lambda)}v^{\pi_k}_{\lambda}(N, n+2),
\end{align*}
for $k = 1$, $\lambda \in [\lambda_c, 1)$ and $k = 2$, $\lambda \in (0, \lambda_c]$. By the induction hypothesis, it suffices to show that 
\begin{align}\label{exp17_cond1}
    &\dfrac{\lambda}{9-4\lambda}v^{\pi_k}_{\lambda}(N-2, \ell) - \left(\dfrac{20\lambda}{3(12-5\lambda)} - \dfrac{9\lambda}{9-4\lambda}\right)v^{\pi_k}_{\lambda}(N-2, \ell+1) + \dfrac{4\lambda}{12-5\lambda}v^{\pi_k}_{\lambda}(N-2, \ell+2) \\
    \nonumber &- \dfrac{15\lambda}{2(12-5\lambda)}v^{\pi_k}_{\lambda}(N, \ell) - \dfrac{49\lambda}{3(12-5\lambda)}v^{\pi_k}_{\lambda}(N, \ell+1) + \dfrac{25\lambda}{2(12-5\lambda)}v^{\pi_k}_{\lambda}(N, \ell+2) \geq 0,
\end{align}
for all $\ell = 2, \ldots, N-5$, for $k =1$, $\lambda \in [\lambda_c, 1)$ and $k =2$, $\lambda \in (0, \lambda_c]$. We prove this statement by another induction argument. First, we verify its validity for $\ell = N-5$ using Table \ref{explicit_v_stars}. Now, suppose that it holds for $\ell = \tilde{\ell} +1$, for some $\tilde{\ell} = 2, \ldots, N-6$, for $k =1$, $\lambda \in [\lambda_c, 1)$ and $k =2$, $\lambda \in (0, \lambda_c]$. Using expression (\ref{rec_8}), we obtain
\begin{align*}
    &\dfrac{\lambda}{9-4\lambda}v^{\pi_k}_{\lambda}(N-2, \tilde{\ell}) - \left(\dfrac{20\lambda}{3(12-5\lambda)} - \dfrac{9\lambda}{9-4\lambda}\right)v^{\pi_k}_{\lambda}(N-2, \tilde{\ell}+1) + \dfrac{4\lambda}{12-5\lambda}v^{\pi_k}_{\lambda}(N-2, \tilde{\ell}+2)\\
    &- \dfrac{15\lambda}{2(12-5\lambda)}v^{\pi_k}_{\lambda}(N, \tilde{\ell}) - \dfrac{49\lambda}{3(12-5\lambda)}v^{\pi_k}_{\lambda}(N, \tilde{\ell}+1) + \dfrac{25\lambda}{2(12-5\lambda)}v^{\pi_k}_{\lambda}(N, \tilde{\ell}+2) \\
    \nonumber &= \dfrac{\lambda}{9-4\lambda}\dfrac{12}{12-5\lambda}\Bigg[\dfrac{\lambda}{8}v^{\pi_k}_{\lambda}(N, \tilde{\ell}) + \dfrac{25\lambda}{72}v^{\pi_k}_{\lambda}(N, \tilde{\ell}+1) +\dfrac{\lambda}{9}v^{\pi_k}_{\lambda}(N-2, \tilde{\ell}+1)\Bigg] \\
    &- \left(\dfrac{20\lambda}{3(12-5\lambda)} - \dfrac{9\lambda}{9-4\lambda}\right)\dfrac{12}{12-5\lambda}\Bigg[\dfrac{\lambda}{8}v^{\pi_k}_{\lambda}(N, \tilde{\ell}+1) + \dfrac{25\lambda}{72}v^{\pi_k}_{\lambda}(N, \tilde{\ell}+2) + \dfrac{\lambda}{9}v^{\pi_k}_{\lambda}(N-2, \tilde{\ell}+2)\Bigg] \\
    &+\dfrac{4\lambda}{12-5\lambda}\dfrac{12}{12-5\lambda}\Bigg[\dfrac{\lambda}{8}v^{\pi_k}_{\lambda}(N, \tilde{\ell}+2) + \dfrac{25\lambda}{72}v^{\pi_k}_{\lambda}(N, \tilde{\ell}+3) + \dfrac{\lambda}{9}v^{\pi_k}_{\lambda}(N-2, \tilde{\ell}+3)\Bigg] - \dfrac{15\lambda}{2(12-5\lambda)}v^{\pi_k}_{\lambda}(N, \tilde{\ell}) \\
    &- \dfrac{49\lambda}{3(12-5\lambda)}v^{\pi_k}_{\lambda}(N, \tilde{\ell}+1) + \dfrac{25\lambda}{2(12-5\lambda)}v^{\pi_k}_{\lambda}(N, \tilde{\ell}+2) \\
    \nonumber &= \dfrac{4\lambda}{3(12-5\lambda)}\Bigg[\dfrac{\lambda}{9-4\lambda}v^{\pi_k}_{\lambda}(N-2, \tilde{\ell}+1) - \left(\dfrac{20\lambda}{3(12-5\lambda)} - \dfrac{9\lambda}{9-4\lambda}\right)v^{\pi_k}_{\lambda}(N-2, \tilde{\ell}+2) \\
    \nonumber &+ \dfrac{4\lambda}{12-5\lambda}v^{\pi_k}_{\lambda}(N-2, \tilde{\ell}+3) - \dfrac{15\lambda}{2(12-5\lambda)}v^{\pi_k}_{\lambda}(N, \tilde{\ell}+1) - \dfrac{49\lambda}{3(12-5\lambda)}v^{\pi_k}_{\lambda}(N, \tilde{\ell}+2) \\
    &+ \dfrac{25\lambda}{2(12-5\lambda)}v^{\pi_k}_{\lambda}(N, \tilde{\ell}+3)\Bigg] -\dfrac{9\lambda(15-7\lambda)}{2(9-4\lambda)(12-5\lambda)}v^{\pi_k}_{\lambda}(N, \tilde{\ell}) - \dfrac{\lambda(147-83\lambda)}{(9-4\lambda)(12-5\lambda)}v^{\pi_k}_{\lambda}(N, \tilde{\ell}+1) \\
    &+ \dfrac{25\lambda(9-\lambda)}{2(9-4\lambda)(12-5\lambda)}v^{\pi_k}_{\lambda}(N, \tilde{\ell}+2),
\end{align*}
for $k = 1, 2$ and $\lambda \in (0,1)$. From the induction hypothesis, it follows that a sufficient condition for expression (\ref{exp17_cond1}) to hold for $\ell = \tilde{\ell}$ is
\begin{align}\label{exp17_cond2}
&-\dfrac{9\lambda(15-7\lambda)}{2(9-4\lambda)(12-5\lambda)}v^{\pi_k}_{\lambda}(N, m) - \dfrac{\lambda(147-83\lambda)}{(9-4\lambda)(12-5\lambda)}v^{\pi_k}_{\lambda}(N, m+1) + \dfrac{25\lambda(9-\lambda)}{2(9-4\lambda)(12-5\lambda)}v^{\pi_k}_{\lambda}(N, m+2) \geq 0,
\end{align}
for all $m = 2, \ldots, N-6$, for $k =1$, $\lambda \in [\lambda_c, 1)$ and $k =2$, $\lambda \in (0, \lambda_c]$. We prove this statement by another embedded induction argument. Consider the case $k =1$, $\lambda \in [\lambda_c, 1)$. We start by verifying the validity of expression (\ref{exp17_cond2}) for $m = N-6$ using Table \ref{explicit_v_stars}. Now, assume that it holds for $m = \tilde{m}+1$, for some $\tilde{m} = 2, \ldots, N-7$. Expression (\ref{rec_4}) and this induction hypothesis now yield
\begin{align*}
    & -\dfrac{9\lambda(15-7\lambda)}{2(9-4\lambda)(12-5\lambda)}v^{\pi_1}_{\lambda}(N, \tilde{m}) - \dfrac{\lambda(147-83\lambda)}{(9-4\lambda)(12-5\lambda)}v^{\pi_1}_{\lambda}(N, \tilde{m}+1) + \dfrac{25\lambda(9-\lambda)}{2(9-4\lambda)(12-5\lambda)}v^{\pi_1}_{\lambda}(N, \tilde{m}+2) \\
    \nonumber &= \dfrac{2\lambda}{3-\lambda}\Bigg[-\dfrac{9\lambda(15-7\lambda)}{2(9-4\lambda)(12-5\lambda)}v^{\pi_1}_{\lambda}(N, \tilde{m}+1) - \dfrac{\lambda(147-83\lambda)}{(9-4\lambda)(12-5\lambda)}v^{\pi_1}_{\lambda}(N, \tilde{m}+2) \\
    \nonumber &+ \dfrac{25\lambda(9-\lambda)}{2(9-4\lambda)(12-5\lambda)}v^{\pi_1}_{\lambda}(N, \tilde{m}+3)\Bigg] \geq 0,
\end{align*}
for all $\lambda \in [\lambda_c, 1)$. Thus, expression (\ref{exp17_cond2}) is satisfied for all $m = 2, \ldots, N-6$, for $k = 1$, $\lambda \in [\lambda_c, 1)$. \\
We proceed to consider the case $k =2$, $\lambda \in (0, \lambda _c]$. Again using Table \ref{explicit_v_stars}, we verify the correctness of expression (\ref{exp17_cond2}) for $m = N-6$ and $m = N-7$. Suppose now that it is satisfied for all $m \geq \tilde{m}+1$, for some $\tilde{m} = 2, \ldots, N-8$. Using expression (\ref{rec_5}), we obtain
\begin{align*}
    & -\dfrac{9\lambda(15-7\lambda)}{2(9-4\lambda)(12-5\lambda)}v^{\pi_2}_{\lambda}(N, \tilde{m}) - \dfrac{\lambda(147-83\lambda)}{(9-4\lambda)(12-5\lambda)}v^{\pi_2}_{\lambda}(N, \tilde{m}+1) + \dfrac{25\lambda(9-\lambda)}{2(9-4\lambda)(12-5\lambda)}v^{\pi_2}_{\lambda}(N, \tilde{m}+2) \\
    \nonumber &=  -\dfrac{9\lambda(15-7\lambda)}{2(9-4\lambda)(12-5\lambda)}\left[\dfrac{7\lambda}{3(9-5\lambda)}v^{\pi_2}_{\lambda}(N, \tilde{m}+1) + \dfrac{5\lambda}{3(9-5\lambda)}v^{\pi_2}_{\lambda}(N, \tilde{m}+2)\right] \\
    \nonumber &- \dfrac{\lambda(147-83\lambda)}{(9-4\lambda)(12-5\lambda)}\left[\dfrac{7\lambda}{3(9-5\lambda)}v^{\pi_2}_{\lambda}(N, \tilde{m}+2) + \dfrac{5\lambda}{3(9-5\lambda)}v^{\pi_2}_{\lambda}(N, \tilde{m}+3)\right] \\
    \nonumber &+ \dfrac{25\lambda(9-\lambda)}{2(9-4\lambda)(12-5\lambda)}\left[\dfrac{7\lambda}{3(9-5\lambda)}v^{\pi_2}_{\lambda}(N, \tilde{m}+3) + \dfrac{5\lambda}{3(9-5\lambda)}v^{\pi_2}_{\lambda}(N, \tilde{m}+4)\right] \\
    \nonumber &= \dfrac{7\lambda}{3(9-5\lambda)}\Bigg[-\dfrac{9\lambda(15-7\lambda)}{2(9-4\lambda)(12-5\lambda)}v^{\pi_2}_{\lambda}(N, \tilde{m}+1) - \dfrac{\lambda(147-83\lambda)}{(9-4\lambda)(12-5\lambda)}v^{\pi_2}_{\lambda}(N, \tilde{m}+2)\\
    \nonumber &+ \dfrac{25\lambda(9-\lambda)}{2(9-4\lambda)(12-5\lambda)}v^{\pi_2}_{\lambda}(N, \tilde{m}+3)\Bigg] +\dfrac{5\lambda}{3(9-5\lambda)}\Bigg[-\dfrac{9\lambda(15-7\lambda)}{2(9-4\lambda)(12-5\lambda)}v^{\pi_2}_{\lambda}(N, \tilde{m}+2) \\
    \nonumber &- \dfrac{\lambda(147-83\lambda)}{(9-4\lambda)(12-5\lambda)}v^{\pi_2}_{\lambda}(N, \tilde{m}+3) + \dfrac{25\lambda(9-\lambda)}{2(9-4\lambda)(12-5\lambda)}v^{\pi_2}_{\lambda}(N, \tilde{m}+4)\Bigg]. 
\end{align*}
It now follows from the induction hypothesis that
\begin{align*}
    & -\dfrac{9\lambda(15-7\lambda)}{2(9-4\lambda)(12-5\lambda)}v^{\pi_2}_{\lambda}(N, \tilde{m}) - \dfrac{\lambda(147-83\lambda)}{(9-4\lambda)(12-5\lambda)}v^{\pi_2}_{\lambda}(N, \tilde{m}+1) + \dfrac{25\lambda(9-\lambda)}{2(9-4\lambda)(12-5\lambda)}v^{\pi_2}_{\lambda}(N, \tilde{m}+2) \geq 0.
\end{align*}
Hence, expression (\ref{exp17_cond2}) holds for all $m = 2, \ldots, N-6$, for $k = 2$, $\lambda \in (0, \lambda_c]$ as well. This implies that expression (\ref{exp17_cond1}) is valid for all $\ell = 2, \ldots, N-5$ for $k = 1$, $\lambda \in [\lambda_c, 1)$ and for $k = 2$, $\lambda \in (0, \lambda_c]$. This in turn establishes the correctness of expression (\ref{17}) for all $j = 2, \ldots, N-4$, for $k = 1$, $\lambda \in [\lambda_c, 1)$ and for $k = 2$, $\lambda \in (0, \lambda_c]$.

\paragraph*{Inequality (\ref{18})}
Again, we start by verifying inequality (\ref{18}) for $j = N-4$ using Table \ref{explicit_v_stars}, for $k = 1$, $\lambda \in [\lambda_c, 1)$ and $k = 2$, $\lambda \in (0, \lambda_c]$. Now, suppose that it is satisfied for $j = n+1$, for some $n = 2, \ldots, N-5$, for $k = 1$, $\lambda \in [\lambda_c, 1)$ and $k = 2$, $\lambda \in (0, \lambda_c]$. Using expressions (\ref{rec_8}) and (\ref{rec_10}), we obtain
\begin{align*}
    &8v^{\pi_k}_{\lambda}(N-3, n) + 16v^{\pi_k}_{\lambda}(N-3, n+1) - 15v^{\pi_k}_{\lambda}(N-2, n) + 48v^{\pi_k}_{\lambda}(N-2, n+1) - 57v^{\pi_k}_{\lambda}(N, n) \\
    \nonumber & =\dfrac{8\lambda}{9-4\lambda}\Big(v^{\pi_k}_{\lambda}(N-2, n) + v^{\pi_k}_{\lambda}(N-3, n+1) + 3v^{\pi_k}_{\lambda}(N-2, n+1)\Big) \\
    \nonumber &+ \dfrac{16\lambda}{9-4\lambda}\Big(v^{\pi_k}_{\lambda}(N-2, n+1) + v^{\pi_k}_{\lambda}(N-3, n+2) + 3v^{\pi_k}_{\lambda}(N-2, n+2)\Big) \\
    \nonumber &- \dfrac{180}{12-5\lambda}\left(\dfrac{\lambda}{8}v^{\pi_k}_{\lambda}(N, n) + \dfrac{25\lambda}{72}v^{\pi_k}_{\lambda}(N, n+1) + \dfrac{\lambda}{9}v^{\pi_k}_{\lambda}(N-2, n+1)\right) \\
    \nonumber &+ \dfrac{576}{12-5\lambda}\left(\dfrac{\lambda}{8}v^{\pi_k}_{\lambda}(N, n+1) + \dfrac{25\lambda}{72}v^{\pi_k}_{\lambda}(N, n+2) + \dfrac{\lambda}{9}v^{\pi_k}_{\lambda}(N-2, n+2)\right) - 57v^{\pi_k}_{\lambda}(N, n) \\
    \nonumber &= \dfrac{\lambda}{9-4\lambda}\Big(8v^{\pi_k}_{\lambda}(N-3, n+1) + 16v^{\pi_k}_{\lambda}(N-3, n+2) - 15v^{\pi_k}_{\lambda}(N-2, n+1) + 48v^{\pi_k}_{\lambda}(N-2, n+2) \\
    &- 57v^{\pi_k}_{\lambda}(N, n+1)\Big) + \dfrac{8\lambda}{9-4\lambda}v^{\pi_k}_{\lambda}(N-2, n) + \left[\dfrac{55\lambda}{9-4\lambda} - \dfrac{20\lambda}{12-5\lambda}\right]v^{\pi_k}_{\lambda}(N-2, n+1) \\
    \nonumber &+ \dfrac{64\lambda}{12-5\lambda}v^{\pi_k}_{\lambda}(N-2, n+2) -\dfrac{1368-525\lambda}{2(12-5\lambda)}v^{\pi_k}_{\lambda}(N, n) + \left[\dfrac{19\lambda}{2(12-5\lambda)}+ \dfrac{57\lambda}{9-4\lambda}\right]v^{\pi_k}_{\lambda}(N, n+1) \\
    \nonumber &+ \dfrac{200}{12-5\lambda}v^{\pi_k}_{\lambda}(N, n+2),
\end{align*}
for $k = 1$, $\lambda \in [\lambda_c, 1)$ and $k = 2$, $\lambda \in (0, \lambda_c]$. By the induction hypothesis, it follows that it suffices to show that
\begin{align}\label{exp18_cond1}
    &\dfrac{8\lambda}{9-4\lambda}v^{\pi_k}_{\lambda}(N-2, \ell) + \left[\dfrac{55\lambda}{9-4\lambda} - \dfrac{20\lambda}{12-5\lambda}\right]v^{\pi_k}_{\lambda}(N-2, \ell+1) + \dfrac{64\lambda}{12-5\lambda}v^{\pi_k}_{\lambda}(N-2, \ell+2)\\
    \nonumber &- \dfrac{1368-525\lambda}{2(12-5\lambda)}v^{\pi_k}_{\lambda}(N, \ell) + \left[\dfrac{19\lambda}{2(12-5\lambda)}+ \dfrac{57\lambda}{9-4\lambda}\right]v^{\pi_k}_{\lambda}(N, \ell+1) + \dfrac{200}{12-5\lambda}v^{\pi_k}_{\lambda}(N, \ell+2) \geq 0,
\end{align}
for all $\ell = 2, \ldots, N-5$, for $k = 1$, $\lambda \in [\lambda_c, 1)$ and $k = 2$, $\lambda \in (0, \lambda_c]$. We prove this statement by an embedded induction argument. First of all, we use Table \ref{explicit_v_stars} to verify its validity for $\ell = N-5$. Now, we assume that it holds for $\ell = \tilde{\ell} + 1$ for some $\tilde{\ell} = 2, \ldots, N-6$, for $k = 1$, $\lambda \in [\lambda_c, 1)$ and $k = 2$, $\lambda \in (0, \lambda_c]$. Expression (\ref{rec_8}) yields 
\begin{align*}
    &\dfrac{8\lambda}{9-4\lambda}v^{\pi_k}_{\lambda}(N-2, \tilde{\ell}) + \left[\dfrac{55\lambda}{9-4\lambda} - \dfrac{20\lambda}{12-5\lambda}\right]v^{\pi_k}_{\lambda}(N-2, \tilde{\ell}+1) + \dfrac{64\lambda}{12-5\lambda}v^{\pi_k}_{\lambda}(N-2, \tilde{\ell}+2) \\
    \nonumber &- \dfrac{1368-525\lambda}{2(12-5\lambda)}v^{\pi_k}_{\lambda}(N, \tilde{\ell}) + \left[\dfrac{19\lambda}{2(12-5\lambda)}+ \dfrac{57\lambda}{9-4\lambda}\right]v^{\pi_k}_{\lambda}(N, \tilde{\ell}+1) + \dfrac{200}{12-5\lambda}v^{\pi_k}_{\lambda}(N, \tilde{\ell}+2) \\
    \nonumber &= \dfrac{8\lambda}{9-4\lambda}\dfrac{12}{12-5\lambda}\left[\dfrac{\lambda}{8}v^{\pi_k}_{\lambda}(N, \tilde{\ell}) + \dfrac{25\lambda}{72}v^{\pi_k}_{\lambda}(N, \tilde{\ell}+1) + \dfrac{\lambda}{9}v^{\pi_k}_{\lambda}(N-2, \tilde{\ell}+1)\right] \\
    \nonumber &+ \left[\dfrac{55\lambda}{9-4\lambda} - \dfrac{20\lambda}{12-5\lambda}\right]\dfrac{12}{12-5\lambda}\left[\dfrac{\lambda}{8}v^{\pi_k}_{\lambda}(N, \tilde{\ell}+1) + \dfrac{25\lambda}{72}v^{\pi_k}_{\lambda}(N, \tilde{\ell}+2) + \dfrac{\lambda}{9}v^{\pi_k}_{\lambda}(N-2, \tilde{\ell}+2)\right] \\
    \nonumber &+\dfrac{64\lambda}{12-5\lambda} \dfrac{12}{12-5\lambda}\left[\dfrac{\lambda}{8}v^{\pi_k}_{\lambda}(N, \tilde{\ell}+2) + \dfrac{25\lambda}{72}v^{\pi_k}_{\lambda}(N, \tilde{\ell}+3) + \dfrac{\lambda}{9}v^{\pi_k}_{\lambda}(N-2, \tilde{\ell}+3)\right] \\
    \nonumber &- \dfrac{1368-525\lambda}{2(12-5\lambda)}v^{\pi_k}_{\lambda}(N, \tilde{\ell}) + \left[\dfrac{19\lambda}{2(12-5\lambda)}+ \dfrac{57\lambda}{9-4\lambda}\right]v^{\pi_k}_{\lambda}(N, \tilde{\ell}+1) + \dfrac{200}{12-5\lambda}v^{\pi_k}_{\lambda}(N, \tilde{\ell}+2) \\
    \nonumber &= \dfrac{4\lambda}{3(12-5\lambda)}\Bigg[\dfrac{8\lambda}{9-4\lambda}v^{\pi_k}_{\lambda}(N-2, \tilde{\ell}+1) + \left[\dfrac{55\lambda}{9-4\lambda} - \dfrac{20\lambda}{12-5\lambda}\right]v^{\pi_k}_{\lambda}(N-2, \tilde{\ell}+2) \\
    \nonumber &+ \dfrac{64\lambda}{12-5\lambda}v^{\pi_k}_{\lambda}(N-2, \tilde{\ell}+3) - \dfrac{1368-525\lambda}{2(12-5\lambda)}v^{\pi_k}_{\lambda}(N, \tilde{\ell}+1) \\
    &+ \left[\dfrac{19\lambda}{2(12-5\lambda)}+ \dfrac{57\lambda}{9-4\lambda}\right]v^{\pi_k}_{\lambda}(N, \tilde{\ell}+2) + \dfrac{200}{12-5\lambda}v^{\pi_k}_{\lambda}(N, \tilde{\ell}+3)\Bigg] \\
    \nonumber &- \dfrac{3(4104 - 3399 \lambda + 692\lambda^2)}{2(9-4\lambda)(12-5\lambda)}v^{\pi_k}_{\lambda}(N, \tilde{\ell}) + \dfrac{\lambda(8721 - 3067\lambda)}{6(9-4\lambda)(12-5\lambda)}v^{\pi_k}_{\lambda}(N, \tilde{\ell}+1) \\
    \nonumber & + \dfrac{10800 - 4800\lambda +919\lambda^2}{6(9-4\lambda)(12-5\lambda)}v^{\pi_k}_{\lambda}(N, \tilde{\ell}+2) - \dfrac{800\lambda(1-\lambda)}{3(12-5\lambda)^2}v^{\pi_k}_{\lambda}(N, \tilde{\ell}+3),
\end{align*}
for $k = 1$, $\lambda \in [\lambda_c, 1)$ and $k = 2$, $\lambda \in (0, \lambda_c]$. The induction hypothesis now implies that it suffices to show that
\begin{align}\label{exp18_cond2}
    &- \dfrac{3(4104 - 3399 \lambda + 692\lambda^2)}{2(9-4\lambda)(12-5\lambda)}v^{\pi_k}_{\lambda}(N, m) + \dfrac{\lambda(8721 - 3067\lambda)}{6(9-4\lambda)(12-5\lambda)}v^{\pi_k}_{\lambda}(N, m+1) \\
    \nonumber &+ \dfrac{10800 - 4800\lambda +919\lambda^2}{648 - 558\lambda + 120\lambda^2}v^{\pi_k}_{\lambda}(N, m+2) - \dfrac{800\lambda(1-\lambda)}{3(12-5\lambda)^2}v^{\pi_k}_{\lambda}(N, m+3) \geq 0,
\end{align}
for all $m = 2, \ldots, N-6$, for $k = 1$, $\lambda \in [\lambda_c, 1)$ and for $k = 2$, $\lambda \in (0, \lambda_c]$. We prove this statement by yet another induction argument. First of all, consider the case $k =1$, $\lambda \in [\lambda_c, 1)$. We start by verifying the correctness of expression (\ref{exp18_cond2}) for $m = N-6$. Now, assume that it is satisfied for $m = \tilde{m}+1$ for some $\tilde{m} = 2, \ldots, N-7$. Using expression (\ref{rec_4}) and this induction hypothesis, we now obtain
\begin{align*}
    &- \dfrac{3(4104 - 3399 \lambda + 692\lambda^2)}{2(9-4\lambda)(12-5\lambda)}v^{\pi_1}_{\lambda}(N, \tilde{m}) + \dfrac{\lambda(8721 - 3067\lambda)}{6(9-4\lambda)(12-5\lambda)}v^{\pi_1}_{\lambda}(N, \tilde{m}+1) \\
    \nonumber &+ \dfrac{10800 - 4800\lambda +919\lambda^2}{648 - 558\lambda + 120\lambda^2}v^{\pi_1}_{\lambda}(N, \tilde{m}+2) - \dfrac{800\lambda(1-\lambda)}{3(12-5\lambda)^2}v^{\pi_1}_{\lambda}(N, \tilde{m}+3) \\
    \nonumber &=\dfrac{2\lambda}{3-\lambda}\Bigg[- \dfrac{3(4104 - 3399 \lambda + 692\lambda^2)}{2(9-4\lambda)(12-5\lambda)}v^{\pi_1}_{\lambda}(N, \tilde{m}+1) + \dfrac{\lambda(8721 - 3067\lambda)}{6(9-4\lambda)(12-5\lambda)}v^{\pi_1}_{\lambda}(N, \tilde{m}+2) \\
    \nonumber &+ \dfrac{10800 - 4800\lambda +919\lambda^2}{648 - 558\lambda + 120\lambda^2}v^{\pi_1}_{\lambda}(N, \tilde{m}+3) - \dfrac{800\lambda(1-\lambda)}{3(12-5\lambda)^2}v^{\pi_1}_{\lambda}(N, \tilde{m}+4)\Bigg] \geq 0.
\end{align*}
Hence, expression (\ref{exp18_cond2}) holds for all $m = 2, \ldots, N-6$, for $k = 1$, $\lambda \in [\lambda_c, 1)$. \\
We proceed to consider the case $k = 2$, $\lambda \in (0, \lambda_c]$. Again using Table \ref{explicit_v_stars}, we verify the validity of expression (\ref{exp18_cond2}) for $m = N-6$ and $m = N-7$. Now, we suppose that it is true for all $m = \tilde{m}+1, \ldots, N-6$ for some $\tilde{m} = 2, \ldots, N-8$. By expression (\ref{rec_5}), we have
\begin{align*}
    &- \dfrac{3(4104 - 3399 \lambda + 692\lambda^2)}{2(9-4\lambda)(12-5\lambda)}v^{\pi_2}_{\lambda}(N, \tilde{m}) + \dfrac{\lambda(8721 - 3067\lambda)}{6(9-4\lambda)(12-5\lambda)}v^{\pi_2}_{\lambda}(N, \tilde{m}+1) \\
    \nonumber &+ \dfrac{10800 - 4800\lambda +919\lambda^2}{648 - 558\lambda + 120\lambda^2}v^{\pi_2}_{\lambda}(N, \tilde{m}+2) - \dfrac{800\lambda(1-\lambda)}{3(12-5\lambda)^2}v^{\pi_2}_{\lambda}(N, \tilde{m}+3) \\
    \nonumber &= - \dfrac{3(4104 - 3399 \lambda + 692\lambda^2)}{2(9-4\lambda)(12-5\lambda)}\left[\dfrac{7\lambda}{3(9-5\lambda)}v^{\pi_2}_{\lambda}(N, \tilde{m}+1) + \dfrac{5\lambda}{3(9-5\lambda)}v^{\pi_2}_{\lambda}(N, \tilde{m}+2)\right] \\
    \nonumber &+ \dfrac{\lambda(8721 - 3067\lambda)}{6(9-4\lambda)(12-5\lambda)}\left[\dfrac{7\lambda}{3(9-5\lambda)}v^{\pi_2}_{\lambda}(N, \tilde{m}+2) + \dfrac{5\lambda}{3(9-5\lambda)}v^{\pi_2}_{\lambda}(N, \tilde{m}+3)\right] \\
    \nonumber &+ \dfrac{10800 - 4800\lambda +919\lambda^2}{648 - 558\lambda + 120\lambda^2}\left[\dfrac{7\lambda}{3(9-5\lambda)}v^{\pi_2}_{\lambda}(N, \tilde{m}+3) + \dfrac{5\lambda}{3(9-5\lambda)}v^{\pi_2}_{\lambda}(N, \tilde{m}+4)\right] \\
    \nonumber &- \dfrac{800\lambda(1-\lambda)}{3(12-5\lambda)^2}\left[\dfrac{7\lambda}{3(9-5\lambda)}v^{\pi_2}_{\lambda}(N, \tilde{m}+4) + \dfrac{5\lambda}{3(9-5\lambda)}v^{\pi_2}_{\lambda}(N, \tilde{m}+5)\right] \\
    \nonumber &= \dfrac{7\lambda}{3(9-5\lambda)}\Bigg[- \dfrac{3(4104 - 3399 \lambda + 692\lambda^2)}{2(9-4\lambda)(12-5\lambda)}v^{\pi_2}_{\lambda}(N, \tilde{m}+1) + \dfrac{\lambda(8721 - 3067\lambda)}{6(9-4\lambda)(12-5\lambda)}v^{\pi_2}_{\lambda}(N, \tilde{m}+2) \\
    &+ \dfrac{10800 - 4800\lambda +919\lambda^2}{648 - 558\lambda + 120\lambda^2}v^{\pi_2}_{\lambda}(N, \tilde{m}+3) - \dfrac{800\lambda(1-\lambda)}{3(12-5\lambda)^2}v^{\pi_2}_{\lambda}(N, \tilde{m}+4)\Bigg] \\
    &+ \dfrac{5\lambda}{3(9-5\lambda)}\Bigg[- \dfrac{3(4104 - 3399 \lambda + 692\lambda^2)}{2(9-4\lambda)(12-5\lambda)}v^{\pi_2}_{\lambda}(N, \tilde{m}+2) + \dfrac{\lambda(8721 - 3067\lambda)}{6(9-4\lambda)(12-5\lambda)}v^{\pi_2}_{\lambda}(N, \tilde{m}+3) \\
    &+ \dfrac{10800 - 4800\lambda +919\lambda^2}{648 - 558\lambda + 120\lambda^2}v^{\pi_2}_{\lambda}(N, \tilde{m}+4) - \dfrac{800\lambda(1-\lambda)}{3(12-5\lambda)^2}v^{\pi_2}_{\lambda}(N, \tilde{m}+5)\Bigg],
\end{align*}
for all $\lambda \in (0, \lambda_c]$. Invoking the induction hypothesis, we can now conclude that 
\begin{align*}
    &- \dfrac{3(4104 - 3399 \lambda + 692\lambda^2)}{2(9-4\lambda)(12-5\lambda)}v^{\pi_2}_{\lambda}(N, \tilde{m}) + \dfrac{\lambda(8721 - 3067\lambda)}{6(9-4\lambda)(12-5\lambda)}v^{\pi_2}_{\lambda}(N, \tilde{m}+1) \\
    \nonumber &+ \dfrac{10800 - 4800\lambda +919\lambda^2}{648 - 558\lambda + 120\lambda^2}v^{\pi_2}_{\lambda}(N, \tilde{m}+2) - \dfrac{800\lambda(1-\lambda)}{3(12-5\lambda)^2}v^{\pi_2}_{\lambda}(N, \tilde{m}+3) \geq 0,
\end{align*}
for all $\lambda \in (0, \lambda_c]$. Thus, expression (\ref{exp18_cond2}) holds for all $m = 2, \ldots, N-6$ for $k = 2$, $\lambda \in [\lambda_c, 1)$ as well. Hence, expression (\ref{exp18_cond1}) is satisfied for all $\ell = 2, \ldots, N-5$, for $k =1$, $\lambda \in [\lambda_c, 1)$ and for $k = 2$, $\lambda \in (0, \lambda_c]$, which in turn implies the validity of inequality (\ref{18}) for all $j = 2, \ldots, N-4$, for $k =1$, $\lambda \in [\lambda_c, 1)$ and for $k = 2$, $\lambda \in (0, \lambda_c]$.

We finalize the proof by showing the validity of inequalities (\ref{19}) and (\ref{21}). The correctness of inequalities (\ref{20}) and (\ref{22}) then follow immediately from symmetry. In the proofs of inequalities (\ref{19}) and (\ref{21}), we make use of the more involved induction argument outlined in the main part of the proof of Theorem \ref{Optimal_policy}.

\paragraph*{Inequality (\ref{19})}

\begin{itemize}
    \item \textit{Induction base:} We show that inequality (\ref{19}) holds for states $(i, N-4)$, $(N-4, j)$, $i, j = 2, \ldots, N-4$, for $k =1$, $\lambda \in [\lambda_c, 1)$ and $k=2$, $\lambda \in (0, \lambda_c]$. To this end, we use an embedded induction argument over the length of the shortest side of the rectangle. By symmetry, it suffices to handle the case $(N-4, j)$, $j = 2, \ldots, N-4$. First of all, we verify the validity of the inequality for state $(N-4, N-4)$ using Table \ref{explicit_v_stars}. Now, suppose that it is true for state $(N-4, n+1)$ for some $n = 2, \ldots, N-5$, for $k =1$, $\lambda \in [\lambda_c, 1)$ and $k=2$, $\lambda \in (0, \lambda_c]$. By expression (\ref{rec_10}), we have
    \begin{align*}
        &v^{\pi_k}_{\lambda}(N-4, n) + v^{\pi_k}_{\lambda}(N-3, n) - 5v^{\pi_k}_{\lambda}(N-4, n+1) + 3v^{\pi_k}_{\lambda}(N-3, n+1) \\
        \nonumber &= \dfrac{\lambda}{9-4\lambda}\Big(v^{\pi_k}_{\lambda}(N-3, n) + v^{\pi_k}_{\lambda}(N-4, n+1) +3v^{\pi_k}_{\lambda}(N-3, n+1)\Big) \\
        \nonumber &+ \dfrac{\lambda}{9-4\lambda}\Big(v^{\pi_k}_{\lambda}(N-2, n) + v^{\pi_k}_{\lambda}(N-3, n+1) +3v^{\pi_k}_{\lambda}(N-2, n+1)\Big) \\
        \nonumber &- \dfrac{5\lambda}{9-4\lambda}\Big(v^{\pi_k}_{\lambda}(N-3, n+1) + v^{\pi_k}_{\lambda}(N-4, n+2) +3v^{\pi_k}_{\lambda}(N-3, n+2)\Big) \\
        \nonumber &+ \dfrac{3\lambda}{9-4\lambda}\Big(v^{\pi_k}_{\lambda}(N-2, n+1) + v^{\pi_k}_{\lambda}(N-3, n+2) +3v^{\pi_k}_{\lambda}(N-2, n+2)\Big),
    \end{align*}
    for $k =1$, $\lambda \in [\lambda_c, 1)$ and $k=2$, $\lambda \in (0, \lambda_c]$. Through rearranging, we obtain
    \begin{align*}
        &v^{\pi_k}_{\lambda}(N-4, n) + v^{\pi_k}_{\lambda}(N-3, n) - 5v^{\pi_k}_{\lambda}(N-4, n+1) + 3v^{\pi_k}_{\lambda}(N-3, n+1) \\
        \nonumber &=\dfrac{\lambda}{9-4\lambda}\Big(v^{\pi_k}_{\lambda}(N-3, n) + v^{\pi_k}_{\lambda}(N-2, n) - 5v^{\pi_k}_{\lambda}(N-3, n+1) + 3v^{\pi_k}_{\lambda}(N-2, n+1)\Big) \\
        \nonumber &+ \dfrac{\lambda}{9-4\lambda}\Big(v^{\pi_k}_{\lambda}(N-4, n+1) + v^{\pi_k}_{\lambda}(N-3, n+1) - 5v^{\pi_k}_{\lambda}(N-4, n+2) + 3v^{\pi_k}_{\lambda}(N-3, n+2)\Big) \\
        &+ \dfrac{3\lambda}{9-4\lambda}\Big(v^{\pi_k}_{\lambda}(N-3, n+1) + v^{\pi_k}_{\lambda}(N-2, n+1) - 5v^{\pi_k}_{\lambda}(N-3, n+2) + 3v^{\pi_k}_{\lambda}(N-2, n+2)\Big),
    \end{align*}
    for $k =1$, $\lambda \in [\lambda_c, 1)$ and $k=2$, $\lambda \in (0, \lambda_c]$. The induction hypothesis and inequality (\ref{15}) now yield,
    \begin{equation*}
        v^{\pi_k}_{\lambda}(N-4, n) + v^{\pi_k}_{\lambda}(N-3, n) - 5v^{\pi_k}_{\lambda}(N-4, n+1) + 3v^{\pi_k}_{\lambda}(N-3, n+1) >0,
    \end{equation*}
    for $k = 1$, $\lambda \in [\lambda_c, 1)$ and $k =2$, $\lambda \in (0, \lambda_c]$. Hence, inequality (\ref{19}) holds for all states $(N-4, j)$, $j = 2, \ldots, N-4$, for $k = 1$, $\lambda \in [\lambda_c, 1)$ and $k =2$, $\lambda \in (0, \lambda_c]$.

    \item \textit{Induction hypothesis}: Suppose that inequality (\ref{19}) holds for states $(i, n+1)$, $(n+1, i)$ for all $i = 2, \ldots, n+1$ for some $n = 2, \ldots, N-5$. We refer to this induction hypothesis as the \textit{global induction hypothesis}.

    \item \textit{Induction step}: First of all, we show that the induction hypothesis implies the validity of inequality (\ref{19}) for state $(n, n)$. Using expression (\ref{rec_10}), we obtain
    \begin{align*}
        &v^{\pi_k}_{\lambda}(n, n) + v^{\pi_k}_{\lambda}(n+1, n) - 5v^{\pi_k}_{\lambda}(n, n+1) + 3v^{\pi_k}_{\lambda}(n+1, n+1) \\
        \nonumber &= \dfrac{\lambda}{9-4\lambda}\Big(v^{\pi_k}_{\lambda}(n+1, n) + v^{\pi_k}_{\lambda}(n, n+1) + 3v^{\pi_k}_{\lambda}(n+1, n+1)\Big) \\
        \nonumber &+ \dfrac{\lambda}{9-4\lambda}\Big(v^{\pi_k}_{\lambda}(n+2, n) + v^{\pi_k}_{\lambda}(n+1, n+1) + 3v^{\pi_k}_{\lambda}(n+2, n+1)\Big)\\
        \nonumber &- \dfrac{5\lambda}{9-4\lambda}\Big(v^{\pi_k}_{\lambda}(n+1, n+1) + v^{\pi_k}_{\lambda}(n, n+2) + 3v^{\pi_k}_{\lambda}(n+1, n+2)\Big)\\
        \nonumber &+\dfrac{3\lambda}{9-4\lambda}\Big(v^{\pi_k}_{\lambda}(n+2, n+1) + v^{\pi_k}_{\lambda}(n+1, n+2) + 3v^{\pi_k}_{\lambda}(n+2, n+2)\Big)\\
        \nonumber &= \dfrac{\lambda}{9-4\lambda}\Big(v^{\pi_k}_{\lambda}(n+1, n) + v^{\pi_k}_{\lambda}(n+2, n) - 5v^{\pi_k}_{\lambda}(n+1, n+1) + 3v^{\pi_k}_{\lambda}(n+2, n+1)\Big) \\
        \nonumber &+ \dfrac{\lambda}{9-4\lambda}\Big(v^{\pi_k}_{\lambda}(n, n+1) + v^{\pi_k}_{\lambda}(n+1, n+1) - 5v^{\pi_k}_{\lambda}(n, n+2) + 3v^{\pi_k}_{\lambda}(n+1, n+2)\Big) \\
        \nonumber &+ \dfrac{3\lambda}{9-4\lambda}\Big(v^{\pi_k}_{\lambda}(n+1, n+1) + v^{\pi_k}_{\lambda}(n+2, n+1) - 5v^{\pi_k}_{\lambda}(n+1, n+2) + 3v^{\pi_k}_{\lambda}(n+2, n+2)\Big) > 0,
    \end{align*}
    for $k = 1$, $\lambda \in [\lambda_c, 1)$ and $\lambda \in (0, \lambda_c]$ by the induction hypothesis, establishing the correctness of inequality (\ref{19}) for state $(n, n)$.
    
    Now, in addition to the global induction hypothesis, we make the assumption that inequality (\ref{19}) holds for some state $(n, \ell + 1)$, where $\ell = 2, \ldots, n-1$. We show that, under the global and additional induction hypotheses, the validity of inequality (\ref{19}) for state $(n, \ell +1)$ carries over to state $(n, \ell)$. This implies the correctness of inequality (\ref{19}) for all states $(i, n)$, $(n, i)$, $i = 2, \ldots, n$ under the global induction hypothesis. Using expression (\ref{rec_10}), we obtain
    \begin{align*}
        &v^{\pi_k}_{\lambda}(n, \ell) + v^{\pi_k}_{\lambda}(n+1, \ell) -5v^{\pi_k}_{\lambda}(n, \ell + 1) + 3v^{\pi_k}_{\lambda}(n+1, \ell + 1) \\
        \nonumber &= \dfrac{\lambda}{9-4\lambda}\Big(v^{\pi_k}_{\lambda}(n+1, \ell) + v^{\pi_k}_{\lambda}(n, \ell+1) + 3v^{\pi_k}_{\lambda}(n+1, \ell + 1)\Big) \\
        \nonumber &+ \dfrac{\lambda}{9-4\lambda}\Big(v^{\pi_k}_{\lambda}(n+2, \ell) + v^{\pi_k}_{\lambda}(n+1, \ell +1) +3v^{\pi_k}_{\lambda}(n+2, \ell + 1)\Big) \\
        \nonumber &-\dfrac{5\lambda}{3(9-4\lambda)}\Big(v^{\pi_k}_{\lambda}(n+1, \ell+1) + v^{\pi_k}_{\lambda}(n, \ell+2) + 3v^{\pi_k}_{\lambda}(n+1, \ell + 2)\Big) \\
        \nonumber &+ \dfrac{3\lambda}{9-4\lambda}\Big(v^{\pi_k}_{\lambda}(n+2, \ell +1) + v^{\pi_k}_{\lambda}(n+1, \ell +2) +3v^{\pi_k}_{\lambda}(n+2, \ell + 2)\Big) \\
        \nonumber &= \dfrac{\lambda}{9-4\lambda}\Big(v^{\pi_k}_{\lambda}(n+1, \ell) + v^{\pi_k}_{\lambda}(n+2, \ell) -5v^{\pi_k}_{\lambda}(n+1, \ell + 1) + 3v^{\pi_k}_{\lambda}(n+2, \ell + 1)\Big) \\
        \nonumber &+ \dfrac{\lambda}{9-4\lambda}\Big(v^{\pi_k}_{\lambda}(n, \ell +1) + v^{\pi_k}_{\lambda}(n+1, \ell+1) -5v^{\pi_k}_{\lambda}(n, \ell + 2) + 3v^{\pi_k}_{\lambda}(n+1, \ell + 2)\Big) \\
        \nonumber &+ \dfrac{3\lambda}{9-4\lambda}\Big(v^{\pi_k}_{\lambda}(n+1, \ell +1) + v^{\pi_k}_{\lambda}(n+2, \ell+1) -5v^{\pi_k}_{\lambda}(n+1, \ell + 2) + 3v^{\pi_k}_{\lambda}(n+2, \ell + 2)\Big) > 0,
    \end{align*}
    by the global and additional induction hypotheses. It follows that inequality (\ref{19}) is satisfied for states $(n, i)$, $(i, n)$ for all $i = 2, \ldots, n$ under the global induction hypothesis. From the full induction argument, we can now conclude that inequality (\ref{19}) holds for all states $(i,j)$, $i, j = 2, \ldots, N-4$, for $k = 1$, $\lambda \in [\lambda_c, 1)$ and $k = 2$, $\lambda \in (0, \lambda_c]$.  
    \end{itemize}
    
    \paragraph*{Inequality (\ref{21})}
    \begin{itemize}
        \item \textit{Induction base:} We prove that inequality (\ref{21}) is valid for states $(i, N-4)$, $(N-4, j)$, $i, j = 2, \ldots, N-4$, for $k = 1$, $\lambda \in [\lambda_c, 1)$ and $k = 2$, $\lambda \in (0, \lambda_c]$, again by means of an embedded induction argument over the length of the shortest side of the rectangle. By symmetry, it is sufficient to prove the statement for states $(N-4, j)$, $j = 2, \ldots, N-4$. First, we use Table \ref{explicit_v_stars} to verify the correctness of inequality (\ref{21}) for state $(N-4, N-4)$. Now, assume that it holds for state $(N-4, n+1)$ for some $n = 2, \ldots, N-5$, for $k = 1$, $\lambda \in [\lambda_c, 1)$ and $k = 2$, $\lambda \in (0, \lambda_c]$. Expression (\ref{rec_10}) yields
        \begin{align*}
            & -3v^{\pi_k}_{\lambda}(N-4,n) + 3v^{\pi_k}_{\lambda}(N-3, n) -4v^{\pi_k}_{\lambda}(N-4, n+1) + 9v^{\pi_k}_{\lambda}(N-3, n+1) -5v^{\pi_k}_{\lambda}(N-4, n+2) \\
            \nonumber &=-\dfrac{3\lambda}{9-4\lambda}\Big(v^{\pi_k}_{\lambda}(N-3, n) + v^{\pi_k}_{\lambda}(N-4, n+1) + 3v^{\pi_k}_{\lambda}(N-3, n+1)\Big) \\
            \nonumber &+\dfrac{3\lambda}{9-4\lambda}\Big(v^{\pi_k}_{\lambda}(N-2, n) + v^{\pi_k}_{\lambda}(N-3, n+1) + 3v^{\pi_k}_{\lambda}(N-2, n+1)\Big) \\
            \nonumber &-\dfrac{4\lambda}{9-4\lambda}\Big(v^{\pi_k}_{\lambda}(N-3, n+1) + v^{\pi_k}_{\lambda}(N-4, n+2) + 3v^{\pi_k}_{\lambda}(N-3, n+2)\Big) \\
            \nonumber &+\dfrac{9\lambda}{9-4\lambda}\Big(v^{\pi_k}_{\lambda}(N-2, n+1) + v^{\pi_k}_{\lambda}(N-3, n+2) + 3v^{\pi_k}_{\lambda}(N-2, n+2)\Big) \\
            \nonumber &- \dfrac{5\lambda}{9-4\lambda}\Big(v^{\pi_k}_{\lambda}(N-3, n+2) + v^{\pi_k}_{\lambda}(N-4, n+3) + 3v^{\pi_k}_{\lambda}(N-3, n+3)\Big).           
        \end{align*}
        Through rearranging, we obtain
        \begin{align*}
            & -3v^{\pi_k}_{\lambda}(N-4,n) + 3v^{\pi_k}_{\lambda}(N-3, n) -4v^{\pi_k}_{\lambda}(N-4, n+1) + 9v^{\pi_k}_{\lambda}(N-3, n+1)  -5v^{\pi_k}_{\lambda}(N-4, n+2) \\
            \nonumber &= \dfrac{\lambda}{9-4\lambda}\Big(-3v^{\pi_k}_{\lambda}(N-3,n) + 3v^{\pi_k}_{\lambda}(N-2, n) -4v^{\pi_k}_{\lambda}(N-3, n+1) + 9v^{\pi_k}_{\lambda}(N-2, n+1) \\
            &-5v^{\pi_k}_{\lambda}(N-3, n+2)\Big) +\dfrac{\lambda}{9-4\lambda}\Big(-3v^{\pi_k}_{\lambda}(N-4,n+1) + 3v^{\pi_k}_{\lambda}(N-3, n+1) -4v^{\pi_k}_{\lambda}(N-4, n+2) \\
            &+ 9v^{\pi_k}_{\lambda}(N-3, n+2) -5v^{\pi_k}_{\lambda}(N-4, n+3)\Big) + \dfrac{3\lambda}{9-4\lambda}\Big(-3v^{\pi_k}_{\lambda}(N-3,n+1) + 3v^{\pi_k}_{\lambda}(N-2, n+1) \\
            &-4v^{\pi_k}_{\lambda}(N-3, n+2) + 9v^{\pi_k}_{\lambda}(N-2, n+2) -5v^{\pi_k}_{\lambda}(N-3, n+3)\Big).
        \end{align*}
        From the induction hypothesis and inequality (\ref{16}), it now follows that
        \begin{align*}
            & -3v^{\pi_k}_{\lambda}(N-4,n) + 3v^{\pi_k}_{\lambda}(N-3, n) -4v^{\pi_k}_{\lambda}(N-4, n+1) + 9v^{\pi_k}_{\lambda}(N-3, n+1) \\
            &-5v^{\pi_k}_{\lambda}(N-4, n+2) > 0.
        \end{align*}
        Thus, inequality (\ref{21}) holds for all states $(N-4, j)$, $j = 2, \ldots, N-4$ for $k = 1$, $\lambda \in [\lambda_c, 1)$ and $k = 2$, $\lambda \in (0, \lambda_c]$. 

        \item \textit{Induction hypothesis:} Suppose that inequality (\ref{21}) is valid for states $(i, n+1)$, $(n+1, j)$ for all $i, j = 2, \ldots, n+1$ for some $n = 2, \ldots, N-5.$ We refer to this induction hypothesis as the \textit{global induction hypothesis.}

        \item \textit{Induction step:} First of all, we show that the induction hypothesis implies the correctness of inequality (\ref{21}) for state $(n, n)$. By expression (\ref{rec_10}), we have
        \begin{align*}
            & -3v^{\pi_k}_{\lambda}(n,n) + 3v^{\pi_k}_{\lambda}(n+1, n) -4v^{\pi_k}_{\lambda}(n, n+1) + 9v^{\pi_k}_{\lambda}(n+1, n+1) -5v^{\pi_k}_{\lambda}(n, n+2) \\
            \nonumber &= -\dfrac{3\lambda}{9-4\lambda}\Big(v^{\pi_k}_{\lambda}(n+1, n) + v^{\pi_k}_{\lambda}(n, n+1) + 3v^{\pi_k}_{\lambda}(n+1, n+1)\Big) \\
            \nonumber &+ \dfrac{3\lambda}{9-4\lambda}\Big(v^{\pi_k}_{\lambda}(n+2, n) + v^{\pi_k}_{\lambda}(n+1, n+1) + 3v^{\pi_k}_{\lambda}(n+2, n+1)\Big) \\
            \nonumber &- \dfrac{4\lambda}{9-4\lambda}\Big(v^{\pi_k}_{\lambda}(n+1, n+1) + v^{\pi_k}_{\lambda}(n, n+2) + 3v^{\pi_k}_{\lambda}(n+1, n+2)\Big) \\
            \nonumber &+ \dfrac{9\lambda}{9-4\lambda}\Big(v^{\pi_k}_{\lambda}(n+2, n+1) + v^{\pi_k}_{\lambda}(n+1, n+2) + 3v^{\pi_k}_{\lambda}(n+2, n+2)\Big) \\
            \nonumber &-\dfrac{5\lambda}{9-4\lambda}\Big(v^{\pi_k}_{\lambda}(n+1, n+2) + v^{\pi_k}_{\lambda}(n, n+3) + 3v^{\pi_k}_{\lambda}(n+1, n+3)\Big).
        \end{align*}
        Rearranging yields
        \begin{align*}
            & -3v^{\pi_k}_{\lambda}(n,n) + 3v^{\pi_k}_{\lambda}(n+1, n) -4v^{\pi_k}_{\lambda}(n, n+1) + 9v^{\pi_k}_{\lambda}(n+1, n+1) -5v^{\pi_k}_{\lambda}(n, n+2) \\
            \nonumber &= \dfrac{\lambda}{9-4\lambda}\Big(-3v^{\pi_k}_{\lambda}(n+1,n) + 3v^{\pi_k}_{\lambda}(n+2, n) -4v^{\pi_k}_{\lambda}(n+1, n+1) + 9v^{\pi_k}_{\lambda}(n+2, n+1)\\
            &-5v^{\pi_k}_{\lambda}(n+1, n+2)\Big) + \dfrac{\lambda}{9-4\lambda}\Big(-3v^{\pi_k}_{\lambda}(n,n+1) + 3v^{\pi_k}_{\lambda}(n+1, n+1) \\
            &-4v^{\pi_k}_{\lambda}(n, n+2) + 9v^{\pi_k}_{\lambda}(n+1, n+2) -5v^{\pi_k}_{\lambda}(n, n+3)\Big) + \dfrac{3\lambda}{9-4\lambda}\Big(-3v^{\pi_k}_{\lambda}(n+1,n+1)\\
            &+ 3v^{\pi_k}_{\lambda}(n+2, n+1)-4v^{\pi_k}_{\lambda}(n+1, n+2) + 9v^{\pi_k}_{\lambda}(n+2, n+2) -5v^{\pi_k}_{\lambda}(n+1, n+3)\Big).
        \end{align*}
    Invoking the induction hypothesis three times, we now obtain
    \begin{equation*}
        -3v^{\pi_k}_{\lambda}(n,n) + 3v^{\pi_k}_{\lambda}(n+1, n) -4v^{\pi_k}_{\lambda}(n, n+1) + 9v^{\pi_k}_{\lambda}(n+1, n+1) -5v^{\pi_k}_{\lambda}(n, n+2) > 0,
    \end{equation*}
    for $k =1$, $\lambda \in [\lambda_c, 1)$ and $\lambda \in (0, \lambda_c]$. Hence, inequality (\ref{21}) is satisfied for state $(n, n)$. 
    
    We proceed to make the assumption that inequality (\ref{21}) holds for some state $(n, \ell +1)$, where $\ell = 2, \ldots, n-1$, in addition to the global induction hypothesis. We show that the global and additional induction hypotheses imply the validity of inequality (\ref{21}) for state $(n, \ell)$. Invoking expression (\ref{rec_10}), we obtain
    \begin{align*}
        & -3v^{\pi_k}_{\lambda}(n,\ell) + 3v^{\pi_k}_{\lambda}(n+1, \ell) -4v^{\pi_k}_{\lambda}(n, \ell+1) + 9v^{\pi_k}_{\lambda}(n+1, \ell+1) -5v^{\pi_k}_{\lambda}(n, \ell+2) \\
        \nonumber &= -\dfrac{3\lambda}{9-4\lambda}\Big(v^{\pi_k}_{\lambda}(n+1, \ell) + v^{\pi_k}_{\lambda}(n, \ell+1) + 3v^{\pi_k}_{\lambda}(n+1, \ell+1)\Big) \\
        \nonumber &+ \dfrac{3\lambda}{9-4\lambda}\Big(v^{\pi_k}_{\lambda}(n+2, \ell) + v^{\pi_k}_{\lambda}(n+1, \ell+1) + 3v^{\pi_k}_{\lambda}(n+2, \ell+1)\Big) \\
        \nonumber &- \dfrac{4\lambda}{9-4\lambda}\Big(v^{\pi_k}_{\lambda}(n+1, \ell+1) + v^{\pi_k}_{\lambda}(n, \ell+2) + 3v^{\pi_k}_{\lambda}(n+1, \ell+2)\Big) \\
        \nonumber &+ \dfrac{9\lambda}{9-4\lambda}\Big(v^{\pi_k}_{\lambda}(n+2, \ell+1) + v^{\pi_k}_{\lambda}(n+1, \ell+2) + 3v^{\pi_k}_{\lambda}(n+2, \ell+2)\Big) \\
        \nonumber &-\dfrac{5\lambda}{9-4\lambda}\Big(v^{\pi_k}_{\lambda}(n+1, \ell+2) + v^{\pi_k}_{\lambda}(n, \ell+3) + 3v^{\pi_k}_{\lambda}(n+1, \ell+3)\Big) \\
        \nonumber &= \dfrac{\lambda}{9-4\lambda}\Big(-3v^{\pi_k}_{\lambda}(n+1,\ell) + 3v^{\pi_k}_{\lambda}(n+2, \ell) -4v^{\pi_k}_{\lambda}(n+1, \ell+1) + 9v^{\pi_k}_{\lambda}(n+2, \ell+1) \\
        \nonumber &-5v^{\pi_k}_{\lambda}(n+1, \ell+2)\Big) + \dfrac{\lambda}{9-4\lambda}\Big(-3v^{\pi_k}_{\lambda}(n,\ell+1) + 3v^{\pi_k}_{\lambda}(n+1, \ell+1) -4v^{\pi_k}_{\lambda}(n, \ell+2)\\
        &+ 9v^{\pi_k}_{\lambda}(n+1, \ell+2) -5v^{\pi_k}_{\lambda}(n, \ell+3)\Big) + \dfrac{3\lambda}{9-4\lambda}\Big(-3v^{\pi_k}_{\lambda}(n+1,\ell+1)\\
        &+ 3v^{\pi_k}_{\lambda}(n+2, \ell+1) -4v^{\pi_k}_{\lambda}(n+1, \ell+2) + 9v^{\pi_k}_{\lambda}(n+2, \ell+2) -5v^{\pi_k}_{\lambda}(n+1, \ell+3)\Big). 
    \end{align*}
    It now follows from the global and additional induction hypotheses that
    \begin{equation*}
        -3v^{\pi_k}_{\lambda}(n,\ell) + 3v^{\pi_k}_{\lambda}(n+1, \ell) -4v^{\pi_k}_{\lambda}(n, \ell+1) + 9v^{\pi_k}_{\lambda}(n+1, \ell+1) -5v^{\pi_k}_{\lambda}(n, \ell+2) > 0.
    \end{equation*}
    Thus, inequality (\ref{21}) holds for all states $(n, i)$, $(i, n)$, $i = 2, \ldots, n$ under the global induction hypothesis. The full induction argument now implies that inequality (\ref{21}) is valid for all states $(i,j)$, $i,j = 2, \ldots, N-4$ for $k=1$, $\lambda \in [\lambda_c, 1)$ and $k = 2$, $\lambda \in (0, \lambda_c]$. 
    \end{itemize}

\begin{longtable}{lll}
\toprule
$(i,j)$ & $k, \lambda$ & $v^{\pi_k}_{\lambda}(i,j)$\\
\midrule
\midrule
& & \\
$(N, N)$ & $k = 1, 2$, $\lambda \in (0,1)$ & $\dfrac{1}{1-\lambda}$ \\ 
& & \\\midrule
& & \\
$(N, N-2)$ & $k = 1, 2$, $\lambda \in (0,1)$ & $\dfrac{3\lambda}{(1-\lambda)(4-\lambda)}$ \\
& & \\ \midrule
& & \\
$(N, N-3)$ & $k = 1, 2$, $\lambda \in (0,1)$ & $\dfrac{3\lambda(19+3\lambda)}{2(4-\lambda)(1-\lambda)(18-7\lambda)}$ \\
& & \\ \midrule
& & \\
$(N, N-4)$ & $k = 1, 2$, $\lambda \in (0,1)$ & $\dfrac{3\lambda^2(19+3\lambda)}{(18-7\lambda)(1-\lambda)(3-\lambda)(4-\lambda)}$ \\ 
& & \\ \midrule
& & \\
$(N, N-5)$ & $k =1$, $[\lambda_c, 1)$  & $\dfrac{6\lambda^3(19+3\lambda)}{(18-7\lambda)(1-\lambda)(3-\lambda)^2(4-\lambda)}$  \\
& & \\
& $k = 2$, $(0, \lambda_c]$ & $\dfrac{3\lambda^2(5+3\lambda)(19+3\lambda)}{2(4-\lambda)(3-\lambda)(1-\lambda)(9-5\lambda)(18-7\lambda)}$ \\
& & \\ \midrule
& & \\
$(N, N-6)$ & $k = 1$, $[\lambda_c, 1)$ & $\dfrac{12\lambda^4(19+3\lambda)}{(18-7\lambda)(1-\lambda)(3-\lambda)^3(4-\lambda)}$  \\
& & \\
 & $k = 2$, $(0, \lambda_c]$ & $\dfrac{\lambda^3(19+3\lambda)(125-29\lambda)}{2(9-5\lambda)^2(4-\lambda)(3-\lambda)(1-\lambda)(18-7\lambda)}$ \\
 & & \\ \midrule
 & & \\
$(N, N-7)$ & $k = 1$, $[\lambda_c, 1)$ & $\dfrac{24\lambda^5(19+3\lambda)}{(18-7\lambda)(1-\lambda)(3-\lambda)^4(4-\lambda)}$ \\
& & \\
 & $k = 2$, $(0, \lambda_c]$ & $\dfrac{\lambda^3(19+3\lambda)(675+905\lambda-428\lambda^2)}{6(4-\lambda)(3-\lambda)(1-\lambda)(9-5\lambda)^3(18-7\lambda)}$  \\
 & & \\ \midrule
 & & \\
 $(N, N-8)$ & $k = 1$, $[\lambda_c, 1)$ &  $\dfrac{48\lambda^6(19+3\lambda)}{(18-7\lambda)(1-\lambda)(3-\lambda)^5(4-\lambda)}$ \\
 & & \\
 & $k=2$, $(0, \lambda_c]$ & $\dfrac{\lambda^4(19+3\lambda)(21600-6955\lambda - 821\lambda^2)}{18(9-5\lambda)^4(4-\lambda)(3-\lambda)(1-\lambda)(18-7\lambda)}$ \\
 & & \\ \midrule
& & \\
$(N-2, N-2)$ & $k = 1, 2$, $\lambda \in (0,1)$ & $\dfrac{\lambda(7\lambda + 26)}{(4-\lambda)(1-\lambda)(18-7\lambda)}$ \\
& & \\ \midrule
& & \\
$(N-2, N-3)$ & $k = 1, 2$, $\lambda \in (0,1)$ & $\dfrac{\lambda^2(971-245\lambda)}{2(18-7\lambda)^2(1-\lambda)(4-\lambda)}$ \\ 
& & \\ \midrule
& & \\
$(N-2, N-4)$ & $k = 1, 2$, $\lambda \in (0,1)$ & $\dfrac{\lambda^2(2401\lambda^3 - 16714\lambda^2 - 1653\lambda + 76950)}{12(18-7\lambda)^2(4-\lambda)(3-\lambda)(1-\lambda)(12-5\lambda)}$  \\
& & \\ \midrule
& & \\
$(N-2, N-5)$ & $k = 1$, $[\lambda_c, 1)$ & $\dfrac{\lambda^3(3231900 - 2216520 \lambda + 167001 \lambda^2 + 109700 \lambda^3 - 11417 \lambda^4)}{18(18-7\lambda)^2(12-5\lambda)^2(4-\lambda)(3-\lambda)^2(1-\lambda)}$  \\
& & \\
 & $k = 2$, $(0, \lambda_c]$ & $\dfrac{\lambda^3(21053520 -21676626\lambda + 5403027 \lambda^2 + 601490 \lambda^3 - 258755 \lambda^4)}{36(18-7\lambda)^2(12-5\lambda)^2(4-\lambda)(3-\lambda)(1-\lambda)(9-5\lambda)}$ \\
 & & \\ \midrule
 & & \\
$(N-2, N-6)$ & $k = 1$, $[\lambda_c, 1)$ & $\dfrac{\lambda^4}{27(18-7\lambda)^2(4-\lambda)(3-\lambda)^3(1-\lambda)(12-5\lambda)^3}\Big(119118600 $ \\
& &$- 135213192\lambda + 45728820\lambda^2 - 1408311\lambda^3 - 1454032\lambda^4$\\
& & $+ 122059\lambda^5\Big)$ \\
& & \\ 
& $k = 2$, $\lambda \in (0, \lambda_c]$ &  $\dfrac{\lambda^3}{108(18-7\lambda)^2(9-5\lambda)^2(4-\lambda)(3-\lambda)(1-\lambda)(12-5\lambda)^3}$\\
& & $\cdot \Big(149508000 - 269088480 \lambda -1961414838 \lambda^2 + 1470626901 \lambda^3$\\
& & $- 283284744\lambda^4$\\
& & $- 34167685\lambda^5 + 11723950 \lambda^6 \Big) $ \\
& & \\ \midrule 
& & \\
$(N-3, N-3)$ & $k = 1, 2$, $\lambda \in (0,1)$ & $\dfrac{\lambda^2(29241+8296\lambda - 5593\lambda^2)}{8(4-\lambda)(1-\lambda)(18-7\lambda)^3}$  \\ 
& & \\\midrule
& & \\
$(N-3, N-4)$ & $k = 1, 2$, $\lambda \in (0,1)$ & $\dfrac{\lambda^3}{24(4-\lambda)(3-\lambda)(1-\lambda)(9-4\lambda)(12-5\lambda)(18-7\lambda)^3}$\\
& & $\cdot \Big(28579716 - 34122249\lambda + 14845917\lambda^2 - 2786335\lambda^3$\\
& & $+ 191191\lambda^4\Big)$\\
& & \\ \midrule
& & \\
$(N-3, N-5)$ & $k = 1$, $[\lambda_c, 1)$ & $\dfrac{\lambda^3}{72(12-5\lambda)^2(9-4\lambda)^2(4-\lambda)(3-\lambda)^2(1-\lambda)(18-7\lambda)^3}$\\
& & $\cdot \Big(8077903200 - 7782658200 \lambda - 3094709976 \lambda^2 + 7348557735 \lambda^3$\\
& & $- 4088759886\lambda^4 + 1083552000\lambda^5 - 142794154\lambda^6 + 7639681\lambda^7\Big)$ \\
& & \\
& $k = 2$, $(0, \lambda_c]$ & $\dfrac{\lambda^3}{72(12-5\lambda)^2(9-4\lambda)^2(4-\lambda)(3-\lambda)(1-\lambda)(9-5\lambda)(18-7\lambda)^3} $ \\
& & $\cdot \Big(24233709600 - 28194716520\lambda - 7657684272\lambda^2 + 27365238693\lambda^3$\\
& & $- 17216824914\lambda^4 + 4906807638\lambda^5 - 649171070\lambda^6 + 30101645\lambda^7\Big)$ \\
& & \\ \midrule
& & \\
$(N-4, N-4)$ & $k = 1, 2$, $\lambda \in (0,1)$ & $\dfrac{\lambda^3}{24(9-4\lambda)^2(4-\lambda)(3-\lambda)(1-\lambda)(12-5\lambda)(18-7\lambda)^3}\Big(85266756$ \\
& & $- 20495835 \lambda - 73183797 \lambda^2 + 55332465 \lambda^3 - 14767511 \lambda^4$\\
& & $+ 1389122 \lambda^5\Big)$ \\
& & \\
\bottomrule
\caption{Some explicit expressions for the optimal value function.}
\label{explicit_v_stars}
\end{longtable}

\end{document}